\tikzstyle directed=[postaction={decorate,decoration={markings,
    mark=at position #1 with {\arrow{>}}}}]
\newcommand{\hackcenter}[1]{
 \xy (0,0)*{#1}; \endxy}
\tikzset{->-/.style={decoration={
  markings,
  mark=at position #1 with {\arrow{>}}},postaction={decorate}}}
\tikzset{middlearrow/.style={
        decoration={markings,
            mark= at position 0.5 with {\arrow{#1}} ,
        },
        postaction={decorate}
    }
}
\theoremstyle{plain}
\newtheorem{theorem}{Theorem}
\newtheorem{corollary}[theorem]{Corollary}
\newtheorem{proposition}[theorem]{Proposition}
\newtheorem{lemma}[theorem]{Lemma}
\theoremstyle{definition}
\newtheorem{example}[theorem]{Example}
\newtheorem{definition}[theorem]{Definition}
\newtheorem{convention}[theorem]{Convention}
\theoremstyle{definition}
\newtheorem{remark}[theorem]{Remark}
\newtheorem{warning}[theorem]{Warning}
\numberwithin{equation}{section}
\numberwithin{theorem}{section}
\newcommand{\refequal}[1]{\xy {\ar@{=}^{#1}
(-1,0)*{};(1,0)*{}};
\endxy}
\newcommand{\Hom}{{\rm Hom}}
\newcommand{\maps}{\colon}
\renewcommand{\to}{\rightarrow}
\def\1{\mathbf{1}}%
\newcommand{\und}[1]{\underline{#1}}
\newcommand{\rkq}{{\rm rk}_q}
\def\mf{\mathfrak}
\numberwithin{equation}{section}
\let\hat=\widehat
\let\tilde=\widetilde
\let\epsilon=\varepsilon
\def\C{{\mathbb{C}}}
\def\cal#1{\mathcal{#1}}%
\def\nn{\notag}
\renewcommand{\l}{\lambda}
\newcommand{\W}{\cal{W}}
\newcommand{\Wa}{\cal{W}^{\alpha}}
\def\cal#1{\mathcal{#1}}
\newcommand\nc{\newcommand}
\nc\rnc{\renewcommand}
\nc\Kar{\operatorname{Kar}}
\nc\End{\operatorname{End}}
\newcommand{\infl}{{\rm infl}}
\newcommand{\pr}{{\rm pr}}
\newcommand{\scs}{\scriptstyle}
\nc\Sym{\operatorname{Sym}}
\DeclareMathOperator{\Ext}{Ext}
\DeclareMathOperator{\im}{im}
\newcommand{\A}{\mathcal{A}}
\newcommand{\B}{\mathcal{B}}
\newcommand{\F}{\mathbb{F}}
\newcommand{\Z}{\mathbb{Z}}
\newcommand{\x}{\mathbf{x}}
\newcommand{\y}{\mathbf{y}}
\newcommand{\z}{\mathbf{z}}
\newcommand{\Ib}{\mathbf{I}}
\newcommand{\Bq}{\overline{\mathcal{B}}_l(n,k)}
\newcommand{\Bqk}{\overline{\mathcal{B}}_l^{\Bbbk}(n,k)}
\newcommand{\co}{\colon}
\newcommand{\dimq}{\dim_q}
\newcommand{\ootimes}{
  \mathbin{
    \mathchoice
      {\buildcircleotimes{\displaystyle}}
      {\buildcircleotimes{\textstyle}}
      {\buildcircleotimes{\scriptstyle}}
      {\buildcircleotimes{\scriptscriptstyle}}
  }
}
\newcommand\buildcircleotimes[1]{%
  \begin{tikzpicture}[baseline=(X.base), inner sep=0, outer sep=0]
    \node[draw,circle] (X)  {$#1\otimes$};
  \end{tikzpicture}%
}
\title{Ozsv{\'a}th-Szab{\'o} bordered algebras and \\subquotients of category $\cal{O}$}
\begin{document}

\author{Aaron D. Lauda}
\email{lauda@usc.edu}
\address{Department of Mathematics\\ University of Southern California \\ Los Angeles, CA}

\author{Andrew Manion}
\email{amanion@usc.edu}
\address{Department of Mathematics\\ University of Southern California \\ Los Angeles, CA}

\date{\today}

\begin{abstract}
We show that Ozsv{\'a}th--Szab{\'o}'s bordered algebra used to efficiently compute knot Floer homology is a graded flat deformation of the regular block of a $\mf{q}$-presentable quotient of parabolic category $\cal{O}$.
We identify the endomorphism algebra of a minimal projective generator for this block with an explicit quotient of the Ozsv{\'a}th--Szab{\'o} algebra using Sartori's diagrammatic formulation of  the endomorphism algebra. Both of these algebras give rise to categorifications of tensor products of the vector representation $V^{\otimes n}$ for $U_q(gl(1|1))$. Our isomorphism allows us to transport a number of constructions between these two algebras, leading to a new (fully) diagrammatic reinterpretation of Sartori's algebra, new modules over Ozsv{\'a}th--Szab{\'o}'s algebra lifting various bases of $V^{\otimes n}$, and bimodules over Ozsv{\'a}th--Szab{\'o}'s algebra categorifying the action of the quantum group element $F$ and its dual on $V^{\otimes n}$.
\end{abstract}

\maketitle

\setcounter{tocdepth}{1}

\tableofcontents

\section{Introduction}

Categorification originated with the goal of lifting quantum 3-manifold invariants, specifically the Witten--Reshetikhin--Turaev invariants based on Chern--Simons gauge theory, to smooth 4-manifold invariants.  Within Crane and Frenkel's original proposal~\cite{CF}, quantum groups associated to semi-simple Lie algebras heavily influenced the investigation of categorified quantum 3-manifold invariants.  Positive and integral structures arising from geometric representation theory and the discovery of canonical bases for quantum groups suggested that quantum groups could themselves be categorified.  These original insights ultimately birthed the field of higher representation theory and the study of categorified quantum groups.

Quantum groups associated to symmetrizable Kac-Moody algebras have been categorified along with a significant amount of their representation theory~\cite{KL3,Rou2,RouQH,KK,Web}.
These categorical representations, or higher representations, govern link homology theories categorifying the Reshetikhin-Turaev invariants of knots and tangles.  Though these link homologies such as Khovanov-Rozansky homology can be formulated in many different languages like matrix factorizations \cite{KhR1}, Soergel bimodules~\cite{Rou-Soergel, KhTrip}, coherent sheaves on the affine Grassmannian~\cite{CaKa-slm}, BGG category $\cal{O}$~\cite{StroppelKh,Sus,MazS}, and tensor product 2-representations \cite{Web}, higher representation theory unifies these different formulations by realizing them all as 2-representations of categorified quantum groups~\cite{Cautis-clasp,Mackaay-Webster}.

Despite these successes,  thus far the higher representation theoretic approach has fallen short at categorifying quantum invariants for 3-manifolds, not just links in $S^3$. This issue is partly related to the challenges associated with categorification at a root of unity, though there has been some progress in this direction~\cite{Kh4,KQ,Q1,KQYpDG,QYpDG}.

\subsection{Knot Floer homology and categorification}

On the other hand, Heegaard Floer homology \cite{HFOrig,PropsApps} has proven tremendously successful as a 4-dimensional TQFT sensitive to the smooth structure of 4-manifolds.  This theory has a much different flavor than the quantum invariants discussed above; it is a symplectic approach to Seiberg--Witten theory, a more analytically tractable relative of the celebrated Donaldson--Floer invariants that initially sparked mathematical interest in TQFTs in the 1980s. Its definitions rely on moduli spaces of pseudoholomorphic curves as in Lagrangian Floer homology. Heegaard Floer theory also provides a categorification of the Alexander polynomial, similar to Khovanov's categorification of the Jones polynomial, called knot Floer homology ($HFK$) \cite{OSzHFK,RasmussenThesis}. This invariant determines important knot-theoretic information like genus and fiberedness that is only bounded or restricted by the Alexander polynomial \cite{OSzGenus, NiFibered, JuhaszDecomposition}.

Despite its analytic origins, knot Floer homology can be understood fruitfully from an algebraic perspective by making it into a local tangle invariant based on the ideas of bordered Floer homology as studied by Lipshitz--Ozsv{\'a}th--Thurston \cite{LOTBorderedOrig}. In this framework, one associates $A_{\infty}$-algebras (usually dg) to parametrized surfaces and $A_{\infty}$-bimodules to certain diagrams for 3d cobordisms. Applying these methods to tangle complements viewed as cobordisms between genus-zero surfaces with boundary, Ozsv{\'a}th--Szab{\'o} \cite{OSzNew} recently introduced a computational method for knot Floer homology.
They have used their theory to write a very fast $HFK$ calculator program \cite{HFKcalc}, capable of computing $HFK$ and some related concordance invariants for most knots with 40-50 crossings (and some with significantly more, e.g. the 80+ crossing examples\footnote{This claim can be verified by downloading ComputeHFKv1.zip from \cite{HFKcalc}, then compiling the enclosed C++ files and running them on the enclosed examples. The files K2b86.txt, K3c83.txt, and K3d91.txt give 86, 83, and 91-crossing presentations of the knots $K_2$ and $K_3$ from \cite{ManMachine}; they can be run in a few minutes on a laptop.} from \cite{ManMachine}).

Ozsv{\'a}th--Szab{\'o}'s theory is similar in its motivation and formal structure to another construction due to Petkova--V{\'e}rtesi \cite{PetkovaVertesi}, which computes $HFK$ using local versions of ``nice diagrams'' in the sense of \cite{SarkarWang}. Holomorphic curve counts arising from nice diagrams can always be understood combinatorially, but the resulting Heegaard Floer invariants are typically the homology of complexes with a large (e.g. factorial-sized) number of generators. Ozsv{\'a}th--Szab{\'o} \cite{OSzNew} gain efficiency by using a diagram giving a small and natural number of generators, but in which the curve counts are quite complicated to understand; nevertheless, they succeed in describing the counts and their associated $\A_{\infty}$ structures algebraically.

The local or bordered approach to knot Floer homology provides a bridge to representation theory by categorifying the Alexander polynomial as a {\em quantum invariant}. From the introduction of quantum link invariants in the 1980s, it became a natural question to ask if the Alexander polynomial has a definition as a physical observable in some 3-dimensional Chern--Simons theory. The relevant Chern--Simons theory turns out to be the one whose structure group is given by the Lie superalgebra $\mf{gl}(1|1)$ (or $\mf{gl}(n|n)$ for any $n > 0$).  Indeed, the Alexander polynomial can be understood as the quantum invariant associated to the quantum superalgebra $U_q(\mf{gl}(1|1))$, where endpoints of a tangle correspond to tensor powers of the vector representation $V$ and its dual $V^{\ast}$, and tangles give maps of $U_q(\mf{gl}(1|1))$-representations, see e.g. \cite{KaufSal, SartoriAlexander}.

As shown in \cite{ManionDecat}, Ozsv{\'a}th--Szab{\'o}'s theory gives a categorification of these tensor powers of $V$ and $V^*$, together with tangle maps between them.  Closely related results were obtained for Petkova--Vertesi's theory in \cite{EPV}; this theory categorifies tensor powers of $V$ and $V^*$ with one additional factor $L(\lambda_P)$. See \cite{TianUq,TianUT} for still another approach using bordered Floer ingredients with more of a contact topology flavor, although Tian does not categorify tangle maps. None of these constructions categorify the action on $V^{\otimes n}$ of both generators $E$ and $F$ of $U_q(\mf{gl}(1|1))$; Ellis--Petkova--Vertesi categorify both $E$ and $F$ acting on a related representation, Tian works with a different quantum group, and actions of quantum group generators are not considered at all in \cite{OSzNew,OSzNewer,ManionDecat} (we will rectify this last issue in the current paper).

\subsection{Algebraic categorifications associated with \texorpdfstring{$\mf{gl}(1|1)$}{gl(1|1)}}

Moving to the algebraic side, Sartori \cite{Sar-tensor} defines a categorification of tensor powers of $V$, with intertwining maps and (half of) the action of $U_q(\mf{gl}(1|1))$, in the usual spirit of algebraic categorification via the Bernstein--Gelfand--Gelfand category $\cal{O}$ \cite{BGG}. More specifically, categorification is achieved though certain subquotient categories of category $\cal{O}(\mf{gl}_n)$, or what are referred to as $\mf{q}$-presentable quotients $\cal{O}^{\mf{p}, \mf{q}\textrm{-pres}}_0$ of the regular block $\cal{O}^{\mf{p}}_0$ of the parabolic subcategory $\cal{O}^{\mf{p}} \subset \cal{O}$ (see Section~\ref{sec:connectO} for more details; such presentable quotients were first defined in \cite{FKM-presentable} and studied in relation to categorification in \cite{MazS-presentable}). Sartori uses projective functors on these quotients to categorify the Hecke algebra action on $V^{\otimes n}$ and Zuckerman's approximation functors to categorify the action of $U_q(\mf{gl}(1|1))$ (more precisely, of the generator $F$ of half of the quantum group and its dual $E'$ with respect to a bilinear form on $V^{\otimes n}$ arising from graded dimensions of morphism spaces in $\cal{O}^{\mf{p}, \mf{q}\textrm{-pres}}_0$).

In the $\mf{sl}(n)$ case, category $\cal{O}$ is related to geometric categorification via perverse sheaves \cite{BFK} by localization, and related to elementary diagrammatic definitions in the original Khovanov style by work of Stroppel \cite{StroppelKh}. Webster can describe general blocks of (parabolic) category $\cal{O}$, up to equivalence, as module categories over his diagram tensor product algebras from \cite{Web}. In this way, Sartori's categorifications fit naturally into traditional structures associated with higher representation theory.

While an explicit description of blocks in parabolic category $\cal{O}$ can become unwieldy in general, Sartori defines diagrammatic algebras whose module categories are equivalent to  the subquotients $\cal{O}^{\mf{p}, \mf{q}\textrm{-pres}}_0$ for $\mf{p}, \mf{q}$ of Levi type $(1,\ldots,1,n-k)$ and $(k,1,\ldots,1)$ respectively~\cite{Sar-diagrams}.  The structure of these subquotients is accessed through a combinatorial relationship with Soergel modules corresponding to smooth Schubert varieties.  Through a careful analysis of these Soergel modules and the maps between them, Sartori defines algebras $\cal{A}_{n,k}$ we call {\em Sartori algebras}.  We note the description of multiplication on these algebras is not entirely diagrammatic;  rewriting products in the basis of the algebra requires significant effort. Sartori goes on to show that his algebras are graded cellular and properly stratified, equipping  them with explicit classes of modules and filtrations lifting the standard and canonical bases for $V^{\otimes n}$ and their duals. Relationships between Sartori's constructions and categorifications of tensor product representations of $\mf{sl}(k)$ are studied in \cite{SS}.

\subsection{Relating categorifications}

Both the Oszv{\'a}th-Szab{\'o} algebras $\B_l(n,k)$ appearing in \cite{OSzNew,OSzNewer} and the Sartori algebras $\cal{A}_{n,k}$ can be used to categorify the same $U_q(\mf{gl}(1|1))$-representations. While higher representation theory is often useful for unifying categorifications like these that come from different worlds, it has not been developed enough in the case of superalgebras to make the path to such a unification clear. New ideas appear necessary for defining categorified $U_q(\mf{gl}(1|1))$ tangle and link invariants from the usual algebraic ingredients (such as geometric categorifications and skew Howe duality) and connecting them to $HFK$. Indeed, the elaborate structures invoked by Ozsv{\'a}th--Szab{\'o} to solve this problem (e.g. curved $\cal{A}_{\infty}$ bimodules) may suggest a way forward on the algebraic side, leading to a wide range of possible generalizations.

In fact, there are surprising general relationships between bordered Floer homology and higher representation theory. Work in preparation \cite{ManionRouquier} of Rapha{\"e}l Rouquier and the second named author will show that in considerable generality, bordered Floer homology has close ties to the $U_q(\mf{gl}(1|1))$ case of Rouquier's tensor product operation $\ootimes$ for higher representations  applied to Khovanov's categorification $\cal{U}^+$ of the positive half of $U_q(\mf{gl}(1|1))$ \cite{Kh-gl11}.  This work reinterprets and extends cornered Floer homology \cite{DM, DLM}, a further extension of bordered Floer homology. The connection with bordered Floer homology yields 2-representations of $\cal{U}^+$ on a very general family of examples, including bordered Floer algebras for surfaces of arbitrary genus, together with gluing formulas based on $\ootimes$.

The constructions of \cite{ManionRouquier} simplify considerably when applied via \cite{MMW2} to Ozsv{\'a}th--Szab{\'o}'s theory \cite{OSzNew}, and higher morphisms in $\cal{U}^+$ do not have room to act. On the other hand, this particular genus-zero example of a bordered Floer algebra is highly symmetric and has an explicit and powerful bimodule theory for tangles. The relationship to Sartori's theory studied in this paper is of particular interest as mentioned above; it also provides a window into the structure of Heegaard Floer homology and its relationship with other areas of mathematics, advancing the general aim of understanding 4-dimensional gauge theories via categorified quantum invariants.

The first hint that such a relationship might exist came
in \cite{Man-KS} where the second  named  author related Sartori's algebra $\cal{A}_{n,1}$ with Ozsv{\'a}th--Szab{\'o}'s algebra $\B_l(n,1)$; both algebras categorify a next-to-extremal weight space of $V^{\otimes n}$ for the $U_q(\mf{gl}(1|1))$ action.   This weight space for $\mf{gl}(1|1)$ is actually isomorphic to the corresponding weight space of the $n$-th tensor power of the vector representation of $\mf{sl}(2)$. Sartori's algebra $\cal{A}_{n,1}$ for this weight space describes $\cal{O}^{\mf{p}}_0$ for $\mf{p}$ of Levi type $(1,n-1)$, since the $\mf{q}$-presentable quotient does nothing here, and correspondingly $\cal{A}_{n,1}$ is isomorphic to the Khovanov--Seidel quiver algebra from \cite{KS} (a well-known algebra describing this particular $\cal{O}^{\mf{p}}_0$). It is shown in \cite{Man-KS} that the Khovanov--Seidel algebra is isomorphic to a quotient of $\B_l(n,1)$.

Our main result generalizes and reframes the quotient description of the Khovanov--Seidel quiver algebra from \cite{Man-KS}.

\begin{theorem}\label{thm:MainQuotient}[cf. Theorem~\ref{thm:newinjective}, Theorem~\ref{thm:flat}]
For $0 \leq k \leq n$, Ozsv{\'a}th--Szab{\'o}'s algebra $\B_l(n,k)$ is a graded flat deformation of Sartori's algebra $\cal{A}_{n,k}$.
\end{theorem}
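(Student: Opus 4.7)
The plan is to establish the theorem in two steps corresponding to the two referenced results: first identify $\cal{A}_{n,k}$ as an explicit quotient of $\B_l(n,k)$ (Theorem \ref{thm:newinjective}), and then show that this quotient exhibits $\B_l(n,k)$ as a graded flat deformation (Theorem \ref{thm:flat}).

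For the first step, I would use Sartori's diagrammatic presentation of $\cal{A}_{n,k}$ from \cite{Sar-diagrams} together with the local generator-and-relation description of $\B_l(n,k)$ from \cite{OSzNew}. Both algebras come equipped with a compatible system of primitive idempotents indexed by binary strings of weight $k$ in length $n$, so the natural first move is to define an idempotent-preserving graded algebra map $\varphi \co \B_l(n,k) \to \cal{A}_{n,k}$ by matching local generators at each idempotent. Well-definedness reduces to a finite, case-by-case verification that the images satisfy Sartori's relations. Surjectivity should follow by producing explicit preimages for Sartori's diagrammatic generators; a candidate ideal $\cal{I} \subseteq \ker \varphi$, supported locally at each idempotent, can then be written down from the ``extra'' monomials in $\B_l(n,k)$ that have no counterpart in $\cal{A}_{n,k}$.

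For the second step, I would compare graded dimensions of the idempotent-truncated pieces $e_{\x} \B_l(n,k) e_{\y}$ and $e_{\x} \cal{A}_{n,k} e_{\y}$ for each pair of idempotent labels $(\x,\y)$. Sartori's algebras are graded cellular, yielding explicit graded dimensions, while on the Ozsv{\'a}th--Szab{\'o} side the monomial-type generators from \cite{OSzNew} give a manageable basis. The goal is to realize $\B_l(n,k)$ as a free graded module over a polynomial algebra in a finite collection of ``deformation variables,'' which in turn generate $\cal{I}$, so that setting the variables to zero collapses to $\cal{A}_{n,k}$. Equivalently, the Rees algebra with respect to the $\cal{I}$-adic filtration should produce a flat graded $\Bbbk[t]$-family whose special fiber is $\cal{A}_{n,k}$ and whose generic fiber is $\B_l(n,k)$.

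The main obstacle will be identifying the precise ideal $\cal{I}$ and simultaneously verifying the graded dimension match over every idempotent pair. The low-rank case $k=1$ from \cite{Man-KS} provides the template: an ideal generated by certain ``$C$-type'' monomial relations realizes the Khovanov--Seidel quotient of $\B_l(n,1)$. For $k > 1$, however, the combinatorics at each idempotent becomes substantially more intricate, and one likely needs to organize the argument via the cellular structure on $\cal{A}_{n,k}$ to keep track of the matching. Once the ideal is pinned down and the bases match, both injectivity and flatness follow from the dimension computation.
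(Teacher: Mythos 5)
Your overall strategy matches the paper's: build an idempotent-preserving surjection $\B_l(n,k) \to \cal{A}_{n,k}$ by matching local generators, identify the kernel, and then exhibit $\B_l(n,k)$ as free over a polynomial algebra. But there are two substantive gaps. First, your characterization of the kernel is wrong in kind: it is \emph{not} generated by ``extra monomials supported locally at each idempotent.'' The kernel is generated by the \emph{central} elements $e_1(U_1,\ldots,U_n),\ldots,e_k(U_1,\ldots,U_n)$ (equivalently, the complete homogeneous symmetric polynomials $h_i$), none of which are monomials; even in the $k=1$ template the single generator is $U_1+\cdots+U_n$, and for general $k$ these are genuinely symmetric polynomials, not idempotent-localized elements. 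Guessing the ideal from a local monomial count would not land you on the correct description, and the identification of these central elements is a real conceptual step, not a bookkeeping detail.

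Second, you underestimate the difficulty of the flatness step. Matching graded dimensions of $\Ib_{\x} \B_l(n,k) \Ib_{\y}$ against $\1_{\mu}\cal{A}_{n,k}\1_{\lambda}$ (which the paper does via generating-interval combinatorics) does not by itself produce freeness over $S=\Z[\varepsilon_1,\ldots,\varepsilon_k]$; one needs an explicit $S$-basis of $\B_l(n,k)$, and this is the entire content of Theorem~\ref{thm:flat}. The paper constructs ``fork elements'' ${\sf p}_{(\x,\z^{\sigma},\y)}f_{\x,\y}$ indexed by triples of I-states and permutations $\sigma\in S_k$, and proves they form a basis by an induction on $|\{i>i_0 : x_i=y_i\}|$ combined with the classical staircase-monomial basis for $\Z[x_1,\ldots,x_k]$ over symmetric functions. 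Your sketch defers this entirely. Two further wrinkles the proposal does not anticipate: (i) the comparison must happen integrally, so one first has to construct and justify a $\Z$-form $\cal{A}_{n,k}^{\Z}$ of Sartori's $\C$-algebra; and (ii) Sartori's published description of the illicit-morphism submodule ($\widetilde{\cal{W}}_{\lambda,\mu}$ in \cite[Theorem 4.17]{Sar-diagrams}) is incorrect and has to be replaced by a corrected submodule $\cal{W}^{\alpha}_{\lambda,\mu}$, so simply citing Sartori's diagrammatic presentation and doing a relation check would silently import an error. Finally, the Rees-algebra/one-parameter $\Bbbk[t]$ framing is not quite what is meant by ``graded flat deformation'' here: the base is the polynomial ring $\operatorname{Sym}(\Z^k)$ in $k$ variables and flatness is just $S$-module flatness, with no adic filtration involved.
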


Explicitly, $\cal{A}_{n,k}$ is isomorphic to the quotient of $\B_l(n,k)$ by the ideal generated by the elementary symmetric polynomials $e_i(U_1,\ldots,U_n)$ for $1 \leq i \leq k$, where $U_i$ is a central element of $\B_l(n,k)$ reviewed in Section~\ref{sec:BigStep}. We prove Theorem~\ref{thm:MainQuotient} for the $\Z$ version of Ozsv{\'a}th--Szab{\'o}'s algebra from \cite[Section 12]{OSzNewer} and a $\Z$ lift of Sartori's  $\C$-algebra defined here. Flatness of the deformation follows from Theorem~\ref{thm:flat}, which gives an explicit basis for $\B_l(n,k)$ as a free module over the polynomial ring $\Z[\varepsilon_1,\ldots,\varepsilon_k]$ with $\varepsilon_i$ acting as $e_i(U_1,\ldots,U_n)$.

The case $k = 1$ is the first main theorem of \cite{Man-KS}, although flatness was not considered.
For general $k$, Sartori's algebras are much more complicated than the Khovanov--Seidel algebra, so more intricate arguments are required.  By transporting the diagrammatics of $\B_l(n,k)$ from \cite{OSzNew,MMW1} through this isomorphism, we obtain as a corollary a new purely diagrammatic interpretation of Sartori's algebra with a more natural product operation.

The following remarks, written with Heegaard Floer readers in mind, may be helpful for those unfamiliar with category $\cal{O}$ but familiar with Khovanov's tangle theory involving the arc algebra $H^n$ \cite{KhovFunctor}. Given known and conjectured spectral sequences relating Khovanov and Khovanov--Rozansky homology to $HFK$ \cite{KPKH, DGR, DowlinSS}, one could try to find relationships between $\B_l(n,k)$ and $H^n$. Since $\B_l(n,k)$ has $\binom{n}{k}$ basic idempotents, it is natural to replace $H^n$ with the ``platform algebras'' of \cite{StroppelKh, ChenKhovanov} having $H^n$ as an idempotent truncation. These have $\binom{n}{k}$ basic idempotents, but they still seem unrelated to the idempotents of $\B_l(n,k)$.

Representation theory sheds significant light on this question. Idempotents in $\B_l(n,k)$ and the platform algebras both correspond to certain canonical basis elements for a $2^n$-dimensional vector space $V^{\otimes n}$, but the basis elements depend on the quantum group in question: the $U_q(\mf{sl}(2))$ action gives one canonical basis for $V^{\otimes n}$ while the $U_q(\mf{gl}(1|1))$ action gives a different one. When $k \in \{2,\ldots,n-1\}$, this difference in bases means we cannot hope to relate $\B_l(n,k)$ with the platform algebras except in a derived sense (when $k=1$ the bases agree and the platform algebra is Khovanov--Seidel's algebra).

To make progress, one could ask where the platform algebras come from. Their idempotents correspond to indecomposable projectives in parabolic versions $\cal{O}^{\mf{p}}_0$ of category $\cal{O}$ by \cite{StroppelKh}, where $\mathfrak{p}$ has Levi type $(k,n-k)$.  Similarly, Sartori's idempotents correspond to indecomposable projectives in related categories $\cal{O}^{\mf{p}, \mf{q}\textrm{-pres}}_0$, where the Levi types are as described above; in this case they categorify $U_q(\mf{gl}(1|1))$ basis elements, not $U_q(\mf{sl}(2))$ basis elements. A reasonable update of the question about $\B_l(n,k)$ and $H^n$ or the platform algebras is to ask whether $\B_l(n,k)$ is related to Sartori's algebras. Theorem~\ref{thm:MainQuotient} answers this question affirmatively by giving a close relationship with immediate applications for the structure of Ozsv{\'a}th--Szab{\'o}'s theory~\cite{OSzNew}.

\subsection{Applications }

Theorem~\ref{thm:MainQuotient} establishes a bridge between modern constructions in Heegaard Floer homology and the wider world of mathematics. For example, Theorem~\ref{thm:MainQuotient} is, to the authors' knowledge, the first result relating $HFK$  with category $\cal{O}$, outside the $k=1$ case proved in \cite{Man-KS}. We note that other bordered Floer algebras have been related to category $\cal{O}$ in \cite{AGW}, including to the Khovanov--Seidel quiver algebra; these bordered Floer algebras appear to be more related to $\widehat{HF}$ of branched double covers than to $HFK$, although interesting connections between the two may exist. In general, work of Auroux suggests a path from bordered Floer algebras to geometry via Fukaya categories of symmetric products; we discuss this connection further in Section~\ref{sec:Fuk} below. First we discuss some ramifications for $\B_l(n,k)$ of the conceptual framework surrounding $\cal{A}_{n,k}$.

\subsubsection{Bilinear forms on $V^{\otimes n}$}
In general, given a graded categorification of a $\C(q)$-vector space $V$, one gets a sesquilinear pairing on $V$ from graded dimensions of Ext spaces in the categorification, which can be made bilinear using algebra symmetries if they exist. In particular, the results of \cite{ManionDecat} imply that projective modules over Ozsv{\'a}th--Szab{\'o}'s algebra $\B_l(n,k)$ give a bilinear pairing on $V^{\otimes n}$ where $V$ is the vector representation of $U_q(\mf{gl}(1|1))$, but this pairing is not discussed in \cite{ManionDecat}.

On the other hand, Sartori \cite{Sar-tensor} studies the bilinear form arising from his categorification in some detail and shows that this form has a scalar matrix in the standard basis of each weight space of $V^{\otimes n}$. His results suggest analogous conjectures for the Ozsv{\'a}th--Szab{\'o} bilinear form, which we verify; this form also turns out to be scalar in the standard basis of each weight space of $V^{\otimes n}$, with a different scalar than Sartori's.

Using the simple relationship between these forms, we revisit the identification of indecomposable projectives over $\B_l(n,k)$ with basis elements of $V^{\otimes n}$ given in \cite{ManionDecat}. The bilinear forms suggest a change of conventions under which indecomposable projectives correspond exactly to Sartori's canonical basis elements of $V^{\otimes n}$, rather than the modified basis elements introduced in \cite{ManionDecat}.

\begin{theorem}[cf Theorem~\ref{thm:CategorificationsCompatible}]
Under these conventions, the projection functors
\[
\pr \maps \B_l^{\Bbbk}(n,k){\rm -proj} \to \cal{A}_{n,k}{\rm - proj}
\]
between categories of finitely generated projective graded modules induce isomorphisms on Grothendieck groups $K_0$ intertwining the identification of indecomposable projectives with canonical basis elements on each side.
\end{theorem}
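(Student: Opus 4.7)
The plan is to derive the statement from the flat deformation structure of Theorem~\ref{thm:flat} via a graded idempotent-lifting argument, and then invoke the compatibility of conventions on canonical basis elements that was set up in the paragraph preceding the theorem. The heart of the argument is that $\pr$ is a base change along a surjection with kernel in the graded radical, which on projective modules is exact and sends indecomposable projectives to indecomposable projectives.

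First I would verify that the ideal $J = (\varepsilon_1,\ldots,\varepsilon_k) \subset \B_l^{\Bbbk}(n,k)$ by which $\cal{A}_{n,k}$ is obtained as a quotient lies inside the graded Jacobson radical. This is automatic because $\B_l^{\Bbbk}(n,k)$ is non-negatively graded and the $\varepsilon_i = e_i(U_1,\ldots,U_n)$ are homogeneous of strictly positive degree. The standard graded idempotent-lifting lemma then gives a bijection between primitive homogeneous idempotents of $\cal{A}_{n,k}$ and those of $\B_l^{\Bbbk}(n,k)$, under which the basic idempotents $\mathbf{1}_I$ (indexed on both sides by size-$k$ subsets $I \subseteq \{1,\ldots,n\}$) correspond. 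By flatness of $\B_l^{\Bbbk}(n,k)$ over $\Bbbk[\varepsilon_1,\ldots,\varepsilon_k]$ (Theorem~\ref{thm:flat}), the functor $\pr \cong - \otimes_{\B_l^{\Bbbk}(n,k)} \cal{A}_{n,k}$ is well defined on finitely generated graded projectives and sends the indecomposable projective $\B_l^{\Bbbk}(n,k)\mathbf{1}_I$ to the indecomposable projective $\cal{A}_{n,k}\mathbf{1}_I$. Since $K_0$ on either side is a free $\Z[q,q^{-1}]$-module with basis given by grading-shift classes of indecomposable projectives, this promotes to an isomorphism $[\pr]$ on Grothendieck groups.

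Finally I would match this $K_0$ isomorphism with the canonical basis identifications on both sides. Sartori's theorem identifies $[\cal{A}_{n,k}\mathbf{1}_I]$ with the canonical basis vector $b_I \in V^{\otimes n}$. Under the new conventions adopted just above the theorem, which rescale the identification of \cite{ManionDecat} on each weight space by the ratio of the Ozsv{\'a}th--Szab{\'o} bilinear form to Sartori's, the class $[\B_l^{\Bbbk}(n,k)\mathbf{1}_I]$ is also identified with $b_I$. Combining the two, $[\pr]$ intertwines the two identifications on each basis vector and hence on all of $K_0$. The hard part of the argument is this final compatibility: one must confirm that the rescaling implicit in the convention change really does convert the modified basis of \cite{ManionDecat} to Sartori's canonical basis, rather than to some other twist. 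This reduces to a weight-space-by-weight-space comparison of graded Euler forms in the two categorifications, using the explicit scalar formulas for the two forms established in the preceding discussion; once the scalars match weight space by weight space, the theorem follows.
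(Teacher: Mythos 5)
Your argument reaches a correct conclusion, but it misreads both the difficulty of the theorem and the nature of the convention change. The paper's proof is essentially a one-liner: by definition $\pr(P) = \cal{A}^{\Bbbk}_{n,k} \otimes_{\B_l^{\Bbbk}(n,k)} P$, so $\pr(\B_l^{\Bbbk}(n,k)\Ib_{\x}) = \cal{A}^{\Bbbk}_{n,k}\Ib_{\x} = P(\l)$ where $\l$ corresponds to $\x$, and Definitions~\ref{def:OSzVnIdent} and \ref{def:SarVnIdent} each send $[P(\x)]$ and $[P(\l)]$ to the same canonical basis vector $v_{\x}^{\diamondsuit} = v_{\l}^{\diamondsuit}$. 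Neither flatness (Theorem~\ref{thm:flat}) nor a graded idempotent-lifting argument is needed: base change along a surjection always sends $Ae$ to $(A/I)e$, and the idempotents $\Ib_{\x}$ are already given explicitly on both sides as images under $\Xi$, not abstractly lifted.

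The genuine conceptual gap is your description of the conventions. You claim the new identification is obtained by rescaling the $\cite{ManionDecat}$ identification on each weight space by the ratio of the two bilinear forms, and treat confirming this rescaling as the hard part of the proof. That is not how the paper proceeds. Definition~\ref{def:OSzVnIdent} directly declares $[P(\x)] \leftrightarrow v_{\x}^{\diamondsuit}$; the link to \cite{ManionDecat} is worked out afterward in Section~\ref{sec:ConventionComparison} via a linear identification $V^{\otimes n} \cong (V^*)^{\otimes n}$ (tensor-factor reversal plus dualization), not via a scalar on each weight space. The ratio $(k)_{q^2}^!(1-q^2)^k$ from Corollary~\ref{cor:OSzCatFormVsSarCatForm} relates the two bilinear forms, and hence the two \emph{dual} canonical bases $v^{\heartsuit}$ and $v^{\heartsuit\heartsuit}$, but it plays no role in the proof of this theorem and is in fact a downstream consequence of the conventions rather than a premise for checking their compatibility. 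Your proposed weight-space-by-weight-space comparison of graded Euler forms is work in the wrong direction; under the stated conventions there is nothing left to check.
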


\subsubsection{Categorified action of $U_q(\mf{gl}(1|1))$}

The quantum group generator $F$ acting on $V^{\otimes n}$ has a dual $E'$ with respect to Sartori's bilinear form.  This dual is related to the usual generator $E$ by a weight dependent scalar in $\C(q)$; as a quantum group element, $E'$ only makes sense in the idempotented, or modified, form $\dot{U}_q(\mf{gl}(1|1))$ of the quantum group.     Sartori categorifies the action of $E', F \in \dot{U}_q(\mf{gl}(1|1))$ via certain Zuckerman functors $\cal{E}'$ and $\cal{F}$ acting on $\cal{O}^{\mf{p},\mf{q}\textrm{-pres}}_0$. This action can be interpreted on the algebra level as tensoring with bimodules $\mathbf{E}'=(\mathbf{E}')^S$ and $\mathbf{F}=\mathbf{F}^S$ where $\mathbf{E}'$ is the left dual of $\mathbf{F}$ as a bimodule.

The close relationship between Ozsv{\'a}th--Szab{\'o}'s and Sartori's algebras suggests analogous definitions of bimodules $\mathbf{E}''=(\mathbf{E}'')^{OSz}$ and $\mathbf{F}=\mathbf{F}^{OSz}$ over Ozsv{\'a}th--Szab{\'o}'s algebras.

\begin{theorem}[cf. Theorem~\ref{thm:OSzFCat}, Theorem~\ref{thm:OSzECat}]
The bimodules $\mathbf{E}''$ and $\mathbf{F}$ over Ozsv{\'a}th--Szab{\'o}'s algebras square to zero and categorify the action of $E''$ and $F$ on $V^{\otimes n}$, where $E''$ is the dual of $F$ with respect to Ozsv{\'a}th--Szab{\'o}'s bilinear form.
\end{theorem}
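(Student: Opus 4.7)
The plan is to define the bimodules $\mathbf{F} = \mathbf{F}^{OSz}$ and $\mathbf{E}'' = (\mathbf{E}'')^{OSz}$, for each $k$, as explicit bimodules between pairs $(\B_l(n,k), \B_l(n,k+1))$ and $(\B_l(n,k+1), \B_l(n,k))$ constructed to mirror Sartori's $\mathbf{F}^S$ and $(\mathbf{E}')^S$. Here ``mirror'' means that applying the projection $\pr : \B_l(n,k) \twoheadrightarrow \cal{A}_{n,k}$ from Theorem~\ref{thm:MainQuotient} on both factors sends $\mathbf{F}^{OSz}$ and $(\mathbf{E}'')^{OSz}$ to Sartori's bimodules. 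With this setup, the theorem splits into two claims: (a) $\mathbf{F}\otimes\mathbf{F}=0$ and $\mathbf{E}''\otimes\mathbf{E}''=0$, and (b) on Grothendieck groups, $[\mathbf{F}]$ and $[\mathbf{E}'']$ realize $F$ and $E''$ on $V^{\otimes n}$.

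For (a), my first approach is to compute the two squares directly, working idempotent by idempotent using the diagrammatic description of $\B_l(n,k)$ from \cite{OSzNew,OSzNewer,MMW1}. In each case the square decomposes as a sum over ordered pairs of elementary moves changing a $0$ to a $1$ (or vice versa) in an idempotent label; either two such moves involve distinct positions and their contributions cancel in pairs after a short sign computation, or the two moves coincide and the corresponding element in $\B_l(n,k)$ vanishes by a zero relation. A cleaner alternative is to appeal to flatness: Theorem~\ref{thm:flat} presents $\B_l(n,k)$ as a free module over $\Z[\varepsilon_1,\ldots,\varepsilon_k]$, and if I verify that $\mathbf{F}\otimes\mathbf{F}$ is itself free over the same base and vanishes modulo $(\varepsilon_1,\ldots,\varepsilon_k)$ by Sartori's nilpotence in $\cal{A}_{n,k}$, then it must vanish integrally; the same argument handles $\mathbf{E}''\otimes\mathbf{E}''$.

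For (b), I apply $\mathbf{F}$ and $\mathbf{E}''$ to each indecomposable projective and write the result as a direct sum of projectives with grading shifts. Under the identification of indecomposable projectives with Sartori's canonical basis elements from Theorem~\ref{thm:CategorificationsCompatible}, this computation realizes $F$ on $V^{\otimes n}$ verbatim and realizes Sartori's $E'$ up to the scalar ratio between the Sartori and Ozsv{\'a}th--Szab{\'o} bilinear forms. Since that form was shown earlier in the paper to be scalar on each weight space with an explicit ratio $c_{\mathbf{w}}/c_{\mathbf{w}'}$ between adjacent weights, this ratio precisely converts $E'$ into the dual $E''$ of $F$ with respect to the Ozsv{\'a}th--Szab{\'o} form, yielding the desired identification.

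I expect the main obstacle to be in (b): setting up gradings on $\mathbf{E}''$ so that its Grothendieck class equals $E''$ exactly rather than some nonzero scalar multiple. This normalization must be consistent simultaneously with the grading conventions of $\B_l(n,k)$ and with the explicit weight-dependent scalar $c_{\mathbf{w}}/c_{\mathbf{w}'}$ arising from the Ozsv{\'a}th--Szab{\'o} form, and reconciling these shifts uniformly across all weight spaces demands careful bookkeeping. The nilpotence in (a) for $\mathbf{F}$ is conceptually clean, but for $\mathbf{E}''$ the cancellations are more delicate because $\mathbf{E}''$ interacts nontrivially with the central elements $U_i$, which satisfy fewer relations in $\B_l(n,k)$ than in the quotient $\cal{A}_{n,k}$; verifying that the ``extra'' terms introduced by deforming away from the quotient still conspire to vanish is where the flatness viewpoint becomes especially useful.
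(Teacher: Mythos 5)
Your proposal captures the right high-level skeleton (construct the bimodule, show it squares to zero, decategorify and compare via the ratio of bilinear forms), but two of the three pieces contain genuine gaps that would stop a direct write-up.

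\emph{Squaring to zero.} Your first route, expanding the square into a sum over "elementary moves changing a $0$ to a $1$" with sign cancellations, does not match the structure of $\mathbf{F}^{OSz}$. The algebras $\B_l(n,k)$ are honest $\Z$-algebras with no super sign rule, and $\mathbf{F}^{OSz}_k$ is simply $\B_l(n,k)e_k^{\vee}$, where $e_k^{\vee}$ sums over I-states $\x$ with $0\notin\x$, with the right $\B_l(n,k+1)$-action defined via the surjection onto $e_k^{\vee}\B_l(n,k)e_k^{\vee}$ that kills $\B_l(n,k+1)e_{k+1}^{\vee}\B_l(n,k+1)$. Squaring to zero is then essentially built in: $\cal{F}_k(P(\x))$ is nonzero only when $0\in\x$, in which case it equals $P(\x\setminus\{0\})$ — an I-state not containing $0$ — so $\cal{F}_{k-1}\circ\cal{F}_k=0$. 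No cancellation, no signs. Your second route via flatness is circular: "if $\mathbf{F}\otimes\mathbf{F}$ is free over $\Z[\varepsilon_1,\dots,\varepsilon_k]$ and vanishes modulo the augmentation ideal then it is zero" — but establishing that freeness is precisely the hard point, and the paper gives no basis theorem for these tensor products over $\Z[\varepsilon_i]$.

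\emph{Decategorification of $\mathbf{E}''$.} Your key idea — that the ratio of the Sartori and Ozsv{\'a}th--Szab{\'o} bilinear forms, which are scalar on each weight space, converts $E'$ into $E''$ — is exactly what the paper uses, and it is the essential insight. However, your implementation proposes applying $\cal{E}''_k$ to indecomposable projectives and decomposing as a sum of projectives. This is not available: $\mathbf{E}''_k={^{\vee}}\mathbf{F}^{OSz}_k$ is projective as a \emph{right} module (making $\cal{E}''_k$ exact, hence inducing a map on $G_0$), but there is no claim that it takes projectives to projectives. The paper instead computes $G_0(\cal{E}''_k)$ on dual canonical basis elements (classes of simples), where it is the transpose of $K_0(\cal{F}^{OSz}_k)$ on projectives, and then compares $v_{\x}^{\heartsuit\heartsuit}=(k)^!_{q^2}(1-q^2)^k\,v_{\x}^{\heartsuit}$ on both weight spaces to extract the scalar $1-q^{2(k+1)}$ relating $[\cal{E}''_k]$ to $[\cal{E}'_k]$, which gives $E''=(q^{-1}-q)EK$. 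You would need to switch to tracking simples and the dual canonical basis for the $\mathbf{E}''$ half of the theorem.

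The piece you get right in substance is the comparison mechanism (the weight-dependent scalar between the two forms) and, for $\mathbf{F}$ alone, the calculation on projectives matching Sartori's $F$.
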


This result fills a gap in the discussion of \cite{ManionDecat}. Unlike $E'$, $E'' = (q^{-1} - q)EK$ makes sense directly in $U_q(\mf{gl}(1|1))$ without passing to an idempotented form. We note that the relations satisfied by $E''$ and $F$ agree with the algebra $U_T$ studied by Tian~\cite{TianUq} upon setting $T = K^2$.

\begin{theorem}[cf. Theorem~\ref{thm:SarOSzFRel}, Theorem~\ref{thm:SarOSzERel}]
The inflation functors
\[
\infl \maps \cal{A}_{n,k}{\rm - fmod} \to \B_l^{\Bbbk}(n,k){\rm -fmod}
\]
between categories of finite dimensional graded modules intertwine $(\cal{E}'')^{OSz}$ and $\cal{F}^{OSz}$ with $(\cal{E}')^{S}$ and $\cal{F}^{S}$.
\end{theorem}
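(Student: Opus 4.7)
The plan is to reduce the intertwining property to a concrete comparison of $(\B_l^{\Bbbk}(n,k), \cal{A}_{n,k})$-bimodules using the quotient $\pi \maps \B_l^{\Bbbk}(n,k) \twoheadrightarrow \cal{A}_{n,k}$ from Theorem~\ref{thm:MainQuotient}. Since $\infl$ is just restriction of scalars along $\pi$, for every $\cal{A}_{n,k}$-module $M$ there are natural isomorphisms
\[
\mathbf{F}^{OSz} \otimes_{\B} \infl(M) \;\cong\; \bigl(\mathbf{F}^{OSz} \otimes_{\B} \cal{A}_{n,k}\bigr) \otimes_{\cal{A}} M,
\]
and likewise for $(\mathbf{E}'')^{OSz}$. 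The theorem therefore reduces to exhibiting graded $(\B_l^{\Bbbk}, \cal{A}_{n,k})$-bimodule isomorphisms
\[
\mathbf{F}^{OSz} \otimes_{\B} \cal{A}_{n,k} \;\cong\; \infl\bigl(\mathbf{F}^S\bigr), \qquad (\mathbf{E}'')^{OSz} \otimes_{\B} \cal{A}_{n,k} \;\cong\; \infl\bigl((\mathbf{E}')^S\bigr),
\]
where on each right hand side the Sartori bimodule is regarded as a $\B$-$\cal{A}$-bimodule by inflating its left action through $\pi$.

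For the $\mathbf{F}$ side I would start from the explicit idempotent-by-idempotent description of $\mathbf{F}^{OSz}$ provided by Theorem~\ref{thm:OSzFCat}, together with the free basis of $\B_l(n,k)$ over $\Z[\varepsilon_1, \ldots, \varepsilon_k]$ from Theorem~\ref{thm:flat}. Quotienting on the right by the ideal $(e_1(U_1, \ldots, U_n), \ldots, e_k(U_1, \ldots, U_n))$ kills exactly the extra free-module factors detected by the flatness statement, and the surviving generators and relations should match those of $\mathbf{F}^S$ through the diagrammatic dictionary used in the proof of Theorem~\ref{thm:MainQuotient}. Since $\mathbf{F}^{OSz}$ is designed in direct analogy with $\mathbf{F}^S$, this reduces to a finite check of defining relations and a verification that the left $\B$-action descends correctly.

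The $\mathbf{E}$ side is more subtle because $(\mathbf{E}'')^{OSz}$ and $(\mathbf{E}')^S$ categorify operators that are duals of $F$ with respect to different bilinear forms, and hence differ by a weight-dependent scalar in $\C(q)$ coming from the ratio of the two scalar matrices computed earlier in the paper. At the categorical level this scalar should be absorbed into an idempotent-wise internal grading shift built into the definition of $(\mathbf{E}'')^{OSz}$ in Theorem~\ref{thm:OSzECat}. Concretely, I would construct the bimodule isomorphism by going through the left-dual/$\Hom$ descriptions used to define $(\mathbf{E}'')^{OSz}$ and $(\mathbf{E}')^S$ from $\mathbf{F}^{OSz}$ and $\mathbf{F}^S$ respectively; the $\mathbf{F}$-level identification then transports automatically under taking duals, provided one tracks the grading shifts dictated by the two bilinear forms.

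The main obstacle will be the bimodule isomorphism on the $\mathbf{F}$ side. Although Theorem~\ref{thm:MainQuotient} gives the quotient description at the level of algebras, upgrading it to the bimodule level requires verifying that the higher-multiplication data of $\mathbf{F}^{OSz}$ inherited from the curved $\cal{A}_\infty$ framework of \cite{OSzNew} all lies in the ideal generated by the $e_i(U)$'s, so that after quotienting only the Soergel-theoretic terms defining $\mathbf{F}^S$ survive. This is essentially the bimodule enhancement of Theorem~\ref{thm:MainQuotient}, and will absorb most of the work; once it is in hand, the $\mathbf{E}$ statement follows by duality and the intertwining identity drops out formally from restriction of scalars.
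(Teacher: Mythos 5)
Your high-level reduction is correct: $\infl$ is restriction of scalars along $\Xi$, and the functor-level intertwining statements follow from bimodule isomorphisms of the kind you describe. That part matches the structure of the paper's proofs (Theorems~\ref{thm:SarOSzFRel} and \ref{thm:SarOSzERel}), even though the paper chooses to project the $\B_l(n,k)$ side and inflate the $\B_l(n,k+1)$ side while you do the reverse; both variants follow from the same commutative square.

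However, your proposed method for proving the $\mathbf{F}$-side bimodule isomorphism rests on a misconception. You write of tracking ``higher-multiplication data of $\mathbf{F}^{OSz}$ inherited from the curved $\cal{A}_{\infty}$ framework of \cite{OSzNew}.'' The bimodule $\mathbf{F}^{OSz}_k$ is \emph{not} one of Ozsv{\'a}th--Szab{\'o}'s curved $\cal{A}_{\infty}$-bimodules; it is an honest (graded) bimodule introduced in this paper, defined in exact parallel with Sartori's $\mathbf{F}^S_k$, namely as the left ideal $\B_l(n,k)e_k^{\vee}$ with a right $\B_l(n,k+1)$-action induced by a ring surjection $\Psi'$. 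There is no $\cal{A}_{\infty}$ data to control, and your worry that ``this will absorb most of the work'' is misplaced. More importantly, the plan via the $\Z[\varepsilon_1,\ldots,\varepsilon_k]$-basis of Theorem~\ref{thm:flat} and ``a finite check of defining relations'' misses what makes the proof short: after projecting the left action of $\mathbf{F}^{OSz}_k$ (respectively inflating the right action of $\mathbf{F}^S_k$), both sides have the \emph{same} underlying $\cal{A}_{n,k}$-module $\cal{A}_{n,k}e_k^{\vee}$, so the only thing to verify is that the two right $\B_l(n,k+1)$-actions agree. This is done by checking commutativity of a small square relating $\Xi$, $\Psi'$, and $\Psi$ on generators $f_{\x,\y}$ using the explicit description by fork diagrams; there are no symmetric-function computations or flatness arguments needed.

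Your treatment of the $\mathbf{E}$ side also introduces a spurious issue. You suggest the weight-dependent scalar $(1-q^{2(k+1)})$ between $E''$ and $E'$ must be ``absorbed into an idempotent-wise internal grading shift built into the definition of $(\mathbf{E}'')^{OSz}$,'' and that one must ``track the grading shifts dictated by the two bilinear forms.'' But $(\mathbf{E}'')^{OSz}_k$ is defined simply as the left dual ${^{\vee}}\mathbf{F}^{OSz}_k$, with no grading shift inserted, and the bimodule isomorphism ${_{\infl}}(\mathbf{E}'_k) \cong (\mathbf{E}''_k)_{\pr}$ is an honest degree-zero isomorphism obtained formally by applying ${^{\vee}}(\cdot)$ to the $\mathbf{F}$-level isomorphism and commuting duals with projection/inflation. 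The decategorified scalar arises only when comparing the induced maps on $G_0$ through the two different identifications with $V^{\otimes n}$; it plays no role in the bimodule isomorphism itself.
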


The constructions of \cite{ManionRouquier} also yield bimodules that square to zero, defined over a bordered strands algebra known by \cite{MMW2} to be quasi-isomorphic to $\B_l(n,k)$.  The bimodules defined here are compatible with the ones from \cite{ManionRouquier} under this quasi-isomorphism.

\subsubsection{Modules over Ozsv{\'a}th--Szab{\'o}'s algebras}

One important feature of Sartori's algebra $\cal{A}_{n,k}$ is that it is graded cellular and properly stratified (for more details, see \cite{Sar-diagrams}). The cellular structure gives us a family of modules over $\cal{A}_{n,k}$ (cell modules or standard modules) whose classes in an appropriate Grothendieck group correspond to standard tensor product basis elements of $V^{\otimes n}$. Thus, from $\cal{A}_{n,k}$, Sartori naturally sees both the standard tensor-product basis and the canonical basis for $V^{\otimes n}$.

We can use our quotient map to inflate Sartori's modules over $\cal{A}_{n,k}$ into modules over $\B_l(n,k)$ (in other words, an element of $\B_l(n,k)$ acts after applying the quotient map). Our understanding of the bilinear forms on $V^{\otimes n}$ and how they relate allows us to identify these inflated modules with certain basis elements of $V^{\otimes n}$.

\begin{theorem}[cf. Theorem~\ref{thm:BorderedCategorificationOfBases}]
The inflations of Sartori's modules categorify the bases of $V^{\otimes n}$ listed in Theorem~\ref{thm:BorderedCategorificationOfBases}, including a multiple of the standard basis as well as the Ozsv{\'a}th--Szab{\'o} dual standard basis with no multiple.
\end{theorem}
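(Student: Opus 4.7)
The plan is to combine the quotient description of $\cal{A}_{n,k}$ from Theorem~\ref{thm:MainQuotient} with Sartori's cellular and properly stratified structure on $\cal{A}_{n,k}$ from \cite{Sar-diagrams}, and transport his decategorification statements across the Grothendieck-group isomorphism induced by inflation.

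First I would argue that inflation induces an isomorphism on Grothendieck groups of finite-dimensional graded modules. The elementary symmetric polynomials $e_i(U_1,\ldots,U_n)$ for $1 \le i \le k$ are central of strictly positive degree, so the kernel $I$ of the quotient $\B_l(n,k) \twoheadrightarrow \cal{A}_{n,k}$ lies in the graded Jacobson radical. Hence every graded simple $\B_l^{\Bbbk}(n,k)$-module factors through $\cal{A}_{n,k}$, the sets of graded simples coincide on the two sides, and
\[
\infl_{*} \maps K_0\bigl(\cal{A}_{n,k}\text{-}\mathrm{fmod}\bigr) \xrightarrow{\ \sim\ } K_0\bigl(\B_l^{\Bbbk}(n,k)\text{-}\mathrm{fmod}\bigr)
\]
is the identity on the common basis of graded simples. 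Since inflation is exact, the class $[\infl M]$ equals $\infl_{*}([M])$ for each Sartori (proper) standard or (proper) costandard module $M$, so it suffices to compute Sartori's classes and translate them under the change of identification.

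Next I would compare the two identifications of this Grothendieck group with the relevant weight space of $V^{\otimes n}$. Each side comes from the sesquilinear Ext pairing of a fmod class against the class of a projective, which in turn pairs against $V^{\otimes n}$ via Sartori's bilinear form on his side and the Ozsv\'ath--Szab\'o bilinear form on the other. Theorem~\ref{thm:CategorificationsCompatible} already aligns the two identifications on the projective side with the canonical basis. Since both bilinear forms are scalar matrices in the standard tensor basis of each weight space of $V^{\otimes n}$, with an explicit weight-dependent ratio, the two identifications on the fmod side differ exactly by this ratio. Consequently a Sartori module whose class in Sartori's identification is a standard (resp.\ Sartori dual standard) basis element becomes, in the Ozsv\'ath--Szab\'o identification, a specific scalar multiple of the corresponding OSz standard (resp.\ OSz dual standard) basis element.

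Finally I would invoke Sartori's cellular and stratified structure on $\cal{A}_{n,k}$ to identify the classes of his (proper) standard and (proper) costandard modules with the standard tensor basis and its duals, and read off the resulting scalar factors. By construction, the OSz dual standard basis is defined to pair perfectly (up to graded shifts) with the OSz canonical basis under the OSz form, which is exactly the orthogonality that Sartori's proper standards satisfy with the canonical projectives on his side; this orthogonality is preserved by the rescaling, so the inflated proper standard modules categorify the OSz dual standard basis with no extra scalar. The remaining classes of Sartori modules pick up a weight-dependent scalar and yield a multiple of the standard basis. The main difficulty is purely bookkeeping: tracking the grading shifts and scalar conventions for each class of Sartori module through the two bilinear form normalizations so that the inflated classes match precisely those listed in Theorem~\ref{thm:BorderedCategorificationOfBases}.
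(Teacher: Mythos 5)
Your proposal takes essentially the same route as the paper: inflation is an isomorphism on $G_0$, it differs from the $K_0$-compatible identification by the ratio of the two bilinear forms (this is precisely Proposition~\ref{prop:PhiInfl}), and then Theorem~\ref{thm:SartoriG0Ident} is transported and rescaled. That is correct and is what the paper does.

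One factual slip in your last paragraph is worth flagging, since it would lead you astray if you tried to turn the sketch into a careful write-up. You state that ``the OSz dual standard basis is defined to pair perfectly $\ldots$ with the OSz canonical basis under the OSz form, which is exactly the orthogonality that Sartori's proper standards satisfy with the canonical projectives.'' Neither clause is right: by definition the OSz dual standard basis $v_\lambda^{\clubsuit\clubsuit}$ is dual to the \emph{standard} basis $v_\mu$ under $(,)_{OSz}$, not to the canonical basis $v_\mu^{\diamondsuit}$; and on Sartori's side it is the \emph{simples} $L(\lambda)$, not the proper standards $\bar\Delta(\lambda)$, that pair dually with the projectives $P(\mu)$ under $[,]_S$. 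The correct version of the orthogonality argument is: both dual standard bases are dual to the (common) standard basis under their respective forms, and since $(,)_S=(k)^!_{q^2}(1-q^2)^k\,(,)_{OSz}$, the dual bases satisfy $v_\lambda^{\clubsuit\clubsuit}=(k)^!_{q^2}(1-q^2)^k\,v_\lambda^\clubsuit$. This scalar then exactly cancels the $(k)^!_{q^2}(1-q^2)^k$ produced by $\infl$, yielding the stated unscaled identification of $[\infl\,\bar\Delta(\lambda)]$ with $v_\lambda^{\clubsuit\clubsuit}$. The conclusion you reach is correct; only the intermediate pairing claims need fixing.
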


To see the standard basis of $V^{\otimes n}$ via modules over $\B_l(n,k)$, rather than a weight dependent multiple of this basis, it would be desirable to give $\B_l(n,k)$ the structure of a (graded) affine cellular algebra.

\subsubsection{Bimodules for intertwining maps}

In \cite{KS}, Khovanov--Seidel define dg bimodules categorifying maps for braids acting on the weight space of $V^{\otimes n}$ categorified by their quiver algebra $\cal{A}_{n,1}$. It is shown in \cite{Man-KS} that these bimodules are $\cal{A}_{\infty}$ homotopy equivalent to Ozsv{\'a}th--Szab{\'o}'s bimodules over $\B_l(n,1)$ after applying induction and restriction. Generalizing to $k > 1$, Sartori has a categorical Hecke algebra action, including functors categorifying $U_q(\mf{gl}(1|1))$-linear maps for singular crossings or ``thick edges.'' Ozsv{\'a}th--Szab{\'o} have bimodules for tangles with arbitrary orientations, but they do not define bimodules for thick edges. Alishahi--Dowlin's bimodules from \cite{ADLink} provide one candidate generalization (see also \cite{AlishahiDowlin}); more complicated $\A_{\infty}$ bimodules are defined in \cite{ManionSingular}, and both constructions may be relevant when trying to define both upward- and downward-pointing thick-edge bimodules. It would be desirable to relate any of these bimodules to Sartori's categorical Hecke action.

\subsubsection{Fukaya categories}\label{sec:Fuk}
Sartori's theory fits into a rich framework of algebraic and geometric constructions, and Theorem~\ref{thm:MainQuotient} suggests natural defomations of these structures. One can use Theorem~\ref{thm:MainQuotient} to investigate relationships between Heegaard Floer homology and deformed category $\cal{O}$, Schubert varieties, Soergel modules, and other entities. While these objects might seem far afield from the holomorphic curve counts motivating the definition of $\B_l(n,k)$, general conjectures suggest that geometric categorifications should have Fukaya interpretations. For example, $\cal{O}^{\mf{p}}_0$ is equivalent to a category of perverse sheaves on a partial flag variety $X_{\mf p}$ \cite{BeilinsonBernstein, HollandPolo}, and thereby to a subcategory of a Fukaya category of $T^*(X_{\mf p})$ \cite{NadlerZaslow, Nadler}.

Going beyond cotangent bundles, the symplectic Khovanov homology program \cite{SeidelSmith, ManolescuSymplectic, AS-formal, AS-khov} formulates standard algebraic categorifications like Khovanov homology in terms of Fukaya categories of certain symplectic manifolds. In fact, Khovanov--Seidel's work in \cite{KS} can be seen as a progenitor of this program; they interpret their quiver algebra as an Ext-algebra of Lagrangians in the Fukaya category of a Milnor fiber. Similar results have been obtained for Khovanov's arc algebra $H^n$ by Abouzaid--Smith \cite{AS-formal,AS-khov}, allowing them to prove that the construction of \cite{SeidelSmith} agrees with Khovanov homology. The symplectic interpretation of the Khovanov--Seidel algebra and $H^n$ has recently been extended to the above-mentioned platform algebras by Mak--Smith \cite{MakSmith}.

By the results of this paper, Ozsv{\'a}th--Szab{\'o}'s algebras $\B_l(n,k)$ for general $k$ are flat deformations of algebras describing $\cal{O}^{\mf{p}, \mf{q}\textrm{-pres}}_0$ for certain $\mf{p},\mf{q}$. It is reasonable to suspect that $\B_l(n,k)$ describes an $\cal{O}^{\mf{p}, \mf{q}\textrm{-pres}}_0$-analogue of Soergel's deformed category $\hat{\cal{O}}$ \cite{Soe2} (related to equivariant rather than ordinary cohomology). When $k=1$, so there is no $\mf{q}$-presentable quotient, one can further speculate that $\B_l(n,k)$ is an Ext-algebra of Lagrangians in a deformed or equivariant Fukaya category of Khovanov--Seidel's Milnor fiber. For $k>1$ one could hope for a similar story, although it is less clear what symplectic manifolds should be involved.

On the other hand, bordered Floer algebras are known by Auroux's work \cite{Auroux} to be related to Fukaya categories of symmetric products. Using the strands interpretation of $\B_l(n,k)$ given in \cite{MMW2} (and assuming that Auroux's results extend to this setting), $\B_l(n,k)$ should be an Ext-algebra of certain noncompact Lagrangians in a wrapped Fukaya category of the $k$-th symmetric power of an $n$-punctured disk. When $k=1$, it appears that we have two Fukaya interpretations of $\B_l(n,k)$; one is presumably equivariant and applied to a Milnor fiber, while the other is non-equivariant and applied to a punctured disk. It would be desirable to have a Fukaya-theoretical explanation of this apparent coincidence, and the quotient results of this paper; the question is especially immediate when $k=1$ but generalized explanations for arbitrary $k$ do not seem implausible. Viewing the Milnor fiber as the total space of a Lefschetz fibration following Khovanov--Seidel, the complement of the singular fibers has a free $\C^*$ action whose quotient is the $n$-punctured plane. Roughly, the above apparent coincidence for $k=1$ seems to suggest that a suitably $\C^*$-equivariant version of the wrapped Fukaya category of this complement should be related to an analogous category for the full Milnor fiber; if so, it would be informative to understand the relationship geometrically.


\subsection*{Acknowledgements}
The authors especially thank Antonio Sartori for valuable discussions relating to the relationship established in this paper, and for suggesting the form of the quotient map relating his algebra to Ozsv{\'a}th--Szab{\'o}'s in 2014.   The authors would also like to thank Jonathan Brundan, Ben Elias, Eric Friedlander, Sheel Ganatra, Anthony Licata, Ciprian Manolescu, Volodymyr Mazorchuk, Ina Petkova, Rapha{\"e}l Rouquier, Joshua Sussan, and Zolt{\'a}n Szab{\'o} for helpful conversations and suggestions. A.D.L.~ is partially supported by the NSF grants DMS-1902092 and DMS-1664240.

\section{Ozsv{\'a}th--Szab{\'o}'s algebras} \label{sec:bordered}

In \cite{OSzNew}, Ozsv{\'a}th--Szab{\'o} define an I-state to be a subset $\x \subset \{0,\ldots,n\}$ (such states correspond to primitive idempotents in the algebras they consider). Here we will work with a truncated version of the algebra disallowing $n \in \x$; we will call a subset $\x \subset \{0,\ldots,n-1\}$ a left I-state.    We write $V(n,k)$ for the set of I-states with $|\x|=k$  and $V_{l}(n,k)$ for the subset of left I-states with $|x|=k$. For $\x \in V(n,k)$, write $\x = \{x_1,\dots, x_k\}$ with $x_1 < \dots < x_k$.

\begin{convention}
If $\x \xrightarrow{a} \y \xrightarrow{b} \z$ are arrows in a quiver, we will write their product in the path algebra as $ab$. At times it is also useful to view $a$ and $b$ as morphisms in a category whose objects correspond to the quiver vertices (for example, categories of Soergel modules appearing below). When taking this perspective, we will view $a$ as a morphism from $\y$ to $\x$ and $b$ as a morphism from $\z$ to $\y$. The composition $ab$, without order reversal, also makes sense in the category and is interpreted as $\x \xleftarrow{a} \y \xleftarrow{b} \z$.
\end{convention}

\subsection{Big-step quiver description}\label{sec:BigStep}

We begin by giving a ``big-step'' quiver description of Ozsv{\'a}th--Szab{\'o}'s algebras, following \cite{OSzNew} (although we work over $\Z$ as in \cite[Section 12]{OSzNewer}).
\begin{definition}
For $n \geq 0$, we define the following elements of $\Z_{\geq 0}^n$ associated to I-states:
\begin{itemize}
\item If $\x$ is an I-state, define $v^{\x}$ by $v_i^{\x} = |\x \cap \{i,i+1,\ldots,n\}|$.
\item If $\x$ and $\y$ are two I-states, define $w^{\x,\y}$ by $w_i^{\x,\y} = \frac{1}{2}|v_i^{\x} - v_i^{\y}|$.
\item If $\x$, $\y$, and $\z$ are three I-states, define $g^{\x,\y,\z}$ by $g_i^{\x,\y,\z} = w_i^{\y,\z} - w_i^{\x,\z} + w_i^{\x,\y}$.
\end{itemize}
\end{definition}

\begin{definition}
For $n \geq 0$ and $0 \leq k \leq n$, the $\Z[U_1,\ldots,U_n]$-algebra $\B_0(n,k)$ is the path algebra over $\Z[U_1,\ldots,U_n]$ of the quiver whose vertices are I-states with $k$ elements, with a unique arrow $f_{\x,\y}$ from any I-state $\x$ to any I-state $\y$, modulo the relations
\[
f_{\x,\y} f_{\y,\z} = \prod_{i=1}^n U_i^{g_i^{\x,\y,\z}} f_{\x,\z}.
\]
\end{definition}

Next we take a quotient of $\B_0(n,k)$. For $\x \in V(n,k)$, let $\Ib_{\x} = f_{\x,\x}$; the elements $\Ib_{\x}$ for I-states $\x$ are primitive orthogonal idempotents that sum to $1$. For $1 \leq i \leq n$, define elements $R_i$ and $L_i$ of $\B_0(n,k)$ by
\[
R_i = \sum_{\x \, : \, \x \cap \{i-1,i\} = \{i-1\}} f_{\x,\x \setminus \{i-1\} \cup \{i\}}
\]
and
\[
L_i = \sum_{\x \, : \, \x \cap \{i-1,i\} = \{i\}} f_{\x, \x \setminus \{i\} \cup \{i-1\}}.
\]
\begin{definition}\label{def:BigStepBAlg}
The $\Z[U_1,\ldots,U_n]$-algebra $\B(n,k)$ is the quotient of $\B_0(n,k)$ by the ideal generated by the following elements for $1 \leq i \leq n$:
\begin{enumerate}
\item\label{rel:TwoLinePassOSz} $R_{i-1} R_i$ and $L_i L_{i-1}$,
\item\label{rel:UVanishingOSz} $U_i \Ib_{\x}$ if $\x$ is an I-state with $\x \cap \{i-1,i\} = \varnothing$.
\end{enumerate}
The $\Z[U_1,\ldots,U_n]$-algebra $\B_l(n,k)$ is defined to be
\[
\B_l(n,k) := \left( \sum_{\x \, : \, n \notin \x} \Ib_{\x} \right) \cdot \B(n,k) \cdot \left( \sum_{\x \, : \, n \notin \x} \Ib_{\x} \right);
\]
 equivalently, the sums are over $\x \in V_l(n,k)$.
\end{definition}

Analogous to the ``global'' elements $R_i$ and $L_i$ above, for $1 \leq i \leq n$ we define
\[
U_i = \sum_{\x \in V_l(n,k)} U_i \Ib_{\x},
\]
a central element of $\B_l(n,k)$ (one can define similar elements in $\B(n,k)$).  These elements are obtained by acting on $1 \in \B_l(n,k)$ with $U_i$ using the $\Z[U_1,\ldots,U_n]$-module structure on $\B_l(n,k)$.

\subsection{Small-step quiver description}

The following quiver description was shown to be equivalent to Definition~\ref{def:BigStepBAlg} in \cite[Section 4.4]{MMW1}. The results of \cite{MMW1} are formulated over $\F_2$, but they can be lifted to $\Z$ as in \cite[Section 12]{OSzNewer}. An analogous statement holds for $\B(n,k)$, but we will focus on  the algebra $\B_l(n,k)$ most closely related to Sartori's algebras.
\begin{proposition}[Proposition 4.19 of \cite{MMW1}]\label{prop:MMWThm1}
For $n \geq 0$ and $0 \leq k \leq n$, the algebra $\B_l(n,k)$ is isomorphic as a $\Z[U_1,\ldots,U_n]$-algebra to the path algebra over $\Z$ of the quiver whose vertices are left I-states $\x$ with $k$ elements, whose arrows are given as follows:
\begin{itemize}
\item for vertices $\x,\y$ differing in only one element, with $\x \cap \{i-1,i\} = \{i-1\}$ and $\y \cap \{i-1,i\} = \{i\}$, an arrow (called an $R_i$ arrow) from $\x$ to $\y$ and an arrow (called an $L_i$ arrow) from $\y$ to $\x$
\item for each vertex $\x$ and each $i$ between $1$ and $n$, an arrow from $\x$ to itself (called a $U_i$ arrow)
\end{itemize}
and whose relations we now describe. Any linear combination of paths with the same source and target in the above quiver has an associated (noncommutative) polynomial in the variables $R_i$, $L_i$, and $U_i$, and we include such a linear combination as a generator of the relation ideal if its polynomial is:
\begin{align}
&  R_i U_j - U_j R_i,  \quad L_i U_j - U_j L_i, \;\; \text{or} \;\; U_i U_j - U_j U_i , \label{Brel:Ucommute}
\\
&  R_i L_i - U_i \;\; \text{or} \;\;  L_i R_i - U_i , \label{Brel:loop}
\\
& R_i R_j - R_j R_i ,  \quad L_i L_j - L_j L_i , \;\; \text{or} \;\;   R_i L_j - L_j R_i  \;\; \text{for  $|i-j| > 1$} , \label{Brel:RLcommute}
\\
&  R_{i-1} R_{i} \;\; \text{or} \;\;  L_{i} L_{i-1} , \label{Brel:twoline}
\\
&  \text{$U_i$  if the source and target are a left I-state  $\x$  with  $\x \cap \{i-1,i\} = \varnothing$} . \label{Brel:Uzero}
\end{align}
Note that from this perspective, the first set of relations gives $\B_l(n,k)$ its algebra structure over $\Z[U_1,\ldots,U_n]$.
\end{proposition}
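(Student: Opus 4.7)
I would prove the proposition by constructing an algebra homomorphism from the small-step path algebra (subject to relations \eqref{Brel:Ucommute}--\eqref{Brel:Uzero}) to $\B_l(n,k)$, then verifying it is an isomorphism by matching normal forms on the two sides. The natural candidate $\Phi$ sends each small-step arrow to the component of the corresponding ``global'' element defined in Section~\ref{sec:BigStep}: the $R_i$-arrow with source $\x$ maps to the $\Ib_{\x}$-summand of $R_i$, and similarly for $L_i$ and $U_i$. Since $\B_l(n,k)$ is generated over $\Z[U_1,\ldots,U_n]$ by the generators $f_{\x,\y}$ for adjacent left I-states, and these are precisely the $R_i$- and $L_i$-components under $\Phi$, the image generates $\B_l(n,k)$ as a $\Z$-algebra, so $\Phi$ is surjective as soon as it is well-defined.

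To verify the relations, I would compute in the big-step algebra using the formula $f_{\x,\y} f_{\y,\z} = \prod_i U_i^{g_i^{\x,\y,\z}} f_{\x,\z}$. Relations \eqref{Brel:Ucommute} hold because the $U_i$ act centrally via the $\Z[U_1,\ldots,U_n]$-module structure. For the loop relations \eqref{Brel:loop}, with $\x$ satisfying $\x \cap \{i-1,i\} = \{i-1\}$ and $\y = \x \setminus \{i-1\} \cup \{i\}$, one checks that $w^{\x,\y}$ and $w^{\y,\x}$ both have a single nonzero entry equal to $\tfrac{1}{2}$ at position $i$, producing exactly one factor of $U_i$ in $f_{\x,\y} f_{\y,\x}$; similarly at $\y$. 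For the commutation relations \eqref{Brel:RLcommute} with $|i-j|>1$, the two single-element moves are supported at disjoint indices so all relevant $g$ exponents vanish. Relations \eqref{Brel:twoline} and \eqref{Brel:Uzero} are imposed by definition of $\B(n,k)$ and therefore hold in $\B_l(n,k)$.

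For injectivity, I would establish a normal form on the small-step side: every element of the path algebra reduces, modulo relations \eqref{Brel:Ucommute}--\eqref{Brel:Uzero}, to a $\Z$-linear combination of monomials of the form $(\text{chosen minimal } R/L\text{-path from } \x \text{ to } \y) \cdot \prod_i U_i^{a_i} \cdot \Ib_{\x}$, where the $a_i$ satisfy the source-dependent vanishing condition from \eqref{Brel:Uzero}. The loop relations absorb any ``backtracking'' into $U_i$ factors, the far-commutation relations allow reordering of moves supported at non-adjacent positions, and the two-line relations eliminate certain diagonal paths. One then checks that $\Phi$ sends these normal-form monomials bijectively to the natural $\Z$-basis of $\B_l(n,k)$ consisting of $f_{\x,\y}$ times admissible $U$-monomials, where the $U$-exponents contributed during reduction match exactly the accumulated $g$ exponents in the big-step multiplication.

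The main obstacle is establishing the normal form, and in particular showing that any two minimal $RL$-paths between the same pair of left I-states agree modulo relations \eqref{Brel:Ucommute}--\eqref{Brel:twoline}. This requires an induction on path length together with a combinatorial analysis of how minimal paths in the state graph differ by sequences of far-commutations; the accounting has to match the closed formula $g_i^{\x,\y,\z} = w_i^{\y,\z} - w_i^{\x,\z} + w_i^{\x,\y}$, which tracks precisely how many times position $i$ is doubly traversed when composing moves. Once the normal form is in hand, the bijection with the big-step basis is essentially bookkeeping.
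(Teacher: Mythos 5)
Your overall shape — define $\Phi$ on the small-step generators, verify that the relations hold in $\B_l(n,k)$, then match normal forms against the basis from Corollary~\ref{cor:OSzBasis} — is sound, and your relation checks in the big-step algebra are correct. (Note that the paper itself does not prove this proposition; it is cited from \cite[Section 4.4]{MMW1}, so there is no in-paper argument to compare against.) There is, however, a genuine gap in the injectivity step. You take the normal form at a pair $(\x,\y)$ to be a chosen minimal $R/L$-path times a $U$-monomial $\prod_i U_i^{a_i}$ constrained only by ``the source-dependent vanishing condition from \eqref{Brel:Uzero}.'' But the basis of $\Ib_{\x}\B_l(n,k)\Ib_{\y}$ in Corollary~\ref{cor:OSzBasis} is cut out by the generating intervals between $\x$ and $\y$, which depend on \emph{both} endpoints, and for a generating interval $G=[j+1,j+l]$ with $l>1$ no single factor $U_m$ of $p_G$ is killed by \eqref{Brel:Uzero}: the interior regions $j+1,\dots,j+l-1$ lie in $\x\cap\y$, so $\x\cap\{m-1,m\}\neq\varnothing$ and $\y\cap\{m-1,m\}\neq\varnothing$ for every $j+1\le m\le j+l$. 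Concretely, in $\B_l(2,1)$ with $\x=\y=\{1\}$ the monomial $U_1U_2\Ib_{\{1\}}$ vanishes, yet $\{1\}\cap\{0,1\}\neq\varnothing$ and $\{1\}\cap\{1,2\}\neq\varnothing$, so neither $U_1\Ib_{\{1\}}$ nor $U_2\Ib_{\{1\}}$ is zero. Your proposed normal forms therefore strictly outnumber the basis elements, and $\Phi$ cannot restrict to a bijection on them.

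The missing work is precisely to show that generating-interval monomials vanish in the small-step quotient. This requires expanding some $U_m$ as $R_mL_m$ or $L_mR_m$ using \eqref{Brel:loop}, sliding the remaining $U$-factors through the intermediate idempotent, and locating a state where either a $U_j$ dies by \eqref{Brel:Uzero} or a path dies by \eqref{Brel:twoline}; in the example above, $U_1U_2\Ib_{\{1\}} = L_1\bigl(U_2\Ib_{\{0\}}\bigr)R_1 = 0$ because $\{0\}\cap\{1,2\}=\varnothing$. Carrying this out for arbitrary generating intervals, and then establishing confluence of the resulting rewriting system, is the substantive content of the equivalence. Your sketch identifies the ``any two minimal paths agree'' half of the problem but not this $U$-monomial half, and the normal form as stated is incorrect as a target for the reduction.
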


The isomorphism of Proposition~\ref{prop:MMWThm1} makes the following identifications:
\begin{itemize}
\item $R_i$ arrow starting at $\x$ $\leftrightarrow$ $f_{\x,\x \setminus \{i-1\} \cup \{i\}}$
\item $L_i$ arrow starting at $\x$ $\leftrightarrow$ $f_{\x,\x \setminus\{i\} \cup \{i-1\}}$
\item  $U_i$ loop at $\x$ $\leftrightarrow$ $U_i f_{\x,\x}$
\item Trivial path (no edges) at  $\x$ $\leftrightarrow$ the idempotent $\Ib_{\x} = f_{\x,\x}$.
\end{itemize}

\subsection{Gradings}

The algebra $\B_l(n,k)$ has a multi-grading by $\Z^{2n}$, but this grading is not preserved by the quotient map to Sartori's algebra we will define in Section~\ref{sec:Homomorphism}; Sartori's algebra has only a $\Z$ grading. Correspondingly, we will use Ozsv{\'a}th--Szab{\'o}'s single ``Alexander grading'' by $\frac{1}{2}\Z$, corresponding to the power of $t$ in the Alexander polynomial (for a closed knot these powers are always integers, but fractional powers may appear when considering tangles). When viewing the Alexander polynomial as a quantum invariant depending on a parameter $q$, the variables are related by $t = q^2$. Thus, in relating Ozsv{\'a}th--Szab{\'o}'s algebras to Sartori's, it will be useful to double the Alexander gradings.

\begin{definition}
Let $\deg^t$ be defined by $\deg^t(R_i) = \deg^t(L_i) = 1/2$ and $\deg^t(U_i) = 1$. Let $\deg^q$ be obtained by doubling $\deg^t$; explicitly, $\deg^q(R_i) = \deg^q(L_i) = 1$ and $\deg^q(U_i) = 2$.
\end{definition}

\begin{remark}
In the conventions of \cite{OSzNew}, this definition of the single Alexander grading from the refined grading (together with the absence of $C_i$ variables, a homological grading, and a differential) is meant for an algebra associated to $n$ endpoints of a tangle, all pointing downwards. When relating Ozsv{\'a}th--Szab{\'o}'s theory with constructions in representation theory, various changes of convention are often necessary; see e.g. Section~\ref{sec:ConventionComparison}.
\end{remark}

\subsection{Basis for the algebra} \label{subsec:Bbasis}

Ozsv{\'a}th--Szab{\'o}'s proof of \cite[Proposition 3.7]{OSzNew} works over $\Z$ and implies that for any $\x,\y\in V(n,k)$, $\Ib_{\x} \B(n,k) \Ib_{\y}$ is a graded free abelian group with a basis we review below. In particular, we get a basis for $\Ib_{\x} \B_l(n,k) \Ib_{\y}$ when $\x$ and $\y$ are left I-states in $V_{l}(n,k)$.

\begin{definition}\label{def:bord-toofar}
If  $\x,\y \in V(n,k)$ with $|x_i - y_i| > 1$ for some $i$, then $\x$ and $\y$ are said to be {\em too far}.
\end{definition}

If $\x$ and $\y$ are too far then $\Ib_{\x} \B(n,k) \Ib_{\y} = 0$.  Otherwise, we consider the following further definitions.

\begin{definition}\label{def:OSzTerminology}
Let $\x$ and $\y$ be I-states with $k$ elements.
\begin{itemize}
\item If $0 \leq i \leq n$ and $i \in \x \cap \y$, we say that $i$ represents a ``fully used region.'' Otherwise, $i$ represents a ``not fully used region.''
\item If $1 \leq j \leq n$ and $v_j^{\x} \neq v_j^{\y}$, we say that $j$ represents a ``crossed line.'' Otherwise, $j$ represents an ``uncrossed line.''
\end{itemize}
\end{definition}

\begin{remark}\label{rem:DotsAndRegions}
One can visualize the above notions as follows: depict an I-state $\x$ by drawing $n$ parallel lines, with $n+1$ regions outside the lines labeled $0,\ldots,n$, and placing a dot in region $i$ if and only if $i \in \x$. If $\x$ and $\y$ are two I-states with $k$ elements, draw the dot pattern for $\x$ next to that of $\y$, and connect the dots of $\x$ to the dots of $\y$ in the unique order-preserving way. A region is fully used if it has a dot in both $\x$ and $\y$; otherwise it is not fully used. One of the original parallel lines is crossed if a line connecting a dot of $\x$ to a dot of $\y$ crosses it; otherwise it is uncrossed.
\end{remark}

\begin{definition} \label{def:generatingint}
Given I-states $\x,\y \in V(n,k)$ that are not too far, an interval of coordinates $G = [j+1,j+l] = \{j+1,j+2,\ldots,j+l\} \subset [1,n]$ for some $l \geq 1$ is called a ``generating interval'' for $\x$ and $\y$ if:
\begin{itemize}
\item regions $j$ and $j+l$ are not fully used,
\item regions $j+1,\ldots,j+l-1$ are fully used, and
\item the lines $j+1,\ldots, j+l$ are all uncrossed.
\end{itemize}
If $G$ is a generating interval for $\x$ and $\y$, we define a monomial $p_G := U_{j+1} \cdots U_{j+l} \in \Z[U_1,\ldots,U_n]$.
\end{definition}

\begin{proposition}[Proposition 3.7 of \cite{OSzNew}, $\Z$ version]\label{prop:OSzBasis}
Let $\x,\y \in V(n,k)$ be not too far. We have an isomorphism
\[
\Ib_{\x} \B(n,k) \Ib_{\y} \cong \frac{\Z[U_1,\ldots,U_n]}{(p_G : G \textrm{ generating interval})}
\]
of $\Z[U_1,\ldots,U_n]$-modules. When $\x$ and $\y$ are left I-states, we get a description of $\Ib_{\x} \B_l(n,k) \Ib_{\y}$.
\end{proposition}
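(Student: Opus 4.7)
The plan is to start from the fact that in the undeformed algebra $\B_0(n,k)$, the idempotent truncation $\Ib_{\x} \B_0(n,k) \Ib_{\y}$ is a free rank-one $\Z[U_1,\ldots,U_n]$-module generated by $f_{\x,\y}$ (when $\x$ and $\y$ are not too far; when they are too far one instead has to first check that $f_{\x,\y}$ is killed by the two-line relations $R_{i-1}R_i$, $L_iL_{i-1}$, but that case is outside the statement). Under this identification, the entire proposition reduces to computing the kernel of the $\Z[U_1,\ldots,U_n]$-linear surjection
\[
\Z[U_1,\ldots,U_n] \twoheadrightarrow \Ib_{\x} \B(n,k) \Ib_{\y}, \qquad p \mapsto p \cdot f_{\x,\y},
\]
induced by relations (\ref{rel:TwoLinePassOSz}) and (\ref{rel:UVanishingOSz}) of Definition~\ref{def:BigStepBAlg}. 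The claim is that this kernel is exactly the ideal generated by the $p_G$.

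For the inclusion of $(p_G : G)$ into the kernel, I would argue geometrically using the picture from Remark~\ref{rem:DotsAndRegions}. Fix a generating interval $G = [j+1,j+l]$ and an index $i \in G$. Because regions $j$ and $j+l$ are not fully used while regions $j+1,\ldots,j+l-1$ are, and because lines $j+1,\ldots,j+l$ are uncrossed, one can slide an interior dot out of $G$ into the adjacent not-fully-used region to produce an intermediate I-state $\w$ satisfying $\w \cap \{i-1,i\} = \varnothing$. Uncrossedness over $G$ forces $g_m^{\x,\w,\y} = 0$ for all $m \in G$, so the big-step multiplication rule gives $f_{\x,\w} f_{\w,\y} = q\cdot f_{\x,\y}$ for some monomial $q \in \Z[U_1,\ldots,U_n]$ not involving $U_i$. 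Multiplying by $U_i$ and using $U_i \Ib_{\w} = 0$ then kills $U_i \cdot f_{\x,\y}$; iterating this construction over each $i \in G$ with appropriate intermediate states yields $p_G \cdot f_{\x,\y} = 0$.

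For the reverse inclusion (that no further relations hold), the standard route is to construct a faithful $\Z[U_1,\ldots,U_n]$-linear representation of $\B(n,k)$ in which one can see an explicit basis for $\Ib_{\x} \B(n,k) \Ib_{\y}$. The cleanest such realization is the strands description of $\B_l(n,k)$ from \cite{MMW2} (extended to $\Z$ by \cite[Section~12]{OSzNewer}), where monomials in the $U_i$ acting on $f_{\x,\y}$ correspond to diagrams with explicit, independent multiplicities on each strip between the lines, modulo exactly the $p_G$. Equivalently, one defines a normal form for monomials $U_1^{a_1}\cdots U_n^{a_n} f_{\x,\y}$ by reducing exponents using the $p_G$ in a fixed order, and then checks that distinct normal forms remain linearly independent under small-step compositions as in Proposition~\ref{prop:MMWThm1}.

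The main obstacle is the combinatorial bookkeeping in the vanishing step: for each $i$ inside each generating interval one must exhibit a specific $\w \in V(n,k)$ with the correct empty-region at $i$ that also has $g^{\x,\w,\y}_m = 0$ for $m\in G$ and does not produce new crossings outside $G$, so that the argument can be iterated across all indices of $G$ simultaneously. The passage from the $\F_2$ version \cite[Proposition 3.7]{OSzNew} to $\Z$ is essentially a sign-tracking issue along the lines of \cite[Section~12]{OSzNewer}; since all relations in play are monomial in the $U_i$ and the intermediate factorizations involve products whose reorderings carry controlled signs, the signs do not obstruct either the vanishing of the $p_G$ or the freeness of the claimed basis.
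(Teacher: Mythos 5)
The paper itself does not reprove this statement; it simply asserts that Ozsv\'ath--Szab\'o's original argument carries over from $\F_2$ to $\Z$ without change, which is plausible given that all relations involved are monomial in the $U_i$. Your proposal attempts a direct proof, but the vanishing step contains a concrete error. You claim that after sliding an interior dot of $G$ to produce $\w$ with $\w \cap \{i-1,i\}=\varnothing$, uncrossedness of $G$ forces $g_m^{\x,\w,\y}=0$ for all $m \in G$, so that $f_{\x,\w}f_{\w,\y}=q\,f_{\x,\y}$ with $q$ prime to $U_i$, and then $U_i\Ib_{\w}=0$ kills $U_i f_{\x,\y}$. This is false: sliding a dot across lines $p_1+1,\ldots,p_2$ inside $G$ produces $g_m^{\x,\w,\y}=1$ for each such $m$ (since $w_m^{\x,\y}=0$ while $w_m^{\x,\w}=w_m^{\w,\y}=\tfrac12$). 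Concretely, for $\x=\{0,2\}$, $\y=\{1,2\}$, $G=\{2,3\}$, sliding $\x$'s dot from $2$ to $3$ gives $\w=\{0,3\}$ with $g^{\x,\w,\y}=(0,0,1)$, so $f_{\x,\w}f_{\w,\y}=U_3 f_{\x,\y}$, not $f_{\x,\y}$. Moreover, the conclusion $U_i f_{\x,\y}=0$ cannot hold in general: in this example $\Ib_{\x}\B(n,k)\Ib_{\y}\cong \Z[U_1,U_2,U_3]/(U_2U_3)$, so $U_2f_{\x,\y}$ and $U_3f_{\x,\y}$ are both nonzero and only their product vanishes.

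The repair is that the nontrivial $g$-monomial is precisely what is wanted: a single well-chosen $\w$, obtained by sliding the interior dot of whichever of $\x,\y$ has a hole at the relevant endpoint of $G$ all the way to that endpoint, satisfies $f_{\x,\w}f_{\w,\y}=(p_G/U_{j+1})f_{\x,\y}$ (or $(p_G/U_{j+l})f_{\x,\y}$) together with $U_{j+1}\Ib_{\w}=0$ (or $U_{j+l}\Ib_{\w}=0$), so one multiplication kills $p_G f_{\x,\y}$ at once; iterating over $i\in G$ is neither needed nor coherent. Your scheme also does not address the case $l=1$, where $G$ has no interior dot to slide and a different intermediate state must be exhibited. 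Finally, you do not actually carry out the reverse inclusion: invoking the strands description of \cite{MMW2} (quasi-isomorphic rather than isomorphic, and chronologically later than \cite{OSzNew}) or the small-step description of Proposition~\ref{prop:MMWThm1} risks circularity, since those descriptions rely on the basis result being proved. A self-contained route must produce a normal form directly from the relations in Definition~\ref{def:BigStepBAlg}, which is what Ozsv\'ath--Szab\'o do.
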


In the big-step quiver description, the element $1$ of the above quotient of $\Z[U_1,\ldots,U_n]$ corresponds to $f_{\x,\y}$. In the small-step quiver description, a recursive definition of a path in the above quiver giving rise to the element of $\Ib_{\x} \B_l(n,k) \Ib_{\y}$ corresponding to $1 \in \Z[U_1,\ldots,U_n]$ is given in \cite[Definition 2.28]{MMW1}.

\begin{corollary}\label{cor:OSzBasis}
Let $\x,\y \in V(n,k)$ that are not far. The graded abelian group $\Ib_{\x} \B(n,k) \Ib_{\y}$ is free with a basis given by monomials in $U_1,\ldots,U_n$ that are not divisible by the monomial $p_G$ of any generating interval $G$ for $\x$ and $\y$.
\end{corollary}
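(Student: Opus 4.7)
The plan is to deduce the corollary directly from Proposition~\ref{prop:OSzBasis}. That proposition gives the isomorphism
\[
\Ib_{\x} \B(n,k) \Ib_{\y} \;\cong\; \Z[U_1,\ldots,U_n]\big/\bigl(p_G : G \text{ generating interval}\bigr),
\]
so all that remains is to exhibit the claimed $\Z$-basis on the quotient side, which is a standard fact about monomial ideals in a polynomial ring.

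First, I would observe that each generator $p_G = U_{j+1}\cdots U_{j+l}$ is a (squarefree) monomial in the variables $U_1,\ldots,U_n$, so the ideal $I := (p_G : G \text{ generating interval})$ is a monomial ideal in $\Z[U_1,\ldots,U_n]$. Next, I would invoke the general principle that for any monomial ideal $I \subset \Z[U_1,\ldots,U_n]$, the underlying $\Z$-module decomposes as
\[
\Z[U_1,\ldots,U_n] \;=\; I \,\oplus\, \bigoplus_{m \notin I} \Z\cdot m,
\]
the sum running over monomials $m$ not belonging to $I$. This holds because $\Z[U_1,\ldots,U_n]$ is $\Z$-free on all monomials and $I$ is itself $\Z$-spanned by monomials (namely, those divisible by some $p_G$), so the complementary $\Z$-span of the ``standard'' monomials maps isomorphically onto the quotient.

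Finally, I would note that a monomial $m$ lies in $I$ if and only if $m$ is divisible by $p_G$ for some generating interval $G$, which is the precise characterization asserted in the corollary. Grading compatibility comes for free: each $p_G$ is homogeneous for both $\deg^t$ and $\deg^q$, so $I$ is a graded ideal and the resulting monomial basis is automatically homogeneous.

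There is no serious obstacle here; the only point requiring a moment's care is the identification of $I$ as a monomial ideal and the appeal to the standard structure of quotients by monomial ideals, which is the reason the corollary is immediate from Proposition~\ref{prop:OSzBasis} rather than requiring any new input from the structure of $\B(n,k)$.
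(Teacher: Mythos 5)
Your proposal is correct and matches the paper's intent: the corollary is stated without explicit proof precisely because it follows from Proposition~\ref{prop:OSzBasis} via the standard fact that a quotient of $\Z[U_1,\ldots,U_n]$ by a monomial ideal is $\Z$-free on the monomials lying outside the ideal (equivalently, those not divisible by any generator). Your observations that each $p_G$ is squarefree, that membership of a monomial in a monomial ideal is exactly divisibility by some generator, and that homogeneity of the $p_G$ gives the graded statement, are all correct and are exactly the points one needs to make the deduction explicit.
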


\subsection{Characterization of generating intervals}

\begin{definition} \label{def:hole-sequence}
Given an I-state $\x$, define its hole sequence $h^{\x}$ to be $\{ 0, 1, \dots , n\} \setminus \x$.  If $|\x| = k$, then $|h^{\x}| = n-k+1$ and we write $h^{\x} = \{ h_1^{\x}, h_2^{\x}, \dots , h_{n-k}^{\x}, h_{n-k+1}^{\x}\}$ with $h_1^{\x} < h_2^{\x} < \dots < h_{n-k+1}^{\x}$.  For all $\x \in V_{l}(n,k)$ we have $h_{n-k+1}^{\x}=n$.
\end{definition}

 In terms of Remark~\ref{rem:DotsAndRegions}, say $\x$ has a hole in a region if it does not have a dot there; then $h_i^{\x}$ is the region containing the $i$-th hole of $\x$ (compare with $x_i$ which is the region containing the $i$-th dot of $\x$).  Notice that $j$ is not fully used if and only if $j \in h^{\x} \cup h^{\y}$. If $h_i^{\x} > h_i^{\y}$, set $j=h_i^{\x}$; then $v_j^{\x} > v_j^{\y}$.

\begin{lemma} \label{lem:h-ipone}
The I-states $\x,\y \in V(n,k)$  are too far if and only if $h_i^{\x} \geq h_{i+1}^{\y}$ or $h_i^{\y} \geq h_{i+1}^{\x}$ for some $1 \leq i \leq n-k$.
\end{lemma}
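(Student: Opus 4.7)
The plan is to prove both directions by prefix counts, exploiting the complementarity of the dots $\x$ and the holes $h^\x$ in $\{0,\ldots,n\}$: for any $p$, the numbers $|\x \cap \{0,\ldots,p\}|$ and $|h^\x \cap \{0,\ldots,p\}|$ sum to $p+1$, so conditions on the indexed dots $x_i$ translate directly into conditions on the indexed holes $h_i^\x$. This dictionary is what makes both directions amenable to a unified combinatorial argument.

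For the ``if'' direction, suppose $h_j^\x \geq h_{j+1}^\y$ for some $j$ (the other condition is symmetric). Set $q = h_{j+1}^\y$ and write $A = |\x \cap \{0,\ldots,q\}|$, $B = |\y \cap \{0,\ldots,q\}| = q-j$. I would split into two sub-cases. If $h_j^\x > q$, then $\x$ has at most $j-1$ holes in $\{0,\ldots,q\}$, so $A \geq q-j+2 = B+2$; this forces $x_{B+2}\leq q$ while $y_{B+2}\geq q+2$, giving $|y_{B+2}-x_{B+2}|\geq 2$. If instead $h_j^\x = q$, then $q \in h^\x \cap h^\y$ and counting gives $A = B+1$; since $q$ is a hole for both I-states, $x_{B+1} \leq q-1$ and $y_{B+1} \geq q+1$, so already $|y_{B+1}-x_{B+1}|\geq 2$. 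In each sub-case, $A\leq k$ guarantees the witness index lies in $\{1,\ldots,k\}$.

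For the ``only if'' direction, assume $x_{i^*} \geq y_{i^*} + 2$ (WLOG; the opposite inequality is symmetric). Set $p = y_{i^*}$ and choose $j = p - i^* + 2$. The bounds $y_{i^*}\geq i^*-1$ and $x_{i^*}\leq n-k+i^*$, together with $x_{i^*}\geq p+2$, force $1 \leq j \leq n-k$. A direct count shows that $\y$ has exactly $j-1$ holes in $\{0,\ldots,p\}$, so $h_j^\y \geq p+1$; meanwhile $\x$ has at most $i^*-1$ dots in $\{0,\ldots,p+1\}$, hence at least $j+1$ holes, so $h_{j+1}^\x \leq p+1$. Chaining gives $h_j^\y \geq p+1 \geq h_{j+1}^\x$, as required.

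The main obstacle I anticipate is the equality sub-case $h_j^\x = h_{j+1}^\y$ in the ``if'' direction: a naive count in $\{0,\ldots,q\}$ only produces $A - B = 1$, and upgrading this to a true gap of $2$ between some $x_i$ and $y_i$ requires exploiting that $q$ is a common hole, forcing the next $\y$-dot to land at $q+1$ or later. The remaining work is index bookkeeping, in particular verifying that indices like $B+1$, $B+2$, and $j$ stay in their allowed ranges; this is straightforward but needs care with off-by-one shifts.
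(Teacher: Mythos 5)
Your proof is correct, and it runs on the same basic engine as the paper's: translate conditions on the indexed dots $x_i$ into prefix counts of dots and holes and back. But your explicit sub-case split in the ``if'' direction is a genuine improvement, not just extra care. The paper's argument sets $j = h_i^{\x}$ and asserts that ``$\y$ has at least $i$ holes in $\{0,\ldots,j-1\}$, implying that $y_{j-i}\geq j$.'' That implication is an off-by-one: at least $i$ holes only gives $|\y\cap\{0,\ldots,j-1\}|\leq j-i$, hence $y_{j-i+1}\geq j$, not $y_{j-i}\geq j$. The implication does hold in your first sub-case $h_j^{\x}>q$ (then $\y$ has $\geq i+1$ holes below $j$), but in the boundary sub-case $h_i^{\x}=h_{i+1}^{\y}$ it fails, and in fact the index $j-i$ can even be $0$ (take $n=2$, $k=1$, $\x=\{0\}$, $\y=\{2\}$, $i=1$, $j=1$). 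You correctly spotted that the right witness in the equality sub-case is the index $B+1$ (the paper's $j-i+1$), and that the common hole at $q$ is what upgrades the gap from $1$ to $2$. Your ``only if'' direction is also cleaner than the paper's: you choose the cutoff $p=y_{i^*}$ and directly count holes in $\{0,\ldots,p\}$ and $\{0,\ldots,p+1\}$, avoiding the paper's use of an extremal index $i$ and the auxiliary claim $x_i+1\notin\x$.

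One small point to tighten in the write-up: in the first sub-case you should note why $B+2\leq k$ follows, namely $B+2\leq A$ and $A=|\x\cap\{0,\ldots,q\}|\leq k$; you state this, but it is worth making the chain $B+2\leq A\leq k$ explicit. Similarly in the second sub-case, $B+1=A\leq k$. With those one-line justifications filled in, the proof is complete and in fact repairs a gap in the published argument.
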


\begin{proof}
Set $j = h_i^{\x}$ and suppose that  $h_i^{\x} \geq h_{i+1}^{\y}$.   Recall  that $j=h_i^{\x}$ means that $j$ is the position of the $i$th hole in $\x$, so that of the $j$ possible entries in $\{0,1, \dots, j-1\}$ there are $(i-1)$ missing entries in $\x$, leaving $j-(i-1)$ filled.  Hence $x_{j-i+1} < j$.  But $j = h_i^{\x} \geq h_{i+1}^{\y}$ means that $\y$ has at least $i$ holes in the set $\{0, \dots, j-1\}$, implying that $y_{j-i} \geq j$.  We then have
\[
x_{j-i} < x_{j-i+1} < j \leq  y_{j-i},
\quad \text{i.e.} \quad
x_{j-i} \leq  x_{j-i+1}-1 \leq  j -2 \leq y_{j-i} -2,
\]
so that $|x_{j-i} - y_{j-i}| > 1$ and $\x$ and $\y$ are too far.  A similar argument shows that if $h_i^{\y} \geq h_{i+1}^{\x}$ then $\x$ and $\y$ are too far.

Now suppose that $\x$ and $\y$ are too far, so that $|x_i - y_i| >1$ for some index $i$.  First, assume that $y_i \geq x_i +2$ and that $i$ is the largest index satisfying this condition. Since $x_{i+1} \geq x_i + 2$ (otherwise $y_{i+1} \geq y_i + 1 \geq x_i + 3 = x_{i+1} + 2$, contradicting the maximality of $i$), we have $x_i + 1 \notin \x$. It follows that $x_i + 1 = h_{j}^{\x}$ for some $1 \leq j \leq n-k$. Since $y_i > x_i + 1$, the set $\y \cap \{0,\ldots,x_i + 1\}$ has size at most $i-1$, whereas $\x \cap \{0,\ldots,x_i+1\}$ has size $i$. We see that $\y$ has at least one more hole in $\{0,\ldots,x_i+1\} = \{0,\ldots,h_j^{\x}\}$ than does $\x$, so $h_{j+1}(\y) \leq h_j(\x)$.

If $y_i \leq x_i + 1$ for all $i$ but $\x$ and $\y$ are too far, then $x_i \geq y_i + 2$ for some minimal $i$. In this case, we can show that $h_{j+1}(\x) \leq h_j(\y)$ for some $j$ with a similar argument.
\end{proof}

The following gives an alternative characterization of the generating intervals from Definition~\ref{def:generatingint}.

\begin{lemma}\label{lem:SartoriStyleGenInts}
Suppose $\x,\y \in V(n,k)$ are not too far. An interval $[j+1, j+l]$ is a generating interval for $\x$ and $\y$ if and only if $j= \max\left( h_i^{\x}, h_i^{\y} \right)$ and $j+l = \min\left( h_{i+1}^{\x}, h_{i+1}^{\y}\right)$ for some index $ 1 \leq i \leq n-k$.
\end{lemma}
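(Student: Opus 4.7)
The strategy is to translate each of the three conditions defining a generating interval into statements about the hole sequences. Counting dots versus holes gives
\[
v_m^{\x} = k - m + |\{q : h_q^{\x} < m\}|,
\]
so line $m$ is uncrossed precisely when the number of $\x$-holes strictly less than $m$ equals the number of $\y$-holes strictly less than $m$. Likewise, region $r$ is fully used exactly when $r \notin h^{\x} \cup h^{\y}$.

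For the forward direction, let $[j+1, j+l]$ be a generating interval, and let $i$ be the number of $\x$-holes that are $\leq j$; the uncrossed condition for line $j+1$ forces $i$ to equal the number of $\y$-holes $\leq j$ as well. Since region $j$ is not fully used, $j$ is a hole of $\x$ or $\y$, and whichever contains $j$ must satisfy $h_i^{\bullet} = j$; the other of $h_i^{\x}, h_i^{\y}$ is $\leq j$ by definition of $i$. Hence $\max(h_i^{\x}, h_i^{\y}) = j$. The fully-used condition on the interior regions prevents any $\x$- or $\y$-hole in $\{j+1, \ldots, j+l-1\}$, so $h_{i+1}^{\x}, h_{i+1}^{\y} \geq j+l$; since $j+l$ is a hole of at least one of $\x, \y$, one of these inequalities is an equality and $\min(h_{i+1}^{\x}, h_{i+1}^{\y}) = j+l$.

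For the converse, suppose $j = \max(h_i^{\x}, h_i^{\y})$ and $j+l = \min(h_{i+1}^{\x}, h_{i+1}^{\y})$. Both $j$ and $j+l$ are visibly in $h^{\x} \cup h^{\y}$, so the boundary regions are not fully used. Strict monotonicity of the hole sequences, together with $h_i^{\bullet} \leq j$ and $h_{i+1}^{\bullet} \geq j+l$ for both $\bullet \in \{\x, \y\}$, rules out any $\x$- or $\y$-hole in $\{j+1, \ldots, j+l-1\}$, making the interior regions fully used. For each $m \in \{j+1, \ldots, j+l\}$, both the number of $\x$-holes strictly less than $m$ and the number of $\y$-holes strictly less than $m$ equal $i$, so these lines are uncrossed.

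The main bookkeeping obstacle is ensuring the index $i$ falls in the required range $1 \leq i \leq n-k$ in the forward direction. The bound $i \geq 1$ is immediate from $j$ being a hole of $\x$ or $\y$. For the upper bound, the degenerate case $i = n-k+1$ would force all holes of both $\x$ and $\y$ to be $\leq j$, hence $\x, \y \supseteq \{j+1, \ldots, n\}$; but then every region in $\{j+1, \ldots, n\}$, including $j+l$, would be fully used, contradicting the generating interval hypothesis. Thus $i \leq n-k$ and both $h_{i+1}^{\x}$ and $h_{i+1}^{\y}$ are defined, making the min in the statement meaningful.
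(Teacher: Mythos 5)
Your argument is correct, and the forward direction is a genuinely different (and cleaner) route than the paper's. The paper proves the forward direction by contradiction: it picks the index $i$ for which $j = h_i^{\bullet}$, supposes $h_i^{\x} < h_i^{\y}$ (say), and then uses Lemma~\ref{lem:h-ipone} to locate $j+l$ and derive a crossing contradiction. You instead directly \emph{define} $i$ as the common count of $\x$- and $\y$-holes $\leq j$ (equal by the uncrossed condition on line $j+1$), and then read off $j = \max(h_i^{\x}, h_i^{\y})$ and $j+l = \min(h_{i+1}^{\x}, h_{i+1}^{\y})$ from the remaining conditions. This avoids the case split and the appeal to Lemma~\ref{lem:h-ipone} in the forward direction. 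Your handling of the range $1 \leq i \leq n-k$ is also careful and correct. The converse is essentially identical to the paper's.

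One small omission in the converse: you do not verify that $l \geq 1$, i.e.\ that $\max(h_i^{\x}, h_i^{\y}) < \min(h_{i+1}^{\x}, h_{i+1}^{\y})$, so that $[j+1,j+l]$ is a nonempty interval as required by Definition~\ref{def:generatingint}. This is exactly where the hypothesis that $\x,\y$ are not too far enters: by Lemma~\ref{lem:h-ipone} the not-too-far condition gives $h_i^{\x} < h_{i+1}^{\y}$ and $h_i^{\y} < h_{i+1}^{\x}$, which combined with the monotonicity of each hole sequence yields the needed strict inequality. Since your argument never invokes the not-too-far hypothesis at all, it is worth inserting this step; the paper states the consequence of Lemma~\ref{lem:h-ipone} at the outset precisely so that it is available for both directions.
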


\begin{proof}

First, assume that $[j+1, j+l]$ is a generating interval; we will show that it has the described form. We have $j , j+l \in h^{\x} \cup h^{\y}$ and $j+1, \dots, j+l-1 \in \x \cap \y$.  Lemma~\ref{lem:h-ipone} implies that $h_i^{\x} < h_{i+1}^{\y}$ and $h_i^{\y} < h_{i+1}^{\x}$ for any $1 \leq i \leq n-k$.

We claim that $j = \max\left(h_i^{\x},h_i^{\y}\right)$ for some $i$. Indeed, we have $j = h_i^{\x}$ or $j = h_i^{\y}$ for some $i$ because $j$ is not fully used; assume to derive a contradiction that $j = h_i^{\x}$ and $h_i^{\x} < h_i^{\y}$ (the case when $j = h_i^{\y}$ and $h_i^{\y} < h_i^{\x}$ is analogous). Since $h_i^{\y} < \min\left(h_{i+1}^{\x},h_{i+1}^{\y}\right)$, the first non-fully-used coordinate to the right of $j$ is $h_i^{\y}$, so $j+l = h_i^{\y}$. Since $\x$ and $\y$ have differing number of holes/dots to the left of line $j+l$ (and thus differing numbers of holes/dots to its right), we have $v_{j+l}^{\x} \neq v_{j+l}^{\y}$. Therefore, line $j + l$ is crossed, a contradiction.

Now, since $j = \max\left(h_i^{\x},h_i^{\y}\right)$, the first non-fully-used coordinate to the right of $j$ is $\min\left( h_{i+1}^{\x}, h_{i+1}^{\y}\right)$, so $j + l$ is also equal to this quantity, and we have shown any generating interval is of the form described in the statement.

Conversely, for $1 \leq i \leq n-k$, let $j = \max\left(h_i^{\x},h_i^{\y}\right)$ and $j+l = \min\left( h_{i+1}^{\x}, h_{i+1}^{\y}\right)$. We claim that $[j+1,j+l]$ is a generating interval. Indeed, the coordinates $j$ and $j+l$ are non-fully-used because at least one of $\{\x,\y\}$ has a hole in coordinates $j$ and $j+l$, while all coordinates between them are fully used. The I-states $\x$ and $\y$ have the same number of holes/dots to the left of line $j + 1$ and to the right of line $j+l$, so all lines between $j$ and $j+l$ are uncrossed.

\end{proof}

\subsection{Anti-automorphism} \label{subsec:PsiOS}

In \cite[Section 3.6]{OSzNew}, Ozsv{\'a}th--Szab{\'o} define an anti-automorphism of $\B(n,k)$ that restricts to an anti-automorphism of $\B_l(n,k)$.

\begin{definition} \label{def:PsiOS}
The anti-automorphism $\psi_{OSz}\colon \B_l(n,k) \to \B_l(n,k)^{{\rm opp}}$ sends $R_i \mapsto L_i$, $L_i \mapsto R_i$, and $U_i \mapsto U_i$ in the small-step quiver description of $\B_l(n,k)$.
\end{definition}

\begin{remark}\label{rem:OSzAlgSymms}
In \cite{OSzNew}, $\psi_{OSz}$ is called $o$. Ozsv{\'a}th--Szab{\'o} also describe another symmetry $\cal{R}$ of $\B(n,k)$; restricted to $\B_l(n,k)$, it gives an isomorphism from $\B_l(n,k)$ to $\B_r(n,k)$, where $\B_r(n,k)$ is defined by summing over $\x$ with $0 \notin \x$ rather than $n \notin \x$ in the definition of $\B_l(n,k)$. The symmetry $\cal{R}$ is not present in the Sartori algebras we review below.
\end{remark}

\section{Sartori's algebras}

\subsection{Polynomial rings and bases for quotient rings}

Let  $R = \Z[x_1, \dots , x_n]$ and set $\deg(x_i)=2$ so that $R$ is a graded ring. Write $R_{\C}$ for $R \otimes \C = \C[x_1,\ldots,x_n]$. Denote by  $S_n$  the symmetric group, $R^{S_n}$  the symmetric polynomials in $R$, and $R_+^{S_n}$ the symmetric polynomials of strictly positive degree. The coinvariant algebra $R/R_+^{S_n}$ is a graded free abelian group with both a monomial basis $\{ \und{x}^{\und{\ell}} = x_1^{\ell_1} \dots x_{n}^{\ell_n}  \mid 0 \leq \ell_i \leq n-i \}$ and a Schubert polynomial basis $\{\mf{S}_w \mid w \in S_n \}$ indexed by permutations $w\in S_n$.  It is possible to enumerate the monomial basis by permutations by defining $c_i = \#\{ j < w^{-1}(i) \mid w(j) >i \}$ for $w \in S_n$ and defining
\begin{equation} \label{eq:schubert-prime}
  \mf{S}_w'(x_1, \dots, x_n) = x_1^{c_1} x_2^{c_2} \dots x_{n-1}^{c_n}.
\end{equation}
 The monomial $\mf{S}_w'$ is the leading term of the Schubert polynomial $\mf{S}_{w}$  in the lexicographic order generated by $x_n > x_{n-1} > \dots > x_1$.

Define the {\em elementary} and {\em complete} symmetric polynomials by
\[
 e_j(x_1, \dots, x_n) =
 \sum_{1 \leq i_1 < \dots < i_j \leq n}
 x_{i_1} \dots x_{i_j},
 \qquad \quad
 h_j(x_1, \dots, x_n) =
 \sum_{1 \leq i_1 \leq \dots \leq i_j \leq n}
 x_{i_1} \dots x_{i_j}.
\]
for $j \geq 1$. Let $\mathbf{b} = (b_1, \dots, b_n) \in \Z_{\geq 1}^{n}$ be a decreasing sequence that decreases by at most 1, i.e. we have $b_i \geq b_{i+1} \geq b_i-1$ for all $i$. Given $\bf{b}$, define a homogeneous ideal $I_{\mathbf{b}} \subset R$ by
\begin{equation} \label{eq:Ib}
I_{\mathbf{b}}:= \left\langle h_{b_1}(x_1), \; h_{b_2}(x_1,x_2), \; \dots , h_{b_n}(x_1,\dots,x_n)\right\rangle.
\end{equation}
Set $R_{\mathbf{b}} = R/I_{\mathbf{b}}$ and $R_{\mathbf{b}}^{\C} = R_{\mathbf{b}} \otimes \C$.  By a $\Z$ analogue of \cite[Proposition 2.3]{Sar-diagrams}, the quotient ring $R_{\mathbf{b}}$ is a graded free abelian group of rank $b_1 b_2 \cdots b_n$, and a basis is given by
$\{
\mathbf{x}^{\mathbf{j}} = x_1^{j_1} \dots x_n^{j_n} \mid 0 \leq j_i < b_i
\}.$

\subsection{Sequences and permutations} \label{subsection:sequences}

Fix $0 \leq j \leq n$ and consider the set $D=D_{n,k}$ of sequences $\mu=\mu_1  \dots \mu_n$ where each $\mu_i \in \{ \wedge , \vee\}$ and there are $k$ $\wedge$'s and $(n-k)$ $\vee$'s.  By mapping the sequence $\wedge \dots \wedge \vee \dots \vee$ to the identity element $e \in S_n$, each element of $D_{n,k}$ corresponds to a minimal coset representative for $(S_k \times S_{n-k}) \backslash S_n$.  We will identify a $\wedge \vee$ sequence with its corresponding permutation.

Given a $\wedge \vee$ sequence $\mu \in D$, number the positions from left to right, the $\wedge$ terms from 1 to $k$, and the $\vee$ terms from 1 to $n-k$.  Let $\wedge_i^{\mu}$ be the position of the $i$th $\wedge$ and $\vee_j^{\mu}$ be the position of the $j$th $\vee$ in $\mu$.   Define the $\bf{b}$-sequence associated to $\mu \in D$ to be the sequence $\mathbf{b}^{\mu} = (b_1^\mu, b_2^\mu, \dots, b_n^\mu)$ with $b_i^{\mu}-1$ equal to the number of $\wedge$'s strictly to the right of position $i$.

\begin{remark}\label{rem:OSzSarStates}
There is a bijection between $D_{n,k}$ and $V_{l}(n,k)$ defined by sending a left I-state $\x=\{ x_1, \dots, x_k\}$ to the $\wedge \vee$ sequence with a $\wedge$ in position $x_i+1$.  Given $\mu \in D$, we define a left I-state $\mathbf{x}^{\mu}$ whose $i$th term $x_i$ is $\wedge_i^{\mu}-1$. Sartori's sequence $\bf{b}^{\mu}$ is obtained from Ozsv{\'a}th--Szab{\'o}'s weight $v^{\x^{\mu}}$ by adding $1$ to each coordinate.  The hole sequence associated to the I-state $\x^{\mu}$ is expressed in the language of $\wedge \vee$ sequences by $h_i^{\x}=\vee_i^{\mu}-1$.
\end{remark}

\begin{definition} \label{def:Sart-toofar}
We say that $\wedge \vee$ sequences $\mu,\l  \in D_{n,k}$ are \emph{too far} if for some index $1 \leq j \leq n-k-1$ we have $\vee_j^{\mu} \geq \vee_{j+1}^{\l}$ or $\vee_j^{\l} \geq \vee_{j+1}^{\mu}$.
\end{definition}

Lemma~\ref{lem:h-ipone} above shows that the sequences $\l$ and $\mu$ are too far if and only if the corresponding I-states $\x^{\l}$ and $\x^{\mu}$ are too far in the sense of Definition~\ref{def:bord-toofar}; note that for $\x,\y \in V_l(n,k)$ it is impossible to have $h_{n-k}^{\x} \geq h_{n-k+1}^{\y} = n$ or $h_{n-k}^{\y} \geq h_{n-k+1}^{\x} = n$.

\subsection{Soergel modules and their hom spaces}

Soergel modules for the symmetric group $S_n$ are modules ${\sf C}_{w}$ over the polynomial ring $R_{\C}=\C[x_1,\dots, x_n]$ indexed by permutations $w\in S_n$. Set $B=R_{\C}/(R_{\C,+})^{S_n}$ and for a simple transposition $s_i \in S_n$ let $B^{s_i}$ denote the invariants of $B$ under $s_i$. Given a reduced expression $w=s_{i_r} \dots s_{i_1}$, the module ${\sf C}_w$ is defined as the indecomposable direct summand of the module
\[
B \otimes_{B^{s_{i_r}}} B \otimes \dots \otimes B \otimes_{B^{s_{i_1}}} B \otimes_B \C
\]
containing $1 \otimes \dots \otimes 1$.    Soergel showed~\cite{Soe} that ${\sf C}_w$ is the unique indecomposable summand of the above tensor-product $B$-module that is not isomorphic to any ${\sf C}_{w'}$ for $w' \prec w$ in the Bruhat order on $S_n$.  Up to isomorphism, ${\sf C}_w$ does not depend on the choice of reduced expression for $w$.  This holds more generally for $R=\Z[x_1,\dots, x_n]$; see \cite[Theorem 1.1 (3)]{EW}.

Identifying summands giving rise to Soergel modules is in general a difficult task; even the dimensions of the modules are computed using Kazhdan-Lusztig polynomials.  However, things simplify dramatically when the Soergel module is cyclic.  Under the identification of $B$ with the cohomology of the full flag variety, the condition of a Soergel module to be cyclic is equivalent to the rational smoothness of the corresponding Schubert variety in the full flag variety \cite[Appendix]{KazLus}.

\begin{proposition}\label{prop:SartoriSoergel}
We collect the relevant results from \cite{Sar-diagrams} on Soergel modules and the hom spaces between them.
\begin{enumerate}[(i)]
  \item\label{it:SarProp4.5}
Let $w_k$ denote the longest element of $S_k$ considered as an element of $S_n$ via the inclusion $S_k \times S_1 \times \dots \times S_1 \to S_n$.
  For every $\mu \in D_{n,k}$, the module ${\sf C}_{w_k \mu}$ is cyclic  (\cite[Proposition 4.5]{Sar-diagrams}).

  \item\label{it:ExplicitSoergelModules} By \cite[Theorem 4.10]{Sar-diagrams} there is an isomorphism ${\sf C}_{w_k \mu} \cong R_{\mathbf{b}^{\mu}}^{\C}$ so that ${\sf C}_{w_k \mu}$ has a basis given by
  \[
   \{ x_1^{c_1} x_2^{c_2} \dots x_{n-1}^{c_{n-1}} \mid 0 \leq c_i < b_i^{\mu}\}.
  \]

  \item The dimension of ${\sf C}_{w_k \mu}  = R_{\mathbf{b}^{\mu}}^{\C}$ is $b_1^{\mu} \cdots b_n^{\mu}$.

  \item\label{it:SoergelMorphismBasis} Given $\l,\mu \in D_{n,k}$, set $c_i= \max(b_i^{\mu}-b_i^{\l},0)$.  By \cite[Corollary 4.11]{Sar-diagrams}, a basis for the vector space $\Hom_{R_{\C}}( {\sf C}_{w_k \l}, {\sf C}_{w_k \mu}) = \Hom_{R_{\C}}(R_{\mathbf{b}^{\l}}^{\C}, R_{\mathbf{b}^{\mu}}^{\C})$ is given by
      \[
       \{
       1\mapsto x_1^{j_1} \dots x_{n-1}^{j_{n-1}} \mid c_i \leq j_i < b_i^{\mu}
       \}.
      \]
\end{enumerate}
\end{proposition}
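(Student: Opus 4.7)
The proposition is explicitly a collation of four facts from \cite{Sar-diagrams}, so at a high level the proof is bibliographic, with a small glue argument for (iii). Nevertheless, it is worth outlining the internal logic. For item (i), I would invoke \cite[Proposition 4.5]{Sar-diagrams}: its essential content is that the Schubert variety $X_{w_k\mu}$ in the full flag variety is rationally smooth. The element $w_k\mu$ has a very constrained one-line form because $w_k$ reverses the first $k$ positions and $\mu$ is a minimal coset representative for $(S_k\times S_{n-k})\backslash S_n$, and rational smoothness can be verified, for example, via Lakshmibai--Sandhya pattern avoidance. By the criterion due to \cite[Appendix]{KazLus} recalled in the excerpt, rational smoothness of $X_{w_k\mu}$ is equivalent to cyclicity of ${\sf C}_{w_k\mu}$.

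For item (ii), I would invoke \cite[Theorem 4.10]{Sar-diagrams}. Once cyclicity is in hand, ${\sf C}_{w_k\mu}\cong R_{\C}/J$ for some graded ideal $J$, and the task is to identify $J$ with the complexification of $I_{\mathbf{b}^\mu}$. Under the Borel identification of $B$ with the cohomology of the full flag variety, the ideal cutting out the cohomology of $X_{w_k\mu}$ is generated precisely by the complete symmetric polynomials $h_{b_i^\mu}(x_1,\ldots,x_i)$, where the combinatorial recipe for $\mathbf{b}^\mu$ records how many $\wedge$'s lie strictly to the right of each position. The explicit monomial basis of ${\sf C}_{w_k\mu}$ is then inherited from the basis of $R_{\mathbf{b}^\mu}$ recalled earlier in the excerpt, and item (iii) is immediate: the $\Z$-rank $b_1^\mu\cdots b_n^\mu$ of $R_{\mathbf{b}^\mu}$ is also the $\C$-dimension of $R_{\mathbf{b}^\mu}^{\C}$.

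For item (iv), I would cite \cite[Corollary 4.11]{Sar-diagrams}. The argument is direct: since ${\sf C}_{w_k\l}\cong R_{\C}/I_{\mathbf{b}^\l}$ is cyclic, an $R_{\C}$-linear map $\phi\colon {\sf C}_{w_k\l}\to {\sf C}_{w_k\mu}$ is determined by $\phi(1)\in R_{\mathbf{b}^\mu}^{\C}$ subject only to the condition that $I_{\mathbf{b}^\l}$ annihilate $\phi(1)$. A $\C$-basis for this annihilator subspace, expressed in the monomial basis of (ii), is obtained by pushing each generator $h_{b_i^\l}(x_1,\ldots,x_i)$ through the quotient by $I_{\mathbf{b}^\mu}$ and tracking which monomials survive; the combinatorial bookkeeping identifies the surviving monomials as exactly $x_1^{j_1}\cdots x_{n-1}^{j_{n-1}}$ with $c_i\leq j_i<b_i^\mu$, where $c_i=\max(b_i^\mu-b_i^\l,0)$. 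The main obstacle is item (i): the translation between $\wedge\vee$-sequence combinatorics and Schubert geometry, and the uniform verification of rational smoothness for every $w_k\mu$, is the real input. The remaining parts follow by standard commutative-algebra manipulations once (i) and (ii) are in place.
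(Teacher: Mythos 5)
Your proposal correctly recognizes that this proposition is a bibliographic collation: the paper supplies no internal proof beyond the citations to Sartori's Proposition~4.5, Theorem~4.10, and Corollary~4.11, and your sketch of what those results contain (cyclicity via rational smoothness and the Kazhdan--Lusztig criterion recalled in the surrounding text, identification of the annihilating ideal with $I_{\mathbf{b}^\mu}$, item~(iii) as an immediate rank count, and the morphism basis in~(iv) obtained from cyclicity by computing the annihilator of $I_{\mathbf{b}^\l}$ inside $R_{\mathbf{b}^\mu}^{\C}$) accurately reflects their content. The approach is the same as the paper's; there is no gap.
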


\subsection{Illicit morphisms}

Let $W_k \subset S_n$ be the subgroup generated by $s_1, \dots, s_{k-1}$, and $W_k^{\perp}$ the subgroup generated by $s_k,s_{k+1}, \dots, s_{n-1}$.  Let $D'$ be the set of shortest coset representatives for $W_k^{\perp} \setminus S_n$, so that for any $\mu \in D_{n,k}$, we have $\mu,w_k\mu \in D'$ where $w_k$ is defined in Proposition~\ref{prop:SartoriSoergel}, item \eqref{it:SarProp4.5}.

 \begin{definition}
For $\l,\mu \in D_{n,k}$ we say that a morphism ${\sf C}_{w_k\l} \to {\sf C}_{w_k\mu}$ is {\em illicit} if it factors through some ${\sf C}_{y}$, where $y$ is a longest coset representative for $W_k\setminus S_n$ with $y \notin D'$.  Let $\cal{W}_{\l,\mu}$ be the $R_{\C}$-submodule of $\Hom_{R_{\C}}({\sf C}_{w_k\l}, {\sf C}_{w_k \mu})$ consisting of illicit morphisms.
\end{definition}

Sartori's algebras $\cal{A}_{n,k}$ are built from maps between cyclic Soergel modules modulo illicit morphisms.   For our purposes, it suffices to know a few simple illicit morphisms; then the fact that composition with an illicit morphism is illicit will enable us to completely characterize illicit morphisms in Corollary~\ref{cor:W} below.  The following lemma collects Lemmas 4.14-4.16 from \cite{Sar-diagrams}.

\begin{lemma}[\cite{Sar-diagrams}] \label{lem:illicit} \hfill
\begin{enumerate}[(i)]
  \item\label{ill:TooFar} Suppose that $\mu, \l \in D_{n,k}$ are identical except in entries $(j,j+1,j+2)$ where $(\mu_j,\mu_{j+1},\mu_{j+2}) = \wedge \vee \vee$ and $(\l_j,\l_{j+1},\l_{j+2}) = \vee \vee \wedge$.  Then $\cal{W}_{\mu,\l} = \Hom_{R_{\C}}(R_{\mathbf{b}^{\mu}}^{\C}, R_{\mathbf{b}^{\l}}^{\C})$ and $\cal{W}_{\l,\mu} = \Hom_{R_{\C}}(R_{\mathbf{b}^{\l}}^{\C}, R_{\mathbf{b}^{\mu}}^{\C})$.

  \item\label{ill:Uzero} For $\l \in D_{n,k}$ with $\vee_{j+1}^{\l} = \vee_{j}^{\l}+1$, the endomorphism
$(1 \mapsto x_{\vee_j^{\l}})$ of $R_{\mathbf{b}^{\l}}^{\C}$ is illicit.

\item\label{ill:GenInts} For $\l \in D_{n,k}$, the morphisms
\[
 1 \mapsto x_{\vee_j^{\l}} x_{\vee_j^{\l} +1} \dots x_{\vee_{j+1}^{\l} -1}
\]
are illicit endomorphisms of $R_{\mathbf{b}^{\l}}^{\C}$ for each $1 \leq j \leq n-k-1$.
\end{enumerate}
\end{lemma}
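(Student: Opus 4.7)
The plan is to prove each part by exhibiting an explicit factorization of the given morphism through a Soergel module ${\sf C}_y$ for a longest $W_k$-coset representative $y \notin D'$. Recall that $D'$ consists of shortest representatives for $W_k^\perp \setminus S_n$, so $y \in D'$ iff $y$ has no right descent among $s_k,\ldots,s_{n-1}$. Therefore the criterion I need to establish for illicitness is that the auxiliary $y$ I construct has a right descent at some $s_i$ with $i \geq k$.

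For part~\eqref{ill:TooFar}, I would start with the local pattern: $\mu$ and $\l$ agree outside of positions $(j,j+1,j+2)$ where they take the forms $\wedge\vee\vee$ and $\vee\vee\wedge$ respectively. I would choose reduced expressions for $w_k\mu$ and $w_k\l$ compatible with this local data and consider the Soergel module ${\sf C}_y$ indexed by the longest coset representative $y$ combining the $\wedge$-positions of both $\mu$ and $\l$ in the interval $[j,j+2]$ — concretely, one which has $\wedge$'s at both $j$ and $j+2$ locally. Because $\mu$ and $\l$ are too far (two $\vee$'s interposed between two $\wedge$-positions), every morphism ${\sf C}_{w_k\mu} \to {\sf C}_{w_k\l}$ factors through this ${\sf C}_y$; the descent at $s_{j+1}$ (with $j+1 \geq k$ once we account for the initial $\wedge$ block and minimality) then confirms $y \notin D'$, proving $\cal{W}_{\mu,\l}$ and $\cal{W}_{\l,\mu}$ are all of $\Hom$.

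For parts~\eqref{ill:Uzero} and~\eqref{ill:GenInts}, the strategy is to realize the claimed endomorphism of ${\sf C}_{w_k\l}=R_{\mathbf{b}^{\l}}^{\C}$ as a composition of the basis morphisms from Proposition~\ref{prop:SartoriSoergel}\eqref{it:SoergelMorphismBasis} that go up and then back down through an auxiliary sequence $\mu'$. In~\eqref{ill:Uzero}, the adjacency $\vee_{j+1}^{\l} = \vee_j^{\l}+1$ means we can swap the $j$-th $\vee$ with some $\wedge$ to obtain $\mu'$, pass through ${\sf C}_{w_k\mu'}$, and return; computing in the explicit polynomial bases shows the composition is exactly $1\mapsto x_{\vee_j^{\l}}$. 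The auxiliary $\mu'$ places an extra $\wedge$ adjacent to an existing one, so the corresponding $y=w_k\mu'$ acquires a descent at some $s_i$ with $i \geq k$ and lies outside $D'$. Part~\eqref{ill:GenInts} is handled analogously, but now the factorization moves a $\wedge$ all the way across the fully-used gap from position $\vee_j^{\l}$ to $\vee_{j+1}^{\l}$; the composition then yields multiplication by the full monomial $x_{\vee_j^{\l}} x_{\vee_j^{\l}+1}\cdots x_{\vee_{j+1}^{\l}-1}$.

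The main obstacle is the careful bookkeeping in steps (ii) and (iii): one must choose the intermediate $\mu'$ so that (a) the composed up-then-down map gives exactly the stated monomial in the basis of Proposition~\ref{prop:SartoriSoergel}\eqref{it:SoergelMorphismBasis}, and (b) $w_k\mu'$ genuinely lies outside $D'$. The polynomial bookkeeping uses the explicit isomorphism ${\sf C}_{w_k\mu'} \cong R_{\mathbf{b}^{\mu'}}^{\C}$ and the identifications $\vee_j^{\mu}=h_j^{\x^{\mu}}+1$ from Remark~\ref{rem:OSzSarStates} to translate between the $\wedge\vee$ and I-state viewpoints. Since these are exactly Lemmas 4.14-4.16 of \cite{Sar-diagrams}, the detailed verifications are already carried out there and I would cite them for the routine portions while highlighting the descent-set check as the conceptual core.
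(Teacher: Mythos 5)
The paper offers no proof of this lemma; the bracketed citation and the preceding sentence ``The following lemma collects Lemmas 4.14--4.16 from \cite{Sar-diagrams}'' make clear it is quoted from Sartori's paper, so there is no in-paper argument to compare against.

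Your sketch for parts~\eqref{ill:Uzero} and~\eqref{ill:GenInts} contains a genuine error. You propose to factor the endomorphism of ${\sf C}_{w_k\l}$ through ${\sf C}_{w_k\mu'}$, where $\mu'$ is obtained from $\l$ by a swap and therefore lies in $D_{n,k}$, and you assert that the resulting $y = w_k\mu'$ ``lies outside $D'$.'' But the paper states explicitly, in the sentence defining $D'$, that $\mu, w_k\mu \in D'$ for \emph{every} $\mu \in D_{n,k}$. Hence $w_k\mu' \in D'$, and a factorization through ${\sf C}_{w_k\mu'}$ can never certify illicitness. (Concretely: such a factorization is exactly how one writes $U_i = R_i L_i$ in the Ozsv{\'a}th--Szab{\'o} picture, and the intermediate idempotent $\Ib_{\y}$ there corresponds to a perfectly legitimate $\mu' \in D_{n,k}$; the vanishing comes from something subtler.) The illicit witness $y$ in the definition must be a longest $W_k\backslash S_n$ coset representative that is \emph{not} of the form $w_k\mu$ for any $\mu \in D_{n,k}$; identifying such a $y$ inside a composition of translation/wall-crossing functors (i.e., pulling out the right summand of a Bott--Samelson-type module $B \otimes_{B^{s_i}} B$ with $i \geq k$) is the nontrivial content of Sartori's Lemmas 4.15--4.16, and your sketch does not supply it. Part~\eqref{ill:TooFar} is closer to the intended idea, since the $y$ you describe would carry more $\wedge$'s than any element of $D_{n,k}$ and so could genuinely avoid $w_k D_{n,k}$, but the claim that ``every morphism'' factors through this specific ${\sf C}_y$ is also asserted without justification. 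Given all this, the honest version of your proof for this lemma is the one the paper uses: a citation to \cite{Sar-diagrams}.
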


\subsection{Definition of Sartori's algebra}

\begin{definition}
Define the Sartori algebra $\cal{A}$ to be the graded $\C$-algebra
\begin{equation}
  \cal{A} = \cal{A}_{n,k} =
\bigoplus_{\l,\mu \in D_{n,k}} \Hom_{R_{\C}}( R_{\mathbf{b}^{\l}}^{\C} , R_{\mathbf{b}^{\mu}}^{\C}) / \cal{W}_{\l,\mu}.
\end{equation}
\end{definition}

Let $\1_{\l}$ denote the identity map on $R_{\mathbf{b}^{\l}}^{\C}$.  The collection $\{ \1_{\l} \mid \l \in D_{n,k}\}$ form a system of mutually orthogonal idempotents on the algebra $\cal{A}$. We can view $\cal{A}$ as a $\C[x_1,\ldots,x_n]$-algebra by sending $x_i$ to $\sum_{\lambda \in D_{n,k}} x_i \1_{\lambda}$. In the subsequent sections we will review Sartori's diagrammatic description of the graded vector spaces $\1_{\mu} \cal{A} \1_{\l}$ and define a version of Sartori's algebra over $\Z$.

\subsubsection{Connection to BGG category $\cal{O}$ } \label{sec:connectO}

Let $\Pi=\{\alpha_1,\dots, \alpha_{n-1}\}$ denote the set of simple roots for $\mf{gl}_n$, so that $\alpha_i = \epsilon_i - \epsilon_{i+1}$, with $\epsilon_i = (0,\dots,0,1,0\dots,0)$ the standard basis vectors.

Define $^{\Z}\cal{O}^{\mf{p},\mf{q}{\rm -pres}}$ to be the $\mf{q}$-presentable quotient of the $\mf{p}$-parabolic subcategory of graded category $\cal{O}(\mf{gl}_n)$, where $\mf{q}$, $\mf{p}$ are the standard parabolic subalgebras of $\mf{gl}_n$ with sets of simple roots $\Pi_{\mf{q}}=\{\alpha_1,\dots, \alpha_{k-1}\}$ and $\Pi_{\mf{p}} = \{\alpha_k, \dots, \alpha_{m-1} \}$ (so $\mf{p}, \mf{q}$ have Levi types $(1,\ldots,1,n-k)$ and $(k,1,\ldots,1)$ as in the introduction). For the relevant definitions see \cite[Section 6]{Sar-tensor} and the references therein.

The main result of \cite[Theorem 6.7]{Sar-diagrams} shows that the graded algebra $\cal{A}_{n,k}$ is isomorphic to the endomorphism algebra of a minimal projective generator of the block $^{\Z}\cal{O}^{\mf{p},\mf{q}{\rm -pres}}_0$ of $^{\Z}\cal{O}^{\mf{p},\mf{q}{\rm -pres}}$ containing the trivial representation.  In particular, there is a graded equivalence
\begin{equation}
  \cal{A}_{n,k}{\rm -gmod} \cong \;  {}^{\Z}\cal{O}^{\mf{p},\mf{q}{\rm -pres}}_0,
\end{equation}
where $\cal{A}_{n,k}{\rm -gmod}$ is the category of finite-dimensional graded $\cal{A}_{n,k}$-modules, so that the Sartori algebra provides a combinatorial model for $^{\Z}\cal{O}_0^{\mf{p},\mf{q}{\rm -pres}}$.

\subsection{Anti-automorphism} \label{sec:Sart-anti}

There is a symmetry between maps of Soergel modules defined for any $\l,\mu \in D_{n,k}$ by
\begin{align}
 \Hom_{R_{\C}}( R_{\mathbf{b}^{\l}}^{\C}, R_{\mathbf{b}^{\mu}}^{\C})
 & \longrightarrow \Hom_{R_{\C}}( R_{\mathbf{b}^{\mu}}^{\C}, R_{\mathbf{b}^{\l}}^{\C} )
 \\ \nn
 (1 \mapsto p) &\longmapsto (1 \mapsto x^{\mathbf{b}^{\l} - \mathbf{b}^{\mu}} p)
\end{align}
with $x^{\mathbf{b}^{\l} - \mathbf{b}^{\mu}}:= x_1^{b_1^{\l} - b_1^{\mu}} \dots x_n^{b_n^{\l} - b_n^{\mu}}$ (the product of this expression with $p$ will have nonnegative exponents). By \cite[Lemma 5.14]{Sar-diagrams} this map sends $\W_{\l,\mu}$ to $\W_{\mu,\l}$ and thus extends to an anti-automorphism $\psi_S \maps \cal{A}_{n,k} \longrightarrow \cal{A}_{n,k}$ (Sartori refers to $\psi_S$ as $\star$).

\section{A surjective homomorphism from Ozsv{\'a}th--Szab{\'o} to Sartori}\label{sec:Homomorphism}

 If $\Bbbk$ is a field, let $\B^{\Bbbk}_l(n,k)$ denote $\B_l(n,k) \otimes \Bbbk$. View $\cal{A}_{n,k}$ as a $\C[U_1,\ldots,U_n]$-algebra by relabeling $x_i$ as $U_i$ in the $\C[x_1,\ldots,x_n]$-algebra structure.
\begin{proposition}\label{prop:BigStepXi}
Let $\mu=\mu^{\x}$, $\l=\mu^{\y}$ denote $\vee\wedge$-sequences in $D_{n,k}$ associated to left I-states $\x, \y \in V_{l}(n,k)$ as in Remark~\ref{rem:OSzSarStates}.
The map $\Xi\co \B_{l}^{\C}(n,k)\to \mathcal{A}_{n,k}$ sending $\Ib_{\x}$ to $\1_{\mu}$, sending $f_{\x,\y}$ to zero if $\x,\y$ are too far, and otherwise sending $f_{\x,\y}$ to (the equivalence class of) the morphism of Soergel modules
\begin{align*}
 \; \Xi_{f_{\x,\y}}:= \Xi(f_{\x,\y}) \maps R_{\bf{b}^{\l}}^{\C} &\to R_{\bf{b}^{\mu}}^{\C} \\
1 \quad &\mapsto x_1^{c_1} \cdots x_n^{c_n}
\end{align*}
where $c_i = \max\left(v_i^{\x} - v_i^{\y},0\right) = \max\left(b_i^{\mu} - b_i^{\l},0\right)$, extended linearly over $\C[U_1,\ldots,U_n]$, is a well-defined surjective homomorphism of $\Z$-graded $\C[U_1,\ldots,U_n]$-algebras.
\end{proposition}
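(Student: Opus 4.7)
My approach is to work at the level of Soergel hom spaces (before taking the quotient by the illicit ideal $\cal{W}$), verify the defining relations of $\B_l^\C(n,k)$ there, and then pass to $\cal{A}_{n,k}=\bigoplus_{\l,\mu}\Hom_{R_\C}(R_{\mathbf{b}^\l}^\C,R_{\mathbf{b}^\mu}^\C)/\cal{W}_{\l,\mu}$. First I note that the formula $f_{\x,\y}\mapsto (1\mapsto x_1^{c_1}\cdots x_n^{c_n})$ with $c_i=\max(v_i^\x-v_i^\y,0)$ yields a well-defined $R_\C$-linear map for \emph{every} pair $\x,\y\in V_l(n,k)$, since $c_i=\max(b_i^{\mu^\x}-b_i^{\mu^\y},0)$ and $c_n=0$ put it squarely in the basis of Proposition~\ref{prop:SartoriSoergel}(iv). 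With this uniform definition at the hom level, the task reduces to checking (i) the big-step multiplication relation, (ii) the quotient relation $U_i\Ib_\x=0$ for $\x\cap\{i-1,i\}=\varnothing$, and (iii) surjectivity.

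\textbf{Multiplication.} The identity
\[
\Xi_{f_{\x,\y}}\circ\Xi_{f_{\y,\z}} \;=\; \prod_{i=1}^{n}x_i^{g_i^{\x,\y,\z}}\cdot \Xi_{f_{\x,\z}}
\]
holds in $\Hom_{R_\C}(R_{\mathbf{b}^{\mu^\z}}^\C,R_{\mathbf{b}^{\mu^\x}}^\C)$ because both sides send $1$ to the same monomial. Coordinate-wise, this is the elementary identity
\[
\max(a,0)+\max(b,0)\;=\;\tfrac{1}{2}\bigl(|a|+|b|-|a+b|\bigr)+\max(a+b,0)
\]
with $a=v_i^\x-v_i^\y$ and $b=v_i^\y-v_i^\z$, verified by a short sign-case analysis. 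The relation then passes to $\cal{A}_{n,k}$. That $\Xi(f_{\x,\y})$ reduces to $0$ in $\cal{A}_{n,k}$ when $\x,\y$ are too far (as prescribed) is not an extra condition but a consequence: by Lemma~\ref{lem:h-ipone} the OSz and Sartori notions of too far coincide, and the full hom component $\Hom_{R_\C}(R_{\mathbf{b}^{\mu^\y}}^\C,R_{\mathbf{b}^{\mu^\x}}^\C)$ lies in $\cal{W}_{\mu^\y,\mu^\x}$ in that case, generalizing Lemma~\ref{lem:illicit}(i).

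\textbf{The quotient relation $U_i\Ib_\x=0$.} For $1\le i\le n-1$, the hypothesis $\x\cap\{i-1,i\}=\varnothing$ forces $\vee$'s at positions $i$ and $i+1$ of $\mu^\x$, so Lemma~\ref{lem:illicit}(ii) with $\vee_j^{\mu^\x}=i$ makes $1\mapsto x_i$ illicit in $\End_{R_\C}(R_{\mathbf{b}^{\mu^\x}}^\C)$, giving $\Xi(U_i\Ib_\x)=0$ in $\cal{A}_{n,k}$. For $i=n$, the condition $n-1\notin\x$ together with $n\notin\x$ gives $b_{n-1}^{\mu^\x}=b_n^{\mu^\x}=1$; thus $I_{\mathbf{b}^{\mu^\x}}$ contains both $x_1+\cdots+x_{n-1}$ and $x_1+\cdots+x_n$, whose difference $x_n$ is therefore zero in $R_{\mathbf{b}^{\mu^\x}}^\C$. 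So $\Xi(U_n\Ib_\x)$ is literally the zero morphism, needing no illicit argument.

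\textbf{Surjectivity, grading, and main obstacle.} For $\x,\y$ not too far, each basis element $1\mapsto x_1^{j_1}\cdots x_{n-1}^{j_{n-1}}$ (with $c_i\le j_i<b_i^{\mu^\x}$) from Proposition~\ref{prop:SartoriSoergel}(iv) equals $\Xi\bigl(U_1^{j_1-c_1}\cdots U_{n-1}^{j_{n-1}-c_{n-1}}\cdot f_{\x,\y}\bigr)$, so $\Xi$ surjects onto each hom component; too-far components vanish in $\cal{A}_{n,k}$ and contribute nothing. Preservation of the $\Z$-grading follows from $\deg^q(U_i)=\deg(x_i)=2$ together with the standard grading shift on Sartori's hom spaces matching the minimal path length in the OSz small-step quiver. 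I expect the most delicate point to be the general statement that $\Hom_{R_\C}/\cal{W}$ vanishes for all too-far pairs, going beyond the 3-letter swap of Lemma~\ref{lem:illicit}(i); I anticipate handling this either by iterating local moves through sequences of such swaps or by direct appeal to Sartori's coset-representative analysis in \cite{Sar-diagrams}.
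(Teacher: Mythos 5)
Your route is genuinely different from the paper's. You work directly with the big-step generators $f_{\x,\y}$ of $\B_l^\C(n,k)$, verifying the path-algebra quadratic relation through the elementary identity relating $\max$ and absolute value; the paper instead defines the map on the small-step generators $R_i, L_i, U_i$ of Proposition~\ref{prop:MMWThm1}, checks the short list of local relations there (Lemma~\ref{lem:TildeXiWellDef}), and then proves by an induction on $k-|\x\cap\y|$ (Lemma~\ref{lem:TildeXiOnFxy}) that the resulting map reproduces the stated big-step formula. Your approach is more direct and avoids that induction, but it shifts the burden onto verifying the quotient relations of Definition~\ref{def:BigStepBAlg}.

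The gap is precisely at relations~\eqref{rel:TwoLinePassOSz}: you do not list $R_{i-1}R_i=0$ and $L_iL_{i-1}=0$ among the conditions (i)--(iii) that you say must be checked, and instead you defer the general claim that $\Hom_{R_\C}/\cal{W}$ vanishes for every too-far pair, which you identify as the most delicate step but do not prove. In fact that general claim is not needed. If $\Ib_\x R_{i-1}R_i\Ib_\y\neq 0$ in the big-step path algebra then $\y=(\x\setminus\{i-2\})\cup\{i\}$; setting $\x'=(\x\setminus\{i-2\})\cup\{i-1\}$ one computes $g^{\x,\x',\y}=0$, so $\Ib_\x R_{i-1}R_i\Ib_\y=f_{\x,\y}$, and $\mu^{\x},\mu^{\y}$ differ by exactly a $\wedge\vee\vee\leftrightarrow\vee\vee\wedge$ swap at positions $(i-1,i,i+1)$. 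Thus Lemma~\ref{lem:illicit}\eqref{ill:TooFar} already gives the vanishing, with no chaining of swaps or appeal to coset analysis; the $L_iL_{i-1}$ case is the reverse swap. Once all three families of ideal generators in Definition~\ref{def:BigStepBAlg} are shown to map to illicit morphisms, the map factors through $\B_l(n,k)$, and vanishing of $\Xi(f_{\x,\y})$ for \emph{every} too-far pair then comes for free: $f_{\x,\y}=0$ in $\B_l(n,k)$ for such pairs (as recalled in Section~\ref{subsec:Bbasis}), and by Proposition~\ref{prop:SartoriSoergel}\eqref{it:SoergelMorphismBasis} every element of $\Hom_{R_\C}(R_{\mathbf{b}^{\mu^{\y}}}^\C,R_{\mathbf{b}^{\mu^{\x}}}^\C)$ is an $R_\C$-multiple of your monomial. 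So the general too-far vanishing is a \emph{consequence} of the factoring rather than a prerequisite, and the gap closes quickly once this is noticed. Your explicit treatment of the $i=n$ case of relation~\eqref{rel:UVanishingOSz} via $x_n\in I_{\mathbf{b}^{\mu^{\x}}}$ is correct and welcome, since Lemma~\ref{lem:illicit}\eqref{ill:Uzero} as stated only reaches $i\le n-1$.
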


Note that if $\x,\y,\z$ are pairwise not too far, then  $\Xi_{f_{\x,\y}} \Xi_{f_{\y,\z}} = \prod_{i=1}^n U_i^{g_i^{\x,\y,\z}} \Xi_{f_{\x,\z}}$; this identity follows from the relationship $\max(a,b) = \frac{a+b+|a-b|}{2}$ between maxima and absolute values. However, to prove Proposition~\ref{prop:BigStepXi} along these lines, one would also need to consider what happens when some pairs among $\x,\y,\z$ are too far (as well as present $\B_l(n,k)$ as a quotient of a truncation $\B_{l,0}(n,k)$ of $\B_0(n,k)$ and show that the quotient relations are satisfied).

Instead, to show that $\Xi$ is well-defined and prove Proposition~\ref{prop:BigStepXi}, we will make use of the small-step quiver description of $\B_l(n,k)$. The small-step quiver generators $R_i$, $L_i$, and $U_i$ can be viewed as instances of $f_{\x,\y}$ or $U_i f_{\x,\x}$ for certain $\x$ and $\y$; below we describe where $\Xi$ sends these generators.

\begin{lemma}\label{lem:TildeXiLemma}
For a small-step quiver generator with label $R_i$, left idempotent $\x$, and right idempotent $\y$, we have
\begin{align*}
\Xi_{R_i}\co R_{\bf{b}^{\y}}^{\C} &\to R_{\bf{b}^{\x}}^{\C} \\
1 \quad &\mapsto \quad 1.
\end{align*}
For a small-step quiver generator with label $L_i$, left idempotent $\x$, and right idempotent $\y$, we have
\begin{align*}
\Xi_{L_i}\co R_{\bf{b}^{\y}}^{\C} &\to R_{\bf{b}^{\x}}^{\C} \\
1 \quad &\mapsto \quad x_i.
\end{align*}
For a small-step quiver generator with label $U_i$ and left and right idempotent $\x$, we have
\begin{align*}
\Xi_{U_i}\co R_{\bf{b}^{\x}}^{\C} &\to R_{\bf{b}^{\x}}^{\C} \\
1 \quad &\mapsto \quad x_i.
\end{align*}
\end{lemma}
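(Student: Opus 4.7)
The plan is to unpack the identifications between small-step quiver generators and elements of the big-step description given just after Proposition~\ref{prop:MMWThm1}, and then apply the formula for $\Xi$ from Proposition~\ref{prop:BigStepXi} in each of the three cases. The main computational task in each case is to determine the exponents $c_j = \max(v_j^{\x} - v_j^{\y}, 0)$ defining the target monomial $x_1^{c_1} \cdots x_n^{c_n}$; these exponents are read off from how $\x$ and $\y$ differ as subsets of $\{0, \ldots, n-1\}$.

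First I would handle the $R_i$ case, where the corresponding big-step element is $f_{\x,\y}$ with $\y = \x \setminus \{i-1\} \cup \{i\}$. Since $\x$ and $\y$ differ only by exchanging $i-1$ for $i$, for $j \neq i$ both or neither of $i-1, i$ lie in $\{j, \ldots, n\}$, giving $v_j^{\x} = v_j^{\y}$. For $j = i$, the element $i-1$ is not counted in $v_i^{\x}$ while $i$ is counted in $v_i^{\y}$, so $v_i^{\x} - v_i^{\y} = -1$. Hence every $c_j = 0$ and the target monomial is $1$. The $L_i$ case is entirely analogous with the roles of $\x$ and $\y$ reversed: one finds $v_i^{\x} - v_i^{\y} = 1$ with all other differences vanishing, giving target monomial $x_i$.

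For the $U_i$ loop at $\x$, the corresponding big-step element is $U_i f_{\x,\x} = U_i \Ib_{\x}$, namely the idempotent acted on by the polynomial variable $U_i$. Since $\Xi$ is declared to be a homomorphism of $\C[U_1, \ldots, U_n]$-algebras (under the relabeling $x_i \leftrightarrow U_i$ used to view $\cal{A}_{n,k}$ as a $\C[U_1, \ldots, U_n]$-algebra), and $\Xi(f_{\x,\x})$ is the identity of $R_{\bf{b}^{\x}}^{\C}$ (obtained from the big-step formula by setting $\y = \x$, so all $c_j = 0$), we get $\Xi(U_i \Ib_{\x}) = U_i \cdot \1_{\mu^{\x}}$, which as an endomorphism of $R_{\bf{b}^{\x}}^{\C}$ is exactly $1 \mapsto x_i$.

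The whole argument is a routine unwinding of definitions; the only care required is bookkeeping the convention stated at the start of Section~\ref{sec:bordered}, which reverses the direction of an arrow when passing from the path algebra to a category of Soergel modules. With this convention in hand, the source of each of $\Xi_{R_i}$, $\Xi_{L_i}$, $\Xi_{U_i}$ is the right-idempotent module and the target is the left-idempotent module, matching both the lemma statement and the source/target convention of Proposition~\ref{prop:BigStepXi}. I do not anticipate any genuine obstacle beyond this bookkeeping.
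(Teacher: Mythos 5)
Your proof is correct and follows essentially the same route as the paper's: compute $v^{\x}-v^{\y}$ for the $R_i$ and $L_i$ small-step generators to read off the exponents $c_j$, and invoke $\C[U_1,\ldots,U_n]$-linearity of $\Xi$ for the $U_i$ loop. The only (harmless) looseness is the phrase ``roles of $\x$ and $\y$ reversed'' in the $L_i$ case -- strictly, what changes is which of $\x,\y$ contains $i$ versus $i-1$, and the formula $c_j=\max(v_j^{\x}-v_j^{\y},0)$ is not symmetric in $\x,\y$ -- but your stated conclusion $v_i^{\x}-v_i^{\y}=1$ is correct.
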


\begin{proof}
If a small-step generator with label $R_i$ exists with left idempotent $\Ib_{\x}$ and right idempotent $\Ib_{\y}$, then we have $\x \cap \{i-1,i\} = \{i-1\}$ and $\y = (\x \setminus \{i-1\}) \cup \{i\}$. It follows that $v^{\x} - v^{\y}$ is the element of $\Z^n$ with $-1$ in entry $i$ and zero in all other entries, so we have $c_j = 0$ for all $j$ in Proposition~\ref{prop:BigStepXi}. For small-step generators labeled $L_i$, the argument is similar, except that $v^{\x} - v^{\y}$ has $1$ in entry $i$. It follows that $c_j = 0$ for $j \neq i$ and $c_i = 1$ in Proposition~\ref{prop:BigStepXi}. Finally, for small-step generators labeled $U_i$, the claim follows because these generators can be written as $U_i f_{\x,\x}$, $\Xi$ was defined to be $\C[U_1,\ldots,U_n]$-linear, and the action of $\C[U_1,\ldots,U_n]$ on the Soergel module morphism space has $U_i$ acting as multiplication by $x_i$.
\end{proof}

\begin{lemma}\label{lem:TildeXiWellDef}
Extending the values of $\Xi$ from Lemma~\ref{lem:TildeXiLemma} multiplicatively, we get a well-defined $\C$-algebra homomorphism
\[
\widetilde{\Xi}\co \B^{\C}_l(n,k) \to \cal{A}_{n,k}.
\]
\end{lemma}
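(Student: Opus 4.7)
The plan is to use the small-step quiver presentation of $\B_l^{\C}(n,k)$ given in Proposition~\ref{prop:MMWThm1}. The path algebra on this quiver maps to $\cal{A}_{n,k}$ by sending each edge to the morphism prescribed by Lemma~\ref{lem:TildeXiLemma} and composing along paths via the Convention at the start of Section~\ref{sec:bordered} (so the path algebra product $ab$ corresponds to composition $\widetilde{\Xi}_a \circ \widetilde{\Xi}_b$ of morphisms of Soergel modules). To descend to $\B_l^{\C}(n,k)$ it then suffices to check that each of the five families of defining relations \eqref{Brel:Ucommute}--\eqref{Brel:Uzero} is sent to zero in $\cal{A}_{n,k}$.

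The commutation relations \eqref{Brel:Ucommute} and \eqref{Brel:RLcommute} are immediate from Lemma~\ref{lem:TildeXiLemma}: each generator is $R_{\C}$-linear and sends $1$ to $1$ (for $R_i$) or $x_i$ (for $L_i$ and $U_i$), so along any path the composite is determined by the ordered product of these polynomials; for each of the stated commutations the polynomial products on the two sides are equal, and one verifies the source and target idempotents on the two sides agree. The loop relations \eqref{Brel:loop} are an equally direct check: both $\widetilde{\Xi}_{R_i} \circ \widetilde{\Xi}_{L_i}$ and $\widetilde{\Xi}_{L_i} \circ \widetilde{\Xi}_{R_i}$ compute to $(1 \mapsto x_i)$ on the appropriate Soergel module, agreeing with $\widetilde{\Xi}_{U_i}$ at the corresponding state.

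The two-line relations \eqref{Brel:twoline} are the crux. Given a composable path $R_{i-1} R_i$ starting at a state $\x_1$, one checks directly from the small-step quiver definition that necessarily $\x_1 \cap \{i-2,i-1,i\} = \{i-2\}$ and the path ends at $\x_3 = \x_1 \setminus \{i-2\} \cup \{i\}$. Translating via Remark~\ref{rem:OSzSarStates}, the $\wedge\vee$-sequences $\mu^{\x_1}$ and $\mu^{\x_3}$ agree outside positions $(i-1,i,i+1)$, where they read $(\wedge,\vee,\vee)$ and $(\vee,\vee,\wedge)$ respectively. Lemma~\ref{lem:illicit}\eqref{ill:TooFar} then forces every morphism between the corresponding Soergel modules---in particular the composite $\widetilde{\Xi}_{R_{i-1}} \circ \widetilde{\Xi}_{R_i}$---to be illicit and hence zero in $\cal{A}_{n,k}$. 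The relation $L_i L_{i-1}$ is handled symmetrically by running the same sequence analysis in the opposite direction. The boundary case $i = n$ is vacuous because $R_n$ and $L_n$ do not appear as arrows in the small-step quiver of $\B_l(n,k)$.

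Finally, for the vanishing relations \eqref{Brel:Uzero}, $\widetilde{\Xi}_{U_i \Ib_{\x}}$ equals the endomorphism $(1 \mapsto x_i)$ of $R_{\bf{b}^{\mu^{\x}}}^{\C}$ whenever $\x \cap \{i-1,i\} = \emptyset$. For $1 \leq i \leq n-1$ this means positions $i$ and $i+1$ of $\mu^{\x}$ are consecutive $\vee$'s, so there exists $j$ with $\vee_j^{\mu^{\x}} = i$ and $\vee_{j+1}^{\mu^{\x}} = i+1$ and Lemma~\ref{lem:illicit}\eqref{ill:Uzero} applies directly. The case $i = n$ falls outside the range of Lemma~\ref{lem:illicit}\eqref{ill:Uzero} and requires a separate argument: when $n-1 \notin \x$ one has $b_{n-1}^{\mu^{\x}} = b_n^{\mu^{\x}} = 1$, so the generators $h_1(x_1,\ldots,x_{n-1})$ and $h_1(x_1,\ldots,x_n)$ of $I_{\bf{b}^{\mu^{\x}}}$ subtract to give $x_n = 0$ in $R_{\bf{b}^{\mu^{\x}}}^{\C}$, making $(1 \mapsto x_n)$ literally the zero endomorphism. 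The main obstacle I anticipate is precisely this $i=n$ boundary case together with the careful bookkeeping of states needed to match the two-line relations with the $\wedge\vee\vee / \vee\vee\wedge$ pattern triggering Lemma~\ref{lem:illicit}\eqref{ill:TooFar}; once those identifications are in hand, Sartori's illicit-morphism technology does all the work.
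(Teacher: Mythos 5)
Your proposal is correct and takes the same approach as the paper: present $\B^{\C}_l(n,k)$ by the small-step quiver of Proposition~\ref{prop:MMWThm1}, send edges to the Soergel-module maps of Lemma~\ref{lem:TildeXiLemma}, and verify relations \eqref{Brel:Ucommute}--\eqref{Brel:Uzero} using $R_{\C}$-linearity for \eqref{Brel:Ucommute}, the explicit formulas for \eqref{Brel:loop} and \eqref{Brel:RLcommute}, and Lemma~\ref{lem:illicit}\eqref{ill:TooFar}, \eqref{ill:Uzero} for \eqref{Brel:twoline} and \eqref{Brel:Uzero}. The paper's proof is terser and cites Lemma~\ref{lem:illicit}\eqref{ill:Uzero} for all of \eqref{Brel:Uzero}; your version spells out the state bookkeeping for the two-line relations and, in particular, gives a clean direct verification of the $i=n$ case of \eqref{Brel:Uzero} (deducing $x_n \in I_{\mathbf{b}^{\mu^{\x}}}$ from $b_{n-1}^{\mu^{\x}} = b_n^{\mu^{\x}} = 1$) rather than relying on the implicit convention $\vee_{n-k+1}=n+1$ in Lemma~\ref{lem:illicit}\eqref{ill:Uzero} -- a reasonable extra precaution, and the argument is valid.
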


\begin{proof}
We need to check that the relations of Proposition~\ref{prop:MMWThm1} are satisfied. The relations \eqref{Brel:Ucommute} follow because Soergel module morphisms are assumed to be linear over $R_{\C} = \C[x_1,\ldots,x_n]$. The relations \eqref{Brel:loop} and \eqref{Brel:RLcommute} follow from the explicit formulas of Lemma~\ref{lem:TildeXiLemma}. The relations \eqref{Brel:twoline} and \eqref{Brel:Uzero} follow from Lemma~\ref{lem:illicit}, items \eqref{ill:TooFar} and \eqref{ill:Uzero} respectively.
\end{proof}

\begin{lemma}\label{lem:TildeXiOnFxy}
The ring homomorphism $\widetilde{\Xi}$ from Lemma~\ref{lem:TildeXiWellDef} is linear over $\C[U_1,\ldots,U_n]$ and satisfies $\widetilde{\Xi}(f_{\x,\y}) = \Xi(f_{\x,\y})$ for all big-step generators $f_{\x,\y}$ of $\B^{\C}_l(n,k)$.
\end{lemma}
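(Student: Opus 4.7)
I would treat the two assertions separately. For $\C[U_1,\ldots,U_n]$-linearity of $\widetilde{\Xi}$, note that under Proposition~\ref{prop:MMWThm1} the element $U_j$ acts on $\B^{\C}_l(n,k)$ as $\sum_{\x \in V_l(n,k)} U_j \Ib_{\x}$, which is the sum of all small-step $U_j$-loops. Lemma~\ref{lem:TildeXiLemma} sends each loop to $x_j \1_{\mu^{\x}}$, whose sum realizes the image of $x_j$ under the $\C[U_1,\ldots,U_n]$-algebra structure on $\cal{A}_{n,k}$; combined with the ring homomorphism property proved in Lemma~\ref{lem:TildeXiWellDef}, this yields linearity.

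For the identity $\widetilde{\Xi}(f_{\x,\y}) = \Xi(f_{\x,\y})$, I would handle the too-far and $\x = \y$ cases first, where both sides agree trivially ($f_{\x,\y}=0$ when $\x,\y$ are too far by Proposition~\ref{prop:OSzBasis}, while $f_{\x,\x} = \Ib_{\x}$ is sent to $\1_{\mu^{\x}}$ in the diagonal case). For remaining pairs, I would induct on $d(\x,\y) := \sum_i |x_i - y_i|$. The key sub-claim is the existence of a one-step neighbor $\z$ of $\x$ in the small-step quiver satisfying $d(\z,\y) = d(\x,\y)-1$ and $g^{\x,\z,\y} = 0$, so that $f_{\x,\y} = f_{\x,\z} f_{\z,\y}$ holds in the big-step algebra.

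Assuming without loss of generality that some dot of $\x$ lies strictly left of its counterpart in $\y$ (the opposite case being symmetric), pick the largest index $j^*$ with $x_{j^*} < y_{j^*}$ and move dot $j^*$ one step right. Either $j^* = k$ or the maximality of $j^*$ together with the not-too-far hypothesis forces $x_{j^*+1} > x_{j^*}+1$: otherwise $x_{j^*+1} = x_{j^*}+1$, but then $y_{j^*+1} \geq y_{j^*}+1 \geq x_{j^*}+2 > x_{j^*+1}$ would contradict maximality. The resulting edge $\x \to \z$ is an $R_{x_{j^*}+1}$ generator, and the strict inequality $v_i^{\x} < v_i^{\y}$ at $i = x_{j^*}+1$ yields $g^{\x,\z,\y} = 0$ via the identity $|a+1| = |a|-1$ for $a \leq -1$. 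Applying $\widetilde{\Xi}$ to the factorization and using Lemma~\ref{lem:TildeXiLemma} on the single edge (giving $1$ in the $R_i$ case, or $x_i$ in the $L_i$ case arising in the symmetric setting) together with the inductive hypothesis on $f_{\z,\y}$, the claim reduces to comparing the exponents $c_i = \max(v_i^{\x} - v_i^{\y},0)$ for $(\x,\y)$ versus $(\z,\y)$: these match because in the $R_i$ case both are zero at position $i$ (since $v_i^{\x} - v_i^{\y} \leq -1$) and unchanged elsewhere, while in the $L_i$ case the exponent at $i$ drops by exactly $1$, matching the factor $x_i$.

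The main obstacle is the combinatorial construction of such a neighbor $\z$, since the factorization $f_{\x,\z} f_{\z,\y} = f_{\x,\y}$ requires the vanishing of every coordinate of $g^{\x,\z,\y}$ and not merely a shortening of distance. Once this one-step neighbor is produced, the remaining comparisons are mechanical applications of Lemma~\ref{lem:TildeXiLemma} and the explicit exponent formula of Proposition~\ref{prop:BigStepXi}.
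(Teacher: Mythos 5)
Your proposal is correct and follows essentially the same strategy as the paper: linearity over $\C[U_1,\ldots,U_n]$ falls out of the fact that $U_i$ acts via the small-step loops, which $\widetilde{\Xi}$ sends to multiplication by $x_i$; and the identity $\widetilde{\Xi}(f_{\x,\y}) = \Xi(f_{\x,\y})$ is proved by peeling off one small-step generator at a time and comparing the exponent vectors $c_i = \max(v_i^{\x} - v_i^{\y}, 0)$ before and after. Your maximality argument for finding the one-step neighbor $\z$ and the exponent bookkeeping match what the paper does.

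There are, however, two genuine improvements in your version worth noting. First, you induct on $d(\x,\y) = \sum_i |x_i - y_i|$, which strictly decreases by $1$ with each one-step move, whereas the paper inducts on $k - |\x \cap \y|$; the latter need not strictly decrease (take $\x = \{0,1\}$, $\y = \{1,2\}$: moving the dot at $1$ to $2$ gives $\x' = \{0,2\}$ with $|\x' \cap \y| = |\x \cap \y| = 1$), so your induction measure is the one that actually works without further comment. Second, you verify $g^{\x,\z,\y} = 0$ directly, which yields $f_{\x,\y} = f_{\x,\z} f_{\z,\y}$ intrinsically in the big-step description and makes the argument self-contained; the paper instead delegates the one-step factorization to the recursive construction of the path $\gamma_{\x,\y}$ from \cite[Definition 2.28]{MMW1}. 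These are modest but real gains in transparency; everything else (the two symmetric $R_i$/$L_i$ cases, the verification that the moved dot lands on a free region) is the same.
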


\begin{proof}
Since $\C[U_1,\ldots,U_n]$ acts on the small-step description of $\B^{\C}_l(n,k)$ via the small-step generators with label $U_i$, $\widetilde{\Xi}$ sends these to Soergel module endomorphisms that multiply by $x_i$, and $\C[U_1,\ldots,U_n]$ acts on $\cal{A}_{n,k}$ by having $U_i$ multiply by $x_i$, the map $\widetilde{\Xi}$ is $\C[U_1,\ldots,U_n]$-linear.

To show that $\widetilde{\Xi}$ maps $f_{\x,\y}$ as claimed, first note that if $\x$ and $\y$ are too far then $f_{\x,\y}$ is zero in $\B^{\C}_l(n,k)$, so $\widetilde{\Xi}$ maps it to $0 = \Xi(f_{\x,\y})$ in $\cal{A}_{n,k}$.

When $\x$ and $\y$ are not too far, we will proceed by induction on $k - |\x \cap \y|$ using a small-step path $\gamma_{\x,\y}$ representing $f_{\x,\y}$ in $\B_l(n,k)$ under the isomorphism of Proposition~\ref{prop:MMWThm1}.  We make use of a recursive definition of $\gamma_{\x,\y}$ from \cite[Definition 2.28]{MMW1}.

If $k - |\x \cap \y|$ is zero, then $\x = \y$ and the claim follows. Assume that $\x \neq \y$ and that the claim holds for all $(\x,\y)$ with $k - |\x' \cap \y'| < k - |\x \cap \y|$. We consider two cases:
\begin{itemize}
\item If $x_a < y_a$ for some $a$, let $a$ be the maximal such index. Since $\x$ and $\y$ are not too far, we must have $y_a = x_a + 1$, and since $a$ is maximal, we must have $x_a + 1 \notin \x$. Let $\x' = (\x \setminus \{x_a\}) \cup \{x_a + 1\}$; by construction, we have $\gamma_{\x,\y} = R_{x_a + 1} \gamma_{\x',\y}$. Thus, by the induction hypothesis,
\begin{align*}
\widetilde{\Xi}(\gamma_{\x,\y}) &= \widetilde{\Xi}(R_{x_a + 1}) \widetilde{\Xi}(\gamma_{\x',\y}) \\
&= (1 \mapsto 1) \circ (1 \mapsto x_1^{c'_1} \cdots x_n^{c'_n})
\end{align*}
where $c'_i = \max(v_i^{\x'} - v_i^{\y},0)$. For all $i$ except $i = x_a + 1$, we have $v_i^{\x'} = v_i^{\x}$, and we have $v_{x_a+1}^{\x'} = v_{x_a+1}^{\x} + 1$. Since $v_{x_a+1}^{\x} - v_{x_a+1}^{\y} = -1$, we have $v_{x_a+1}^{\x'} - v_{x_a+1}^{\y} = 0$. We see that for all $i$, $c'_i = c_i = \max(v_i^{\x} - v_i^{\y},0)$, so that
\[
\widetilde{\Xi}(\gamma_{\x,\y}) = (1 \mapsto x_1^{c_1} \cdots x_n^{c_n})
\]
as claimed.

\item If $x_a \geq y_a$ for all $a$, then $x_a > y_a$ for some minimal $a$ because $\x \neq \y$. As above, $y_a = x_a - 1$ and $x_a - 1 \notin \x$. Letting $\x' = (\x \setminus \{x_a\}) \cup \{x_a - 1\}$, we have $\gamma_{\x,\y} = L_{x_a} \gamma_{\x',\y}$. We get
\begin{align*}
\widetilde{\Xi}(\gamma_{\x,\y}) &= \widetilde{\Xi}(L_{x_a}) \widetilde{\Xi}(\gamma_{\x',\y}) \\
&= (1 \mapsto x_{x_a}) \circ (1 \mapsto x_1^{c'_1} \cdots x_n^{c'_n})
\end{align*}
where $c'_i = \max(v_i^{\x'} - v_i^{\y},0)$. For all $i$ except $i = x_a$, we have $v_i^{\x'} = v_i^{\x}$, and we have $v_{x_a}^{\x'} = v_{x_a}^{\x} - 1$. Since $v_{x_a}^{\x} - v_{x_a}^{\y} = 1$, we have $v_{x_a}^{\x'} - v_{x_a}^{\y} = 0$. We see that for $i \neq x_a$, we have $c'_i = c_i = \max(v_i^{\x} - v_i^{\y},0)$, while $c'_{x_a} = c_{x_a} - 1  (= 0)$. Again, it follows that for all $i$, we have
\[
\widetilde{\Xi}(\gamma_{\x,\y}) = (1 \mapsto x_1^{c_1} \cdots x_n^{c_n})
\]
as claimed.
\end{itemize}
\end{proof}

\begin{proof}[Proof of Proposition~\ref{prop:BigStepXi}]
Since $\B^{\C}_l(n,k)$ is generated over $\C[U_1,\ldots,U_n]$ by the elements $f_{\x,\y}$, and both $\Xi$ and $\widetilde{\Xi}$ are $\C[U_1,\ldots,U_n]$-linear, Lemma~\ref{lem:TildeXiOnFxy} implies that $\Xi = \widetilde{\Xi}$. Thus, $\Xi$ is a well-defined algebra homomorphism; surjectivity of $\Xi$ follows from Proposition~\ref{prop:SartoriSoergel}, item \eqref{it:SoergelMorphismBasis}.
\end{proof}

\begin{proposition}
The surjective homomorphism $\Xi\co \B^{\C}_{l}(n,k) \to \mathcal{A}_{n,k}$ sends the anti-autormorphism $\psi_{OSz}$ from Section~\ref{subsec:PsiOS} to the anti-automorphism $\psi_{S}$ from Section~\ref{sec:Sart-anti}.
\end{proposition}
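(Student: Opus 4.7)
The plan is to reduce to a check on algebra generators. The composition $\Xi \circ \psi_{OSz}$ is a $\C$-algebra anti-homomorphism $\B^{\C}_l(n,k) \to \cal{A}_{n,k}$ (being the composition of an anti-homomorphism with a homomorphism), and similarly $\psi_S \circ \Xi$ is a $\C$-algebra anti-homomorphism. Since Proposition~\ref{prop:MMWThm1} exhibits $\B^{\C}_l(n,k)$ as generated over $\C$ by the small-step quiver generators $R_i$, $L_i$, and $U_i$, it suffices to verify $(\Xi \circ \psi_{OSz})(a) = (\psi_S \circ \Xi)(a)$ when $a$ is each of these three types of generator.

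The key numerical input comes from Remark~\ref{rem:OSzSarStates}, which gives $b_i^{\mu^{\x}} = v_i^{\x} + 1$ for any left I-state $\x$. Thus, if $R_i$ is an arrow from $\x$ (with $i-1 \in \x$) to $\y = (\x \setminus \{i-1\}) \cup \{i\}$, then $\mathbf{b}^{\mu^{\y}} - \mathbf{b}^{\mu^{\x}} = v^{\y} - v^{\x}$, which has entry $1$ in position $i$ and $0$ elsewhere, so $x^{\mathbf{b}^{\mu^{\y}} - \mathbf{b}^{\mu^{\x}}} = x_i$. For the corresponding $L_i$ arrow from $\y$ to $\x$, the difference $\mathbf{b}^{\mu^{\x}} - \mathbf{b}^{\mu^{\y}}$ has entry $-1$ in position $i$; its formal monomial times $x_i$ evaluates to $1$, and this cancellation is legitimate because the definition of $\psi_S$ only uses the product of this monomial with the image polynomial, which in our case has nonnegative exponents. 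For a $U_i$ loop the weight difference is zero, and both $\psi_{OSz}$ and $\psi_S$ fix the generator.

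Combining these observations with the explicit formulas of Lemma~\ref{lem:TildeXiLemma} yields the desired equality on generators. For instance, for an $R_i$ arrow from $\x$ to $\y$, one gets $\Xi(\psi_{OSz}(R_i)) = \Xi(L_i) = (1 \mapsto x_i) \co R_{\mathbf{b}^{\mu^{\x}}}^{\C} \to R_{\mathbf{b}^{\mu^{\y}}}^{\C}$, while $\psi_S(\Xi(R_i)) = \psi_S(1 \mapsto 1) = (1 \mapsto x_i)$ between the same spaces, and these agree. The $L_i$ case is entirely analogous (with the cancellation described above), and the $U_i$ case is immediate. The argument is a direct verification on generators, and the only real bookkeeping obstacle is tracking the convention that a quiver arrow from $\x$ to $\y$ corresponds to a Soergel morphism from $R_{\mathbf{b}^{\mu^\y}}^{\C}$ to $R_{\mathbf{b}^{\mu^\x}}^{\C}$, so that the source/target subscripts in $\psi_S$ interchange correctly.
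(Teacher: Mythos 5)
Your proposal is correct and takes essentially the same approach as the paper: verify on the small-step generators $R_i$, $L_i$, $U_i$ using the explicit formulas of Lemma~\ref{lem:TildeXiLemma} together with the relation $\mathbf{b}^{\mu^{\x}} = v^{\x} + (1,\ldots,1)$. The paper states this terse­ly as "it is clear," while you spell out the bookkeeping of the $\mathbf{b}$-sequence differences, but the argument is the same.
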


\begin{proof}
This is easiest to see in the small-step description of the homomorphism using the mappings from Lemma~\ref{lem:TildeXiLemma}.  Then it is clear that $\psi_{OSz}$, which swaps the roles of $R_i$ and $L_i$, agrees with the definition of $\psi_{S}$.
\end{proof}

\section{Characterizing illicit morphisms via generating intervals}

\subsection{Generating interval and the \texorpdfstring{$\Wa$}{W-alpha} submodule}

We now begin to work with Sartori-style constructions over $\Z$ rather than $\C$.

\begin{definition}  \label{def:newW}
For $\mu,\l \in D_{n,k}$, let $\Wa_{\l,\mu} \subset \Hom_R(R_{\mathbf{b}^{\l}}, R_{\mathbf{b}^{\mu}} )$  be the $R$-submodule  defined by:
\begin{enumerate}[(i)]
  \item if $\l$ and $\mu$ are too far in the sense of Definition~\ref{def:Sart-toofar}, then $$\Wa_{\l,\mu} = \Hom_R(R_{\mathbf{b}^{\l}}, R_{\mathbf{b}^{\mu}} ) ;$$

  \item otherwise, define $\Wa_{\l,\mu}$ to be the submodule generated by the morphisms
\begin{equation} \label{eq:Wa-illicit}
   1 \mapsto \left(x_{\alpha(j)}, x_{\alpha(j) +1 } \dots x_{\beta(j)} \right)
   \left(
   x_1^{c_1} \dots x_n^{c_n}
   \right)  \qquad \text{for $1 \leq j \leq n-k$},
\end{equation}
where $c_i = \max\left( b_i^{\mu} - b_i^{\l} , 0\right)$ and
\begin{equation} \label{eq-Wa-alpha}
\alpha(j) = \max\left( \vee_j^{\l}, \vee_j^{\mu} \right), \qquad
  \beta(j) =
  \left\{
    \begin{array}{ll}
      \min\left( \vee_{j+1}^{\mu} \vee_{j+1}^{\l} \right) -1, & \hbox{if $j < n-k$,} \\
      n, & \hbox{if $j=n-k$.}
    \end{array}
  \right.
\end{equation}
\end{enumerate}

\end{definition}

\begin{proposition}[cf. Theorem 4.17 \cite{Sar-diagrams} ] \label{prop:WaW}
For all $\mu,\l \in D_{n,k}$, we have $\Wa_{\l,\mu} \otimes \C \subset \cal{W}_{\l,\mu}$, so that the submodule $\Wa_{\l,\mu} \otimes \C$ contains only illicit morphisms.
\end{proposition}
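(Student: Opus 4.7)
The plan is to verify that each $R$-module generator of $\Wa_{\l,\mu}$ gives rise, after base change to $\C$, to an illicit morphism. The guiding principle is that illicit morphisms form a two-sided ideal under composition: if $f$ factors through some ${\sf C}_y$ with $y$ a longest coset representative not in $D'$, then so do $gf$ and $fh$ for any Soergel module morphisms $g, h$. This reduces the claim to producing, for each generator, an explicit factorization through an illicit morphism from Lemma~\ref{lem:illicit}.

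For the too-far case, where $\Wa_{\l,\mu}$ is declared to be the full Hom space, the approach is to interpolate between $\l$ and $\mu$ by a chain of $\wedge\vee$-sequences in which at least one consecutive pair exhibits the three-letter configuration of Lemma~\ref{lem:illicit}~\eqref{ill:TooFar}, then factor every morphism $R_{\mathbf{b}^{\l}}^{\C} \to R_{\mathbf{b}^{\mu}}^{\C}$ through the intermediate Soergel modules; Lemma~\ref{lem:illicit}~\eqref{ill:TooFar} guarantees an all-illicit link in the chain, forcing the composition to be illicit. For the not-too-far case, I would treat each generator \eqref{eq:Wa-illicit} separately by factoring it as either $(1 \mapsto x^c) \circ (1 \mapsto x_{\alpha(j)} \cdots x_{\beta(j)})$, with the right-hand factor an endomorphism of $R_{\mathbf{b}^{\l}}^{\C}$, or as $(1 \mapsto x_{\alpha(j)} \cdots x_{\beta(j)}) \circ (1 \mapsto x^c)$, with the left-hand factor an endomorphism of $R_{\mathbf{b}^{\mu}}^{\C}$. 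When $[\alpha(j), \beta(j)]$ coincides with the generating interval $[\vee_j^{\l}, \vee_{j+1}^{\l} - 1]$ of $\l$, the first factorization places the illicit endomorphism of Lemma~\ref{lem:illicit}~\eqref{ill:GenInts} on the source side; the symmetric situation for $\mu$ uses the target side. The boundary case $j = n-k$ is immediate because $b_n^{\mu} = 1$ forces $x_n \equiv 0$ in $R_{\mathbf{b}^{\mu}}^{\C}$, so the generator is literally the zero morphism and trivially illicit.

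The main obstacle is the mixed sub-case in which $[\alpha(j), \beta(j)]$ is strictly contained in both $[\vee_j^{\l}, \vee_{j+1}^{\l} - 1]$ and $[\vee_j^{\mu}, \vee_{j+1}^{\mu} - 1]$, so that neither multiplication above lies directly among the illicit endomorphisms of Lemma~\ref{lem:illicit}~\eqref{ill:GenInts}. The proposed resolution is to introduce a third $\wedge\vee$-sequence $\nu$ for which $[\alpha(j), \beta(j)]$ does coincide with one of the generating intervals of $\nu$ and through which the base morphism $R_{\mathbf{b}^{\l}}^{\C} \to R_{\mathbf{b}^{\mu}}^{\C}$ factors; the middle multiplication is then an illicit endomorphism of $R_{\mathbf{b}^{\nu}}^{\C}$ by Lemma~\ref{lem:illicit}~\eqref{ill:GenInts} applied to $\nu$, and closure of illicit morphisms under composition concludes. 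The passage from $\C$ back to the $\Z$-defined $\Wa_{\l,\mu}$ requires no extra work, since $\Wa_{\l,\mu} \otimes \C$ is spanned by the $\C$-images of the $R$-module generators of $\Wa_{\l,\mu}$ over $\Z$, so it suffices to check the claim on those generators after base change.
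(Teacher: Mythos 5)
Your approach is genuinely different from the paper's. The paper's proof is a short consequence of the homomorphism $\Xi$ constructed in Proposition~\ref{prop:BigStepXi}: by Lemma~\ref{lem:SartoriStyleGenInts}, each generator of $\Wa_{\l,\mu}\otimes\C$ is $\Xi(p_G f_{\x,\y})$ for a generating interval $G$, and $p_G f_{\x,\y}=0$ in $\B_l(n,k)$ by Proposition~\ref{prop:OSzBasis}, so each generator vanishes in $\cal{A}_{n,k}$ and is therefore in $\cal{W}_{\l,\mu}$. The too-far case is likewise automatic since $f_{\x,\y}=0$ and $\Xi$ is surjective onto $\1_{\mu}\cal{A}_{n,k}\1_{\l}$. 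Your route instead tries to exhibit an explicit factorization through the illicit morphisms of Lemma~\ref{lem:illicit}, which is closer in spirit to Sartori's original proof of his Theorem 4.17 for $\tilde{\cal{W}}$; but that proof does not extend verbatim to $\Wa$ (this is the whole point of Example~\ref{ex:TildeWCounterex} and Remark~\ref{rem:SartoriProp58Counterex}), which is precisely why the present paper replaces it.

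There are concrete gaps in your write-up. First, the claim that $b_n^{\mu}=1$ forces $x_n\equiv 0$ in $R_{\mathbf{b}^{\mu}}^{\C}$ is false: $b_n^{\mu}=1$ only imposes $h_1(x_1,\dots,x_n)=0$, i.e.\ $x_n=-(x_1+\dots+x_{n-1})$, which need not vanish. (For $n=2$, $k=1$, $\mu=\vee\wedge$, one has $R_{\mathbf{b}^{\mu}}^{\C}\cong\C[x_1]/(x_1^2)$ with $x_2=-x_1\neq 0$.) The $j=n-k$ generator does in fact vanish in $\Hom_{R_\C}(R^{\C}_{\mathbf{b}^{\l}},R^{\C}_{\mathbf{b}^{\mu}})$, but for a subtler reason that requires the relations from $I_{\mathbf{b}^{\mu}}$ in combination, and Lemma~\ref{lem:illicit}\eqref{ill:GenInts} explicitly excludes $j=n-k$, so you cannot fall back on it. Second, in the mixed sub-case (say $\vee_j^{\mu}>\vee_j^{\l}$ and $\vee_{j+1}^{\mu}>\vee_{j+1}^{\l}$), the proposed detour through a third sequence $\nu$ is not justified: you would need to exhibit $\nu$ with $\vee_j^{\nu}=\alpha(j)$, $\vee_{j+1}^{\nu}=\beta(j)+1$, not too far from $\l$ and $\mu$, and with $b_i^{\nu}$ lying between $b_i^{\l}$ and $b_i^{\mu}$ for all $i$ so that $\max(b_i^{\nu}-b_i^{\l},0)+\max(b_i^{\mu}-b_i^{\nu},0)=\max(b_i^{\mu}-b_i^{\l},0)$ and the base morphism $1\mapsto x^c$ factors through $R_{\mathbf{b}^{\nu}}^{\C}$ with degrees matching. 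None of this is checked. Third, the too-far case argument ("factor every morphism through intermediate Soergel modules") asserts a nontrivial factorization property that you do not prove. All of these issues are what the paper's proof via $\Xi$ was designed to avoid.
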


\begin{proof}
Identify $\x$ with $\mu$ and $\y$ with $\l$. Observe that
$\vee_{j}^{\mu} = h_j^{\x}+1$.  By Lemma~\ref{lem:SartoriStyleGenInts}, the element of $\cal{A}_{n,k}$ represented by each generator of $\Wa_{\l,\mu} \otimes \C$ is the image under the homomorphism $\Xi$ of $p_G(U_1,\ldots,U_n) f_{\x,\y}$ for some generating interval $G = [j+1,\ldots,j+l]$ between $\x$ and $\y$ as defined in Definition~\ref{def:generatingint}. These elements $p_G(U_1,\ldots,U_n) f_{\x,\y}$ are zero in $\B_l(n,k)$, so $\Xi$ sends them to zero in $\cal{A}_{n,k}$. Thus, each generator of $\Wa_{\l,\mu} \otimes \C$ represents zero in $\cal{A}_{n,k}$ and is hence illicit.
\end{proof}

The statement above differs from \cite[Theorem 4.17]{Sar-diagrams} as we explain in the next section.

\subsection{Comparison with Sartori's Theorem 4.17 }

In \cite[Theorem 4.17]{Sar-diagrams}, Sartori defines a collection of illicit morphisms $\tilde{\cal{W}}_{\l,\mu}$ expressed in our terminology as follows.
For $\mu,\l \in D_{n,k}$, let $\tilde{\cal{W}}_{\l,\mu} \subset \Hom_R(R_{\mathbf{b}^{\l}}, R_{\mathbf{b}^{\mu}} )$ be defined as in Definition~\ref{def:newW} with $\alpha(j):=\vee^{\l}_j$ in \eqref{eq:Wa-illicit}.

Whenever $\l$ and $\mu$ are not too far and $\vee_j^{\mu} > \vee_j^{\l}$ for some $1 \leq j \leq n-k$, this definition differs from Definition~\ref{def:newW}.  Below we give an example showing that $\Wa_{\l,\mu} \nsubseteq\tilde{W}_{\l,\mu}$ (the same is true after complexification), so that $\tilde{W}_{\l,\mu}$ cannot be equivalent to the submodule of illicit morphisms.

\begin{example}[Comparing $\tilde{W}$ and $\Wa$]\label{ex:TildeWCounterex}
Let $\l= \vee \wedge \wedge \vee$ and $\mu = \wedge \vee \vee \wedge$. Then $b^{\l} = (3,2,1,1)$ and $b^{\mu} = (2,2,2,1)$. The submodule $\tilde{ W }_{\l,\mu}$ is generated by $(x_1x_2) (x_3)$ (from $j=1$) and $(x_4)(x_3)$ (from $j=2$). However, $\Wa_{\l,\mu}$ is generated by $(x_2) (x_3)$ and $(x_4)(x_3)$.

A $\Z$ analogue of Proposition~\ref{prop:SartoriSoergel}, item~\eqref{it:SoergelMorphismBasis} implies that a $\Z$-basis for the space of maps from $R_{\bf{b}^{\l}}$ to $R_{\bf{b}^{\mu}}$ is given by the set
\[
\{ x_1^{j_1} x_{2}^{j_2} x_3^{j_3}  \mid  0 \leq j_1 \leq 1, \, 0 \leq j_2 \leq 1, \, j_3 = 1 \} = \{x_3, \, x_1 x_3, \, x_2 x_3, \, x_1 x_2 x_3\}
\]
(since $c_3=1 \leq j_3 < b_3^{\mu} = 2$).

Notice that the $j=1$ generators of both $\tilde{W}_{\l,\mu}$ and $\Wa_{\l,\mu}$ are both elements of the above basis. The $j=2$ generator $x_3 x_4$ of these ideals is redundant in both cases since
\[
I_{\mathbf{b}^{\mu}} = \langle h_2(x_1) , h_2(x_1,x_2), h_2(x_1,x_2,x_3), h_1(x_1, x_2, x_3, x_4) \rangle
\]
so
\[
x_3 x_4 = -x_1 x_3 - x_2 x_3 - x_3^2 = x_1^2 + x_2^2 + x_1 x_2 =  h_2(x_1,x_2) \in I_{\mathbf{b}^{\mu}}.
\]
 It follows that $\{x_3, x_1 x_3, x_2 x_3\}$ is a basis for $\Hom_{R_{\C}}(R_{\bf{b}^{\l}}, R_{\bf{b}^{\mu}}) / \tilde{ W }_{\l,\mu}$; in particular, $x_2 x_3 \in \Wa_{\l,\mu} \setminus \tilde{W}_{\l,\mu}$.
\end{example}

\subsection{Fork diagrams and the dimension of homs mod illicits}\label{subsec:fork-illicit}
\subsubsection{Oriented fork diagrams}

Here we recall the notion of (enhanced) fork diagram from \cite[Section 5.1]{Sar-diagrams}. An {\em $m$-fork} is a  tree with a single root and valency $m$, with 1-forks called {\em rays}.  Let ${\bf H}_-$ (resp. ${\bf H}_+$) denote the lower (resp. upper) half plane.  A {\em lower fork diagram} is a collection of forks in ${\bf H}_-$ such the leaves of each $m$-fork are $m$ distinct points on the boundary $\partial {\bf H}_-$ of ${\bf H}_-$.  Upper fork diagrams are defined analogously.
Below is an example of an $m$-fork for $m=5$ and a lower and upper fork diagram.
\begin{equation} \label{ex:ul-fork}
\hackcenter{\begin{tikzpicture}[scale=0.8]
    \draw[thick, -] (-1.2,-.25) to[out=-90, in=140] (0,-1.25);
    \draw[thick, -] (-.6,-.25) to[out=-90, in=120] (0,-1.25);
    \draw[thick, -] (0,-.25) to  (0,-1.25);
    \draw[thick, -] (.6,-.25) to[out=-90, in=60] (0,-1.25);
    \draw[thick, -] (1.2,-.25) to[out=-90, in=40] (0,-1.25);
    \node at (0,-1.5) {root};
\end{tikzpicture}}
\qquad
\hackcenter{\begin{tikzpicture}[scale=0.8]
    \draw[dotted ] (-.25,-.25) to (5.05,-.25);
    \draw[thick, -] (0,-.25) to[out=-90, in=140]  (.6,-1.25);
    \draw[thick, -] (.6,-.25) to (.6,-1.25);
    \draw[thick, -] (1.2,-.25) to[out=-90, in=40] (.6,-1.25);
    \draw[thick, -] (1.8,-.25) -- (1.8,-1.5);
    \draw[thick, -] (2.4,-.25) -- (2.4,-1.5);
    \draw[thick, -] (3,-.25) to[out=-90, in=150] (3.9,-1.25);
    \draw[thick, -] (3.6,-.25) to[out=-90, in=130] (3.9,-1.25);
    \draw[thick, -] (4.2,-.25) to[out=-90, in=50] (3.9,-1.25);
    \draw[thick, -] (4.8,-.25) to[out=-90, in=30] (3.9,-1.25);
\end{tikzpicture}}
\qquad \qquad
\hackcenter{\begin{tikzpicture}[scale=0.8]
	\begin{scope}[yscale=-1,xscale=1]
 \draw[dotted ] (-.25,-.25) to (5.05,-.25);
    \draw[thick, -] (0,-.25) to  (0,-1.25);
    \draw[thick, -] (.6,-.25) to[out=-90, in=130] (.9,-1.25);
    \draw[thick, -] (1.2,-.25) to[out=-90, in=50] (.9,-1.25);
    \draw[thick, -] (1.8,-.25) -- (1.8,-1.25);
    \draw[thick, -] (2.4,-.25) -- (2.4,-1.25);
    \draw[thick, -] (3,-.25) to[out=-90, in=150] (3.9,-1.25);
    \draw[thick, -] (3.6,-.25) to[out=-90, in=130] (3.9,-1.25);
    \draw[thick, -] (4.2,-.25) to[out=-90, in=50] (3.9,-1.25);
    \draw[thick, -] (4.8,-.25) to[out=-90, in=30] (3.9,-1.25);
	\end{scope}
\end{tikzpicture}}
\end{equation}

If $c$ is a lower fork diagram with $|c \cap \partial {\bf H}_{-}| =n$ and $\l \in D_{n,k}$ then $c\l$ is a {\em unenhanced oriented lower fork diagram} if:
\begin{itemize}
 \item each $m$-fork for $m\geq 2$ is labeled\footnote{In \cite{Sar-diagrams} there is a typo indicating $m\geq 1$.} with exactly one $\vee$ and $m-1$ $\wedge$'s;
 \item the diagram begins at the left with a (possibly empty) sequence of rays labelled $\wedge$, and there are no other rays labelled $\wedge$ in c.
\end{itemize}
For example, in the labeled fork diagrams below
\[
\hackcenter{\begin{tikzpicture}[scale=0.8]
	\begin{scope}[yscale=-1,xscale=1]
    \draw[thick, -] (0,.25) to[out=90, in=220] (.6,1.25);
    \draw[thick, -] (.6,.25) -- (.6,1.25);
    \draw[thick, -] (1.2,.25) to[out=90, in=-40] (.6,1.25);
    \draw[thick, -] (1.8,.25) -- (1.8,1.25);
    \draw[thick, -] (2.4,.25) -- (2.4,1.25);
    \draw[thick, -] (3,.25) to[out=90, in=210] (3.9,1.25);
    \draw[thick, -] (3.6,.25) to[out=90, in=230] (3.9,1.25);
    \draw[thick, -] (4.2,.25) to[out=90, in=-50] (3.9,1.25);
    \draw[thick, -] (4.8,.25) to[out=90, in=-30] (3.9,1.25);
    \node at (0,0) {  $\wedge$};
    \node at (.6,0) {  $\vee$};
    \node at (1.2,0) {  $\wedge$};
    \node at (1.8,0) {  $\vee$};
    \node at (2.4,0) {  $\vee$};
    \node at (3,0) {  $\vee$};
    \node at (3.6,0) {  $\wedge$};
    \node at (4.2,0) {  $\wedge$};
    \node at (4.8,0) {  $\wedge$};
	\end{scope}
\end{tikzpicture}}
\qquad\quad
\hackcenter{\begin{tikzpicture}[scale=0.8]
	\begin{scope}[yscale=-1,xscale=1]
    \draw[thick, -] (0,.25) to[out=90, in=220] (.6,1.25);
    \draw[thick, -] (.6,.25) -- (.6,1.25);
    \draw[thick, -] (1.2,.25) to[out=90, in=-40] (.6,1.25);
    \draw[thick, -] (1.8,.25) -- (1.8,1.25);
    \draw[thick, -] (2.4,.25) -- (2.4,1.25);
    \draw[thick, -] (3,.25) to[out=90, in=210] (3.9,1.25);
    \draw[thick, -] (3.6,.25) to[out=90, in=230] (3.9,1.25);
    \draw[thick, -] (4.2,.25) to[out=90, in=-50] (3.9,1.25);
    \draw[thick, -] (4.8,.25) to[out=90, in=-30] (3.9,1.25);
    \node at (0,0) {  $\wedge$};
    \node at (.6,0) {  $\vee$};
    \node at (1.2,0) {  $\wedge$};
    \node at (1.8,0) {  $\vee$};
    \node at (2.4,0) {  $\wedge$};
    \node at (3,0) {  $\vee$};
    \node at (3.6,0) {  $\wedge$};
    \node at (4.2,0) {  $\wedge$};
    \node at (4.8,0) {  $\wedge$};
	\end{scope}
\end{tikzpicture}}
\quad\qquad
\hackcenter{\begin{tikzpicture}[scale=0.8]
	\begin{scope}[yscale=-1,xscale=1]
    \draw[thick, -] (0,.25) to[out=90, in=220] (.6,1.25);
    \draw[thick, -] (.6,.25) -- (.6,1.25);
    \draw[thick, -] (1.2,.25) to[out=90, in=-40] (.6,1.25);
    \draw[thick, -] (1.8,.25) -- (1.8,1.25);
    \draw[thick, -] (2.4,.25) -- (2.4,1.25);
    \draw[thick, -] (3,.25) to[out=90, in=210] (3.9,1.25);
    \draw[thick, -] (3.6,.25) to[out=90, in=230] (3.9,1.25);
    \draw[thick, -] (4.2,.25) to[out=90, in=-50] (3.9,1.25);
    \draw[thick, -] (4.8,.25) to[out=90, in=-30] (3.9,1.25);
    \node at (0,0) {  $\wedge$};
    \node at (.6,0) {  $\vee$};
    \node at (1.2,0) {  $\wedge$};
    \node at (1.8,0) {  $\vee$};
    \node at (2.4,0) {  $\vee$};
    \node at (3,0) {  $\vee$};
    \node at (3.6,0) {  $\wedge$};
    \node at (4.2,0) {  $\vee$};
    \node at (4.8,0) {  $\wedge$};
	\end{scope}
\end{tikzpicture}}
\]
only the first is an oriented fork diagram. The second has a $\wedge$ labelled ray not appearing at the beginning of the diagram, while the third is disallowed because the 4-fork has two strands labelled $\vee$.

 A {\em enhanced oriented lower fork diagram} $c\l^{\sigma}$ is a unenhanced lower fork diagram equipped with a bijection $\sigma$ between the vertices labelled $\wedge$ in $\l$ and the set $\{1,\dots, k\}$.  Unenhanced and enhanced upper fork diagrams are defined analogously.

By a {\em fork diagram} we mean a diagram of the form $ab$ obtained by gluing a lower fork diagram $a$ underneath an upper fork diagram $b$ with compatible endpoints on the boundaries.  An {\em unenhanced oriented fork diagram} is a fork diagram $a\l b$ in which both $a\l$ is an oriented lower fork diagram and $\l b$ is an oriented upper fork diagram.  Some examples of oriented fork diagrams are given below.
\begin{equation}
\hackcenter{\begin{tikzpicture}[scale=0.8, yscale =-1.0]
	\begin{scope}[yscale=-1,xscale=1]
    \draw[thick, -] (0,.25) to[out=90, in=220] (.6,1.25);
    \draw[thick, -] (.6,.25) -- (.6,1.25);
    \draw[thick, -] (1.2,.25) to[out=90, in=-40] (.6,1.25);
    \draw[thick, -] (1.8,.25) -- (1.8,1.25);
    \draw[thick, -] (2.4,.25) -- (2.4,1.25);
    \draw[thick, -] (3,.25) to[out=90, in=210] (3.9,1.25);
    \draw[thick, -] (3.6,.25) to[out=90, in=230] (3.9,1.25);
    \draw[thick, -] (4.2,.25) to[out=90, in=-50] (3.9,1.25);
    \draw[thick, -] (4.8,.25) to[out=90, in=-30] (3.9,1.25);
    \draw[thick, -] (0,-.25) to  (0,-1.25);
    \draw[thick, -] (.6,-.25) to[out=-90, in=130] (.9,-1.25);
    \draw[thick, -] (1.2,-.25) to[out=-90, in=50] (.9,-1.25);
    \draw[thick, -] (1.8,-.25) -- (1.8,-1.25);
    \draw[thick, -] (2.4,-.25) -- (2.4,-1.25);
    \draw[thick, -] (3,-.25) to[out=-90, in=150] (3.9,-1.25);
    \draw[thick, -] (3.6,-.25) to[out=-90, in=130] (3.9,-1.25);
    \draw[thick, -] (4.2,-.25) to[out=-90, in=50] (3.9,-1.25);
    \draw[thick, -] (4.8,-.25) to[out=-90, in=30] (3.9,-1.25);
    \node at (0,0) {  $\wedge$};
    \node at (.6,0) {  $\vee$};
    \node at (1.2,0) {  $\wedge$};
    \node at (1.8,0) {  $\vee$};
    \node at (2.4,0) {  $\vee$};
    \node at (3,0) {  $\vee$};
    \node at (3.6,0) {  $\wedge$};
    \node at (4.2,0) {  $\wedge$};
    \node at (4.8,0) {  $\wedge$};
	\end{scope}
\end{tikzpicture}}
\qquad \quad
\hackcenter{\begin{tikzpicture}[scale=0.8, yscale=-1.0]
	\begin{scope}[yscale=-1,xscale=1]
    \draw[thick, -] (0,.25) to[out=90, in=220] (.6,1.25);
    \draw[thick, -] (.6,.25) -- (.6,1.25);
    \draw[thick, -] (1.2,.25) to[out=90, in=-40] (.6,1.25);
    \draw[thick, -] (1.8,.25) -- (1.8,1.25);
    \draw[thick, -] (2.4,.25) -- (2.4,1.25);
    \draw[thick, -] (3,.25) to[out=90, in=210] (3.9,1.25);
    \draw[thick, -] (3.6,.25) to[out=90, in=230] (3.9,1.25);
    \draw[thick, -] (4.2,.25) to[out=90, in=-50] (3.9,1.25);
    \draw[thick, -] (4.8,.25) to[out=90, in=-30] (3.9,1.25);
    \draw[thick, -] (0,-.25) to[out=-90, in=140] (.6,-1.25);
    \draw[thick, -] (.6,-.25) -- (.6,-1.25);
    \draw[thick, -] (1.2,-.25) to[out=-90, in=40] (.6,-1.25);
    \draw[thick, -] (1.8,-.25) -- (1.8,-1.25);
    \draw[thick, -] (2.4,-.25) to[out=-90, in=130](2.7,-1.25);
    \draw[thick, -] (3,-.25) to[out=-90, in=50] (2.7,-1.25);
    \draw[thick, -] (3.6,-.25) to[out=-90, in=130] (4.2,-1.25);
    \draw[thick, -] (4.2,-.25) to  (4.2,-1.25);
    \draw[thick, -] (4.8,-.25) to[out=-90, in=50] (4.2,-1.25);
    \node at (0,0) {  $\vee$};
    \node at (.6,0) {  $\wedge$};
    \node at (1.2,0) {  $\wedge$};
    \node at (1.8,0) {  $\vee$};
    \node at (2.4,0) {  $\vee$};
    \node at (3,0) {  $\wedge$};
    \node at (3.6,0) {  $\wedge$};
    \node at (4.2,0) {  $\wedge$};
    \node at (4.8,0) {  $\vee$};
	\end{scope}
\end{tikzpicture}}
\label{eq:example-fork}
\end{equation}

The degree of an oriented $m$-fork for $m \geq 2$ is defined to be $(i-i_0)$ where $i$ is the index of the unique $\vee$ labelling the fork  and $i_0$ is the leftmost index in the fork; $1$-forks have degree zero. The degree of an  upper or lower oriented fork diagram is the sum of the degrees of its forks; the degree of an oriented fork diagram is the degree of its upper part plus the degree of its lower part.   Define the degree of a permutation $\sigma \in S_n$ as $\deg(\sigma) = 2\ell(\sigma)$. Then the degree of an enhanced oriented fork diagram is given by
\begin{equation}
\deg(a \l^{\sigma} b) = \deg(a\l) + \deg(\l b) + 2\ell(\sigma).
\end{equation}
For example, the first fork diagram in \eqref{eq:example-fork} has a degree 1 upper fork diagram and a degree zero lower fork diagram for a total degree of 1.  In the second example, the upper fork diagram has degree $0+3$ while the lower fork diagram has degree 0 + 0 + 2 for a total degree of 5.

For each sequence $\l$, we denote by $\und{\l}$ the unique lower fork diagram such that $\und{\l} \l^e$ is an oriented lower fork diagram of degree zero.  In other words, $\und{\l}$ is the fork diagram where each $\vee$ in $\l$ is the first vertex of an $m$-fork for some $m$.  We write $\bar{\lambda}$ for the unique degree zero upper fork diagram given by reflecting $\und{\l}$ across the horizontal axis.  For example,  the lower fork diagram in \eqref{ex:ul-fork} is $\und{\mu}$ for $\mu= \vee \wedge \wedge \vee \vee \vee \wedge \wedge \wedge$ and the upper fork diagram in \eqref{ex:ul-fork} is $\bar{\l}$ for $\l = \wedge \vee \wedge \vee \vee \vee \wedge \wedge \wedge$. Note that the same lower (resp. upper) fork diagram can correspond to $\wedge \vee$ sequences in $D_{n,k}$ for different $k$.  For example, the upper fork diagram in \eqref{ex:ul-fork} can also be identified with $\bar{\l}$ for $\l = \vee \vee \wedge \vee \vee \vee \wedge \wedge \wedge$.

\begin{definition}
Given two sequences $\mu,\l \in D_{n,k}$, define $\cal{Z}_{\mu,\l}$ to be the graded free abelian group with homogeneous basis given by
\[
 \left\{
 \und{\mu} \eta^{\sigma} \bar{\l} \mid \und{\mu} \eta^{\sigma} \bar{\l} \text{ is an oriented enhanced fork diagram}
\right\}.
\]
\end{definition}

\subsubsection{Counting fork diagrams}

For $\mu,\l \in D_{n,k}$ that are not too far, we would like an explicit formula for the  graded  rank of $\cal{Z}_{\mu,\l}$ (if $\mu$ and $\l$ are too far then the rank is zero). First, we recall a relevant lemma from \cite{Sar-diagrams}.

\begin{lemma}[cf. Lemma 5.5(i) of \cite{Sar-diagrams}]\label{lem:OrientedForkHelper}
Given $\mu,\eta \in D_{n,k}$, the lower fork diagram $\und{\mu} \eta$ is oriented if and only if
\[
\vee_i^{\mu} \leq \vee_i^{\eta} < \vee_{i+1}^{\mu}
\]
for $1 \leq i \leq n-k$, where we set $\vee_{n-k+1}^{\mu} = n+1$ by convention\footnote{In \cite{Sar-diagrams} there is a typo incorrectly indicating $1 \leq i \leq n-k-1$, rather than $1 \leq i \leq n-k$.  For example,
 take $\mu = \wedge \vee$ and $\eta = \vee \wedge$.} .
\end{lemma}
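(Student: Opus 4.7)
The plan is to unpack the fork structure of $\und{\mu}$ explicitly and then read off what ``oriented'' means position by position in $\eta$. First I would describe $\und{\mu}$ concretely: by definition every $\vee$ of $\mu$ is the leftmost leaf of a fork, and no other vertex is a fork root. Consequently, boundary positions $1, \dots , \vee_1^{\mu} - 1$ are leading rays (which are \emph{not} attached to any root coming from $\mu$), and for each $i\in\{1,\dots,n-k\}$ the $i$-th fork of $\und{\mu}$ is an $m_i$-fork with $m_i = \vee_{i+1}^{\mu} - \vee_i^{\mu}$ whose leaves occupy precisely the positions $\vee_i^{\mu}, \vee_i^{\mu}+1, \dots, \vee_{i+1}^{\mu}-1$ (using $\vee_{n-k+1}^{\mu} = n+1$). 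A straightforward count of leaves confirms this partitions $\{1,\dots,n\}$.

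Next I would translate the two bullets in the definition of oriented lower fork diagram into constraints on the $\eta$-labels. For a fork of size $m_i \geq 2$, orientation forces exactly one $\vee$ and $m_i-1$ $\wedge$'s among the labels $\eta_{\vee_i^{\mu}}, \dots, \eta_{\vee_{i+1}^{\mu}-1}$. For a fork of size $m_i = 1$ (i.e.\ $\vee_{i+1}^{\mu}=\vee_i^{\mu}+1$, or the terminal case $\vee_{n-k}^{\mu}=n$), this is a single ray at position $\vee_i^{\mu} \geq \vee_1^{\mu}$, which is not a leading ray, so orientation requires $\eta_{\vee_i^{\mu}} = \vee$. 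Finally, the leading rays at positions $1, \dots, \vee_1^{\mu}-1$ must all be labelled $\wedge$ in $\eta$.

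Combining these conditions uniformly, $\und{\mu}\eta$ is oriented if and only if each of the $n-k$ intervals $[\vee_i^{\mu}, \vee_{i+1}^{\mu}-1]$ contains exactly one $\vee$ of $\eta$ and the leading interval $[1, \vee_1^{\mu}-1]$ contains none. Since $\eta$ has precisely $n-k$ symbols $\vee$, listed in increasing order as $\vee_1^{\eta} < \vee_2^{\eta} < \dots < \vee_{n-k}^{\eta}$, a simple pigeonhole/sorting argument shows this distributional condition is equivalent to the numerical condition $\vee_i^{\mu} \leq \vee_i^{\eta} < \vee_{i+1}^{\mu}$ for every $1 \leq i \leq n-k$: the inequality $\vee_1^{\mu} \leq \vee_1^{\eta}$ encodes emptiness of the leading interval, and the remaining inequalities force the $i$-th $\vee$ of $\eta$ into the $i$-th fork interval.

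The only real subtlety is the terminal convention $\vee_{n-k+1}^{\mu} = n+1$, which is exactly what is needed so that the last fork (covering $\vee_{n-k}^{\mu}, \dots, n$) is treated on equal footing with the others; this is where the typo in \cite{Sar-diagrams} arises, and the example $\mu=\wedge\vee$, $\eta=\vee\wedge$ cited in the footnote verifies that the index $i=n-k$ must be included. No other step requires more than bookkeeping, so I do not anticipate serious obstacles beyond carefully stating the fork decomposition of $\und{\mu}$.
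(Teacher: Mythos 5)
Your plan takes essentially the same route as the paper's proof: decompose $\und{\mu}$ into the $\vee_1^{\mu}-1$ leading rays plus the $n-k$ fork intervals $[\vee_i^{\mu},\vee_{i+1}^{\mu}-1]$, $1\leq i\leq n-k$, and match the $\vee$'s of $\eta$ to these intervals; the converse direction and the final translation into the inequalities are fine as sketched.

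There is, however, a gap in the forward direction. You assert that a 1-fork at position $\vee_i^{\mu}$ ``is not a leading ray, so orientation requires $\eta_{\vee_i^{\mu}}=\vee$,'' and likewise that positions $1,\ldots,\vee_1^{\mu}-1$ must all carry $\wedge$. Neither claim follows immediately from the definition of oriented: whether a ray of $\und{\mu}\eta$ belongs to the initial $\wedge$-labelled prefix is determined by the $\eta$-labels, not by the unlabelled structure of $\und{\mu}$. A 1-fork at $\vee_i^{\mu}$ labelled $\wedge$ is not automatically outside that prefix (it can lie in the prefix whenever the first $i$ forks are all 1-forks and positions $1,\ldots,\vee_i^{\mu}$ are all labelled $\wedge$ in $\eta$), and symmetrically a leading ray could a priori carry a $\vee$. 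Ruling these out requires the $\vee$-count that your plan omits: $\eta$ has exactly $n-k$ $\vee$'s, orientation gives each fork of size at least $2$ exactly one $\vee$ and every fork at most one, so together with the prefix constraint one deduces that every 1-fork must be labelled $\vee$ and every leading ray must be labelled $\wedge$, else the total would not come out to $n-k$. This is precisely the bookkeeping the paper's proof carries out via its parameters $r_0$ and $r$; your outline should make this count explicit rather than appeal to the 1-forks being structurally ``not leading.''
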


\begin{proof}
Note that $\und{\mu}\mu$ has some $\wedge$-labeled rays at the left (say $r_0$ of them), followed by $n-k$ ``$\vee$-labeled forks'' (each with one $\vee$ label and $m-1$ $\wedge$ labels in $\mu$ for some $m \geq 1$). Say $\und{\mu}$ has $r \geq r_0$ rays before its first $m$-fork for $m \geq 2$.

Assume that $\und{\mu}\eta$ is oriented. For $1 \leq i \leq n-k$, if the $i$-th $\vee$-labeled fork of $\und{\mu}\mu$ is an $m$-fork for $m \geq 2$, then $\eta$ must have exactly one $\vee$ that is $\geq \vee_i^{\mu}$ but $< \vee_{i+1}^{\mu}$. There must also be a $\vee$ in $\eta$ for every $\vee$-labeled ray of $\und{\mu}\mu$ occurring to the right of an $m$-fork for $m \geq 2$. We have accounted for $n-k-r+r_0$ $\vee$'s in $\eta$, and $r - r_0$ of them remain. Since $\und{\mu}\eta$ is oriented, $\eta$ must have these $\vee$'s on the rightmost $r-r_0$ out of the initial $r$ rays of $\und{\mu}$. These are precisely the initial rays of $\und{\mu}$ on which $\mu$ has a $\vee$, so the condition in the statement of the lemma holds.

Conversely, if the condition holds, then each $m$-fork of $\und{\mu}$ with $m \geq 2$ has exactly one $\vee$ in $\eta$. Furthermore, $\und{\mu}\mu$ starts with a sequence of $\wedge$-labeled rays and no $\wedge$-labeled rays appear in $\und{\mu}\mu$ outside this sequence, so the condition implies that the same is true for $\und{\mu}\eta$. Thus, $\und{\mu} \eta$ is oriented.
\end{proof}

\begin{proposition} \label{prop:how-many-oriented}
Let $\mu,\l \in D_{n,k}$ correspond to left I-states $\x,\y$ that are not too far as in Remark~\ref{rem:OSzSarStates}. Suppose the generating intervals between $\x$ and $\y$ are $[j_i + 1, j_i + l_i]$ for $1 \leq i \leq n-k$. There are exactly $l_1 \cdots l_{n-k}$ choices of $\eta$ such that $\und{\mu} \eta \bar{\l}$ is oriented.
\end{proposition}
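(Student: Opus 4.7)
The plan is to reduce this count to a product of sizes of independent interval ranges, one for each index $i$, by combining the orientation conditions from Lemma~\ref{lem:OrientedForkHelper} applied to the lower and upper parts of $\und{\mu}\eta\bar{\l}$.

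First I would apply Lemma~\ref{lem:OrientedForkHelper} to the lower fork diagram $\und{\mu}\eta$ to get the condition $\vee_i^\mu \leq \vee_i^\eta < \vee_{i+1}^\mu$ for $1 \leq i \leq n-k$, using the convention $\vee_{n-k+1}^\mu := n+1$. Since $\bar{\l}$ is the horizontal reflection of $\und{\l}$, the upper fork diagram $\eta\bar{\l}$ is oriented if and only if $\und{\l}\eta$ is oriented, which by the same lemma gives $\vee_i^\l \leq \vee_i^\eta < \vee_{i+1}^\l$. Intersecting the two systems of inequalities, $\und{\mu}\eta\bar{\l}$ is oriented if and only if
\[
\max(\vee_i^\mu, \vee_i^\l) \leq \vee_i^\eta < \min(\vee_{i+1}^\mu, \vee_{i+1}^\l) \quad \text{for all } 1 \leq i \leq n-k.
\]

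Next I would translate these bounds into the Ozsv\'ath--Szab\'o hole-sequence language via Remark~\ref{rem:OSzSarStates}, which gives $\vee_i^\mu = h_i^\x + 1$ and $\vee_i^\l = h_i^\y + 1$, together with the conventions $h_{n-k+1}^\x = h_{n-k+1}^\y = n$ for left I-states (matching $\vee_{n-k+1}^\mu = n+1$). Lemma~\ref{lem:SartoriStyleGenInts} then identifies $j_i + 1 = \max(h_i^\x, h_i^\y) + 1 = \max(\vee_i^\mu, \vee_i^\l)$ and $j_i + l_i = \min(h_{i+1}^\x, h_{i+1}^\y) = \min(\vee_{i+1}^\mu, \vee_{i+1}^\l) - 1$, so the allowable values of $\vee_i^\eta$ form exactly the generating interval $\{j_i+1, \ldots, j_i+l_i\}$, contributing a factor of $l_i$.

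It remains to verify that these choices can be made independently and always yield a valid $\eta \in D_{n,k}$. Since the right endpoint of the $i$-th interval satisfies $j_i + l_i = \min(h_{i+1}^\x, h_{i+1}^\y) \leq \max(h_{i+1}^\x, h_{i+1}^\y) = j_{i+1}$, the $i$-th and $(i+1)$-th intervals are disjoint and respect the strict ordering $\vee_i^\eta < \vee_{i+1}^\eta$ required of a $\wedge\vee$ sequence. Conversely, any placement of the $n-k$ $\vee$'s in these intervals produces a valid sequence (with $\wedge$'s in the complementary positions) satisfying both orientation inequalities, and hence a valid oriented enhanced fork diagram $\und{\mu}\eta\bar{\l}$. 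Multiplying the $l_i$ choices for each $i$ gives the claimed total of $l_1 \cdots l_{n-k}$. There is no real obstacle here; the only subtlety is tracking the $i = n-k$ boundary convention consistently between Lemma~\ref{lem:OrientedForkHelper} and Lemma~\ref{lem:SartoriStyleGenInts}, which is handled by the left-I-state constraint $h_{n-k+1}^\x = h_{n-k+1}^\y = n$.
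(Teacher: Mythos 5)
Your argument matches the paper's proof essentially step for step: apply Lemma~\ref{lem:OrientedForkHelper} to both halves of the fork diagram, combine into a system of $\max/\min$ inequalities, translate to hole sequences, and invoke Lemma~\ref{lem:SartoriStyleGenInts} to identify the constraint sets with the generating intervals. The only difference is that you spell out the disjointness check that makes the choices independent, which the paper leaves implicit.
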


\begin{proof}
By Lemma~\ref{lem:OrientedForkHelper}, a choice of $\eta$ such that $\und{\mu} \eta \bar{\l}$ is oriented amounts to a choice of integers $1 \leq \vee_1^{\eta} < \cdots < \vee_{n-k}^{\eta} \leq n$ such that
\[
\max(\vee_i^{\mu},\vee_i^{\l}) \leq \vee_i^{\eta} < \min(\vee_{i+1}^{\mu},\vee_{i+1}^{\l})
\]
for $1 \leq i \leq n-k$, where we set $\vee_{n-k+1}^{\mu} = \vee_{n-k+1}^{\l} = n+1$. Translating $\vee$-sequences for $\mu$ and $\eta$ to hole sequences $h_i^{\x} = \vee_i^{\mu} - 1$ and $h_i^{\y} = \vee_i^{\l} - 1$ (see Remark~\ref{rem:OSzSarStates}), the above condition is equivalent to
\[
\max(h_i^{\x},h_i^{\y}) + 1 \leq \vee_i^{\eta} \leq \min(h_{i+1}^{\x}, h_{i+1}^{\y}).
\]
By Lemma~\ref{lem:SartoriStyleGenInts}, these inequalities are equivalent to $\vee_i^{\eta} \in \{j_i+1,\ldots,j_i+l_i\}$. It follows that the number of $\eta$ such that $\und{\mu} \eta \bar{\l}$ is oriented is equal to the number of ways to choose one element from each set $\{j_i + 1,\ldots,j_i + l_i\}$ for $1 \leq i \leq n-k$, proving the lemma.
\end{proof}

 We use the nonsymmetric quantum integers and factorials
\begin{equation} \label{eq:nonsym-qnum}
(k)_{q^2} := 1+q^2 + \dots + q^{2(k-1)}= q^{k-1}[k], \qquad (k)_{q^2}^{!} := (k)_{q^2}(k-1)_{q^2} \dots (1)_{q^2} = q^{k(k-1)/2}[k]!.
\end{equation}

\begin{corollary}\label{cor:ForkCount}
For $\mu,\l \in D_{n,k}$ that are not too far, corresponding to left I-states $\x,\y$ as in Remark~\ref{rem:OSzSarStates}, the graded rank of the graded free abelian group $\cal{Z}_{\mu,\l}$ is
\[
\rkq(\cal{Z}_{\mu,\l}) = q^d  (k)^!_{q^2} \cdot \prod_{i=1}^{n-k} (l_i)_{q^2},
\]
where $d=\sum_{i=1}^{k}|\wedge_i^{\l} -\wedge_i^{\mu}|$ and $l_1,\ldots,l_{n-k}$ are the lengths of the generating intervals between $\x$ and $\y$. Thus, the ungraded rank of $\cal{Z}_{\mu,\l}$ is $k! \prod_{i=1}^{n-k} l_i$. If $\mu,\l$ are too far, then $\cal{Z}_{\mu,\l} = 0$.
\end{corollary}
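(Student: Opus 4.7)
The plan is to combine Proposition~\ref{prop:how-many-oriented} with a careful computation of the degree of each oriented enhanced fork diagram. The total rank factors as a sum over $\eta$ with $\und{\mu}\eta\bar{\l}$ oriented, times a sum over enhancements $\sigma \in S_k$.

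First I would handle the too far case. By Lemma~\ref{lem:OrientedForkHelper} applied to both $\und{\mu}\eta$ and its reflection $\eta\bar{\l}$, any $\eta$ giving an oriented diagram must satisfy $\max(\vee_i^{\mu},\vee_i^{\l}) \leq \vee_i^{\eta} < \min(\vee_{i+1}^{\mu},\vee_{i+1}^{\l})$, and Definition~\ref{def:Sart-toofar} forces this range to be empty for some $i$, so $\cal{Z}_{\mu,\l}=0$.

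For the main case, I would split the degree as $\deg(\und{\mu}\eta^{\sigma}\bar{\l}) = \deg(\und{\mu}\eta) + \deg(\eta\bar{\l}) + 2\ell(\sigma)$ and sum separately. The enhancement sum gives the standard identity $\sum_{\sigma \in S_k} q^{2\ell(\sigma)} = (k)^!_{q^2}$, which accounts for the $(k)^!_{q^2}$ factor. For the fork degrees, I would use the description of $\und{\mu}$ in which the $i$-th fork has leftmost index $\vee_i^{\mu}$, so the $i$-th fork of $\und{\mu}\eta$ contributes $\vee_i^{\eta}-\vee_i^{\mu}$ (this covers both the $m\geq 2$ forks, where the formula is immediate, and the $1$-forks, which automatically have $\vee_i^{\eta}=\vee_i^{\mu}$ and contribute zero). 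Thus
\[
\deg(\und{\mu}\eta) + \deg(\eta\bar{\l}) = \sum_{i=1}^{n-k}\bigl(2\vee_i^{\eta} - \vee_i^{\mu} - \vee_i^{\l}\bigr).
\]
Then, reparametrizing by $t_i := \vee_i^{\eta} - \max(\vee_i^{\mu},\vee_i^{\l}) \in \{0,\dots,l_i-1\}$ (as in the proof of Proposition~\ref{prop:how-many-oriented}, with $l_i$ the length of the $i$-th generating interval), the $i$-th summand becomes $2t_i + |\vee_i^{\mu}-\vee_i^{\l}|$. Summing over all valid $\eta$ factors as
\[
\sum_{\eta} q^{\deg(\und{\mu}\eta) + \deg(\eta\bar{\l})} = q^{\sum_i |\vee_i^{\mu}-\vee_i^{\l}|}\prod_{i=1}^{n-k}(l_i)_{q^2}.
\]

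The last step is to identify the exponent with $d = \sum_{i=1}^{k}|\wedge_i^{\mu}-\wedge_i^{\l}|$. I would prove $\sum_i |\vee_i^{\mu}-\vee_i^{\l}| = \sum_i |\wedge_i^{\mu}-\wedge_i^{\l}|$ by the standard interpretation of both sides as the minimum number of adjacent $\wedge\vee$-transpositions taking $\mu$ to $\l$: each such elementary move shifts one $\wedge$ and one $\vee$ by a single position, contributing equally to both sums. Combining everything yields the claimed formula for $\rkq(\cal{Z}_{\mu,\l})$, and specialization to $q=1$ gives the ungraded count $k!\prod_i l_i$. The only mildly subtle point is the degree computation on the boundary cases ($1$-forks and the rightmost fork with $\vee^{\mu}_{n-k+1}=n+1$), but these both contribute $0$ consistently with the formula, so the sum is uniform across $i$.
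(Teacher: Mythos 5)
Your proof is correct and follows essentially the same route as the paper: use Proposition~\ref{prop:how-many-oriented} to parametrize the oriented $\eta$'s by choices in each generating interval, factor the graded count into the $(k)^!_{q^2}$ contribution from $\sigma$ and the $\prod_i (l_i)_{q^2}$ contribution from $\eta$, and identify the minimal degree with $d$. You actually fill in two points the paper glosses over: the paper simply asserts that the minimal-degree element has degree $d=\sum_i|\wedge_i^{\lambda}-\wedge_i^{\mu}|$, whereas your fork-by-fork computation produces $\sum_i|\vee_i^{\mu}-\vee_i^{\lambda}|$ directly and then you note the (standard, and true) identity $\sum_i|\vee_i^{\mu}-\vee_i^{\lambda}|=\sum_i|\wedge_i^{\mu}-\wedge_i^{\lambda}|$; your transposition-counting sketch of that identity is morally right, though if you wanted it airtight you would need to spell out both inequalities (each adjacent $\wedge\vee$-swap changes each sum by exactly $\pm1$, and a greedy sequence achieves the bound).
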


\begin{proof}
By the remarks at the end of Proposition~\ref{prop:how-many-oriented}, each choice of $\eta \in D_{n,k}$ such that $\und{\mu}\eta\bar{\l}$ is oriented corresponds to a choice of one element from each of the generating intervals $[j_i+1, j_i + l_i]$, $1 \leq i \leq n-k$, for $\x$ and $\y$. Choosing the $(j_i+1)$-st term of $\eta$ to be $\vee$ for all $i$ produces the lowest degree element $\und{\mu} \eta \bar{\l}$ which has degree $d=\sum_{i=1}^{k}|\wedge_i^{\l} -\wedge_i^{\mu}|$.  More generally, if $\eta$ has a $\vee$ in index $(j_i+1+\gamma_i)$ for $0 \leq \gamma_i < l_i$, then $\und{\mu} \eta \bar{\l}$ has degree $d + \sum_{i=1}^k 2\gamma_i$, so the choices in the $i$-th generating interval contribute a factor of $(l_i)_{q^2}$ to the graded rank.  The result then follows since
\[
 \sum_{\sigma \in S_k}  \deg_q\left( \und{\mu} \eta^{\sigma} \bar{\l} \right) = \sum_{\sigma \in S_k} q^{2l(\sigma)} \deg_q\left( \und{\mu} \eta\bar{\l}\right)
=(k)_{q^2}^!\deg_q\left( \und{\mu} \eta\bar{\l}\right)
\]
\end{proof}

\begin{example}[Number of oriented fork diagrams]
Let $n=12$ and $k=8$. Set
\begin{align*}
\l = (\vee, \wedge,\vee, \wedge, \vee, \wedge, \wedge, \vee, \wedge, \wedge, \wedge, \wedge), \qquad
\mu = (\vee, \wedge, \wedge, \vee, \wedge, \vee, \wedge, \wedge, \wedge, \vee, \wedge, \wedge).
\end{align*}
We have
\begin{align} \nn
\bar{\l}  \;\; := \;\;  \hackcenter{\begin{tikzpicture}[scale=0.8, yscale=-1.0]
    \draw[thick, -] (0,-.25) to[out=-90, in=130]  (.3,-1.25);
    \draw[thick, -] (.6,-.25) to [out=-90, in=50] (.3,-1.25);
    \draw[thick, -] (1.2,-.25) to[out=-90, in=130] (1.5,-1.25);
    \draw[thick, -] (1.8,-.25) to[out=-90, in=50] (1.5,-1.25);
    \draw[thick, -] (2.4,-.25) to[out=-90, in=140] (3,-1.25);
    \draw[thick, -] (3,-.25) -- (3,-1.25);
    \draw[thick, -] (3.6,-.25) to[out=-90, in=40] (3,-1.25);
    \draw[thick, -] (4.2,-.25) to[out=-90, in=160] (5.4,-1.25);
    \draw[thick, -] (4.8,-.25) to[out=-90, in=140] (5.4,-1.25);
    \draw[thick, -] (5.4,-.25) to (5.4,-1.25);
    \draw[thick, -] (6,-.25) to[out=-90, in=40] (5.4,-1.25);
    \draw[thick, -] (6.6,-.25) to[out=-90, in=20] (5.4,-1.25);
\end{tikzpicture}}
\qquad \quad
\und{\mu}  \;\; :=\;\; \hackcenter{\begin{tikzpicture}[scale=0.8, yscale=-1.0]
    \draw[thick, -] (0,.25) to[out=90, in=220] (.6,1.25);
    \draw[thick, -] (.6,.25) -- (.6,1.25);
    \draw[thick, -] (1.2,.25) to[out=90, in=-40] (.6,1.25);
    \draw[thick, -] (1.8,.25) to[out=90, in=230]  (2.1,1.25);
    \draw[thick, -] (2.4,.25) to[out=90, in=-50] (2.1,1.25);
    \draw[thick, -] (3,.25) to[out=90, in=210] (3.9,1.25);
    \draw[thick, -] (3.6,.25) to[out=90, in=230] (3.9,1.25);
    \draw[thick, -] (4.2,.25) to[out=90, in=-50] (3.9,1.25);
    \draw[thick, -] (4.8,.25) to[out=90, in=-30] (3.9,1.25);
    \draw[thick, -] (5.4,.25) to[out=90, in=220]  (6,1.25);
    \draw[thick, -] (6,.25) --  (6,1.25);
    \draw[thick, -] (6.6,.25)to[out=90, in=-40] (6.0,1.25);
\end{tikzpicture}}
\end{align}
so that oriented fork diagrams $\und{\mu} \eta \bar{\l}$ look like
\begin{align}
 \hackcenter{\begin{tikzpicture}[scale=0.8]
    \draw[thick, -] (0,.15) to[out=90, in=220] (.6,1.25);
    \draw[thick, -] (.6,.15) -- (.6,1.25);
    \draw[thick, -] (1.2,.15) to[out=90, in=-40] (.6,1.25);
    \draw[thick, -] (1.8,.15) to[out=90, in=230]  (2.1,1.25);
    \draw[thick, -] (2.4,.15) to[out=90, in=-50] (2.1,1.25);
    \draw[thick, -] (3,.15) to[out=90, in=210] (3.9,1.25);
    \draw[thick, -] (3.6,.15) to[out=90, in=230] (3.9,1.25);
    \draw[thick, -] (4.2,.15) to[out=90, in=-50] (3.9,1.25);
    \draw[thick, -] (4.8,.15) to[out=90, in=-30] (3.9,1.25);
    \draw[thick, -] (5.4,.15) to[out=90, in=220]  (6,1.25);
    \draw[thick, -] (6,.15) --  (6,1.25);
    \draw[thick, -] (6.6,.15)to[out=90, in=-40] (6.0,1.25);
    \draw[thick, -] (0,-.15) to[out=-90, in=130]  (.3,-1.25);
    \draw[thick, -] (.6,-.15) to [out=-90, in=50] (.3,-1.25);
    \draw[thick, -] (1.2,-.15) to[out=-90, in=130] (1.5,-1.25);
    \draw[thick, -] (1.8,-.15) to[out=-90, in=50] (1.5,-1.25);
    \draw[thick, -] (2.4,-.15) to[out=-90, in=140] (3,-1.25);
    \draw[thick, -] (3,-.15) -- (3,-1.25);
    \draw[thick, -] (3.6,-.15) to[out=-90, in=40] (3,-1.25);
    \draw[thick, -] (4.2,-.15) to[out=-90, in=160] (5.4,-1.25);
    \draw[thick, -] (4.8,-.15) to[out=-90, in=140] (5.4,-1.25);
    \draw[thick, -] (5.4,-.15) to (5.4,-1.25);
    \draw[thick, -] (6,-.15) to[out=-90, in=40] (5.4,-1.25);
    \draw[thick, -] (6.6,-.15) to[out=-90, in=20] (5.4,-1.25);
    \node at (1.2,0) {  $\wedge$};
    \node at (2.4,0) {  $\wedge$};
    \node at (4.2,0) {  $\wedge$};
    \node at (4.8,0) {  $\wedge$};
    \node[draw, thick,blue, rounded corners=4pt,inner sep=3pt] at (.3,0) {$\qquad$};
    \node[draw, thick,blue, rounded corners=4pt,inner sep=3pt] at (1.8,0) {$\;\;$};
    \node[draw, thick,blue, rounded corners=4pt,inner sep=3pt] at (3.3,0) {$\qquad$};
    \node[draw, thick,blue, rounded corners=4pt,inner sep=3pt] at (6,0) {$\qquad \quad$};
    \node[blue] at (.3,-2) {$\scs x_1 x_2$};
    \node[blue] at (1.8,-2) {$\scs x_4$};
    \node[blue] at (3.3,-2) {$\scs x_6 x_7$};
    \node[blue] at (6,-2) {$\scs x_{10} x_{11} x_{12}$};
    \node at (.6,1.5) {$\scs 1$};
    \node at (2.1,1.5) {$\scs 2$};
    \node at (3.9,1.5) {$\scs 3$};
    \node at (6,1.5) {$\scs 4$};
    \node at (.3,-1.5) {$\scs 1$};
    \node at (1.5,-1.5) {$\scs 2$};
    \node at (3,-1.5) {$\scs 3$};
    \node at (5.4,-1.5) {$\scs 4$};
\end{tikzpicture}}
\end{align}
where the blue circled regions correspond to the generating intervals and represent the possible locations of the $\vee$ in $\und{\mu}\eta \bar{\l}$. Note that the $i$-th generating interval is formed by all indices (necessarily consecutive) connecting the $i$-th $\vee$-labeled fork (not a $\wedge$-labeled ray) of $\mu$ to the $i$-th $\vee$-labeled fork of $\l$ (both $\mu$ and $\l$ have $n-k$ $\vee$-labeled forks).
\end{example}

\subsubsection{Fork diagrams and maps between Soergel modules}

Proposition~\ref{prop:WaW} gives us a natural surjective morphism of $R_{\C}$-modules from $\Hom_{R_{\C}}(R_{\bf{b}^{\l}}^{\C} , R_{\bf{b}^{\mu}}^{\C})/(\cal{W}^{\alpha}_{\l,\mu} \otimes \C)$ to $\Hom_{R_{\C}}(C_{w_k \l},C_{w_k \mu})/\cal{W}_{\l,\mu}$. We want to show that this map is an isomorphism, so that the explicit submodule $\cal{W}^{\alpha}_{\l,\mu} \otimes \C$ describes the space of illicit morphisms from $C_{w_k \l}$ to $C_{w_k \mu}$.

Indeed, we know the dimension of $\Hom_{R_{\C}}(C_{w_k \l},C_{w_k \mu})/\cal{W}_{\l,\mu}$ by a result of Sartori. Using the connection between $\cal{A}_{n,k}$ and subquotients of category $\cal{O}$, Sartori establishes the following lemma.

\begin{lemma}[Lemma 6.6 \cite{Sar-diagrams}]\label{lem:Sartori6.6}
We have
 \[
 \dim\Hom_{R_{\C}}(C_{w_k \l}, C_{w_k\mu})/ \cal{W}_{\l,\mu} = \dim (\cal{Z}_{\mu,\l} \otimes \C).
 \]
\end{lemma}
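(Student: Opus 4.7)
The plan is to invoke the graded equivalence ${}^{\Z}\cal{O}^{\mf{p},\mf{q}{\rm -pres}}_0 \cong \cal{A}_{n,k}{\rm -gmod}$ recalled in Section~\ref{sec:connectO}. This identifies the left-hand side of the claim, namely $\1_{\mu}\cal{A}_{n,k}\1_{\l} = \Hom_{R_{\C}}(C_{w_k\l}, C_{w_k\mu})/\cal{W}_{\l,\mu}$, with $\Hom_{\cal{O}}(P(\l), P(\mu))$ between two indecomposable projectives in the $\mf{q}$-presentable block. It therefore suffices to verify $\dim\Hom_{\cal{O}}(P(\l), P(\mu)) = k! \prod_i l_i$, matching the ungraded count of Corollary~\ref{cor:ForkCount}.

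I would then exploit the properly stratified structure of $\cal{O}^{\mf{p},\mf{q}{\rm -pres}}_0$: projectives admit filtrations by standard objects $\Delta(\eta)$ indexed by $\eta \in D_{n,k}$, and a BGG-type reciprocity yields
\begin{equation*}
\dim\Hom_{\cal{O}}(P(\l), P(\mu)) = \sum_{\eta \in D_{n,k}} (P(\l):\Delta(\eta)) \, (P(\mu):\nabla(\eta)),
\end{equation*}
graded-refined by parabolic Kazhdan--Lusztig polynomials. The cyclicity of ${\sf C}_{w_k\nu}$ asserted in Proposition~\ref{prop:SartoriSoergel}\eqref{it:SarProp4.5} translates to rational smoothness of the associated Schubert varieties, which forces the relevant Kazhdan--Lusztig polynomials to collapse to monomials. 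Each multiplicity $(P(\mu):\Delta(\eta))$ then becomes a $0/1$ quantity whose nonvanishing is controlled precisely by the orientability criterion for $\und{\mu}\eta$ in Lemma~\ref{lem:OrientedForkHelper}, with graded shift $q^{\deg(\und{\mu}\eta)}$ (and similarly for $P(\l)$ via $\eta\bar\l$). Summing over $\eta$ such that both $\und{\mu}\eta$ and $\eta\bar\l$ are oriented recovers the factor $\prod_i l_i$ via Proposition~\ref{prop:how-many-oriented}. The remaining factor $k!$ comes from the $\mf{p}$-parabolic structure: indecomposable projectives in $\cal{O}^{\mf{p}}_0$ absorb an extra coinvariant factor of ungraded rank $k!$ (graded rank $(k)^!_{q^2}$) from the $S_k$ acting on the Levi of $\mf{p}$, exactly matching the enhancement $\sigma \in S_k$ in the fork diagram formalism.

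The principal obstacle is the explicit identification of the parabolic, $\mf{q}$-presentable Kazhdan--Lusztig multiplicities with fork orientations, together with isolating the $S_k$-symmetry contribution from the $\mf{p}$-parabolic side; one cannot avoid some nontrivial input from the categorical/geometric picture to pin these down. As an alternative that partially circumvents category $\cal{O}$, one could combine the explicit basis of $\Hom_{R_{\C}}(C_{w_k\l}, C_{w_k\mu})$ from Proposition~\ref{prop:SartoriSoergel}\eqref{it:SoergelMorphismBasis} with the explicit generators of $\cal{W}^{\alpha}$ from Definition~\ref{def:newW} to compute $\dim\Hom/(\cal{W}^{\alpha}\otimes \C)$ directly in terms of a monomial basis indexed by enhanced oriented fork diagrams, show the result equals $\dim \cal{Z}_{\mu,\l}$, and then combine Proposition~\ref{prop:WaW} with a single inequality from the category-$\cal{O}$ side to force equality and rule out further hidden illicit morphisms.
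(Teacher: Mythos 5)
The paper does not prove this statement; it is imported directly as Lemma~6.6 of \cite{Sar-diagrams}, where it is established via the equivalence of $\cal{A}_{n,k}\textrm{-gmod}$ with the block ${}^{\Z}\cal{O}^{\mf{p},\mf{q}{\rm -pres}}_0$, and the paper then uses it as a black box to prove Corollary~\ref{cor:W}. Your first sketch is in the spirit of Sartori's argument, but you yourself flag the principal obstacle (matching the $\mf{q}$-presentable parabolic multiplicities with fork orientations) and leave it unresolved, so as written it is not a proof. There is also a concrete slip: you attribute the factor $k!$ to ``the $\mf{p}$-parabolic structure'' and ``the $S_k$ acting on the Levi of $\mf{p}$,'' but $\mf{p}$ has Levi type $(1,\ldots,1,n-k)$, so only an $S_{n-k}$ lives there. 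The $S_k$, and hence the $[k]_0^!$ discrepancy between $[\Delta(\eta)]$ and $[\bar{\Delta}(\eta)]$, arises from the $\mf{q}$-presentable quotient ($\mf{q}$ has Levi type $(k,1,\ldots,1)$); this is exactly why the block is properly stratified rather than quasi-hereditary, and your Cartan-matrix identity $\sum_\eta(P(\l):\Delta(\eta))(P(\mu):\nabla(\eta))$, which is the quasi-hereditary version, needs to be replaced by the properly stratified reciprocity that separates $\Delta$ from $\bar{\Delta}$ and $\nabla$ from $\bar{\nabla}$.

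Your proposed alternative is circular. Lemma~\ref{lem:NewerSartori58} and Proposition~\ref{prop:WaW} together already yield the chain
\[
\dim\Hom_{R_{\C}}(C_{w_k\l},C_{w_k\mu})/\cal{W}_{\l,\mu} \;\leq\; \dim\Hom_{R_{\C}}(R_{\mathbf{b}^{\l}}^{\C},R_{\mathbf{b}^{\mu}}^{\C})/(\Wa_{\l,\mu}\otimes\C) \;\leq\; \dim(\cal{Z}_{\mu,\l}\otimes\C),
\]
so the ``single inequality from the category-$\cal{O}$ side'' you would need to close the argument is precisely $\dim\Hom_{R_{\C}}(C_{w_k\l},C_{w_k\mu})/\cal{W}_{\l,\mu} \geq \dim(\cal{Z}_{\mu,\l}\otimes\C)$, which is the nontrivial content of Lemma~6.6 itself. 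The paper's logic runs the other way: it takes Sartori's Lemma~6.6 as an external input and uses the squeeze above to conclude $\cal{W}_{\l,\mu}=\Wa_{\l,\mu}\otimes\C$ in Corollary~\ref{cor:W}. Reversing the order does not circumvent the category-$\cal{O}$ computation; it just renames it.
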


To show that $\cal{W}_{\l,\mu} = \Wa_{\l,\mu} \otimes \C$, it therefore suffices to show that
\[
\dim \Hom_{R_{\C}}(R_{\bf{b}^{\l}}^{\C}, R_{\bf{b}^{\mu}}^{\C})/(\cal{W}^{\alpha}_{\l,\mu} \otimes \C) \leq \dim (\cal{Z}_{\mu,\l} \otimes \C).
\]

\begin{lemma}[cf. Proposition 5.8 of \cite{Sar-diagrams}, Remark~\ref{rem:SartoriProp58Counterex} below]\label{lem:NewerSartori58}
The image of the map $\Psi$ from oriented enhanced fork diagrams for $\mu$ and $\l$ (i.e. basis elements for $\cal{Z}_{\mu,\l}$) to $\Hom_{R}(R/I_{\mathbf{b}^{\l}},R/I_{\mathbf{b}^{\mu}})/ \Wa_{\l,\mu} $ sending
\[
\und{\mu} \eta^{\sigma} \bar{\l} \mapsto  \left(1 \mapsto p_{\und{\mu} \eta^{\sigma}}\right) +  \Wa_{\l,\mu} \nn,
\]
where
\begin{equation} \label{eq:p-poly}
  p_{\und{\mu} \eta^{\sigma}} = \mf{S}'_{\sigma}\left(x_{\wedge_1^{\eta}}, \dots , x_{\wedge_k^{\eta}} \right)
  \prod_{j=1}^{n-k} x_{\vee_j^{\mu}} x_{(\vee_j^{\mu}) +1 } \dots x_{\vee_j^{\eta} -1 } \in R,
\end{equation}
is degree zero and its image generates $\Hom_{R}(R_{\mathbf{b}^{\l}}, R_{\mathbf{b}^{\mu}})/ \Wa_{\l,\mu}$ over $\Z$. Thus, the image gives a homogenous spanning set for $\Hom_{R_{\C}}(R_{\mathbf{b}^{\l}}^{\C}, R_{\mathbf{b}^{\mu}}^{\C})/ (\Wa_{\l,\mu} \otimes \C)$ over $\C$.
\end{lemma}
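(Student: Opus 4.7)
The plan is to verify the degree-zero claim and the $\Z$-generation claim for the image of $\Psi$ in $\Hom_R(R_{\mathbf{b}^{\l}}, R_{\mathbf{b}^{\mu}})/\Wa_{\l,\mu}$.

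For the degree computation, I would expand $\deg p_{\und{\mu}\eta^{\sigma}}$ using $\deg x_i = 2$, so that the Schubert monomial $\mf{S}'_{\sigma}(x_{\wedge_1^{\eta}},\ldots,x_{\wedge_k^{\eta}})$ contributes $2\ell(\sigma)$ and the consecutive-variable products $\prod_j x_{\vee_j^{\mu}} x_{\vee_j^{\mu}+1}\cdots x_{\vee_j^{\eta}-1}$ contribute an explicit sum indexed by generating intervals. Comparing to the fork diagram degree $\deg(\und{\mu}\eta) + \deg(\eta\bar{\l}) + 2\ell(\sigma)$, I would invoke the identification of the $j$th $\vee$-labeled fork of $\und{\mu}$ (with leftmost position $\vee_j^{\mu}$) as the fork carrying the $j$th $\vee$ of $\eta$, and analogously for $\bar{\l}$, so that each fork's $(i-i_0)$ degree unpacks into the needed summand.

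For the generation claim, I would start from Sartori's explicit $\Z$-basis for $\Hom_R(R_{\mathbf{b}^{\l}}, R_{\mathbf{b}^{\mu}})$ provided by Proposition~\ref{prop:SartoriSoergel}, consisting of monomials $1 \mapsto x_1^{j_1}\cdots x_{n-1}^{j_{n-1}}$ with $c_i \leq j_i < b_i^{\mu}$, where $c_i = \max(b_i^{\mu}-b_i^{\l},0)$. The main step is to reduce each such basis monomial modulo $\Wa_{\l,\mu}$ to a $\Z$-linear combination of images $p_{\und{\mu}\eta^{\sigma}}$. The defining relations of $\Wa_{\l,\mu}$ say that within each generating interval $[\alpha(j),\beta(j)]$ the full product $x_{\alpha(j)} x_{\alpha(j)+1}\cdots x_{\beta(j)}$ (times $x^{\mathbf{c}} = x_1^{c_1}\cdots x_n^{c_n}$) lies in $\Wa_{\l,\mu}$; this truncates runs of consecutive variables in each interval, and the surviving partial product $x_{\vee_j^{\mu}}\cdots x_{\vee_j^{\eta}-1}$ encodes a choice of $\vee_j^{\eta}$ in that interval. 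The residual part of the monomial, in the variables $x_{\wedge_i^{\eta}}$ attached to the $\wedge$-positions of $\eta$, decomposes as a $\Z$-linear combination of the Schubert monomials $\mf{S}'_{\sigma}(x_{\wedge_1^{\eta}},\ldots,x_{\wedge_k^{\eta}})$ for $\sigma \in S_k$ via the classical fact that these form a $\Z$-basis for the coinvariant algebra of $S_k$.

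The main obstacle will be controlling the interplay between the two relation families: applying a relation from $I_{\mathbf{b}^{\mu}}$ (such as $h_{b_i^{\mu}}(x_1,\ldots,x_i) \equiv 0$) to push a monomial toward the desired form can introduce new terms with variables crossing generating interval boundaries, necessitating further application of $\Wa_{\l,\mu}$ relations, so the overall reduction must be organized as a combined induction on a total order of monomials. A useful consistency check is provided by Corollary~\ref{cor:ForkCount}, which counts $k!\prod_i l_i$ oriented enhanced fork diagrams; this matches the ungraded rank of $\Hom_{R_{\C}}({\sf C}_{w_k\l}, {\sf C}_{w_k\mu})/\cal{W}_{\l,\mu}$ over $\C$ via Lemma~\ref{lem:Sartori6.6}, confirming the spanning set has the correct size.
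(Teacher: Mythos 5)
Your proposal diverges from the paper's proof in a way that introduces a genuine gap. The paper's argument is substantially simpler than what you sketch: it starts from Sartori's explicit $\Z$-basis of $\Hom_R(R_{\mathbf{b}^{\l}}, R_{\mathbf{b}^{\mu}})$ (Proposition~\ref{prop:SartoriSoergel}, item~\eqref{it:SoergelMorphismBasis}), observes that the classes of those basis monomials lying in $\Wa_{\l,\mu}$ vanish in the quotient, and then shows that each remaining basis monomial $\mathfrak{m} \notin \Wa_{\l,\mu}$ is \emph{directly} of the form $p_{\und{\mu}\eta^{\sigma}}$ — not a $\Z$-linear combination of fork polynomials — via the argument in the final paragraph of Sartori's Proposition 5.8, with $\vee_j^\mu$ replaced by $\alpha(j) = \max(\vee_j^{\lambda}, \vee_j^{\mu})$. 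Because the basis monomials are already chosen representatives modulo $I_{\mathbf{b}^{\mu}}$, no reduction by the relations of $I_{\mathbf{b}^{\mu}}$ is ever performed, and the ``obstacle'' you flag about the interplay between $I_{\mathbf{b}^{\mu}}$ and $\Wa_{\l,\mu}$ simply never arises.

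Two specific issues with your version. First, the step where you claim the residual in the variables $x_{\wedge_i^{\eta}}$ ``decomposes as a $\Z$-linear combination of the Schubert monomials $\mf{S}'_{\sigma}(x_{\wedge_1^{\eta}}, \ldots, x_{\wedge_k^{\eta}})$ via the classical fact that these form a $\Z$-basis for the coinvariant algebra of $S_k$'' is not sound as stated: the ambient quotient is $R_{\mathbf{b}^{\mu}}$, not the $S_k$-coinvariant algebra in the variables $x_{\wedge_i^{\eta}}$, and rewriting modulo $I_{\mathbf{b}^{\mu}}$ introduces terms that break out of those variables and cross generating-interval boundaries. The constraint $c_i \leq j_i < b_i^{\mu}$ together with $\mathfrak{m} \notin \Wa_{\l,\mu}$ instead forces the exponents on $\wedge$-positions of $\eta$ to lie in the staircase range already, so that $\mathfrak{m}$ \emph{equals} a Schubert-staircase monomial without any reduction; this is what Sartori's argument delivers. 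Second, your ``consistency check'' is circular: Lemma~\ref{lem:Sartori6.6} gives $\dim \Hom_{R_{\C}}(C_{w_k\l}, C_{w_k\mu})/\cal{W}_{\l,\mu}$, but transporting that count to $\Hom/\Wa_{\l,\mu}$ requires $\cal{W}_{\l,\mu} = \Wa_{\l,\mu} \otimes \C$, which is Corollary~\ref{cor:W}, which is in turn proved \emph{from} the present lemma together with Lemma~\ref{lem:Sartori6.6}.
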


\begin{proof}
One can check that the elements given in Proposition~\ref{prop:SartoriSoergel}, item~\eqref{it:SoergelMorphismBasis} generate $\Hom_R(R_{\mathbf{b}^{\l}},R_{\mathbf{b}^{\mu}})$ over $\Z$. The set of such elements that are not in $\Wa_{\l,\mu}$ is a generating set for $\Hom_{R}(R_{\mathbf{b}^{\l}},R_{\mathbf{b}^{\mu}}) / \Wa_{\l,\mu}$; it suffices to show that this set is contained in the image of $\Psi$.

Indeed, to see that (the class of) any such basis element that is not in $\Wa_{\l,\mu}$ is in the image of $\Psi$, one can use the argument in the final paragraph of \cite[Proposition 5.8]{Sar-diagrams}. This argument works (replacing $\vee_j^{\mu}$ with $\alpha(j) = \max(v_j^{\lambda},v_j^{\mu})$ in the definition of $\ell_j$) assuming we are given a monomial $\mathfrak{m}$ that is not in $\Wa_{\l,\mu}$; see Remark~\ref{rem:SartoriProp58Counterex} below for a counterexample when we are given $\mathfrak{m} \in \Wa_{\l,\mu} \setminus \widetilde{\cal{W}}_{\l,\mu}$.
\end{proof}

\begin{corollary}\label{cor:W}
Under the identification
\[
\Hom_{R_{\C}}(C_{w_k \l},C_{w_k \mu}) \leftrightarrow \Hom_{R_{\C}}(R_{\mathbf{b}^{\l}}^{\C}, R_{\mathbf{b}^{\mu}}^{\C})
\]
given by Proposition~\ref{prop:SartoriSoergel}, item \eqref{it:ExplicitSoergelModules}, we have $\cal{W}_{\l,\mu} \leftrightarrow \Wa_{\l,\mu} \otimes \C$.
\end{corollary}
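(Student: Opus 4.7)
The plan is to combine the inclusion of Proposition~\ref{prop:WaW} with two dimension estimates to force equality. Concretely, Proposition~\ref{prop:WaW} gives $\Wa_{\l,\mu}\otimes\C\subset\cal{W}_{\l,\mu}$ (after identifying the two Hom spaces via Proposition~\ref{prop:SartoriSoergel}\eqref{it:ExplicitSoergelModules}), so there is a canonical surjection
\[
\Hom_{R_{\C}}(R_{\mathbf{b}^{\l}}^{\C},R_{\mathbf{b}^{\mu}}^{\C})\big/(\Wa_{\l,\mu}\otimes\C)\;\twoheadrightarrow\;\Hom_{R_{\C}}(C_{w_k\l},C_{w_k\mu})\big/\cal{W}_{\l,\mu}.
\]
It therefore suffices to show that the domain and codomain have the same $\C$-dimension, since a surjection between finite-dimensional spaces of equal dimension is an isomorphism, which upgrades the inclusion $\Wa_{\l,\mu}\otimes\C\subset\cal{W}_{\l,\mu}$ to the desired equality.

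For the codomain, Sartori's Lemma~\ref{lem:Sartori6.6} gives $\dim\Hom_{R_{\C}}(C_{w_k\l},C_{w_k\mu})/\cal{W}_{\l,\mu}=\dim(\cal{Z}_{\mu,\l}\otimes\C)$, computed explicitly in Corollary~\ref{cor:ForkCount} (and equal to $0$ when $\l,\mu$ are too far, in which case $\Wa_{\l,\mu}=\Hom$ by Definition~\ref{def:newW}\,(i) and the corollary is trivial). For the domain, Lemma~\ref{lem:NewerSartori58} produces a spanning set of the domain indexed by oriented enhanced fork diagrams $\und{\mu}\eta^{\sigma}\bar{\l}$, i.e.\ by a basis of $\cal{Z}_{\mu,\l}$. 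Hence
\[
\dim\Hom_{R_{\C}}(R_{\mathbf{b}^{\l}}^{\C},R_{\mathbf{b}^{\mu}}^{\C})\big/(\Wa_{\l,\mu}\otimes\C)\;\leq\;\dim(\cal{Z}_{\mu,\l}\otimes\C).
\]
The sandwich $\dim(\cal{Z}_{\mu,\l}\otimes\C)\geq\dim\text{(domain)}\geq\dim\text{(codomain)}=\dim(\cal{Z}_{\mu,\l}\otimes\C)$ forces equality throughout, and the corollary follows.

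The only substantive work is already done in the earlier lemmas, so there is no real obstacle left here; the corollary is just the payoff of assembling Proposition~\ref{prop:WaW}, Lemma~\ref{lem:Sartori6.6}, and Lemma~\ref{lem:NewerSartori58}. The only point to double-check is that the identification of Hom spaces from Proposition~\ref{prop:SartoriSoergel}\eqref{it:ExplicitSoergelModules} is $R_{\C}$-linear and compatible with the explicit generators used in Definition~\ref{def:newW} and in the description of illicit morphisms, so that the two submodules indeed correspond under it; this is transparent because both $\Wa_{\l,\mu}\otimes\C$ and $\cal{W}_{\l,\mu}$ are $R_{\C}$-submodules and the identification sends $1\in R_{\mathbf{b}^{\mu}}^{\C}$ to the cyclic generator of $C_{w_k\mu}$.
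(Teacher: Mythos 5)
Your proof is correct and takes essentially the same route as the paper: combine the surjection from Proposition~\ref{prop:WaW} with the dimension count from Lemma~\ref{lem:Sartori6.6} on the codomain and the spanning set from Lemma~\ref{lem:NewerSartori58} on the domain, and conclude by the sandwich argument. The paper's proof is the same two-sentence assembly; your extra remark on the too-far case and on $R_{\C}$-linearity of the identification is a harmless elaboration rather than a different approach.
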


\begin{proof}
As mentioned above, Proposition~\ref{prop:WaW} shows that the identification of Proposition~\ref{prop:SartoriSoergel}, item \eqref{it:ExplicitSoergelModules} gives a natural surjective linear map of complex vector spaces
\[
\Hom_{R_{\C}}(R_{\bf{b}^{\l}}^{\C}, R_{\bf{b}^{\mu}}^{\C})/(\cal{W}^{\alpha}_{\l,\mu} \otimes \C) \to \Hom_{R_{\C}}(C_{w_k \l},C_{w_k \mu})/\cal{W}_{\l,\mu}.
\]
By Lemma~\ref{lem:Sartori6.6} and Lemma~\ref{lem:NewerSartori58}, the dimension of the domain is no greater than the dimension of the codomain, so the map is an isomorphism.
\end{proof}

\begin{remark}\label{rem:SartoriProp58Counterex}
The final paragraph of Sartori's proof of \cite[Proposition 5.8]{Sar-diagrams} does not work if we are only given $\mathfrak{m}$ that does not lie in $\widetilde{\cal{W}}_{\l,\mu}$. For instance, take $\mathfrak{m} = x_2 x_3$ in Example~\ref{ex:TildeWCounterex}. We have $x_2 x_3 \in \Wa_{\l,\mu} \setminus \widetilde{\cal{W}}_{\l,\mu}$. Since $\vee_1^{\mu} = 2$ and $\vee_2^{\mu} = 3$, we have $\ell_1 = 4$ and $\ell_2 = 4$. This is a problem because we need $\ell_1 < \cdots < \ell_{n-k}$ for the proof to work.
\end{remark}

\subsection{A \texorpdfstring{$\Z$}{Z} lift of Sartori's algebra}

By Corollary~\ref{cor:W}, it is reasonable to define the following $\Z$ lift of Sartori's algebra.

\begin{definition}
Let $\A_{n,k}^{\Z}$ be the graded ring
\[
\A_{n,k}^{\Z} = \bigoplus_{\l,\mu \in D_{n,k}} \Hom_R(R_{\bf{b}^{\l}}, R_{\bf{b}^{\mu}}) / \Wa_{\l,\mu}.
\]
\end{definition}

Corollary~\ref{cor:W} implies that $\A_{n,k}^{\Z} \otimes \C \cong \A_{n,k}$. It is natural to ask whether passing from $\C$ to $\Z$ in this manner introduces any torsion in $\A_{n,k}^{\Z}$; the answer is ``no,'' as shown in the next proposition.

\begin{proposition} \label{prop:AZfree}
For $\l,\mu \in D_{n,k}$ that are not too far, the graded abelian group $\Hom_R(R_{\bf{b}^{\l}}, R_{\bf{b}^{\mu}}) / \Wa_{\l,\mu}$ is free.  Consequently, $\1_{\mu} \cal{A}_{n,k}^{\Z} \1_{\l}$ is a free graded $\Z$-module with homogeneous basis given by the set $\{1 \mapsto p_{\und{\mu} \eta^{\sigma}}\}$ where $\eta \in D_{n,k}, \sigma \in S_k$ run over all choices such that $\und{\l} \eta^{\sigma} \bar{\mu}$ is an enhanced oriented fork diagram. Furthermore, the graded rank $\rkq (\1_{\mu}\cal{A}_{n,k}^{\Z}\1_{\l})$ is equal to $\rkq(\cal{Z}_{\mu,\l})$ from Corollary~\ref{cor:ForkCount}.
\end{proposition}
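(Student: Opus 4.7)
The plan is to upgrade Lemma~\ref{lem:NewerSartori58} from a ``spanning'' statement to a ``basis'' statement by a counting argument, using complexification to reduce to Sartori's known dimension formula.

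More precisely, I would first package Lemma~\ref{lem:NewerSartori58} as giving a degree-preserving $\Z$-linear surjection
\[
\Psi \co \cal{Z}_{\mu,\l} \twoheadrightarrow \Hom_R(R_{\mathbf{b}^{\l}}, R_{\mathbf{b}^{\mu}})/\Wa_{\l,\mu},
\]
defined on the basis by $\und{\mu}\eta^{\sigma}\bar{\l} \mapsto (1 \mapsto p_{\und{\mu}\eta^{\sigma}}) + \Wa_{\l,\mu}$ and extended $\Z$-linearly. Surjectivity is exactly the content of Lemma~\ref{lem:NewerSartori58}. I would then verify (a direct computation from \eqref{eq:p-poly} and the definition of the degree of a fork diagram) that $\Psi$ is homogeneous: the Schubert factor $\mf{S}'_{\sigma}$ contributes $2\ell(\sigma)$ to $\deg(p_{\und{\mu}\eta^{\sigma}})$, matching the enhancement contribution $2\ell(\sigma)$ to $\deg(\und{\mu}\eta^{\sigma}\bar{\l})$, and the remaining product $\prod_j x_{\vee_j^{\mu}} \cdots x_{\vee_j^{\eta}-1}$ exactly realizes the sum of fork degrees in $\und{\mu}\eta$ and $\eta\bar{\l}$.

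Next, I would tensor with $\C$. On the target, Corollary~\ref{cor:W} identifies $(\Hom_R(R_{\mathbf{b}^{\l}}, R_{\mathbf{b}^{\mu}})/\Wa_{\l,\mu}) \otimes \C$ with $\1_{\mu}\cal{A}_{n,k}\1_{\l}$, whose $\C$-dimension equals $\dim_{\C}(\cal{Z}_{\mu,\l} \otimes \C)$ by Lemma~\ref{lem:Sartori6.6}. Thus $\Psi \otimes \C$ is a surjection between finite-dimensional $\C$-vector spaces of the same dimension, and hence an isomorphism. This forces $\ker(\Psi)$ to be a torsion subgroup of the free abelian group $\cal{Z}_{\mu,\l}$, and therefore trivial. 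So $\Psi$ is itself an isomorphism of graded $\Z$-modules, which gives simultaneously freeness, the stated basis, and (via Corollary~\ref{cor:ForkCount}) the graded rank.

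The main technical obstacle I anticipate is the degree-preservation check for $\Psi$: the gradings on $R_{\mathbf{b}^{\mu}}$, on the Hom space, and on $\cal{Z}_{\mu,\l}$ use slightly different conventions (Schubert length versus $\ell(\sigma)$, the asymmetric contributions from $\und{\mu}\eta$ versus $\eta\bar{\l}$, and the doubling $\deg(x_i)=2$), so it is worth being explicit that the various shifts balance out before invoking the graded version of the ``surjection of equal rank'' argument. Once this is in place, the torsion-free-quotient-of-a-free-module step and the final rank identification are formal.
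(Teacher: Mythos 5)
Your proposal is correct and follows essentially the same route as the paper: Lemma~\ref{lem:NewerSartori58} supplies the surjection, Lemma~\ref{lem:Sartori6.6} together with Corollary~\ref{cor:W} pins the rank of the target to $\dim_{\C}(\cal{Z}_{\mu,\l}\otimes\C)$, and the ``surjection from a free module of matching rank has trivial kernel'' observation finishes. The one thing you flag as a potential obstacle, degree-preservation of $\Psi$, is already asserted as part of the statement of Lemma~\ref{lem:NewerSartori58} (``is degree zero''), so you may simply cite it rather than re-derive it.
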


\begin{proof}
Lemma~\ref{lem:NewerSartori58} gives us a generating set for $\Hom_R(R_{\bf{b}^{\l}}, R_{\bf{b}^{\mu}}) / \Wa_{\l,\mu}$ whose size is the number of oriented enhanced fork diagrams for $\l,\mu$, i.e. the dimension of $\cal{Z}_{\l,\mu} \otimes \C$. This number is also the rank of $\Hom_R(R_{\bf{b}^{\l}}, R_{\bf{b}^{\mu}}) / \Wa_{\l,\mu}$ by Lemma~\ref{lem:Sartori6.6} and Corollary~\ref{cor:W}, so the generating set is a basis. It follows that $\Hom_R(R_{\bf{b}^{\l}}, R_{\bf{b}^{\mu}}) / \Wa_{\l,\mu}$ is free; the graded rank follows from Corollary~\ref{cor:ForkCount} and Lemma~\ref{lem:NewerSartori58}.
\end{proof}
Thus, the natural map from $\A_{n,k}^{\Z}$ to $\A_{n,k}$ is injective.

\begin{proposition} \label{prop:Bord2Sart}
The composition
\[
\B_l(n,k) \to \B^{\C}_l(n,k) \xrightarrow{\Xi} \A_{n,k}
\]
has image in $\A_{n,k}^{\Z} \subset \A_{n,k}$, where $\Xi$ is the homomorphism from Proposition~\ref{prop:BigStepXi}.
\end{proposition}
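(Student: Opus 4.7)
The plan is to exploit the small-step description of $\B_l(n,k)$ from Proposition~\ref{prop:MMWThm1}, together with the explicit formulas for $\Xi$ on small-step generators computed in Lemma~\ref{lem:TildeXiLemma}. As a $\Z$-algebra, $\B_l(n,k)$ is generated by the idempotents $\Ib_{\x}$ for $\x \in V_l(n,k)$ together with the arrows $R_i$, $L_i$, and $U_i$ (subject to the relations in Proposition~\ref{prop:MMWThm1}). It therefore suffices to check that the image of each of these generators under $\Xi$ lies inside the $\Z$-subalgebra $\A_{n,k}^{\Z} \subset \A_{n,k}$; since $\A_{n,k}^{\Z}$ is a subring (the natural map $\A_{n,k}^{\Z} \to \A_{n,k}$ is an injective ring homomorphism by Proposition~\ref{prop:AZfree} and the remark following it), the image of the whole composition then lies in $\A_{n,k}^{\Z}$.

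The verification is immediate from Lemma~\ref{lem:TildeXiLemma}: the idempotent $\Ib_{\x}$ is sent to $\1_{\mu^\x}$, which is the class of the identity morphism of $R_{\bf{b}^{\x}}$; each $R_i$ is sent to the class of $(1 \mapsto 1)$; and each $L_i$ and $U_i$ is sent to the class of $(1 \mapsto x_i)$. In each case the representative morphism is an element of $\Hom_R(R_{\bf{b}^{\l}}, R_{\bf{b}^{\mu}})$ rather than merely $\Hom_{R_{\C}}(R_{\bf{b}^{\l}}^{\C}, R_{\bf{b}^{\mu}}^{\C})$, since the polynomials $1$ and $x_i$ lie in $R = \Z[x_1,\ldots,x_n]$. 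Hence each generator's image lies in $\Hom_R(R_{\bf{b}^{\l}}, R_{\bf{b}^{\mu}}) / \Wa_{\l,\mu} \subset \A_{n,k}^{\Z}$, completing the argument.

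I do not expect any real obstacle here; the content of the proposition is essentially bookkeeping, combining the fact that $\Xi$ was already computed on a $\Z$-integral generating set with the fact (established just above the proposition) that $\A_{n,k}^{\Z}$ embeds naturally inside $\A_{n,k}$. The only subtle point is making sure that $\A_{n,k}^{\Z}$ really carries a ring structure compatible with $\A_{n,k}$, i.e.\ that $\Wa_{\l,\mu}$ behaves well under composition of Soergel module morphisms; but this is built into the definition of $\A_{n,k}^{\Z}$ used in the preceding results and does not need to be reproven here.
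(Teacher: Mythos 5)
Your proof is correct and takes essentially the same approach as the paper's, which simply asserts that one can check each generator (in the big- or small-step description) is sent by $\Xi$ into $\A_{n,k}^{\Z}$. You have merely spelled out the check via Lemma~\ref{lem:TildeXiLemma} and the fact that $1$ and $x_i$ lie in $\Z[x_1,\ldots,x_n]$.
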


\begin{proof}
One can check that each generator of $\B_l(n,k) \subset \B^{\C}_l(n,k)$, in either the big-step or the small-step description, gets sent by $\Xi$ to an element of $\A_{n,k}^{\Z} \subset \A_{n,k}$.
\end{proof}

\begin{corollary}\label{cor:IntegerXi}
We have a commutative square of $R$-algebra homomorphisms
\[
\xymatrix{
\B_l(n,k) \ar[r]^{\Xi} \ar[d] & \A_{n,k}^{\Z} \ar[d] \\
\B^{\C}_l(n,k) \ar[r]_{\Xi} & \A_{n,k},
}
\]
where the bottom edge is the map from Proposition~\ref{prop:BigStepXi}; by slight abuse of notation, we call both the top and the bottom edges $\Xi$. Since each fork monomial $p_{\und{\mu} \eta^{\sigma}}$ is an $R$-multiple of $\Xi(f_{\x,\y})$ where $\x,\y \in V_l(n,k)$ correspond to $\mu, \l \in D_{n,k}$, Lemma~\ref{lem:NewerSartori58} implies that the top edge $\Xi$ of the square is surjective.
\end{corollary}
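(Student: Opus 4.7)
The plan is first to construct the top edge as a factorization and verify commutativity, then reduce surjectivity to the divisibility claim stated in the corollary, and finally verify that claim by a combinatorial argument.

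First, I would construct $\Xi\colon \B_l(n,k) \to \A_{n,k}^{\Z}$ as the unique factorization of the composition $\B_l(n,k) \to \B_l^{\C}(n,k) \xrightarrow{\Xi} \A_{n,k}$ through the inclusion $\A_{n,k}^{\Z} \hookrightarrow \A_{n,k}$; this factorization exists by Proposition~\ref{prop:Bord2Sart}. The square commutes by construction, and since the bottom edge is an $R$-algebra map (Proposition~\ref{prop:BigStepXi}) while the vertical arrows are the obvious scalar extensions sending $U_i \mapsto U_i$, the top edge is also $R$-linear.

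Next I would reduce surjectivity to a single divisibility claim. By Lemma~\ref{lem:NewerSartori58}, the piece $\1_{\mu} \A_{n,k}^{\Z} \1_{\l}$ is generated over $\Z$ by the fork monomials $1 \mapsto p_{\und{\mu}\eta^{\sigma}}$. The $R = \Z[x_1,\ldots,x_n]$-action on $\A_{n,k}^{\Z}$ coincides, via $x_i \leftrightarrow U_i$, with multiplication by the central elements $U_i \in \B_l(n,k)$, since the composition across the square sends $U_i$ to multiplication by $x_i$. Consequently, if each $p_{\und{\mu}\eta^{\sigma}}$ can be written as $r \cdot \Xi(f_{\x,\y})$ for some $r \in R$ (where $\x,\y$ are the left I-states corresponding to $\mu,\l$), then $p_{\und{\mu}\eta^{\sigma}} = \Xi(r(U_1,\ldots,U_n) f_{\x,\y})$ lies in the image of the top $\Xi$, so $\Xi$ is surjective.

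The remaining task, which I expect to be the main obstacle, is verifying the divisibility $\Xi(f_{\x,\y}) \mid p_{\und{\mu}\eta^{\sigma}}$ in $R$, where $\Xi(f_{\x,\y}) = x_1^{c_1}\cdots x_n^{c_n}$ and $c_i = \max(b_i^{\mu}-b_i^{\l},0)$. A short preliminary step, exploiting the not-too-far hypothesis on $\x,\y$ via an interleaving argument for the sequences $x_a, y_a$, shows that $|v_i^{\x} - v_i^{\y}| \leq 1$ and hence $c_i \in \{0,1\}$; thus it suffices to show that for each $i$ with $c_i = 1$ the variable $x_i$ occurs in the guaranteed factor $\prod_{j=1}^{n-k} x_{\vee_j^{\mu}} x_{\vee_j^{\mu}+1} \cdots x_{\vee_j^{\eta}-1}$ of $p_{\und{\mu}\eta^{\sigma}}$. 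The condition $c_i = 1$ translates to $\l$ having exactly one more $\vee$ strictly to the right of position $i$ than $\mu$, which produces a unique index $j_0$ with $\vee_{j_0}^{\mu} \leq i < \vee_{j_0}^{\l}$. The orientation condition of Lemma~\ref{lem:OrientedForkHelper} then forces $\vee_{j_0}^{\eta} \geq \vee_{j_0}^{\l} > i$, so $i \in [\vee_{j_0}^{\mu}, \vee_{j_0}^{\eta}-1]$ and $x_i$ appears (exactly once, as the intervals for different $j$ are disjoint) in the $j = j_0$ factor. The combinatorial bookkeeping in this last step is the most delicate part; once it is in hand, the rest of the proof is a routine assembly of the earlier machinery.
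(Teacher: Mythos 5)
Your proposal is correct and follows essentially the same route as the paper: construct the top edge as the factorization of $\B_l(n,k) \to \B_l^{\C}(n,k) \xrightarrow{\Xi} \A_{n,k}$ through the inclusion $\A_{n,k}^{\Z} \hookrightarrow \A_{n,k}$ (which exists by Proposition~\ref{prop:Bord2Sart}), observe commutativity and $R$-linearity, and deduce surjectivity from Lemma~\ref{lem:NewerSartori58} together with the divisibility of each fork monomial by $\Xi(f_{\x,\y})$. The one place you go beyond the paper is that you actually verify the divisibility claim, which the corollary merely asserts: you use the not-too-far hypothesis to show $c_i \in \{0,1\}$ and then, via Lemma~\ref{lem:OrientedForkHelper}, locate each $x_i$ with $c_i = 1$ inside the monomial factor $\prod_j x_{\vee_j^{\mu}}\cdots x_{\vee_j^{\eta}-1}$. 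This is a sound and self-contained check; the paper's own justification appears only later and implicitly, in Proposition~\ref{prop:XiOnForks}, which exhibits the explicit $R$-multiple ${\sf p}_{(\x,\z^{\sigma},\y)}$ realizing $p_{\und{\mu}\eta^{\sigma}} = {\sf p}_{(\x,\z^{\sigma},\y)}\cdot \Xi(f_{\x,\y})$. So your argument is a welcome filling-in of detail rather than a divergence in strategy.
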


The anti-automorphism $\psi_S \maps \cal{A}_{n,k} \to \cal{A}_{n,k}$ from \eqref{sec:Sart-anti} extends to an anti-automorphism of the integral form $\psi_S \maps \cal{A}_{n,k}^{\Z} \to \cal{A}_{n,k}^{\Z}$ given on the basis from Proposition~\ref{prop:AZfree} by
\begin{align}
  \psi_S \maps \cal{A}_{n,k}^{\Z} &\longrightarrow \cal{A}_{n,k}^{\Z}
   \\ \nn
   \und{\mu} \eta^{\sigma} \bar{\l} & \longmapsto \und{\l} \eta^{\sigma} \bar{\mu}.
\end{align}

\section{A vanishing ideal in the Sartori algebra}

In this section we identify an ideal that is present in $R_{\mathbf{b}^{\mu}}$ for all $\mu \in D_{n,k}$ and hence is zero in $\cal{A}_{n,k}^{\Z}$.
Define an ideal in $R$ by
\begin{equation} \label{eq:J}
  \cal{J} = \cal{J}_{n,k} :=
  \left\langle
  e_1(x_1,\dots, x_{n}), e_2(x_1, \dots, x_{n}), \dots e_{k}(x_1, \dots, x_{n})
  \right\rangle.
\end{equation}
Equivalently, $\cal{J}_{n,k}$ is the ideal generated by $h_i(x_1,\ldots,x_n)$ for $1 \leq i \leq k$; we will work primarily with the polynomials $h_i$ as generators of $\cal{J}_{n,k}$ below.

The ideal $\cal{J}$ admits a further alternate description that we give below after a preliminary lemma.

\begin{lemma}\label{lem:Defo1}
For $1 \leq k \leq n$ and $1 \leq p \leq k$, we have $h_k(x_1,\ldots,x_{n-p+1}) = h_k(x_1,\ldots,x_n)$ in the ring $\frac{R}{(h_1(x_1,\ldots,x_n),\ldots,h_{k-1}(x_1,\ldots,x_n))}$.
\end{lemma}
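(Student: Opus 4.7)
The plan is to prove this by induction on $k$, using the classical Pieri-type recursion
\[
h_k(x_1,\ldots,x_m) \;=\; h_k(x_1,\ldots,x_{m-1}) \;+\; x_m\, h_{k-1}(x_1,\ldots,x_m),
\]
which lets us telescope the difference between $h_k$ evaluated on different numbers of variables.

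The base case $k=1$ forces $p=1$, and then both sides equal $h_1(x_1,\ldots,x_n)$, so there is nothing to check. For the inductive step, assume the lemma holds for all $k' < k$ and for all $1 \leq p' \leq k'$. Iterating the recursion above and telescoping yields the identity
\[
h_k(x_1,\ldots,x_n) - h_k(x_1,\ldots,x_{n-p+1}) \;=\; \sum_{m=n-p+2}^{n} x_m\, h_{k-1}(x_1,\ldots,x_m)
\]
in $R$. For each index $m$ appearing in this sum we have $n-m+1 \in \{1,\ldots,p-1\} \subseteq \{1,\ldots,k-1\}$, so the inductive hypothesis (applied at level $k-1$ with $p' = n-m+1$) gives
\[
h_{k-1}(x_1,\ldots,x_m) \;\equiv\; h_{k-1}(x_1,\ldots,x_n) \pmod{(h_1(x_1,\ldots,x_n),\ldots,h_{k-2}(x_1,\ldots,x_n))}.
\]
Since $(h_1,\ldots,h_{k-2}) \subseteq (h_1,\ldots,h_{k-1})$, multiplying by $x_m$ and summing over $m$ shows that the displayed difference is congruent to $\bigl(\sum_{m=n-p+2}^{n} x_m\bigr)\, h_{k-1}(x_1,\ldots,x_n)$ modulo $(h_1,\ldots,h_{k-2})$, and this representative already lies in the larger ideal $(h_1,\ldots,h_{k-1})$. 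Hence $h_k(x_1,\ldots,x_n) - h_k(x_1,\ldots,x_{n-p+1}) \in (h_1(x_1,\ldots,x_n),\ldots,h_{k-1}(x_1,\ldots,x_n))$, as required.

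The only mild subtlety is bookkeeping the range of $m$: one must check that each $m$ in the telescoping sum is large enough for the inductive hypothesis to apply, i.e.\ that $n-m+1 \leq k-1$, which follows immediately from the hypothesis $p \leq k$. No further obstacles arise — the argument is essentially a careful application of the Pieri recursion plus induction.
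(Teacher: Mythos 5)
Your proof is correct and rests on the same Pieri recursion $h_k(x_1,\ldots,x_m) = h_k(x_1,\ldots,x_{m-1}) + x_m\,h_{k-1}(x_1,\ldots,x_m)$ that the paper uses. The only organizational difference is that the paper runs a nested induction (outer on $k$, inner on $p$), stripping off one variable at a time, whereas you telescope all $p-1$ steps at once into the sum $\sum_{m=n-p+2}^{n} x_m h_{k-1}(x_1,\ldots,x_m)$ and dispatch every term simultaneously with the inductive hypothesis at level $k-1$. Your version trades the inner induction for a short bookkeeping check that each $n-m+1$ stays in range $\{1,\ldots,k-1\}$, which you handle correctly; both are sound, and neither is substantially shorter than the other.
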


\begin{proof} Induct on $k$; the case $k=1$ follows from $1 \leq p \leq k$. For $k > 1$, we will induct on $p$; the case $p=1$ is trivial. Assume $p > 1$; then
\[
h_k(x_1,\ldots,x_{n-p+1}) = h_k(x_1,\ldots,x_{n-p+2}) - x_{n-p+2}h_{k-1}(x_1,\ldots,x_{n-p+2}).
\]
By induction on $k$, we have $h_{k-1}(x_1,\ldots,x_{n-p+2}) = h_{k-1}(x_1,\ldots,x_n)$ in $\frac{R}{(h_1(x_1,\ldots,x_n),\ldots,h_{k-2}(x_1,\ldots,x_n))}$. Thus,
\[
h_k(U_1,\ldots,U_{n-p+1}) = h_k(U_1,\ldots,U_{n-p+2}) = h_k(U_1,\ldots,U_n)
\]
modulo $(h_1(x_1,\ldots,x_n),\ldots,h_{k-1}(x_1,\ldots,x_n))$ (the second equality follows from induction on $p$).
\end{proof}

\begin{corollary}\label{cor:Defo1}
The elements $\theta_i := h_i(x_1,\dots, x_{n+1-i})$ of $R$, for $1 \leq i \leq k$, generate the ideal $\cal{J}$.
\end{corollary}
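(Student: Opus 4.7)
The plan is to prove the stronger statement, by induction on $i$ from $1$ to $k$, that the two ideals
\[
\bigl(h_1(x_1,\dots,x_n),\, h_2(x_1,\dots,x_n),\, \dots,\, h_i(x_1,\dots,x_n)\bigr)
\quad \text{and} \quad
(\theta_1,\theta_2,\dots,\theta_i)
\]
coincide in $R$. The case $i = k$ then yields the corollary.

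For the base case $i = 1$, one simply has $\theta_1 = h_1(x_1,\dots,x_n)$ because both polynomials equal $x_1 + \dots + x_n$, so the two singly-generated ideals agree tautologically.

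For the inductive step, assume the two ideals agree through index $i-1$. Apply Lemma~\ref{lem:Defo1} with the substitution $p = i$: this gives the congruence
\[
\theta_i = h_i(x_1,\dots,x_{n-i+1}) \,\equiv\, h_i(x_1,\dots,x_n) \pmod{\bigl(h_1(x_1,\dots,x_n),\dots,h_{i-1}(x_1,\dots,x_n)\bigr)}.
\]
By the inductive hypothesis, the ideal on the right-hand side is exactly $(\theta_1,\dots,\theta_{i-1})$, so $\theta_i - h_i(x_1,\dots,x_n) \in (\theta_1,\dots,\theta_{i-1})$. Reading this relation one way shows $\theta_i \in (h_1(x_1,\dots,x_n),\dots,h_i(x_1,\dots,x_n))$; reading it the other way shows $h_i(x_1,\dots,x_n) \in (\theta_1,\dots,\theta_i)$. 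Combined with the inductive hypothesis, this gives equality of the two ideals through index $i$, completing the induction.

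There is no real obstacle here, since Lemma~\ref{lem:Defo1} has already done the essential computational work; the proof simply packages that lemma into a triangular change-of-generators statement. The only thing to be careful about is setting up the induction so that at each step one uses the generators $h_j(x_1,\dots,x_n)$ through degree $i-1$ (which by induction have already been replaced by $\theta_1,\dots,\theta_{i-1}$) before invoking the lemma to handle $h_i$.
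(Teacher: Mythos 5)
Your proof is correct and follows essentially the same route as the paper: both arguments induct so that the ideals $(\theta_1,\dots,\theta_{i-1})$ and $(h_1(x_1,\dots,x_n),\dots,h_{i-1}(x_1,\dots,x_n))$ are assumed equal, then invoke Lemma~\ref{lem:Defo1} (with the lemma's indices set to $i$) to replace $\theta_i$ by $h_i(x_1,\dots,x_n)$ modulo that ideal. The paper phrases the induction on $k$ and writes the chain of ideal equalities more compactly, but the underlying argument is identical.
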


\begin{proof}
For $1 \leq i \leq k$, write $\eta_i = h_i(x_1,\ldots,x_n)$. Induct on $k$; the case $k=1$ is clear. If $k > 1$, then
\begin{align*}
(\theta_1,\ldots,\theta_k) &= (\theta_k) + (\theta_1,\ldots,\theta_{k-1}) \\
&= (\theta_k) + (\eta_1,\ldots,\eta_{k-1}) \\
&= (\eta_k) + (\eta_1,\ldots,\eta_{k-1}) \\
&= (\eta_1,\ldots,\eta_k),
\end{align*}
where we use the inductive hypothesis in the second equality and Lemma~\ref{lem:Defo1} in the third equality.
\end{proof}

\begin{proposition}
The ideal $\cal{J}_{n,k}$ acts by zero on $\cal{A}_{n,k}^{\Z}$.
\end{proposition}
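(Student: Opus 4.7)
The plan is to reduce the claim to showing the containment $\cal{J}_{n,k} \subseteq I_{\mathbf{b}^{\mu}}$ inside $R$ for each $\mu \in D_{n,k}$. Indeed, every component $\Hom_R(R_{\mathbf{b}^{\l}}, R_{\mathbf{b}^{\mu}}) / \Wa_{\l,\mu}$ of $\cal{A}_{n,k}^{\Z}$ is built from $R$-linear maps between the quotients $R_{\mathbf{b}^{\mu}} = R / I_{\mathbf{b}^{\mu}}$; once each $I_{\mathbf{b}^{\mu}}$ contains $\cal{J}_{n,k}$, the ideal acts as zero on both source and target of every morphism and thus annihilates $\cal{A}_{n,k}^{\Z}$.

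By Corollary~\ref{cor:Defo1}, $\cal{J}_{n,k}$ is generated by the elements $\theta_i = h_i(x_1,\ldots,x_{n+1-i})$ for $1 \leq i \leq k$, so it suffices to prove $\theta_i \in I_{\mathbf{b}^{\mu}}$. I would first establish the following key lemma: for any admissible sequence $\mathbf{b}$ (i.e.\ $b_j \geq 1$ with $b_j - b_{j+1} \in \{0,1\}$) and any indices $1 \leq j \leq n$ and $i \geq b_j$, the element $h_i(x_1,\ldots,x_j)$ lies in $I_{\mathbf{b}}$. The proof is by induction on $i$, with base case $i = b_j$ (immediate from the definition of $I_{\mathbf{b}}$), using the standard recursion
\[
h_i(x_1,\ldots,x_j) \;=\; h_i(x_1,\ldots,x_{j-1}) \;+\; x_j\, h_{i-1}(x_1,\ldots,x_j).
\]
When $i > b_j$, the second summand lies in $I_{\mathbf{b}}$ by the inductive hypothesis since $i-1 \geq b_j$; for the first summand, the admissibility condition forces $b_{j-1} \leq b_j + 1 \leq i$, so an auxiliary descending induction on $j$ applies, the base case $j=1$ being trivial because $h_i$ of an empty set of variables vanishes for $i \geq 1$.

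To close the argument, I would verify the numerical hypothesis needed to feed $\theta_i$ into the lemma: setting $j = n+1-i$ and using the definition of $\mathbf{b}^{\mu}$ from Section~\ref{subsection:sequences}, the quantity $b_j^{\mu} - 1$ counts $\wedge$'s strictly to the right of position $j$ in $\mu$, and only $i-1$ positions lie to the right of $j$, so $b_{n+1-i}^{\mu} \leq i$. The lemma then yields $\theta_i \in I_{\mathbf{b}^{\mu}}$ for every $\mu \in D_{n,k}$, completing the proof.

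The main point requiring care is the inductive step of the key lemma, specifically the bookkeeping that ensures $i \geq b_{j-1}$ when one tries to lower $j$; this is precisely where the \emph{decreases-by-at-most-one} condition on admissible sequences $\mathbf{b}$ is essential, and it is what makes the full ideal $\cal{J}_{n,k}$ (not merely a single generator) fit uniformly inside every $I_{\mathbf{b}^{\mu}}$. The rest of the argument is routine manipulation of the Hom description of $\cal{A}_{n,k}^{\Z}$ and of complete homogeneous symmetric polynomials.
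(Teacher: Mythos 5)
Your proposal is correct and follows the same route as the paper: reduce to showing $\cal{J}_{n,k} \subseteq I_{\mathbf{b}^{\mu}}$ for every $\mu$, use Corollary~\ref{cor:Defo1} to replace the elementary symmetric generators of $\cal{J}_{n,k}$ by $\theta_i = h_i(x_1,\ldots,x_{n+1-i})$, and invoke the fact that $h_a(x_1,\ldots,x_j) \in I_{\mathbf{b}}$ whenever $a \geq b_j$. The paper cites a $\Z$ analogue of Sartori's Lemma~2.7 for this last ingredient and compares each $\mathbf{b}^{\mu}$ to the $\mathbf{b}$-sequence of the minimal element $\tilde{\mu} = \vee^{n-k}\wedge^k$, whereas you supply an explicit double-induction proof of the containment lemma and verify the needed inequality $b_{n+1-i}^{\mu} \leq i$ by a direct count of positions to the right of $n+1-i$; both refinements are fine and slightly more self-contained than the paper's phrasing.
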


\begin{proof}
The minimal sequence $\tilde{\mu}$ in the Bruhat order generated by $\wedge \vee \succ \vee \wedge$ is the sequence
\begin{equation} \label{eq:mu-min}
\tilde{\mu} = \vee^{n-k} \wedge^k, \qquad \mathbf{b}^{\tilde{\mu}} = (k+1, k+1, \dots, k+1, k,k-1,\dots,2,1).
\end{equation}
All other sequences $\l\in D_{n,k}$ will have $\mathbf{b}$-sequences with $b_i^{\l} \leq b_i^{\tilde{\mu}}$.  The result follows by a $\Z$ analogue of \cite[Lemma 2.7]{Sar-diagrams} showing that $h_a(x_1, \dots, x_i) \in I_{\mathbf{b}}$ for every $a \geq b_i$, so that $\cal{J} \subset I_{\mathbf{b}^{\l}}$ for all $\l \in D_{n,k}$ by Corollary~\ref{cor:Defo1}.
\end{proof}

In other words, the $\Z[x_1,\ldots,x_n]$-module structure on $\cal{A}_{n,k}^{\Z}$ descends to an action of $\frac{\Z[x_1,\ldots,x_n]}{\cal{J}}$.

\begin{definition}
Define the quotient Ozsv{\'a}th--Szab{\'o} algebra $\Bq$ to be the quotient of the $\Z[U_1,\ldots,U_n]$-algebra $\B_l(n,k)$ by the action of the ideal $\cal{J}$ defined in \eqref{eq:J} (with $x_i$ variables relabeled as $U_i$.)
\end{definition}

Note that Lemma~\ref{lem:Defo1} and Corollary~\ref{cor:Defo1} hold in $\B_l(n,k)$ since they hold in $\Z[x_1,\ldots,x_n]$.

\begin{corollary}\label{cor:AlgIsoWellDef}
The homomorphism $\Xi: \B_l(n,k) \to \cal{A}_{n,k}^{\Z}$ from Corollary~\ref{cor:IntegerXi} descends to a well-defined homomorphism
\[
\Xi: \Bq \to \cal{A}_{n,k}^{\Z}.
\]
\end{corollary}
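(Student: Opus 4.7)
The plan is to observe that this corollary is essentially immediate from the two previous results once we unpack the definitions carefully. First, I would recall that by Corollary~\ref{cor:IntegerXi}, the map $\Xi \maps \B_l(n,k) \to \cal{A}_{n,k}^{\Z}$ is a homomorphism of $R$-algebras, where the $R = \Z[U_1,\ldots,U_n]$-module structure on $\B_l(n,k)$ is given by the central elements $U_i = \sum_{\x \in V_l(n,k)} U_i \Ib_{\x}$ from Section~\ref{sec:BigStep}, and the $R$-module structure on $\cal{A}_{n,k}^{\Z}$ is given by $U_i \mapsto \sum_{\lambda \in D_{n,k}} x_i \1_{\lambda}$.

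Next, I would invoke the preceding proposition, which asserts that $\cal{J}_{n,k} \subset I_{\mathbf{b}^{\l}}$ for every $\l \in D_{n,k}$, and so acts by zero on $\cal{A}_{n,k}^{\Z}$. Concretely, for any generator $p \in \{h_1(x_1,\ldots,x_n), \ldots, h_k(x_1,\ldots,x_n)\}$ of $\cal{J}_{n,k}$ and any $\eta \in \cal{A}_{n,k}^{\Z}$, we have $p \cdot \eta = 0$.

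Now, $\Bq$ is by definition the quotient of $\B_l(n,k)$ by the two-sided ideal generated by the elements $p \cdot 1_{\B_l(n,k)} = \sum_{\x \in V_l(n,k)} p(U_1,\ldots,U_n) \Ib_{\x}$ for $p$ in a generating set of $\cal{J}$; this is a two-sided ideal because the $U_i$ are central in $\B_l(n,k)$. Thus to show that $\Xi$ descends to $\Bq$, it suffices to check that $\Xi(p \cdot 1_{\B_l(n,k)}) = 0$ for each such generator $p$. By $R$-linearity of $\Xi$ we have
\[
\Xi(p \cdot 1_{\B_l(n,k)}) = p \cdot \Xi(1_{\B_l(n,k)}) = p \cdot 1_{\cal{A}_{n,k}^{\Z}} = 0,
\]
where the last equality follows from the previous proposition. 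Hence $\Xi$ annihilates the defining ideal of $\Bq$ and descends to a well-defined algebra homomorphism $\Xi \maps \Bq \to \cal{A}_{n,k}^{\Z}$.

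There is no real obstacle here; the work has already been done in establishing $R$-linearity of $\Xi$ (Corollary~\ref{cor:IntegerXi}) and the vanishing of $\cal{J}$ on the Sartori side (the previous proposition). The only subtlety worth a sentence in the write-up is to emphasize that the quotient defining $\Bq$ is well-defined as a two-sided ideal precisely because the $U_i$ are central in $\B_l(n,k)$, so that the $R$-module action can be presented as multiplication by a central element.
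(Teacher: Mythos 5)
Your proposal is correct and fills in exactly the argument the paper leaves implicit: the corollary follows by combining $R$-linearity of $\Xi$ (Corollary~\ref{cor:IntegerXi}) with the vanishing of $\cal{J}_{n,k}$ on $\cal{A}_{n,k}^{\Z}$ from the preceding proposition, plus the observation that the $U_i$ are central so the quotient defining $\Bq$ is by a genuine two-sided ideal. The paper states the result without proof precisely because the computation $\Xi(p \cdot 1) = p \cdot \Xi(1) = 0$ is immediate; your write-up is the intended verification.
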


\section{Fork elements and injectivity}

\subsection{Deformations}

Let $A$ be a graded ring, and let $U$ be a finitely generated free $\Z$-module. Following the notation of \cite[Section 4]{BLPPW}, we say a graded ring $\tilde{A}$ is a graded deformation of $A$ over $U^*$ if $\tilde{A}$ is equipped with graded homomorphisms
\[
{\rm Sym}(U) \xrightarrow{j} \tilde{A} \xrightarrow{\pi} A
\]
such that $\im(j) \subset Z(\tilde{A})$ and $\pi$ induces an isomorphism from $\frac{\tilde{A}}{\im(j)}$ to $A$. The deformation is called flat if $j$ makes $\tilde{A}$ a flat ${\rm Sym}(U)$-module.

Let $U = \Z^k$ with standard basis $\{\varepsilon_1,\ldots,\varepsilon_k\}$, so that ${\rm Sym}(U) = \Z[\varepsilon_1,\ldots,\varepsilon_k]$, and define $j\colon {\rm Sym}(U)\to \B_l(n,k)$ by sending $\varepsilon_i$ to the central element $e_i(U_1,\ldots,U_{n})$ (one could equivalently use complete homogeneous symmetric polynomials $h_i(U_1,\ldots,U_n)$ instead of elementary symmetric polynomials). If $\pi\colon B_l(n,k) \to \cal{A}^{\Z}_{n,k}$ is the homomorphism $\Xi$ from Corollary~\ref{cor:IntegerXi}, then Corollary~\ref{cor:AlgIsoWellDef} tells us that the image of $j$ is contained in the kernel of $\pi$.

We want to show that $\pi = \Xi$ induces an isomorphism from $\B_l(n,k)/\im(j)$ to $\cal{A}_{n,k}^{\Z}$, so that we may view $\B_l(n,k)$ as a graded deformation of $\cal{A}_{n,k}^{\Z}$, and we want to know that this deformation is flat. Indeed, we first show that $\B_l(n,k)$ is a free ${\rm Sym}(U)$-module on a basis defined in the next section; we deal with injectivity in Section~\ref{sec:InjectivityAfterFreeness}.

\subsection{Fork elements as an \texorpdfstring{$S$}{S}-basis}

The Bruhat order generated by $\wedge \vee \succ \vee \wedge$ induces a partial order on I-states with size $k$ given below.
\begin{definition}
Define a partial order on the set $V(n,k)$ of I-states with $|\x|=k$ by $\x \succ \y$ if $x_i \leq y_i$ for all $1 \leq i \leq k$.
\end{definition}

\begin{definition} \label{def:b-fork-element}
Let $\x,\y,\z \in V_l(n,k)$  and suppose that none of the three are too far from one another.  Let
$[j_i + 1,j_i + l_i]$ for $1 \leq i \leq n-k$  be the generating intervals for $\x$ and $\y$ and assume  $\x\preceq \z \succeq\y$ (if $\x,\y,\z$ correspond to $\mu,\l,\eta \in D_{n,k}$, Lemma~\ref{lem:OrientedForkHelper} implies that this condition is equivalent to $\und{\mu} \eta \bar{\l}$ being an oriented fork diagram).
For $\sigma \in S_k$, define fork polynomials
\begin{align} \label{eq:fork-element}
 {\sf p}_{(\x, \z^{\sigma}, \y) }
:=
\mf{S}'_{\sigma}\left(U_{z_1+1}, \dots , U_{z_k+1} \right)
 \left( \prod_{i=1}^{n-k} U_{j_i+1} U_{j_i+2} \dots U_{h_i^{\z} }\right) \in \Z[U_1\ldots,U_n],
\end{align}
where $h_i^{\z}$ is the hole sequence of $\z$ defined in Definition~\ref{def:hole-sequence}. We have corresponding fork elements ${\sf p}_{(\x, \z^{\sigma}, \y) } f_{\x,\y} \in \B_l(n,k)$.
\end{definition}

Note that ${\sf p}_{(\x, \z^{e}, \y) } = U_1^{c_1} \dots U_n^{c_n}$ with $c_j=\min(v^{\x}_j,v^{\y}_j)-v^{\z}_j \in \{0,1\}$, so that  ${\sf p}_{(\x, \z^{e}, \y) } = 1$ if $z_i = \min(x_i,y_i)$. Likewise, recall from \eqref{eq:schubert-prime} that $\mf{S}'_{\sigma}\left(U_{z_1+1}, \dots , U_{z_k+1} \right) \in \{ U_{z_1+1}^{\ell_1} \dots U_{z_k+1}^{\ell_k} \mid 0 \leq \ell_i \leq k-i \}$.

\begin{proposition}\label{prop:XiOnForks}
Let $\mu=\mu^{\x}$, $\l=\mu^{\y}$, and $\eta = \mu^{\z}$ denote $\vee\wedge$-sequences in $D_{n,k}$ associated to left I-states $\x, \y, \z \in V_l(n,k)$ satisfying the assumptions in Definition~\ref{def:b-fork-element} (equivalently, such that $\und{\mu} \eta \bar{\l}$ is oriented). Under the surjective homomorphism $\Xi \maps \B_l(n,k) \to \cal{A}_{n,k}$, we have
\begin{equation} \label{eq:fork-basis}
  \Xi_{{\sf p}_{(\x, \z^{\sigma}, \y) } f_{\x,\y}} = (1 \mapsto p_{\und{\mu} \eta^{\sigma}  })
\end{equation}
with $p_{\und{\mu} \eta^{\sigma} }$  defined in \eqref{eq:p-poly}.
\end{proposition}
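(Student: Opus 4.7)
The plan is to reduce the claim to a combinatorial identity between monomials in $x_1, \ldots, x_n$ by applying $\Xi$ term by term. Using $\Z[U_1, \ldots, U_n]$-linearity of $\Xi$ (established in Lemma~\ref{lem:TildeXiOnFxy}) together with $\Xi(f_{\x, \y}) = (1 \mapsto x_1^{c_1} \cdots x_n^{c_n})$ for $c_i = \max(v_i^{\x} - v_i^{\y}, 0)$, we obtain
\[
\Xi\bigl({\sf p}_{(\x, \z^{\sigma}, \y)} f_{\x, \y}\bigr) \;=\; \bigl(1 \mapsto {\sf p}_{(\x, \z^{\sigma}, \y)}(x_1, \ldots, x_n) \cdot x_1^{c_1} \cdots x_n^{c_n}\bigr).
\]
Since $z_i + 1 = \wedge_i^{\eta}$, the Schubert factor $\mathfrak{S}'_{\sigma}(U_{z_1+1}, \ldots, U_{z_k+1})$ of ${\sf p}_{(\x,\z^\sigma,\y)}$ becomes $\mathfrak{S}'_{\sigma}(x_{\wedge_1^{\eta}}, \ldots, x_{\wedge_k^{\eta}})$ after the substitution, matching the Schubert factor of $p_{\und{\mu}\eta^{\sigma}}$ in \eqref{eq:p-poly}. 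After canceling these common Schubert parts and using $\vee_i^{\mu} = h_i^{\x}+1$ and $\vee_i^{\eta} = h_i^{\z}+1$, what remains is the polynomial identity
\[
\Bigl(\prod_{i=1}^{n-k} x_{j_i+1}\, x_{j_i+2} \cdots x_{h_i^{\z}}\Bigr) \cdot x_1^{c_1} \cdots x_n^{c_n} \;=\; \prod_{i=1}^{n-k} x_{h_i^{\x}+1}\, x_{h_i^{\x}+2} \cdots x_{h_i^{\z}}.
\]

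The central step is a ``flow'' rewriting of $x_1^{c_1} \cdots x_n^{c_n}$ in terms of hole sequences. Using that for left I-states $v_j^{\x} + |h^{\x} \cap [j, n]| = n - j + 1$, one derives
\[
v_j^{\x} - v_j^{\y} \;=\; \#\{i : h_i^{\x} < j \leq h_i^{\y}\} \;-\; \#\{i : h_i^{\y} < j \leq h_i^{\x}\}.
\]
The two counts can never both be positive: if some index $i$ contributed to the first while some $i'$ contributed to the second, monotonicity of $h^{\x}$ and $h^{\y}$ would yield an immediate contradiction (both $i<i'$ and $i>i'$ are ruled out by comparing the sequences). Consequently $c_j = \#\{i : h_i^{\x} < j \leq h_i^{\y}\}$, and regrouping the monomials by $i$ gives
\[
x_1^{c_1} \cdots x_n^{c_n} \;=\; \prod_{i \,:\, h_i^{\y} > h_i^{\x}} x_{h_i^{\x}+1} \cdots x_{h_i^{\y}}.
\]

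With this flow formula in hand, the remaining identity is established factor by factor in $i$, using the description $j_i = \max(h_i^{\x}, h_i^{\y})$ from Lemma~\ref{lem:SartoriStyleGenInts}. When $h_i^{\x} \geq h_i^{\y}$, the $i$th factor of the first product on the left is already $x_{h_i^{\x}+1} \cdots x_{h_i^{\z}}$ and no contribution from the $c$-product is needed. When $h_i^{\x} < h_i^{\y}$, we have $j_i = h_i^{\y}$; the first product then contributes $x_{h_i^{\y}+1} \cdots x_{h_i^{\z}}$ while the $c$-product contributes exactly $x_{h_i^{\x}+1} \cdots x_{h_i^{\y}}$, and together they fill out $x_{h_i^{\x}+1} \cdots x_{h_i^{\z}}$. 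The main technical obstacle is verifying the flow formula for $c_j$; once that is done the bookkeeping is straightforward, and it is worth noting that the identity holds as an honest equality of polynomials in $R$, with no need to pass to quotients modulo $I_{\mathbf{b}^{\mu}}$ or modulo illicit morphisms.
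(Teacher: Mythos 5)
Your proof is correct and follows essentially the same route as the paper: apply $\Xi$ by $\Z[U_1,\ldots,U_n]$-linearity, express $\Xi(f_{\x,\y})$ as a monomial, and concatenate with the fork polynomial to produce $p_{\und{\mu}\eta^{\sigma}}$. The paper packages the monomial $x_1^{c_1}\cdots x_n^{c_n}$ as $p_{\und{\mu}\varrho}$ for the minimal compatible $\varrho$ and merges the two interval products; your ``flow formula'' for the exponents $c_j$ in terms of hole sequences is exactly the combinatorial identity the paper invokes without spelling it out, so you have simply filled in that detail.
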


\begin{proof}
Let $\varrho \in D_{n,k}$ be such that $\und{\mu}\varrho^{e} \bar{\l}$ is the minimal degree oriented enhanced fork diagram with lower fork $\und{\mu}$ and upper fork $\bar{\l}$, so that $\varrho$ has all $\vee$'s maximally to the left subject to the constraint that $\und{\mu}\varrho^{e} \bar{\l}$ is an oriented fork diagram. Explicitly, we have $\vee^{\varrho}_j = \max(\vee^{\mu}_j,\vee^{\l}_j)$. Then since $\varrho \preceq \eta$ for all oriented fork diagrams $\und{\mu}\eta\bar{\l}$ by assumption, $p_{\und{\mu}\varrho}$ divides $p_{\und{\mu}\eta^{\sigma}}$ for every oriented enhanced fork diagram $\und{\mu}\eta^{\sigma} \bar{\l}$.  The morphism $1 \mapsto p_{\und{\mu}\varrho}$ is the image of the generator $f_{\x,\y}$ of $\Ib_{\x} \B(n,k) \Ib_{\y}$ under $\Xi$. Then
\begin{align}
\Xi_{{\sf p}_{(\x, \z^{\sigma}, \y) } f_{\x,\y}}(1)
&:=
\mf{S}'_{\sigma}\left(x_{z_1+1}, \dots , x_{z_k+1} \right)
 \left( \prod_{i=1}^{n-k} x_{j_i+1} x_{j_i+2} \dots x_{h_i^{\z} }\right) \cdot \Xi_{f_{\x,\y}}(1)
 \nn \\
 &= \mf{S}'_{\sigma}\left(x_{\wedge_1^{\eta}}, \dots , x_{\wedge_k^{\eta}} \right)
 \left( \prod_{j=1}^{n-k} x_{\vee_j^{\varrho}} x_{(\vee_j^{\varrho}) +1 } \dots x_{\vee_j^{\eta} -1 }\right) \cdot p_{\und{\mu}\varrho}
\nn \\
&= \mf{S}'_{\sigma}\left(x_{\wedge_1^{\eta}}, \dots , x_{\wedge_k^{\eta}} \right)
 \left( \prod_{j=1}^{n-k} x_{\vee_j^{\varrho}} x_{(\vee_j^{\varrho}) +1 } \dots x_{\vee_j^{\eta} -1 }\right)
\left(\prod_{j=1}^{n-k} x_{\vee_j^{\mu}} x_{(\vee_j^{\mu}) +1 } \dots x_{\vee_j^{\varrho} -1 } \right)
  \nn\\
&= \mf{S}'_{\sigma}\left(x_{\wedge_1^{\eta}}, \dots , x_{\wedge_k^{\eta}} \right)
 \left( \prod_{j=1}^{n-k} x_{\vee_j^{\mu}} x_{(\vee_j^{\mu}) +1 } \dots x_{\vee_j^{\eta} -1 }\right)
  \nn\\
&=  p_{\und{\mu} \eta^{\sigma}}\nn
\end{align}
and the result follows.
\end{proof}

\begin{theorem}\label{thm:flat}
The Ozsv{\'a}th--Szab{\'o} algebra $\B_l(n,k)$ is free over $S$ with a basis given by elements ${\sf p}_{(\x, \z^{\sigma}, \y) } f_{\x,\y}$ where $\x,\y \in V_l(n,k)$ are not too far and $\{{\sf p}_{(\x, \z^{\sigma}, \y) }\}$ are the fork polynomials from \eqref{eq:fork-element}.
\end{theorem}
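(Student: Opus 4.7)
The plan is to build a natural $S$-linear surjection from a free $S$-module on the fork symbols onto $\B_l(n,k)$, and then verify injectivity by matching graded Poincar\'e series in each bidegree. Let $F = \bigoplus_{\x,\y} \Ib_{\x} F \Ib_{\y}$, where $\Ib_{\x} F \Ib_{\y}$ is the free graded $S$-module with one generator $e_{(\x,\z^{\sigma},\y)}$ for each triple $(\x,\z^{\sigma},\y)$ appearing in Definition~\ref{def:b-fork-element} (equivalently, such that $\und{\mu}\eta^{\sigma}\bar{\l}$ is an enhanced oriented fork diagram via Lemma~\ref{lem:OrientedForkHelper}), placed in $q$-degree $d + 2\ell(\sigma) + 2\sum_i \gamma_i$ with $d=\sum_i|x_i-y_i|$ and $\gamma_i = h_i^{\z}-j_i$. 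Define $\phi\colon F \to \B_l(n,k)$ by $e_{(\x,\z^{\sigma},\y)} \mapsto {\sf p}_{(\x,\z^{\sigma},\y)} f_{\x,\y}$; this is $S$-linear because $S$ acts centrally on $\B_l(n,k)$.

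The first step is surjectivity of $\phi$, via graded Nakayama. The algebra $\B_l(n,k)$ is nonnegatively graded under $\deg^q$ and each generator $\varepsilon_i$ of $S$ has strictly positive degree. By Corollary~\ref{cor:AlgIsoWellDef}, $\B_l(n,k)/S^+\B_l(n,k) = \B_l(n,k)/\cal{J}\B_l(n,k) = \cal{A}_{n,k}^{\Z}$, and Proposition~\ref{prop:XiOnForks} says each $\phi(e_{(\x,\z^{\sigma},\y)})$ reduces modulo $S^+$ to the element $(1 \mapsto p_{\und{\mu}\eta^{\sigma}})$, which by Proposition~\ref{prop:AZfree} is part of a $\Z$-basis of $\cal{A}_{n,k}^{\Z}$. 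Graded Nakayama therefore lifts the spanning property, so $\phi$ is surjective.

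The second step is a graded rank comparison on each summand $\Ib_{\x}(-)\Ib_{\y}$ for $\x,\y \in V_l(n,k)$ not too far (both sides vanish otherwise). Corollary~\ref{cor:OSzBasis} presents $\Ib_{\x}\B_l(n,k)\Ib_{\y}$ as $\Z[U_1,\ldots,U_n]/(p_{G_1},\ldots,p_{G_{n-k}})$ shifted by $q^d$ (since $\deg^q f_{\x,\y}=d$). The polynomials $p_{G_i}$ involve pairwise disjoint variables, so the quotient factors as a tensor product, yielding
\[
\rkq\bigl(\Ib_{\x}\B_l(n,k)\Ib_{\y}\bigr) = q^d \cdot \frac{\prod_{i=1}^{n-k}(1-q^{2 l_i})}{(1-q^2)^n}.
\]
On the $F$ side, Corollary~\ref{cor:ForkCount} together with the Poincar\'e series $\prod_{j=1}^k(1-q^{2j})^{-1}$ of $S$ gives
\[
\rkq\bigl(\Ib_{\x}F\Ib_{\y}\bigr) = q^d \cdot (k)^!_{q^2} \prod_{i=1}^{n-k}(l_i)_{q^2} \cdot \prod_{j=1}^k (1-q^{2j})^{-1}.
\]
Using $(j)_{q^2}=(1-q^{2j})/(1-q^2)$, the factor $(k)^!_{q^2} \prod_j (1-q^{2j})^{-1}$ telescopes to $(1-q^2)^{-k}$ and the two expressions coincide termwise.

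To conclude, in every fixed $q$-degree, both $\Ib_{\x}\B_l(n,k)\Ib_{\y}$ (by Corollary~\ref{cor:OSzBasis}) and $\Ib_{\x}F\Ib_{\y}$ (since $S$ is a polynomial ring over $\Z$) are finite-rank free abelian groups, and the restriction of $\phi$ is a surjection of free abelian groups of equal finite rank; such a surjection is automatically an isomorphism. Assembling over all $(\x,\y)$ and all $q$-degrees, $\phi$ is an isomorphism of graded $S$-modules, completing the proof. The main obstacle is really bookkeeping: keeping the degree shift $q^d$ and the quantum-integer manipulations consistent in the Poincar\'e series comparison. Beyond that identity, graded Nakayama together with the already-available basis of $\cal{A}_{n,k}^{\Z}$ from Proposition~\ref{prop:AZfree} does all of the substantive work.
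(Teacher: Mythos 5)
Your approach is genuinely different from the paper's: the paper proves freeness by an explicit induction on the structure of the pair $(\x,\y)$, splitting a short exact sequence at each step and eventually reducing to Manivel's basis theorem for polynomials over symmetric polynomials, whereas you propose an abstract argument combining graded Nakayama with a Poincar\'e-series comparison. The rank computation in your second step is correct, and if surjectivity of $\phi$ were in hand the endgame (a graded surjection of free abelian groups of equal finite rank in each degree is an isomorphism) would work. However, there is a circularity in the surjectivity step.

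You assert, citing Corollary~\ref{cor:AlgIsoWellDef}, that $\B_l(n,k)/S^+\B_l(n,k) = \cal{A}_{n,k}^{\Z}$. But that corollary only says that $\Xi$ descends to a well-defined (surjective) homomorphism $\Bq \to \cal{A}_{n,k}^{\Z}$; it does not say this map is injective. The injectivity is precisely Theorem~\ref{thm:newinjective}, whose proof in the paper invokes Theorem~\ref{thm:flat}. Without that isomorphism, knowing that the images $\bar\phi(e_{(\x,\z^\sigma,\y)})$ form a $\Z$-basis of the \emph{quotient} $\cal{A}_{n,k}^{\Z}$ does not show they span $\B_l(n,k)/S^+\B_l(n,k)$, so graded Nakayama does not apply as you've used it. Note also that the rank identity $\rkq(F) = \rkq(\B_l(n,k))$ does not let you bound $\rkq(\Bq)$ from above without already knowing $\B_l(n,k)$ is flat over $S$, which is again what you are trying to prove. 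The honest content of the theorem is that the fork elements $S$-generate $\B_l(n,k)$, and that step is exactly what the paper's inductive/combinatorial argument establishes directly from Corollary~\ref{cor:OSzBasis}; some such direct spanning argument cannot be bypassed here.
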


\begin{proof}
As an $S$-module, $\B_l(n,k)$ is the direct sum of $\Ib_{\x} \B_l(n,k) \Ib_{\y}$ for $\x,\y \in V_l(n,k)$. Let $\x,\y \in V_l(n,k)$ be not too far; it suffices to show that $\Ib_{\x} \B_l(n,k) \Ib_{\y}$ is a free $S$-module with a basis given by the above fork elements.

Let $i_0 \geq 0$ be the maximal index such that $x_i = y_i = i-1$ for all $i \leq i_0$. We will induct on $|\{i > i_0 : x_i =  y_i\}|$; first suppose this number is zero. We have a crossed line between $\x$ and $\y$ as in Definition~\ref{def:OSzTerminology} for each index $i_0 + 1 \leq j \leq k$, so $k-i_0$ lines are crossed. The $n-k$ generating intervals between $\x$ and $\y$ must all be contained in the $(n-i_0)$-element set $\{i_0+1,\ldots,n\}$, $k-i_0$ of whose elements correspond to crossed lines, so each generating interval must have length $1$. There is a unique $\z \in V_l(n,k)$ that is pairwise not too far from $\x,\y$ and satisfies $\x \preceq \z \succeq \y$, namely $z_i = \mathrm{min}(x_i,y_i)$, and $\Ib_{\x} \B_l(n,k) \Ib_{\y}$ is the quotient of $\Z[U_1,\ldots,U_n]$ by all variables except $U_{z_1 + 1},\ldots,U_{z_k+1}$. The fork elements of $\Ib_{\x} \B_l(n,k) \Ib_{\y}$ are given in this quotient by ``staircase'' monomials $U_{z_1 + 1}^{c_1} \cdots U_{z_k+1}^{c_k}$ with $0 \leq c_i \leq k-i$, and the elements $e_i(U_1,\ldots,U_n)$ of $S$ act as $e_i(U_{z_1 + 1},\ldots,U_{z_k + 1})$. The result now follows from \cite[Proposition 2.5.5]{Manivel}, which shows that the staircase monomials provide a basis for the polynomial ring over the ring of symmetric polynomials.

Now suppose that $x_i = y_i$ for some minimal $i > i_0$. It follows that $x_i = j+1$ for some generating interval $[j+1,\ldots,j+l]$ between $\x$ and $\y$. We consider four cases.
\begin{itemize}
\item If $j\in \x$ (so $j \notin \y$) and $j+l \in \x$ (so $j+l \notin \y$), let $\x' = \x$ and $\y' = (\y \setminus \{j+1\}) \cup \{j+l\}$. Let $\x'' = \x$ and $\y'' = (\y \setminus \{j+1\}) \cup \{j\}$.
\item If $j \in \x$ and $j+l \notin \x$, let $\x' = (\x \setminus \{j+1\}) \cup \{j+l\}$ and $\y' = \y$. Let $\x'' = \x$ and $\y'' = (\y \setminus \{j+1\}) \cup \{j\}$.
\item If $j \notin \x$ and $j+l \in \x$, let $\x' = \x$ and $\y' = (\y \setminus \{j+1\}) \cup \{j+l\}$. Let $\x'' = (\x \setminus \{j+1\}) \cup \{j\}$ and $\y'' = \y$.
\item If $j \notin \x$ and $j+l \notin \x$, let $\x' = (\x \setminus \{j+1\}) \cup \{j+l\}$ and $\y' = \y$. Let $\x'' = (\x \setminus \{j+1\}) \cup \{j\}$ and $\y'' = \y$.
\end{itemize}
In all cases, an element $\z \in V_l(n,k)$ is pairwise not too far from $\x'$ and $\y'$ and satisfies $\x' \preceq \z \succeq \y'$ if and only if $\z$ is pairwise not too far from $\x$ and $\y$, satisfies $\x \preceq \z \succeq \y$, and also satisfies $z_i = x_i (=y_i)$. Similarly, $\z$ is pairwise not too far from $\x'$ and $\y'$ and satisfies $\x' \preceq \z \succeq \y'$ if and only if $\z$ is pairwise not too far from $\x$ and $\y$, satisfies $\x \preceq \z \succeq \y$, and also satisfies $z_i = x_i -1 (=y_i -1)$.

For $\sigma \in S_k$ and $\z$ as above, we have ${\sf p}_{(\x,\z^{\sigma},\y)} = {\sf p}_{(\x',\z^{\sigma},\y')}$ or ${\sf p}_{(\x,\z^{\sigma},\y)} = U_{j+1} {\sf p}_{(\x'',\z^{\sigma},\y'')}$ as appropriate. We see that the fork polynomials for $\Ib_{\x} \B_l(n,k) \Ib_{\y}$ can be viewed as the fork polynomials for $\Ib_{\x'} \B_l(n,k) \Ib_{\y'}$ together with $U_{j+1}$ times the fork polynomials for $\Ib_{\x''} \B_l(n,k) \Ib_{\y''}$.

Now consider the exact sequence
\[
0 \to \Ib_{\x''} \B_l(n,k) \Ib_{\y''} \xrightarrow{\cdot U_{j+1}} \Ib_{\x} \B_l(n,k) \Ib_{\y} \xrightarrow{\cdot 1} \Ib_{\x'} \B_l(n,k) \Ib_{\y'} \to 0,
\]
of $S$-modules, interpreting each term as a quotient of $\Z[U_1,\ldots,U_n]$. We have $|\{i > i_0 : x'_i =  y'_i\}| < |\{i > i_0 : x_i =  y_i\}|$ and similarly for $\x'',\y''$. By induction, the first and third terms of the exact sequence are free $S$-modules with bases given by fork elements. Since the third term is free, the sequence splits; a basis for the middle term is given by $f_{\x,\y}$ times fork polynomials for the third term together with $U_{j+1} f_{\x,\y}$ times fork polynomials for the first term. As discussed above, the resulting basis coincides with the fork elements of $\Ib_{\x} \B_l(n,k) \Ib_{\y}$.
\end{proof}

\begin{remark}
The proof of Theorem~\ref{thm:flat}, together with \cite[Proposition 2.5.3]{Manivel}, shows that we can replace the leading term $\mf{S}'_{\sigma}\left(U_{z_1+1}, \dots , U_{z_k+1} \right)$ of the Schubert polynomial with the actual Schubert polynomial $\mf{S}_{\sigma}\left(U_{z_1+1}, \dots , U_{z_k+1} \right)$ in the definition of fork polynomials from Definition~\ref{def:b-fork-element} and Theorem~\ref{thm:flat} continues to hold.
\end{remark}

\subsection{Injectivity}\label{sec:InjectivityAfterFreeness}

We can now prove injectivity for the algebra homomorphism $\Xi$ from Corollary~\ref{cor:AlgIsoWellDef}.
\begin{theorem}\label{thm:newinjective}
The map $\Xi\colon \Bqk \to \cal{A}^{\Z}_{n,k}$ from Corollary~\ref{cor:AlgIsoWellDef} is an isomorphism.
\end{theorem}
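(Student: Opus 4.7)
The plan is to deduce Theorem~\ref{thm:newinjective} by combining the basis from Theorem~\ref{thm:flat} on the Ozsv{\'a}th--Szab{\'o} side with the basis from Proposition~\ref{prop:AZfree} on the Sartori side, matched via Proposition~\ref{prop:XiOnForks}. Surjectivity is already known: Corollary~\ref{cor:IntegerXi} shows that $\B_l(n,k) \to \cal{A}^{\Z}_{n,k}$ is surjective, and since this map factors through the quotient by $\cal{J}$ by Corollary~\ref{cor:AlgIsoWellDef}, the induced map $\Xi \colon \Bqk \to \cal{A}^{\Z}_{n,k}$ is surjective as well. Thus the task is to prove injectivity.

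First, I would unpack the quotient. By definition $\Bqk = \B_l(n,k)/\cal{J}\cdot \B_l(n,k)$, where $\cal{J}\cdot \B_l(n,k)$ is the ideal generated by the central elements $e_i(U_1,\ldots,U_n) = j(\varepsilon_i)$ for $1 \leq i \leq k$. Theorem~\ref{thm:flat} tells us that $\B_l(n,k)$ is free over $S = \Z[\varepsilon_1,\ldots,\varepsilon_k]$ on the fork elements $\{ {\sf p}_{(\x,\z^\sigma,\y)} f_{\x,\y} \}$, where $\x,\y \in V_l(n,k)$ range over pairs that are not too far and $\z,\sigma$ range over all choices making $\x \preceq \z \succeq \y$ with $\z$ pairwise not too far from $\x,\y$. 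Consequently, $\Bqk = \B_l(n,k) \otimes_S \Z$ is a free $\Z$-module with basis given by the images of these fork elements.

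Next, I would invoke Proposition~\ref{prop:XiOnForks}, which computes $\Xi$ explicitly on fork elements: if $\mu,\l,\eta$ are the $\wedge\vee$-sequences corresponding to $\x,\y,\z$ via Remark~\ref{rem:OSzSarStates}, then
\[
\Xi\bigl( {\sf p}_{(\x,\z^\sigma,\y)} f_{\x,\y} \bigr) = \bigl(1 \mapsto p_{\und{\mu} \eta^\sigma}\bigr).
\]
The condition that $\x,\y,\z$ are pairwise not too far with $\x \preceq \z \succeq \y$ translates, via Lemma~\ref{lem:OrientedForkHelper}, precisely to the condition that $\und{\mu} \eta^\sigma \bar{\l}$ is an enhanced oriented fork diagram for $\l,\mu$. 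Hence $\Xi$ carries the $\Z$-basis of $\Bqk$ bijectively onto the homogeneous $\Z$-basis of $\cal{A}^{\Z}_{n,k}$ described in Proposition~\ref{prop:AZfree}.

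Therefore $\Xi$ is a degree-preserving bijection on a $\Z$-basis, so it is an isomorphism of graded abelian groups; being already a ring homomorphism, it is an isomorphism of graded $\Z$-algebras. The ``main obstacle'' here has really been dispatched in advance: all the genuine content is in Theorem~\ref{thm:flat} (freeness on fork elements) and Proposition~\ref{prop:AZfree} (a matching basis of fork monomials on the Sartori side); once these are in hand the proof of Theorem~\ref{thm:newinjective} is a direct comparison of bases through the formula of Proposition~\ref{prop:XiOnForks}.
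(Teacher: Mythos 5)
Your proof is correct and takes essentially the same route as the paper: both deduce injectivity by observing, via Theorem~\ref{thm:flat}, that the fork elements descend to a $\Z$-basis of $\Bqk$, and then using Proposition~\ref{prop:XiOnForks} together with Proposition~\ref{prop:AZfree} to see that $\Xi$ carries this basis bijectively onto the fork-monomial basis of $\cal{A}^{\Z}_{n,k}$. The only difference is that you spell out the intermediate bookkeeping (the identification $\Bqk \cong \B_l(n,k)\otimes_S \Z$ and the translation through Lemma~\ref{lem:OrientedForkHelper}) in more detail than the paper's terse one-sentence argument.
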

\begin{proof}
By Theorem~\ref{thm:flat}, the fork elements from \eqref{eq:fork-element} for all $\x,\y \in V_l(n,k)$ that are not too far give a $\Z$-basis for $\Bqk$; Proposition~\ref{prop:XiOnForks} and Proposition~\ref{prop:AZfree} show that $\Xi$ sends these elements to a $\Z$-basis for $\cal{A}^{\Z}_{n,k}$.
\end{proof}

\section{Categorification of bases and bilinear forms}

\subsection{Quantum \texorpdfstring{$\mf{gl}(1|1)$}{gl(1|1)}}

Let $\epsilon_1 = (1,0)$ and $\epsilon_2=(0,1)$ denote the standard basis for the weight lattice $\Z^2$ of $\mf{gl}(1|1)$; let $h_1$ and $h_2$ denote the basis for dual weight lattice with associated pairing $\langle h_i, \epsilon_j\rangle = \delta_{i,j}$.  We denote the simple root of $\mf{gl}(1|1)$ by $\alpha=\epsilon_1-\epsilon_2$.

\begin{definition}
The Hopf superalgebra $U_q(\mf{gl}(1|1))$ is generated as a superalgebra over $\C(q)$ by two even generators $K_1^{\pm}, K_2^{\pm}$ and two odd generators $E,F$ with relations
\begin{alignat}{3}
  &K_i K_j = K_jK_i \quad   &&K_iK_{i}^{-1} = 1 = K_i^{-1}K_i  \quad  &&\text{for $i,j \in \{1,2\}$}
  \\
 & K_iE = q^{\langle h_i, \alpha \rangle}EK_i \quad    && K_iF  = q^{-\langle h_i, \alpha \rangle }FK_i &&
  \\
 & E^2 = F^2 = 0 \quad &&EF + FE = \frac{K - K^{-1}}{q - q^{-1}} &&
\end{alignat}
where $K = K_1 K_2$.   The comultiplication is given by $\Delta(E) = E \otimes K^{-1} + 1 \otimes E$ and $\Delta(F) = F \otimes 1 + K \otimes F$, and $\Delta(K_i) = K_i \otimes K_i$. We will not need explicit formulas for the counit or antipode.
\end{definition}

Let $V=\C(q)\langle v_0, v_1\rangle$ denote the two dimensional simple $U_q(\mf{gl}(1|1))$-module with highest weight $\epsilon_1$.   The super grading is fixed by setting the highest weight space spanned by $v_0$ to be even, so that $v_1$ is odd.  We write $V^{\otimes n}$ for the $n$-fold tensor power of $V$ and $(V^{\otimes n})_k$ for $\{ v \in V^{\otimes n} \mid Kv =q^{\langle h, k\epsilon_1, (n-k)\epsilon_2\rangle} v \}$.

\subsection{The canonical basis of \texorpdfstring{$V^{\otimes n}$}{Vn}}

We first describe the canonical basis of $V^{\otimes n}$ used by Sartori \cite[Section 4.3]{Sar-tensor}, following Zhang \cite{ZhangCanonical} (see also \cite{Zho,BKK}).

\begin{remark}
Here we are using ``canonical'' in the combinatorial or crystal sense of Kashiwara~\cite{Kas}. For superalgebras the authors are not aware of a geometric construction  in the sense of Lusztig~\cite{Lus3} giving rise to canonical bases for $U_q(\mf{gl}(1|1))$-modules.
\end{remark}

For $1 \leq i \leq n$, let $e_i$ be the standard basis element of $V^{\otimes n}$ with $v_0$ in position $i$ and $v_1$ in all other positions. We write the standard basis vector with $v_0$ in positions $i_k > \cdots > i_1$ and $v_1$ in all other positions as $e_{i_k} \wedge \cdots \wedge e_{i_1}$. Sartori writes this basis vector as $v_{\eta}$ where $\eta \in D_{n,k}$ has a $\wedge$ in positions $i_1,\ldots,i_k$ and $\vee$ elsewhere; we will also use this notation.

For $2 \leq i \leq n$, let $\ell_i = e_i + qe_{i-1}$; let $\ell_1 = e_1$. We expand wedge products of the $\ell_i$ as usual, without any ``super'' sign rules.

\begin{proposition}
Let $\eta \in D_{n,k}$ be a $\wedge \vee$ sequence with $\wedge$ in positions $1 \leq i_i < \cdots < i_k \leq n$. The canonical basis element $v_{\eta}^{\diamondsuit}$ for $V^{\otimes n}$, defined in \cite[Theorem 4.2]{Sar-tensor}, is
\[
\{ \ell_{i_k} \wedge \cdots \wedge \ell_{i_1} : 1 \leq i_1 < \cdots < i_k \leq n\}.
\]
\end{proposition}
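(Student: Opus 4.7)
The plan is to verify that each wedge product $\ell_{i_k} \wedge \cdots \wedge \ell_{i_1}$ satisfies the two characterizing properties of the canonical basis element $v_{\eta}^{\diamondsuit}$ from \cite[Theorem 4.2]{Sar-tensor}: bar-invariance under the bar involution on $V^{\otimes n}$ (defined via the quasi-$R$-matrix of $U_q(\mf{gl}(1|1))$), and a unitriangular expansion $v_{\eta}^{\diamondsuit} = v_{\eta} + \sum_{\eta' \prec \eta} a_{\eta,\eta'}(q)\, v_{\eta'}$ with $a_{\eta,\eta'}(q) \in q\Z[q]$, relative to the Bruhat-style order on $D_{n,k}$. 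These two properties uniquely determine $v_{\eta}^{\diamondsuit}$, so establishing both yields the stated formula.

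For unitriangularity I expand the wedge product by multilinearity using $\ell_{i_l} = e_{i_l} + q\, e_{i_l - 1}$ for $i_l > 1$ and $\ell_1 = e_1$. Each factor contributes either $e_{i_l}$ (with coefficient $1$) or $e_{i_l - 1}$ (with coefficient $q$); a term indexed by a subset $T \subseteq \{1,\ldots,k\}$ of shifted positions survives antisymmetry of $\wedge$ precisely when the resulting indices $j_1 < \cdots < j_k$ are all distinct, which fails only if $l \notin T$, $l+1 \in T$, and $i_{l+1} - 1 = i_l$ for some $l$. The all-unshifted contribution ($T = \varnothing$) equals $v_{\eta}$ with coefficient $1$, and every other surviving term contributes $q^{|T|} v_{\eta'}$ with $\eta' \prec \eta$ in the Bruhat order, giving the required unitriangular structure with coefficients in $q\Z[q]$.

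For bar-invariance the subtlety is that the bar involution on $V^{\otimes n}$ is not the naive $q \mapsto q^{-1}$ on coefficients, but is twisted by the quasi-$R$-matrix $\Theta$ of $U_q(\mf{gl}(1|1))$. My plan is to exploit the facts that $\bar{F} = F$ and that the top vector $v_0^{\otimes n}$ is bar-invariant, then argue by induction on $k$ that $\ell_{i_k} \wedge \cdots \wedge \ell_{i_1}$ coincides with an explicit normalized image of a product of $F$-operators applied to $v_0^{\otimes n}$, from which bar-invariance follows automatically. The main obstacle is making this $F$-theoretic identification precise, because the quasi-$R$-matrix twists (and the interaction between $F \otimes 1$ and $K \otimes F$ in $\Delta(F)$) make the direct verification cumbersome. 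An alternative route that sidesteps this difficulty is to invoke the explicit canonical basis formulas for $V^{\otimes n}$ in \cite{ZhangCanonical} (or the closely related constructions in \cite{Zho, BKK}) and combinatorially match them against the wedge-product expression, reducing everything to the unitriangularity computation above.
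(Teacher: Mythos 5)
Your unitriangularity computation is correct and in fact more detailed than what the paper writes: expanding each $\ell_{i_l}$ multilinearly, noting that a collision $j_{l+1}=j_l$ can only occur when $i_{l+1}-1=i_l$ with $l+1\in T$ and $l\notin T$, and observing that all surviving terms stay in decreasing index order (so no signs appear) is exactly the right bookkeeping. One small caution: with the paper's conventions on the Bruhat order ($\wedge\vee\succ\vee\wedge$, equivalently $\x\succ\y$ iff $x_i\le y_i$ for all $i$), moving a $\wedge$ to the left via $e_{i_l}\mapsto e_{i_l-1}$ produces $\eta'\succ\eta$, not $\eta'\prec\eta$; compare Example~\ref{ex:CanonicalBasisV3}, where $v_{100}^\diamondsuit = v_{100} + qv_{010} + q^2v_{001}$ and $001 \succ 010 \succ 100$. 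This is just a directional convention, but you should state it consistently with Sartori's characterization in \cite[Theorem 4.2]{Sar-tensor}.

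The genuine gap is the bar-invariance half. You describe a plan (realize $\ell_{i_k}\wedge\cdots\wedge\ell_{i_1}$ as a normalized product of $F$-operators acting on $v_0^{\otimes n}$ and deduce bar-invariance from $\bar{F}=F$) but do not execute it, and you explicitly flag the quasi-$R$-matrix twist as an unresolved obstacle. That obstacle is real: for $U_q(\mf{gl}(1|1))$ one has $F^2=0$, so the intended ``iterated $F$'' construction needs to be formulated carefully (it is not a naive divided-power story), and the interaction with the quasi-$R$-matrix in the definition of the bar involution on $V^{\otimes n}$ is not something one can wave away. Without completing this step, the characterization argument is not a proof. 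Your fallback --- matching against the explicit canonical basis formulas in \cite{ZhangCanonical} (or \cite{Zho,BKK}) using your unitriangular expansion --- would close the gap and is in fact essentially the paper's route: the paper computes closed formulas for wedge products over consecutive intervals $\ell_i\wedge\cdots\wedge\ell_j=\sum_{m=j-1}^i q^{m-j+1}e_i\wedge\cdots\wedge\hat{e_m}\wedge\cdots\wedge e_{j-1}$, observes that a general wedge product factors over its consecutive blocks, and then compares directly with Sartori's explicit evaluation of the lower fork diagram of $\eta$ (\cite[Proposition 5.5]{Sar-tensor} and \cite[Figure 1]{Sar-tensor}). So the fallback is the right idea; to make the proposal a proof you should abandon the $F$-operator route, carry out the comparison with the explicit reference formula, and fix the direction of the Bruhat order.
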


\begin{proof}
First, we notice that if $i \geq j > 1$, then
\[
\ell_i \wedge \ell_{i-1} \wedge \cdots \wedge \ell_j = \sum_{m=j-1}^i q^{m-j+1} e_i \wedge \cdots \wedge \hat{e_m} \wedge \cdots \wedge e_{j-1},
\]
while if $i \geq 1$ then
\[
\ell_i \wedge \cdots \wedge \ell_1 = e_i \wedge \cdots \wedge e_1.
\]
In general, a wedge product element as above is a product of such expressions over its consecutive $\ell_i$ intervals.

By \cite[Proposition 5.5]{Sar-tensor}, the canonical basis element $v_{\eta}^{\diamondsuit}$ arises from an evaluation of the lower fork diagram of $\eta$. The result follows from a comparison of the above formulas with \cite[Figure 1]{Sar-tensor}.
\end{proof}

If $\x \in V_l(n,k)$ corresponds to $\eta \in D_{n,k}$, we will write $v_{\x}^{\diamondsuit} = v_{\eta}^{\diamondsuit}$. For example, if $\x = \varnothing$ then $v_{\x}^{\diamondsuit} = v_1 \otimes \cdots \otimes v_1$, while if $\x = \{1,\ldots,n\}$ then $v_{\x}^{\diamondsuit} = v_0 \otimes \cdots \otimes v_0$.

\begin{example}\label{ex:CanonicalBasisV3}
Write $\wedge \vee$ sequences $\eta \in D_{n,k}$ as sequences of zeroes and ones, with $\wedge$ corresponding to zero and $\vee$ corresponding to one. The canonical basis elements for $V^{\otimes 3}$ are:
\begin{alignat*}{4}
 &v_{000}^{\diamondsuit} = v_{000} \qquad
 &&v_{100}^{\diamondsuit} = v_{100} + qv_{010} + q^2v_{001} \qquad
&&v_{010}^{\diamondsuit} = v_{010} + qv_{001} \qquad
&&v_{001}^{\diamondsuit} = v_{001}
 \\
&v_{110}^{\diamondsuit} = v_{110} + qv_{101} \qquad
&&v_{101}^{\diamondsuit} = v_{101} + qv_{011} \qquad
&&v_{011}^{\diamondsuit} = v_{011} \qquad
&&v_{111}^{\diamondsuit} = v_{111}
\end{alignat*}
One can check that these elements are invariant under the bar involution of \cite[Section 4.3]{Sar-tensor}.
\end{example}

\subsection{Categorification of \texorpdfstring{$V^{\otimes n}$}{Vn} via Sartori's algebras}

Let $\Bbbk$ be an arbitrary field.  Write $\cal{A}_{n,k}^{\Bbbk}:= \cal{A}_{n,k}^{\Z} \otimes_{\Z} \Bbbk$. For $\l \in D_{n,k}$, let $P(\l) = \cal{A}^{\Bbbk}_{n,k} \1_{\l}$, and let $L(\l)$ be the one-dimensional irreducible $\cal{A}_{n,k}^{\Bbbk}$-module such that $\1_{\l} L(\l) \neq 0$. For any graded module $M$ and integer $i$, let $q^i M$ denote $M$ with degrees shifted upwards by $i$. The following result is standard.
\begin{proposition}
The Grothendieck group
\[
K_0(\cal{A}_{n,k}^{\Bbbk}) := K(\cal{A}_{n,k}^{\Bbbk}{\rm -proj})
\]
of the abelian category of finitely generated projective graded left $\cal{A}_{n,k}^{\Bbbk}$ modules is a free $\Z[q,q^{-1}]$-module with basis given by the classes of indecomposable projective modules $[P(\lambda)]$ for $\lambda \in D_{n,k}$. The action of $q^{\pm 1}$ is given by $q^{\pm 1}[P] := [q^{\pm 1}P]$.

Similarly, the Grothendieck group
\[
G_0(\cal{A}_{n,k}^{\Bbbk}) := K(\cal{A}_{n,k}^{\Bbbk}{\rm -fmod})
\]
of the abelian category of finite dimensional graded left $\cal{A}_{n,k}^{\Bbbk}$ modules is a free $\Z[q,q^{-1}]$-module with basis given by the classes of simple modules $[L(\lambda)]$ for $\lambda \in D_{n,k}$.
\end{proposition}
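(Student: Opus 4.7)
The plan is to reduce this to the standard representation theory of a nonnegatively graded finite-dimensional $\Bbbk$-algebra whose degree-zero part is a product of copies of $\Bbbk$; once that structural input is in place, both claims follow from graded Krull--Schmidt and graded Jordan--H\"older.

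First I would verify the structural input using Proposition~\ref{prop:AZfree}. Each homogeneous basis element $1 \mapsto p_{\und{\mu}\eta^{\sigma}}$ of $\1_\mu \cal{A}^{\Z}_{n,k}\1_\lambda$ has degree $\deg(\und{\mu}\eta) + \deg(\eta\bar{\lambda}) + 2\ell(\sigma) \geq 0$, and degree zero forces $\sigma = e$ and $\eta = \mu = \lambda$, in which case the basis element is $\1_\lambda$. Consequently $\cal{A}^{\Bbbk}_{n,k}$ is nonnegatively graded with degree-zero part $\bigoplus_{\lambda \in D_{n,k}} \Bbbk \1_\lambda \cong \Bbbk^{|D_{n,k}|}$, which is commutative and semisimple.

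Next I would identify the projectives and simples. Since $\1_\lambda \cal{A}_{n,k}^{\Bbbk}\1_\lambda$ has one-dimensional degree-zero part spanned by $\1_\lambda$ and its positive-degree part is contained in the graded Jacobson radical of $\cal{A}_{n,k}^{\Bbbk}$, the idempotent $\1_\lambda$ is primitive in the graded sense. Hence $P(\lambda) = \cal{A}_{n,k}^{\Bbbk}\1_\lambda$ is indecomposable, $L(\lambda) = P(\lambda)/\mathrm{rad}\,P(\lambda)$ is one-dimensional and concentrated in degree zero, and the grading shifts $\{q^i P(\lambda)\}_{i \in \Z, \lambda \in D_{n,k}}$ (respectively $\{q^i L(\lambda)\}$) exhaust the indecomposable graded projectives (respectively simples) up to isomorphism.

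The Grothendieck group claims then follow by invoking graded Krull--Schmidt and Jordan--H\"older. Each $\End(P(\lambda))$ has a unique maximal graded ideal with residue field $\Bbbk$, so every finitely generated projective graded $\cal{A}_{n,k}^{\Bbbk}$-module decomposes uniquely (up to reordering and grading shifts) as a finite direct sum of grading shifts of the $P(\lambda)$, yielding the $\Z[q,q^{-1}]$-basis $\{[P(\lambda)]\}$ of $K_0$. For $G_0$, finite-dimensionality of $\cal{A}^{\Bbbk}_{n,k}$ ensures every object of $\cal{A}^{\Bbbk}_{n,k}{\rm -fmod}$ has finite graded length, and graded Jordan--H\"older gives the $\Z[q,q^{-1}]$-basis $\{[L(\lambda)]\}$ of $G_0$. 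No serious obstacle arises; the only nontrivial input is the structural description of the degree-zero part of $\cal{A}^{\Bbbk}_{n,k}$, which is immediate from Proposition~\ref{prop:AZfree}.
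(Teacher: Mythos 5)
Your proof is correct and is exactly the standard argument the paper invokes when it says "The following result is standard" and gives no proof: reduce to the fact that a finite-dimensional nonnegatively graded algebra whose degree-zero part is a product of copies of $\Bbbk$ has graded Krull--Schmidt and Jordan--H\"older, with the idempotents $\1_\lambda$ primitive. The one piece of non-formal input, that $(\cal{A}^{\Bbbk}_{n,k})_0 = \bigoplus_\lambda \Bbbk\1_\lambda$, you correctly extract from the fork-diagram basis of Proposition~\ref{prop:AZfree} via the degree formula $\deg(\und{\mu}\eta^\sigma\bar{\lambda}) = \deg(\und{\mu}\eta) + \deg(\eta\bar{\lambda}) + 2\ell(\sigma) \geq 0$, which vanishes only for $\mu=\eta=\lambda$ and $\sigma=e$.
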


\begin{remark}
Since $\cal{A}_{n,k}^{\Bbbk}$ is finite-dimensional over $\Bbbk$, we have $\cal{A}_{n,k}^{\Bbbk}{\rm -fmod} = \cal{A}_{n,k}^{\Bbbk}{\rm -gmod}$, the category of finitely generated graded left $\cal{A}_{n,k}^{\Bbbk}$ modules. Similarly, all objects of $\cal{A}_{n,k}^{\Bbbk}{\rm -proj}$ are finite-dimensional over $\Bbbk$. Note that all objects of $\cal{A}_{n,k}^{\Bbbk}{\rm -proj}$ can be thought of as objects of $\cal{A}_{n,k}^{\Bbbk}{\rm -fmod}$, but not conversely. We will see below that $K_0(\cal{A}_{n,k}^{\Bbbk})$ and $G_0(\cal{A}_{n,k}^{\Bbbk})$ can be identified over $\C(q)$.
\end{remark}

Write $K_0^{\C(q)}(\cal{A}_{n,k}^{\Bbbk}) := K_0(\cal{A}_{n,k}^{\Bbbk}) \otimes_{\Z[q,q^{-1}]} \C(q)$, and similarly for $G_0$.
\begin{definition}[Theorem 7.13 of \cite{Sar-tensor}]\label{def:SarVnIdent}
We identify $K_0^{\C(q)}(\cal{A}_{n,k}^{\Bbbk})$ with $\left(V^{\otimes n}\right)_k$ by identifying the basis element $[P(\l)]$ of $K_0^{\C(q)}(\cal{A}_{n,k}^{\Bbbk})$ with the canonical basis element $v_{\l}^{\diamondsuit}$.
\end{definition}

\subsection{Categorification of \texorpdfstring{$V^{\otimes n}$}{Vn} via Ozsv{\'a}th--Szab{\'o}'s algebras}

As in Section~\ref{sec:Homomorphism} above, write $\B_l^{\Bbbk}(n,k) := \B_l(n,k) \otimes_{\Z} \Bbbk$. For $\x \in V(n,k)$, let $P(\x) = \B^{\Bbbk}_l(n,k) \Ib_{\x}$, and let $L(\x)$ be the one-dimensional irreducible $\B_l^{\Bbbk}(n,k)$-module such that $\Ib_{\x} L(\x) \neq 0$.

Since $\B^{\Bbbk}_l(n,k)$ is positively graded and semisimple in degree zero, grading shifts of the modules $P(\x)$ form a complete set of isomorphism classes of indecomposable projective graded $\B^{\Bbbk}_l(n,k)$-modules, and grading shifts of the modules $L(\x)$ form a complete set of isomorphism classes of simple graded $\B^{\Bbbk}_l(n,k)$-modules.  Let $\B_l^{\Bbbk}(n,k){\rm -fmod}$ denote the category of finite dimensional graded left $\B_l^{\Bbbk}(n,k)$-modules and let $\B_l^{\Bbbk}(n,k){\rm -proj}$ denote the category of finitely generated projective graded left $\B_l^{\Bbbk}(n,k)$-modules.

\begin{corollary}
The Grothendieck group
\[
K_0(\B_l^{\Bbbk}(n,k)) := K(\B_l^{\Bbbk}(n,k){\rm -proj})
\]
is a free $\Z[q,q^{-1}]$-module with basis given by the classes of indecomposable projective modules $[P(\x)]$ for $\x \in V_l(n,k)$.

Similarly, the Grothendieck group
\[
G_0(\B_l^{\Bbbk}(n,k)) := K(\B_l^{\Bbbk}(n,k){\rm -fmod})
\]
is a free $\Z[q,q^{-1}]$-module with basis given by the classes of simple modules $[L(\x)]$ for $\x \in V_l(n,k)$.
\end{corollary}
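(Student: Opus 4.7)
The plan is to derive this corollary from the structural facts about $\B_l^{\Bbbk}(n,k)$ already assembled: it is positively graded, each graded piece is finite-dimensional (by Corollary~\ref{cor:OSzBasis}, since only finitely many admissible $U$-monomials live in any fixed degree), and its degree-zero part is the semisimple subalgebra $\bigoplus_{\x \in V_l(n,k)} \Bbbk \Ib_{\x}$ generated by the pairwise orthogonal idempotents $\Ib_{\x}$. Everything else follows from standard graded representation theory of positively graded algebras whose degree-zero part is a product of fields.

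For $K_0$, the first step is to observe that each idempotent $\Ib_{\x}$ is primitive: its graded endomorphism ring is $\End(P(\x)) = \Ib_{\x} \B_l^{\Bbbk}(n,k) \Ib_{\x}$, whose degree-zero part is $\Bbbk \Ib_{\x} \cong \Bbbk$, so this endomorphism ring is graded local (any homogeneous element of positive degree is nilpotent modulo the nonzero elements of degree zero, which are units). Hence $P(\x)$ is indecomposable and the Krull–Schmidt theorem applies to finitely generated graded projectives. Any such projective is a summand of a finite direct sum of grading shifts of $\B_l^{\Bbbk}(n,k) = \bigoplus_{\x \in V_l(n,k)} P(\x)$, so it decomposes as a finite direct sum of shifts $q^{i} P(\x)$; this shows $[P(\x)]$ for $\x \in V_l(n,k)$ spans $K_0(\B_l^{\Bbbk}(n,k))$ over $\Z[q,q^{-1}]$. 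Linear independence follows from the graded pairing
\[
\langle [P(\x)], [L(\y)] \rangle \;:=\; \dimq \Hom_{\B_l^{\Bbbk}(n,k)}(P(\x), L(\y)) \;=\; \dimq (\Ib_{\x} L(\y)) \;=\; \delta_{\x,\y},
\]
which forces any relation among the $[P(\x)]$'s to be trivial.

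For $G_0$, the key point is that any $M \in \B_l^{\Bbbk}(n,k)\text{-fmod}$ has finite length as a graded module, since it is finite-dimensional over $\Bbbk$. Its graded Jordan–H\"older factors are grading shifts of the simple modules, and because $\B_l^{\Bbbk}(n,k)$ is generated by its positive-degree part together with the idempotents in degree zero, the simple graded modules are precisely the one-dimensional $L(\x)$ on which $\Ib_{\y}$ acts as $\delta_{\x,\y}$ and all positive-degree elements act as zero. Hence $[L(\x)]$ for $\x \in V_l(n,k)$ spans $G_0(\B_l^{\Bbbk}(n,k))$ over $\Z[q,q^{-1}]$, and linear independence again follows from the same pairing above, which is nondegenerate between $K_0$ and $G_0$.

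The only genuine technical obstacle is the verification that Krull–Schmidt holds in the graded category despite $\B_l^{\Bbbk}(n,k)$ being infinite-dimensional over $\Bbbk$; but because graded $\Hom$-spaces between finitely generated graded projectives are finite-dimensional in each degree, and because the degree-zero part of $\End(P(\x))$ is a field, the standard argument (idempotents in $\End$ of a finitely generated graded projective lift modulo the positive-degree ideal) goes through without modification. Once this is in hand, both statements of the corollary are immediate, and the identification of the Grothendieck groups with the expected $\Z[q,q^{-1}]$-modules is complete.
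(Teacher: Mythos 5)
Your proposal is correct and takes essentially the same approach as the paper: the paper states the corollary immediately after observing that $\B_l^{\Bbbk}(n,k)$ is positively graded with semisimple degree-zero part (and that the $P(\x)$ and $L(\x)$ classify indecomposable projectives and simples up to shift), treating the rest as standard; your writeup spells out precisely that standard argument, including the graded Krull--Schmidt step and the perfect pairing giving linear independence.
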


As with $\cal{A}_{n,k}^{\Bbbk}$, we will see below that $K_0(\B_l^{\Bbbk}(n,k))$ and $G_0(\B_l^{\Bbbk}(n,k))$ can be identified over $\C(q)$. Write $K_0^{\C(q)}(\B_l^{\Bbbk}(n,k)) := K_0(\B_l^{\Bbbk}(n,k)) \otimes_{\Z[q,q^{-1}]} \C(q)$, and similarly for $G_0$.
\begin{definition}\label{def:OSzVnIdent}
We identify $K_0^{\C(q)}(\B_l^{\Bbbk}(n,k))$ with $\left(V^{\otimes n}\right)_k$ by identifying $[P(\x)] \in K_0^{\C(q)}(\B_l^{\Bbbk}(n,k))$ with the canonical basis element $v_{\x}^{\diamondsuit}$.
\end{definition}

\begin{remark}
When $\x = \varnothing$, Definition~\ref{def:OSzVnIdent} sends $[P_{\x}]$ to $v_1 \otimes \cdots \otimes v_1$, in contrast to the conventions in \cite{ManionDecat} where this class $[P_{\x}]$ is sent to $v_0 \otimes \cdots \otimes v_0$. We will compare the identification of Definition~\ref{def:OSzVnIdent} with the one given in \cite{ManionDecat} in Section~\ref{sec:ConventionComparison}.
\end{remark}

\subsection{Relating the categorifications by projection and inflation}

If $P$ is a finitely generated projective $\B_l^{\Bbbk}(n,k)$-module, define the projection $\pr(P)$ of $P$ to be the $\cal{A}^{\Bbbk}_{n,k}$-module $\cal{A}^{\Bbbk}_{n,k} \otimes_{\B_l^{\Bbbk}(n,k)} P$, where $\cal{A}^{\Bbbk}_{n,k}$ is a right module over $\B_l^{\Bbbk}(n,k)$ via the quotient map $\Xi$ of Section~\ref{sec:Homomorphism}.

If $M$ is a finite-dimensional $\cal{A}^{\Bbbk}_{n,k}$-module, define the inflation $\infl(M)$ of $M$ to be the $\B_l^{\Bbbk}(n,k)$-module with the same underlying set as $M$, with an action of $\B_l^{\Bbbk}(n,k)$ given by applying $\Xi$ to get an element of $\cal{A}^{\Bbbk}_{n,k}$ and then acting on $M$. We get functors
\begin{equation}
\pr \maps \B_l^{\Bbbk}(n,k){\rm -proj} \to \Bqk{\rm - proj},
\qquad \infl \maps \Bqk{\rm - fmod} \to \B_l^{\Bbbk}(n,k){\rm -fmod};
\end{equation}
note that $\infl$ preserves exact sequences, since it acts as the identity on underlying sets and functions between them.

\begin{remark}
Heegaard Floer homologists may be most familiar with $\pr$ and $\infl$ as special cases of the induction and restriction functors discussed in \cite[Section 2.4.2]{LOTBimodules}, which make sense in a general $\A_{\infty}$ setting.
\end{remark}

\begin{theorem}\label{thm:CategorificationsCompatible}
The projection functor induces an isomorphism from $K_0(\B_l^{\Bbbk}(n,k))$ to $K_0(\cal{A}_{n,k}^{\Bbbk})$, compatible with the identifications of both Grothendieck groups over $\C(q)$ with $\left(V^{\otimes n}\right)_k$ in Definitions~\ref{def:OSzVnIdent} and \ref{def:SarVnIdent}.
\end{theorem}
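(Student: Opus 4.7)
The plan is to reduce the theorem to a direct computation of $\pr(P(\x))$ for each indecomposable projective $P(\x)$, using the algebra isomorphism $\Xi$ from Theorem~\ref{thm:newinjective}, and then to observe that the resulting map on $K_0$ sends basis to basis in a way compatible with the two identifications with $(V^{\otimes n})_k$.

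First, for $\x \in V_l(n,k)$ corresponding to $\mu^{\x} \in D_{n,k}$ under the bijection of Remark~\ref{rem:OSzSarStates}, I would compute
\[
\pr(P(\x)) \;=\; \cal{A}_{n,k}^{\Bbbk} \otimes_{\B_l^{\Bbbk}(n,k)} \B_l^{\Bbbk}(n,k) \Ib_{\x} \;\cong\; \cal{A}_{n,k}^{\Bbbk}\, \Xi(\Ib_{\x}) \;=\; \cal{A}_{n,k}^{\Bbbk}\, \1_{\mu^{\x}} \;=\; P(\mu^{\x}),
\]
where the right $\B_l^{\Bbbk}(n,k)$-action on $\cal{A}_{n,k}^{\Bbbk}$ factors through the quotient $\Bqk$ (since $\cal{J}$ acts trivially on $\cal{A}_{n,k}^{\Bbbk}$) and then through the algebra isomorphism $\Xi\co \Bqk \xrightarrow{\sim} \cal{A}_{n,k}^{\Bbbk}$. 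Because $\Xi$ is graded, $\pr$ commutes with the grading shift functors $q^{\pm 1}$.

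Next, since $\pr$ is right exact (being a tensor product) and sends projectives to projectives while preserving direct sums and grading shifts, it induces a well-defined $\Z[q,q^{-1}]$-linear map $\pr_{\ast}\co K_0(\B_l^{\Bbbk}(n,k)) \to K_0(\cal{A}_{n,k}^{\Bbbk})$. The computation above shows that $\pr_{\ast}([P(\x)]) = [P(\mu^{\x})]$, so $\pr_{\ast}$ carries the standard $\Z[q,q^{-1}]$-basis $\{[P(\x)]\}_{\x \in V_l(n,k)}$ bijectively onto the standard basis $\{[P(\l)]\}_{\l \in D_{n,k}}$ via the bijection $\x \leftrightarrow \mu^{\x}$. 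Hence $\pr_{\ast}$ is a $\Z[q,q^{-1}]$-module isomorphism.

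Finally, compatibility with the identifications of the two Grothendieck groups with $(V^{\otimes n})_k$ is immediate from the definitions: Definition~\ref{def:OSzVnIdent} sends $[P(\x)]$ to $v^{\diamondsuit}_{\x}$, Definition~\ref{def:SarVnIdent} sends $[P(\mu^{\x})]$ to $v^{\diamondsuit}_{\mu^{\x}}$, and by the convention in the paper $v^{\diamondsuit}_{\x} = v^{\diamondsuit}_{\mu^{\x}}$. There is no substantive obstacle here beyond the computation of $\pr(P(\x))$, and that computation rests entirely on the algebra isomorphism $\Xi$ already established; the remainder is a direct unwinding of the definitions of the two canonical-basis identifications.
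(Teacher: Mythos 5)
Your proof is correct and takes essentially the same approach as the paper: compute $\pr(P(\x)) \cong P(\mu^{\x})$ via the quotient/isomorphism $\Xi$, observe that this sends the projective-module basis of $K_0$ bijectively onto the corresponding basis on the Sartori side, and then note that both Definitions~\ref{def:OSzVnIdent} and \ref{def:SarVnIdent} label these by the same canonical basis element $v^{\diamondsuit}_{\x} = v^{\diamondsuit}_{\mu^{\x}}$ by the paper's notational convention. The only minor overkill is invoking the full isomorphism of Theorem~\ref{thm:newinjective}; as your own computation shows, surjectivity and $\Xi(\Ib_{\x}) = \1_{\mu^{\x}}$ already suffice.
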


\begin{proof}
By the above discussion, the result follows since projection sends basis elements $[P(\x)]$ to basis elements $[P(\l)]$ where $\l \in D_{n,k}$ corresponds to $\x \in V_l(n,k)$.
\end{proof}

On the other hand, Sartori defines interesting families of modules over $\cal{A}_{n,k}$, and one can obtain similar families of modules over $\B_l(n,k)$ by inflation. Since inflation sends simples $L(\l)$ to simples $L(\x)$, we have the following result.
\begin{corollary}\label{cor:InflIsIso}
The inflation functor gives us an isomorphism from $G_0(\cal{A}_{n,k}^{\Bbbk})$ to $G_0(\B_l^{\Bbbk}(n,k))$.
\end{corollary}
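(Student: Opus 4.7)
The plan is to exploit the fact that inflation along any surjective algebra homomorphism induces a bijection on isomorphism classes of simple modules, then invoke that simples form a $\Z[q,q^{-1}]$-basis of each $G_0$.

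First, I would recall from Corollary~\ref{cor:AlgIsoWellDef} (combined with Theorem~\ref{thm:newinjective} after tensoring with $\Bbbk$) that we have a surjective $\Bbbk$-algebra homomorphism $\Xi \maps \B_l^{\Bbbk}(n,k) \twoheadrightarrow \cal{A}_{n,k}^{\Bbbk}$, which is used to define the inflation functor. Since inflation along a surjective ring homomorphism preserves and reflects simplicity, I would show that for each $\l \in D_{n,k}$ with corresponding $\x \in V_l(n,k)$ under the bijection of Remark~\ref{rem:OSzSarStates}, the inflation $\infl(L(\l))$ is isomorphic (as a graded $\B_l^{\Bbbk}(n,k)$-module) to $L(\x)$. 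The key observation is that Proposition~\ref{prop:BigStepXi} yields $\Xi(\Ib_{\x}) = \1_{\l}$, so $\Ib_{\x}$ acts nontrivially on $\infl(L(\l))$; since $\infl(L(\l))$ is one-dimensional and concentrated in a single degree, it must coincide with $L(\x)$ up to a grading shift, and tracking degrees (both simples are concentrated in degree $0$ when the idempotent generators are) pins down the shift.

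Next, I would verify that $\infl$ is exact, which is immediate because it does not alter the underlying graded $\Bbbk$-vector space or any short exact sequence of $\Bbbk$-linear maps; it only restricts the action through $\Xi$. Consequently $\infl$ descends to a well-defined $\Z[q,q^{-1}]$-linear map on Grothendieck groups
\[
[\infl] \maps G_0(\cal{A}_{n,k}^{\Bbbk}) \longrightarrow G_0(\B_l^{\Bbbk}(n,k))
\]
that commutes with the grading shift $q^{\pm 1}$ (since $\infl(q^i M) = q^i \infl(M)$).

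Finally, since $\{[L(\l)] : \l \in D_{n,k}\}$ is a $\Z[q,q^{-1}]$-basis of $G_0(\cal{A}_{n,k}^{\Bbbk})$ and $\{[L(\x)] : \x \in V_l(n,k)\}$ is a $\Z[q,q^{-1}]$-basis of $G_0(\B_l^{\Bbbk}(n,k))$, the map $[\infl]$ sends a basis bijectively to a basis under the bijection of Remark~\ref{rem:OSzSarStates}. Hence $[\infl]$ is an isomorphism of $\Z[q,q^{-1}]$-modules.

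There is no real obstacle here once surjectivity of $\Xi$ (Theorem~\ref{thm:newinjective}) and the idempotent correspondence (Proposition~\ref{prop:BigStepXi}) are in hand; the only mildly delicate point is book-keeping the grading shift so that $\infl(L(\l))$ matches $L(\x)$ on the nose rather than up to a nontrivial shift, which is straightforward since both algebras are positively graded with semisimple degree-zero part and the simples are pinned down by their nonzero idempotents in degree $0$.
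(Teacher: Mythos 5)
Your proof is correct and takes essentially the same approach as the paper, whose entire justification is the one-line observation preceding the corollary that ``inflation sends simples $L(\l)$ to simples $L(\x)$''; you are simply spelling out why that is true (surjectivity of $\Xi$, idempotent correspondence, exactness of $\infl$) and why it suffices. One very minor remark: the grading-shift bookkeeping you flag as ``mildly delicate'' is not actually needed to conclude that $[\infl]$ is an isomorphism, since $[\infl]$ is $\Z[q,q^{-1}]$-linear and would still carry the basis $\{[L(\l)]\}$ to a $\Z[q,q^{-1}]$-basis even if each $\infl(L(\l))$ were some nontrivial shift $q^{a_{\l}}L(\x)$.
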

As mentioned above, we will be able to identify $K_0$ and $G_0$ over $\C(q)$ on both sides. From the identification $G^{\C(q)}_0(\B_l^{\Bbbk}(n,k)) \cong K_0^{\C(q)}(\B_l^{\Bbbk}(n,k))$ we define below, the inflated modules will give us classes in $K_0^{\C(q)}(\B_l^{\Bbbk}(n,k))$ and thus elements of $\left(V^{\otimes n}\right)_k$ by Definition~\ref{def:OSzVnIdent}.
\begin{warning}
Under the identifications of $K_0^{\C(q)}$ and $G_0^{\C(q)}$ on both sides, the inflation isomorphism is not the inverse of the projection isomorphism. Rather, they are related by a scalar multiple; see Proposition~\ref{prop:PhiInfl} below.
\end{warning}

To understand which elements of $V^{\otimes n}$ we get from the inflated modules, we need to study the identifications of $K_0^{\C(q)}$ and $G_0^{\C(q)}$ for Ozsv{\'a}th--Szab{\'o}'s and Sartori's algebras; this task will occupy the next few sections.

\subsection{The Sartori bilinear form for \texorpdfstring{$V^{\otimes n}$}{Vn}}

For a finitely generated projective (graded) left module $P$ over $\cal{A}^{\Bbbk}_{n,k}$, define ${^{\vee}}P^{\psi_S}$ to be the dual ${^{\vee}}P = \Hom_{\cal{A}^{\Bbbk}_{n,k}}(P,\cal{A}^{\Bbbk}_{n,k})$ of $P$ with its action of $\cal{A}^{\Bbbk}_{n,k}$ twisted by $\psi_S$.  Since ${^{\vee}}P$ is a right $\cal{A}^{\Bbbk}_{n,k}$-module, ${^{\vee}}P^{\psi_S}$ is a left $\cal{A}^{\Bbbk}_{n,k}$-module, like $P$ itself. When $P = q^i P(\l)$, we have ${^{\vee}}P^{\psi} \cong q^{-i} P(\l)$.

\begin{definition}\label{def:SartoriCatPairing}
Let $[,]_S$ be the $\Z[q,q^{-1}]$-bilinear pairing
\begin{align*}
[,]_S\colon K_0(\cal{A}^{\Bbbk}_{n,k}) &\times K_0(\cal{A}^{\Bbbk}_{n,k}) \to \Z[q,q^{-1}] \\
[P], [P'] &\mapsto \dimq(\Hom_{\cal{A}^{\Bbbk}_{n,k}}({^{\vee}}P^{\psi_S},P')).
\end{align*}
\end{definition}
Note that we have $[[P(\mu)],[P(\l)]]_S = \dimq 1_{\mu} \cal{A}^{\Bbbk}_{n,k} 1_{\l}$.

\begin{remark}\label{rem:SartoriStylePairingDef}
The form defined in \cite[Proposition 7.12]{Sar-tensor}, restricted to finitely-generated projective modules, is given by
\[
[[P],[P']]_S = \overline{\dimq(\Hom_{\cal{A}^{\Bbbk}_{n,k}}(P, (P')^*))}
\]
where $(P')^* = \Hom_{\Bbbk}(P',\Bbbk)$ with
\[
(a \phi)(x) = \phi(\psi(a) x)
\]
for $\phi \in (P')^*$, $a \in \cal{A}^{\Bbbk}_{n,k}$, and $x \in P'$. The operation $\overline{(\cdot)}$ is the involution of $\Z[q,q^{-1}]$ given by $q \mapsto q^{-1}$. One can check that this definition of the form is equivalent to Definition~\ref{def:SartoriCatPairing} by comparing the values on indecomposable projectives $[P(\l)]$.
\end{remark}

By Definition~\ref{def:SarVnIdent}, we get a $\C(q)$-bilinear pairing on $V^{\otimes n}$. We can describe this pairing as follows.
\begin{definition}
Sartori's bilinear form $(,)_S$ on $V^{\otimes n}$ has matrix $(k)_{q^2}^!$ times the identity in the standard basis of the weight space $\left(V^{\otimes n}\right)_k$.
\end{definition}

\begin{proposition}\label{prop:SartoriFormCat}
The identification of $K_0^{\C(q)}(\cal{A}^{\Bbbk}_{n,k})$ and $\left(V^{\otimes n}\right)_k$ from Definition~\ref{def:SarVnIdent} identifies $[,]_S$ with $(,)_S$.
\end{proposition}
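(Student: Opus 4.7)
Both pairings are $\C(q)$-bilinear, so it suffices to verify
\[
[[P(\mu)],[P(\l)]]_S \;=\; (v_\mu^\diamondsuit, v_\l^\diamondsuit)_S
\]
for all $\mu,\l \in D_{n,k}$. For the left-hand side, since $\psi_S$ fixes idempotents we have ${}^{\vee}P(\mu)^{\psi_S} \cong P(\mu)$, and the pairing collapses to $\dimq\,\1_\mu\,\cal{A}_{n,k}^{\Bbbk}\,\1_\l$. Invoking Proposition~\ref{prop:AZfree} and Corollary~\ref{cor:ForkCount}, this equals $(k)_{q^2}^!\cdot q^d \prod_{i=1}^{n-k}(l_i)_{q^2}$ when $\mu,\l$ are not too far---with $d = \sum_i |\wedge_i^\l - \wedge_i^\mu|$ and $l_i$ the lengths of the generating intervals between the corresponding I-states---and vanishes otherwise.

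For the right-hand side, my plan is to expand $v_\l^\diamondsuit = \ell_{i_k}\wedge\cdots\wedge\ell_{i_1}$, where $i_1 < \cdots < i_k$ are the $\wedge$-positions of $\l$, into the standard basis blockwise across the maximal $\wedge$-runs of $\l$ using
\[
\ell_i\wedge\ell_{i-1}\wedge\cdots\wedge\ell_j \;=\; \sum_{m=j-1}^{i} q^{m-j+1}\, e_i\wedge\cdots\wedge\hat{e}_m\wedge\cdots\wedge e_{j-1}.
\]
Each non-initial $\wedge$-block corresponds to a $\vee$-fork of $\und{\l}$, and the formula lets the fork's $\vee$ slide anywhere in its range, weighted by $q$ to the position shift; initial $\wedge$-blocks correspond to $\wedge$-rays and contribute trivially. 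Combined with Lemma~\ref{lem:OrientedForkHelper}, this should yield the closed form
\[
v_\l^\diamondsuit \;=\; \sum_{\eta\,:\,\und{\l}\eta\text{ oriented}} q^{\deg(\und{\l}\eta)}\, v_\eta.
\]
Since $(,)_S$ has matrix $(k)_{q^2}^!$ times the identity on the standard basis, bilinearity then gives
\[
(v_\mu^\diamondsuit, v_\l^\diamondsuit)_S \;=\; (k)_{q^2}^!\sum_{\eta} q^{\deg(\und{\mu}\eta)+\deg(\und{\l}\eta)} \;=\; (k)_{q^2}^!\sum_\eta q^{\deg(\und{\mu}\eta\bar{\l})},
\]
the sum running over $\eta$ making $\und{\mu}\eta\bar{\l}$ (unenhanced) oriented. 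The degree computation in the proof of Corollary~\ref{cor:ForkCount} then identifies this sum as $q^d \prod_i(l_i)_{q^2}$ in the not-too-far case and $0$ otherwise, matching the left-hand side.

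The hard part will be the blockwise verification of the standard-basis expansion of $v_\l^\diamondsuit$: one must check that the wedge-formula contributions from different $\wedge$-blocks assemble without sign or index conflicts (they will, because distinct blocks contribute to disjoint index ranges that are already nested in the correct decreasing order), and that the resulting $q$-weights assemble fork-by-fork to exactly $\deg(\und{\l}\eta)$. Once this expansion is in hand, the matching is a direct consequence of the fork-by-fork identity $\deg(\und{\mu}\eta)+\deg(\und{\l}\eta) = \deg(\und{\mu}\eta\bar{\l})$ (reflection preserves fork degree) and the combinatorics already packaged in Corollary~\ref{cor:ForkCount}.
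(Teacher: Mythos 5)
Your proof is correct, but it takes a genuinely different route from the paper. The paper's proof is a one-line citation: it says the proposition follows from Sartori's \cite[Proposition 7.12]{Sar-tensor} together with Remark~\ref{rem:SartoriStylePairingDef}, which reconciles Sartori's phrasing of the categorical pairing with Definition~\ref{def:SartoriCatPairing}. You instead give a self-contained computation entirely inside the fork-diagram machinery already built in the paper: you reduce both sides to the not-too-far case, compute $[[P(\mu)],[P(\l)]]_S=\dimq\,\1_\mu\,\cal{A}_{n,k}^{\Bbbk}\,\1_\l$ via Proposition~\ref{prop:AZfree} and Corollary~\ref{cor:ForkCount}, and independently expand $v_\l^\diamondsuit$ in the standard basis as $\sum_{\eta\,:\,\und{\l}\eta\text{ oriented}}q^{\deg(\und{\l}\eta)}v_\eta$ (I checked this against Example~\ref{ex:CanonicalBasisV3}, and it is right; it is in effect a restatement of Sartori's fork-evaluation formula that the paper cites just before that example). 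Pairing two such expansions and using $\deg(\und{\mu}\eta)+\deg(\und{\l}\eta)=\deg(\und{\mu}\eta\bar{\l})$ then recovers exactly the graded rank $\rkq(\cal{Z}_{\mu,\l})$, closing the loop. Your approach buys independence from \cite[Prop.~7.12]{Sar-tensor} and makes manifest the combinatorial mechanism (both pairings are generating functions over the same set of oriented fork diagrams, counted with the same $q$-weights); the paper's citation buys brevity and avoids re-deriving the canonical-to-standard change of basis. The one step you flag as ``the hard part'' — the blockwise wedge-product expansion — is indeed the only place where care is needed, but your sketch is sound: the blocks act on disjoint, nested index ranges so no sign/index conflicts arise, and the $q$-weight from each block is precisely the per-fork degree contribution. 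If you do write this out, you should also note explicitly that $\psi_S$ fixes the idempotents (used to get ${}^\vee P(\mu)^{\psi_S}\cong P(\mu)$), though the paper already records the consequence $[[P(\mu)],[P(\l)]]_S=\dimq\,\1_\mu\cal{A}^{\Bbbk}_{n,k}\1_\l$ just after Definition~\ref{def:SartoriCatPairing}.
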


\begin{proof}
This proposition is a consequence of \cite[Proposition 7.12]{Sar-tensor} and Remark~\ref{rem:SartoriStylePairingDef}.
\end{proof}

It follows from Proposition~\ref{prop:SartoriFormCat} that $[,]_S$ is perfect after tensoring with $\C(q)$, so it gives us an identification of $K_0^{\C(q)}(\cal{A}^{\Bbbk}_{n,k})$ with its dual $(K_0^{\C(q)}(\cal{A}^{\Bbbk}_{n,k}))^*$.

We can also consider the $\Z[q,q^{-1}]$-bilinear pairing
\begin{align*}
K_0(\cal{A}^{\Bbbk}_{n,k}) &\times G_0(\cal{A}^{\Bbbk}_{n,k}) \to \Z[q,q^{-1}] \\
[P], [M] &\mapsto \dimq(\Hom_{\cal{A}^{\Bbbk}_{n,k}}({^{\vee}}P^{\psi_S},M)).
\end{align*}
The matrix for this pairing in the bases of projectives for $K_0$ and simples for $G_0$ is the identity matrix, so the pairing allows us to identify $K_0$ and $G_0^*$ (or vice-versa) over $\Z[q,q^{-1}]$, and thus over $\C(q)$. Combining this identification with the isomorphism $K_0^{\C(q)}(\cal{A}^{\Bbbk}_{n,k}) \cong (K_0^{\C(q)}(\cal{A}^{\Bbbk}_{n,k}))^*$ from $[,]_S$, we get an identification of $K_0^{\C(q)}(\cal{A}^{\Bbbk}_{n,k})$ with $G_0^{\C(q)}(\cal{A}^{\Bbbk}_{n,k})$. The pairing on $G_0^{\C(q)}(\cal{A}^{\Bbbk}_{n,k})$ induced by $[,]_S$ can be described by
\[
[M,N]_S = \overline{\chi_q \left(\Ext^*_{\cal{A}^{\Bbbk}_{n,k}}(M,N^*) \right)}
\]
where $N^*$ is defined as in Remark~\ref{rem:SartoriStylePairingDef} and $\chi_q$ is the $q$-graded Euler characteristic.

The basis of simples $\{[L(\l)]\}$ for $G_0^{\C(q)}(\cal{A}^{\Bbbk}_{n,k})$ gives us a basis for $K_0^{\C(q)}(\cal{A}^{\Bbbk}_{n,k})$ under the above identification. Under the identification of this latter space with $V^{\otimes n}$, the basis of simples corresponds to Sartori's dual canonical basis, as we review in Section~\ref{sec:DualCanonical} below.

The change-of-basis matrix from projectives to simples on $K_0^{\C(q)}(\cal{A}^{\Bbbk}_{n,k})$ is the matrix for $[,]_S$ on $K_0^{\C(q)}(\cal{A}^{\Bbbk}_{n,k})$ in the basis of projectives. We compute this matrix below; equivalently, we compute the matrix for $(,)_S$ in the canonical basis of $V^{\otimes n}$.

Recall the nonsymmetric quantum integers defined in \eqref{eq:nonsym-qnum}.

\begin{proposition}\label{prop:SartoriFormInCanonicalBasis}
For $\mu,\l \in D_{n,k}$, we have $(v_{\mu}^{\diamondsuit},v_{\l}^{\diamondsuit})_S = 0$ if $\mu$ and $\l$ are too far. If $\mu,\l$ are not too far and correspond to $\x,\y \in V_l(n,k)$, we have
\[
(v_{\mu}^{\diamondsuit},v_{\l}^{\diamondsuit})_S = q^{d}(k)_{q^2}^!\prod_{i=1}^{n-k} (l_i)_{q^2}
\]
where $l_1,\ldots, l_{n-k}$ are the lengths of the generating intervals between $\x$ and $\y$ and $d = \sum_{i=1}^k |\wedge^{\l}_i - \wedge^{\mu}_i|$.
\end{proposition}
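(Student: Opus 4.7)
The plan is to compute $(v_{\mu}^{\diamondsuit}, v_{\l}^{\diamondsuit})_S$ by transporting it back through the categorification. By Proposition~\ref{prop:SartoriFormCat}, the identification from Definition~\ref{def:SarVnIdent} intertwines $(,)_S$ with the categorical pairing $[,]_S$, so
\[
(v_{\mu}^{\diamondsuit}, v_{\l}^{\diamondsuit})_S \;=\; \bigl[[P(\mu)],[P(\l)]\bigr]_S.
\]
Unwinding Definition~\ref{def:SartoriCatPairing} and using ${^{\vee}}P(\mu)^{\psi_S} \cong P(\mu)$ (the indecomposable projective is self-dual up to grading shift, and since $\mu$ is already a basic idempotent label, the shift is zero), one obtains
\[
\bigl[[P(\mu)],[P(\l)]\bigr]_S \;=\; \dim_q \Hom_{\cal{A}^{\Bbbk}_{n,k}}\!\bigl(P(\mu), P(\l)\bigr) \;=\; \dim_q \bigl(\mathbf{1}_{\mu}\, \cal{A}^{\Bbbk}_{n,k}\, \mathbf{1}_{\l}\bigr).
\]

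Next I would identify the right-hand side with the combinatorial rank of $\cal{Z}_{\mu,\l}$. Since $\cal{A}^{\Bbbk}_{n,k} = \cal{A}^{\Z}_{n,k} \otimes_{\Z} \Bbbk$, graded dimensions over $\Bbbk$ coincide with graded $\Z$-ranks, so Proposition~\ref{prop:AZfree} gives
\[
\dim_q \bigl(\mathbf{1}_{\mu}\, \cal{A}^{\Bbbk}_{n,k}\, \mathbf{1}_{\l}\bigr) \;=\; \mathrm{rk}_q \bigl(\mathbf{1}_{\mu}\, \cal{A}^{\Z}_{n,k}\, \mathbf{1}_{\l}\bigr) \;=\; \mathrm{rk}_q(\cal{Z}_{\mu,\l}).
\]
Finally, Corollary~\ref{cor:ForkCount} evaluates $\mathrm{rk}_q(\cal{Z}_{\mu,\l})$: if $\mu$ and $\l$ are too far then $\cal{Z}_{\mu,\l} = 0$, and otherwise
\[
\mathrm{rk}_q(\cal{Z}_{\mu,\l}) \;=\; q^{d}\, (k)^!_{q^2} \cdot \prod_{i=1}^{n-k} (l_i)_{q^2},
\]
with $d$ and $l_1,\ldots,l_{n-k}$ as in the statement. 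Chaining these equalities proves the proposition.

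The argument is a pure bookkeeping assembly of already-proven results, so there is no genuine obstacle; the only point requiring a small check is the identification ${^{\vee}}P(\mu)^{\psi_S} \cong P(\mu)$, which follows because $\psi_S$ fixes the idempotents $\mathbf{1}_{\mu}$ (cf.\ Section~\ref{sec:Sart-anti}), so dualizing and twisting by $\psi_S$ sends the right projective $\mathbf{1}_{\mu}\cal{A}^{\Bbbk}_{n,k}$ to the left projective $\cal{A}^{\Bbbk}_{n,k}\mathbf{1}_{\mu} = P(\mu)$ without a grading shift. With that in hand, the formula and the vanishing in the too-far case drop out directly from Corollary~\ref{cor:ForkCount}.
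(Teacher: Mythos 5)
Your proof is correct and follows essentially the same route as the paper, which simply cites Proposition~\ref{prop:SartoriFormCat} and Corollary~\ref{cor:ForkCount}; you have filled in the intermediate steps explicitly, namely the identity $[[P(\mu)],[P(\l)]]_S = \dim_q(\1_{\mu}\cal{A}^{\Bbbk}_{n,k}\1_{\l})$ (noted in the paper just after Definition~\ref{def:SartoriCatPairing}), the self-duality ${^{\vee}}P(\mu)^{\psi_S}\cong P(\mu)$ (stated before Definition~\ref{def:SartoriCatPairing}), and the freeness result Proposition~\ref{prop:AZfree} linking $\dim_q$ over $\Bbbk$ to $\mathrm{rk}_q(\cal{Z}_{\mu,\l})$. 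All steps check out.
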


\begin{proof}
This proposition follows from Corollary~\ref{cor:ForkCount} and Proposition~\ref{prop:SartoriFormCat}.
\end{proof}

\begin{example}
The matrix for $(,)_S$ on $(V^{\otimes 3})_k$ for $0 \leq k \leq 3$ is given below.
\begin{center}
\setlength{\kbcolsep}{-4pt}
\begin{small}
\begin{tabular}{|c|c|c|c|}
  \hline
  $ \xy (0,0)*+{(  V^{\otimes 3})_3} \endxy$ & $(V^{\otimes 3})_2$
  & $\xy (0,0)*+{ (V^{\otimes 3})_1} \endxy$ & $(V^{\otimes 3})_0$
  \\
  \hline  \hline
  $
  (3)^!_{q^2} \left( \kbordermatrix{
    & \mkern-6mu v_{000}^{\diamondsuit} \mkern-6mu\\
    v_{000}^{\diamondsuit} & \mkern-6mu 1 \mkern-6mu
    }   \right)$
&
  $(2)^!_{q^2} \left( \kbordermatrix{
    & v_{100}^{\diamondsuit} & v_{010}^{\diamondsuit} & v_{001}^{\diamondsuit} \\
    v_{100}^{\diamondsuit} & 1 + q^2 + q^4 & q + q^3 & q^2 \\
    v_{010}^{\diamondsuit} & q + q^3 & 1 + q^2 & q \\
    v_{001}^{\diamondsuit} & q^2 & q & 1
    } \right)$  &
    $\kbordermatrix{
& v_{110}^{\diamondsuit} & v_{101}^{\diamondsuit} & v_{011}^{\diamondsuit} \\
v_{110}^{\diamondsuit} & 1+q^2 & q & 0 \\
v_{101}^{\diamondsuit} & q & 1+q^2 & q \\
v_{011}^{\diamondsuit} & 0 & q & 1
}$ &
 $\kbordermatrix{
& \mkern-6mu v_{111}^{\diamondsuit} \mkern-6mu\\
v_{111}^{\diamondsuit} & \mkern-6mu 1 \mkern-6mu
}$\\
  \hline
\end{tabular}
 \end{small}
 \end{center}

\end{example}

\subsection{The Ozsv{\'a}th--Szab{\'o} bilinear form for \texorpdfstring{$V^{\otimes n}$}{Vn}}

We can define a similar bilinear pairing $[,]_{OSz}$ on $K_0(\B_l^{\Bbbk}(n,k))$.
\begin{definition}
Let $[,]_{OSz}$ be the $\Z[q,q^{-1}]$-bilinear pairing
\begin{align*}
[,]_S\colon K_0(\B_l^{\Bbbk}(n,k)) &\times K_0(\B_l^{\Bbbk}(n,k)) \to \Z[q,q^{-1}] \\
[P], [P'] &\mapsto \dimq(\Hom_{\B_l^{\Bbbk}(n,k)}({^{\vee}}P^{\psi_{OSz}},P')).
\end{align*}
\end{definition}
Note that we have $[[P(\x)],[P(\y)]]_{OSz} = \dimq \Ib_{\x} \B_l^{\Bbbk}(n,k) \Ib_{\y}$. By Definition~\ref{def:OSzVnIdent}, we get another $\C(q)$-bilinear pairing on $V^{\otimes n}$. This pairing has a simple description in the standard basis, analogous to Sartori's. To see this, we compute its matrix in the canonical basis, or equivalently the matrix for $[,]_{OSz}$ in the basis of projectives. This amounts to computing $\dimq \Ib_{\x} \B_l^{\Bbbk}(n,k) \Ib_{\y}$ for $\x,\y \in V_l(n,k)$.

\begin{proposition}\label{prop:OSzFormInCanonicalBasis}
For $\x,\y \in V(n,k)$ that are not too far, the graded dimension of $\Ib_{\x} \B^{\Bbbk}_l(n,k) \Ib_{\y}$ is $q^d\frac{\prod_{i=1}^{n-k} (l_i)_{q^2}}{(1-q^2)^k}$, where $l_1,\ldots, l_{n-k}$ are the lengths of the generating intervals between $\x$ and $\y$ and $d = \sum_{i=1}^k |x_i - y_i|$. If $\x$ and $\y$ are too far, this graded dimension is zero.
\end{proposition}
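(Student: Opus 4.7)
The plan is to combine the basis description from Corollary~\ref{cor:OSzBasis} with the structural information on generating intervals from Lemma~\ref{lem:SartoriStyleGenInts} to run a Koszul-style Hilbert series computation. When $\x,\y$ are too far, $\Ib_{\x}\B(n,k)\Ib_{\y} = 0$ by the remark following Definition~\ref{def:bord-toofar}, and since the $\Z$-algebra $\B_l(n,k)$ is free on the basis of Corollary~\ref{cor:OSzBasis}, tensoring with $\Bbbk$ yields $\Ib_{\x}\B^{\Bbbk}_l(n,k)\Ib_{\y} = 0$ as well. So assume henceforth that $\x,\y$ are not too far.

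The first key observation is that the generating intervals for $\x,\y$ are pairwise disjoint. Indeed, using Lemma~\ref{lem:SartoriStyleGenInts}, if $G_i = [j_i+1,\,j_i+l_i]$ with $j_i = \max(h_i^{\x},h_i^{\y})$ and $j_i+l_i = \min(h_{i+1}^{\x},h_{i+1}^{\y})$, then
\[
j_{i+1} = \max(h_{i+1}^{\x},h_{i+1}^{\y}) \;\geq\; \min(h_{i+1}^{\x},h_{i+1}^{\y}) = j_i + l_i,
\]
so the intervals $G_i$ and $G_{i+1}$ share no coordinate. Consequently the monomial relations $p_{G_i} = U_{j_i+1}\cdots U_{j_i+l_i}$ in Proposition~\ref{prop:OSzBasis} lie in disjoint sets of variables, and writing $S_0 = \{1,\ldots,n\}\setminus\bigsqcup_i G_i$, the quotient splits as a tensor product
\[
\frac{\Z[U_1,\ldots,U_n]}{(p_{G_1},\ldots,p_{G_{n-k}})} \;\cong\; \Z[U_j : j \in S_0] \,\otimes_{\Z}\, \bigotimes_{i=1}^{n-k} \frac{\Z[U_j : j \in G_i]}{(p_{G_i})}.
\]

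Each free polynomial generator contributes $(1-q^2)^{-1}$ to the graded Hilbert series since $\deg^q(U_j) = 2$, and each block $\Z[U_j : j \in G_i]/(p_{G_i})$ contributes $(1-q^{2l_i})/(1-q^2)^{l_i}$ as a single-monomial complete intersection of degree $2l_i$ in $l_i$ variables. Multiplying these together, with $|S_0| = n - \sum_i l_i$, the graded Hilbert series of the unshifted quotient is
\[
\frac{\prod_{i=1}^{n-k}(1-q^{2l_i})}{(1-q^2)^n} \;=\; \frac{\prod_{i=1}^{n-k}(l_i)_{q^2}}{(1-q^2)^k},
\]
using $(l_i)_{q^2} = (1-q^{2l_i})/(1-q^2)$ to redistribute a factor of $(1-q^2)^{n-k}$.

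Finally I would pin down the grading shift. Under Proposition~\ref{prop:OSzBasis}, the unit $1$ of the quotient corresponds to the big-step generator $f_{\x,\y}$, which via the small-step presentation of Proposition~\ref{prop:MMWThm1} is realized by any minimum-length path from $\x$ to $\y$ built from $R_i,L_i$ arrows of $\deg^q = 1$. Since each such arrow alters a single I-state coordinate by $\pm 1$ and the hypothesis that $\x,\y$ are not too far allows one to reach $\y$ from $\x$ without backtracking (hence without acquiring any $U_i$ factor along the way), the minimum path length is exactly $d = \sum_i|x_i - y_i|$, giving $\deg^q(f_{\x,\y}) = d$ and contributing the prefactor $q^d$. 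Freeness over $\Z$ ensures that the Hilbert series computation descends to $\Bbbk$, completing the argument. The main point requiring care is this degree-shift identification; the remainder of the proof is the monomial-ideal Hilbert series calculation enabled by disjointness of the $G_i$.
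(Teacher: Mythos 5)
Your argument is correct and follows the same strategy as the paper's proof, which likewise computes the graded dimension as $q^d$ times the Hilbert series of the monomial quotient $\Bbbk[U_1,\ldots,U_n]/(p_{G_1},\ldots,p_{G_{n-k}})$. You supply two details the paper leaves implicit: the pairwise disjointness of the generating intervals via Lemma~\ref{lem:SartoriStyleGenInts} (which justifies treating the $p_{G_i}$ as a regular sequence and gives the tensor-product form of the quotient), and the identification of the shift $d$ with the length of the small-step path $\gamma_{\x,\y}$ realizing $f_{\x,\y}$.
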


\begin{proof}
Since $\Ib_{\x} \B^{\Bbbk}_l(n,k) \Ib_{\y}$ can be viewed as $\Bbbk[U_1,\ldots,U_n]$ modulo the ideal generated by monomials of generating intervals between $\x$ and $\y$, its graded dimension is some power $q^d$ (where $d$ is the degree of the minimal generator) times $\frac{\prod_{i=1}^{n-k}(1-q^{2l_i})}{(1-q^2)^n}$, which equals $\frac{\prod_{i=1}^{n-k} (l_i)_{q^2}}{(1-q^2)^k}$. The degree $d$ of the minimal generator is $\sum_{i=1}^k |x_i - y_i|$.
\end{proof}

\begin{example}
The matrices for $[,]_{OSz}$ on $K_0(\B_l^{\Bbbk}(n,k))$ are given in the following table.
\begin{center}
\setlength{\kbcolsep}{-4pt}
\begin{small}
\begin{tabular}{|c|c|c|c|}
  \hline
  $ \xy (0,0)*+{ K_0(\B_l^{\Bbbk}(n,3) } \endxy$
  & $K_0(\B_l^{\Bbbk}(n,2))$
  & $\xy (0,0)*+{K_0(\B_l^{\Bbbk}(n,1)} \endxy$
  & $K_0(\B_l^{\Bbbk}(n,0))$
  \\
  \hline  \hline
  $
  \frac{1}{(1-q^2)^3}
  [1]   $
&
  $\frac{1}{(1-q^2)^2}
	\left( \kbordermatrix{
& {\tiny [P_{1,2}]}&\mkern-6mu  {\tiny[P_{0,2}]} \mkern-6mu  & \mkern-6mu  {\tiny[P_{0,1}]} \mkern-8mu   \\
{[P_{1,2}]}  &\mkern-6mu  1 + q^2 + q^4 \mkern-6mu &\mkern-6mu  q + q^3 \mkern-6mu &\mkern-6mu  q^2 \mkern-6mu  \\
{[P_{0,2}]}  &\mkern-6mu  q + q^3       \mkern-6mu &\mkern-6mu  1 + q^2 \mkern-6mu &\mkern-6mu  q \mkern-6mu  \\
{[P_{0,1}]}  &\mkern-6mu  q^2           \mkern-6mu &\mkern-6mu  q       \mkern-6mu &\mkern-6mu  1 \mkern-6mu
} \right)$  &
    $\frac{1}{1-q^2}
		\left( \kbordermatrix{
& {[P_{2}]}  & \mkern-6mu {[P_{1}]}  \mkern-6mu &\mkern-6mu  {[P_{0}]} \mkern-6mu  \\
{[P_{2}]}  &\mkern-6mu  1+q^2 \mkern-6mu &\mkern-6mu  q     \mkern-6mu &\mkern-6mu  0\mkern-6mu  \\
{[P_{1}]}  &\mkern-6mu  q     \mkern-6mu &\mkern-6mu  1+q^2 \mkern-6mu &\mkern-6mu  q\mkern-6mu  \\
{[P_{0}]}  &\mkern-6mu  0     \mkern-6mu &\mkern-6mu  q     \mkern-6mu &\mkern-6mu  1\mkern-6mu
} \right)$ &
 $[1]$\\
  \hline
\end{tabular}
 \end{small}
 \end{center}

where the first matrix is in the basis $[P_{0,1,2}]$ and the last matrix is in the basis $P_{[\emptyset]}$.
\end{example}

\begin{corollary}\label{cor:OSzCatFormVsSarCatForm}
We have
\[
[[\pr(P)],[\pr(P')]]_S = (k)_{q^2}^!(1-q^2)^k [[P],[P']]_{OSz}
\]
for all objects $P,P'$ of $\B^{\Bbbk}_l(n,k){\rm -proj}$.
\end{corollary}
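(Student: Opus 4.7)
Both sides of the claimed identity are $\Z[q,q^{-1}]$-bilinear in $(P,P')$, so it suffices to verify the equality for $P = P(\x)$ and $P' = P(\y)$ with $\x,\y \in V_l(n,k)$. Under the projection functor, $\pr(P(\x)) = \cal{A}_{n,k}^{\Bbbk}\otimes_{\B_l^{\Bbbk}(n,k)} \B_l^{\Bbbk}(n,k)\Ib_{\x} \cong \cal{A}_{n,k}^{\Bbbk} \1_\l = P(\l)$, where $\l \in D_{n,k}$ is the $\wedge\vee$-sequence corresponding to $\x$ via Remark~\ref{rem:OSzSarStates}; likewise $\pr(P(\y)) = P(\mu)$ for the sequence $\mu$ associated to $\y$. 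With these identifications, the claim reduces to
\[
[[P(\l)],[P(\mu)]]_S \;=\; (k)_{q^2}^!(1-q^2)^k \, [[P(\x)],[P(\y)]]_{OSz}.
\]

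If $\x$ and $\y$ are too far, then $\Ib_{\x}\B_l^{\Bbbk}(n,k)\Ib_{\y}=0$ by Proposition~\ref{prop:OSzBasis}, so the right-hand side vanishes; correspondingly $\l,\mu$ are too far (by Lemma~\ref{lem:h-ipone} together with Remark~\ref{rem:OSzSarStates}), so $\1_\l \cal{A}_{n,k}^{\Bbbk} \1_\mu = 0$ by Proposition~\ref{prop:AZfree} and the left-hand side also vanishes. Suppose then that $\x,\y$ are not too far, and let $l_1,\ldots,l_{n-k}$ be the lengths of the generating intervals between them; set $d = \sum_{i=1}^k |x_i - y_i|$. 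The plan is to invoke the two explicit formulas already in hand: Proposition~\ref{prop:OSzFormInCanonicalBasis} gives
\[
[[P(\x)],[P(\y)]]_{OSz} \;=\; \dimq \Ib_{\x}\B_l^{\Bbbk}(n,k)\Ib_{\y} \;=\; q^d\,\frac{\prod_{i=1}^{n-k}(l_i)_{q^2}}{(1-q^2)^k},
\]
while Proposition~\ref{prop:SartoriFormInCanonicalBasis}, combined with Proposition~\ref{prop:SartoriFormCat} and Definition~\ref{def:SarVnIdent}, gives
\[
[[P(\l)],[P(\mu)]]_S \;=\; (v_\l^{\diamondsuit},v_\mu^{\diamondsuit})_S \;=\; q^{d}\,(k)_{q^2}^!\,\prod_{i=1}^{n-k}(l_i)_{q^2}.
\]

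The only point to check is that the exponent $d$ matches in the two formulas. This is immediate from Remark~\ref{rem:OSzSarStates}: the bijection between left I-states and $\wedge\vee$-sequences sends $x_i$ to $\wedge_i^\l - 1$ (and $y_i$ to $\wedge_i^\mu - 1$), so $|x_i - y_i| = |\wedge_i^\l - \wedge_i^\mu|$ term by term. Dividing the two displays now yields exactly the factor $(k)_{q^2}^!(1-q^2)^k$, as required. The argument has no real obstacle; the only conceptual step is matching the notational conventions between the Ozsv\'ath--Szab\'o side (I-states, coordinates $x_i$) and the Sartori side (sequences, positions $\wedge_i^\l$), and this is handled by Remark~\ref{rem:OSzSarStates}.
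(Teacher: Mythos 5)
Your proof is correct and takes the same route the paper implicitly intends: the corollary is stated without proof precisely because it follows by comparing Proposition~\ref{prop:SartoriFormInCanonicalBasis} with Proposition~\ref{prop:OSzFormInCanonicalBasis} on indecomposable projectives, and you spell this out cleanly (including the too-far vanishing case and the matching of the exponent $d$ via Remark~\ref{rem:OSzSarStates}).
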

The above corollary motivates the following definition.

\begin{definition}
The Ozsv{\'a}th--Szab{\'o} bilinear form $(,)_{OSz}$ on $V^{\otimes n}$ has matrix $\frac{1}{(1-q^2)^k}$ times the identity in the standard basis of the weight space $\left(V^{\otimes n}\right)_k$.
\end{definition}

\begin{corollary}\label{cor:OSzBilinearForm}
The identification of $K_0^{\C(q)}(\B_l^{\Bbbk}(n,k))$ and $\left(V^{\otimes n}\right)_k$ from Definition~\ref{def:OSzVnIdent} identifies $[,]_{OSz}$ with $(,)_{OSz}$.
\end{corollary}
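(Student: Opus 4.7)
The plan is to chain together three results already in place: Corollary~\ref{cor:OSzCatFormVsSarCatForm} relating the two categorified pairings, Theorem~\ref{thm:CategorificationsCompatible} telling us projection sends $[P(\x)]$ to $[P(\l)]$ where $\l \in D_{n,k}$ corresponds to $\x \in V_l(n,k)$ (and both map to the same canonical basis element $v_\x^\diamondsuit = v_\l^\diamondsuit$ under the respective identifications), and Proposition~\ref{prop:SartoriFormCat} identifying $[,]_S$ with $(,)_S$. The only new observation needed is a scalar comparison of the two bilinear forms on $V^{\otimes n}$.

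First I would note that on the vector space $V^{\otimes n}$, the definitions of $(,)_S$ and $(,)_{OSz}$ as scalar multiples of the identity in the standard basis give the identity
\[
(,)_{OSz} = \frac{1}{(k)_{q^2}^!(1-q^2)^k}\,(,)_S
\]
on the weight space $(V^{\otimes n})_k$. This is just the ratio of the two scalar factors $\frac{1}{(1-q^2)^k}$ and $(k)_{q^2}^!$.

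Next, for $\x,\y \in V_l(n,k)$ corresponding to $\l,\mu \in D_{n,k}$, the chain of equalities
\[
[[P(\x)],[P(\y)]]_{OSz} \;=\; \tfrac{1}{(k)_{q^2}^!(1-q^2)^k}\,[[\pr P(\x)],[\pr P(\y)]]_S \;=\; \tfrac{1}{(k)_{q^2}^!(1-q^2)^k}\,[[P(\l)],[P(\mu)]]_S
\]
follows from Corollary~\ref{cor:OSzCatFormVsSarCatForm} and the fact that projection sends indecomposable projectives to indecomposable projectives as in Theorem~\ref{thm:CategorificationsCompatible}. Applying Proposition~\ref{prop:SartoriFormCat}, this equals $\frac{1}{(k)_{q^2}^!(1-q^2)^k}\,(v_\l^\diamondsuit,v_\mu^\diamondsuit)_S$, which by the scalar comparison above is exactly $(v_\l^\diamondsuit,v_\mu^\diamondsuit)_{OSz} = (v_\x^\diamondsuit,v_\y^\diamondsuit)_{OSz}$. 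Since the pairings are determined by their values on the canonical basis, this proves the claim.

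There is no real obstacle here; the work has all been done in Proposition~\ref{prop:OSzFormInCanonicalBasis}, Proposition~\ref{prop:SartoriFormInCanonicalBasis}, and Corollary~\ref{cor:OSzCatFormVsSarCatForm}. As a sanity check one can verify the matching directly: by Proposition~\ref{prop:OSzFormInCanonicalBasis}, $[[P(\x)],[P(\y)]]_{OSz} = q^d \prod_{i=1}^{n-k}(l_i)_{q^2}/(1-q^2)^k$ when $\x,\y$ are not too far (and $0$ otherwise), while Proposition~\ref{prop:SartoriFormInCanonicalBasis} combined with the scalar identity gives the same expression for $(v_\x^\diamondsuit,v_\y^\diamondsuit)_{OSz}$; the $(k)_{q^2}^!$ factor cancels precisely as required.
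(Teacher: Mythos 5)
Your proof is correct and follows essentially the same chain as the paper: apply Corollary~\ref{cor:OSzCatFormVsSarCatForm}, then Proposition~\ref{prop:SartoriFormCat}, then the scalar relation $(,)_{OSz} = \frac{1}{(k)_{q^2}^!(1-q^2)^k}(,)_S$ coming directly from the definitions. The additional citation of Theorem~\ref{thm:CategorificationsCompatible} and the closing sanity check via Propositions~\ref{prop:OSzFormInCanonicalBasis} and~\ref{prop:SartoriFormInCanonicalBasis} are harmless elaborations of steps the paper treats as immediate.
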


\begin{proof}
We have
\begin{align*}
[[P(\x)],[P(\x')]]_{OSz} &= \frac{1}{(k)_{q^2}^!(1-q^2)^k } [[\pr(P(\x))],[\pr(P(\x'))]]_S \\
&= \frac{1}{(k)_{q^2}^!(1-q^2)^k } (v_{\x}^{\diamondsuit}, v_{\x'}^{\diamondsuit})_S
= (v_{\x}^{\diamondsuit},v_{\x'}^{\diamondsuit})_{OSz};
\end{align*}
the first equality follows from Corollary~\ref{cor:OSzCatFormVsSarCatForm}, the second follows from Proposition~\ref{prop:SartoriFormCat}, and the third follows from the definitions of $(,)_S$ and $(,)_{OSz}$.
\end{proof}

By Corollary~\ref{cor:OSzBilinearForm}, $[,]_{OSz}$ is perfect over $\C(q)$. Thus, as above, we get an identification of $K_0^{\C(q)}(\B_l^{\Bbbk}(n,k))$ with $(K_0^{\C(q)}(\B_l^{\Bbbk}(n,k)))^*$ and thereby with $G_0^{\C(q)}(\B_l^{\Bbbk}(n,k))$. Via this identification, the basis of simples for $G_0^{\C(q)}(\B_l^{\Bbbk}(n,k))$ gives us as basis of $K_0^{\C(q)}(\B_l^{\Bbbk}(n,k))$ and thus of $V^{\otimes n}$; the change of basis matrix from the basis of projectives (or canonical basis) to this basis is the matrix for $[,]_{OSz}$ in the basis of projectives. Below we will identify the basis of simples for $K_0^{\C(q)}(\B_l^{\Bbbk}(n,k))$ with the Ozsv{\'a}th--Szab{\'o} dual canonical basis of $V^{\otimes n}$ (to be defined).

The pairing on $G_0^{\C(q)}(\B_l^{\Bbbk}(n,k))$ induced by $[,]_{OSz}$ can be described by
\[
[M,N]_{OSz} = \overline{\chi_q \left(\Ext^*_{\B_l^{\Bbbk}(n,k)}(M,N^*) \right)};
\]
again, $N^*$ is defined as in Remark~\ref{rem:SartoriStylePairingDef} and $\chi_q$ is the $q$-graded Euler characteristic.

\subsection{Dual standard and dual canonical bases}\label{sec:DualCanonical}

\begin{definition}
From the standard and canonical bases for $V^{\otimes n}$, we obtain four bases by dualizing with respect to the above two bilinear forms $(,)_S$ and $(,)_{OSz}$. We will call these the Sartori dual standard, Sartori dual canonical, Ozsv{\'a}th--Szab{\'o} dual standard, and Ozsv{\'a}th--Szab{\'o} dual canonical bases. We will use the following notation; let $\x \in V_l(n,k)$.
\begin{itemize}
\item The Sartori dual standard basis element associated to $\x$ will be denoted $v_{\x}^{\clubsuit}$.
\item The Sartori dual canonical basis element associated to $\x$ will be denoted $v_{\x}^{\heartsuit}$.
\item The Ozsv{\'a}th--Szab{\'o} dual standard basis element associated to $\x$ will be denoted $v_{\x}^{\clubsuit \clubsuit}$.
\item The Ozsv{\'a}th--Szab{\'o} dual canonical basis element associated to $\x$ will be denoted $v_{\x}^{\heartsuit \heartsuit}$.
\end{itemize}
\end{definition}
The matrices for the bilinear forms in these dual bases are the inverses of the matrices in the original bases.

\begin{example}
Labeling $\wedge \vee$ sequences as in Example~\ref{ex:CanonicalBasisV3}, Sartori's dual canonical basis for $V^{\otimes 3}$ is
\begin{alignat*}{4}
 &v_{000}^{\heartsuit} = \frac{1}{(3)^!_{q^2}} v_{000}\qquad
 &&v_{100}^{\heartsuit} = \frac{1}{(2)^!_{q^2}} v_{100} \qquad
&&v_{010}^{\heartsuit} = \frac{1}{(2)^!_{q^2}} \left( v_{010} - qv_{100} \right) \qquad
&&v_{001}^{\heartsuit} = \frac{1}{(2)^!_{q^2}} \left( v_{001} - qv_{010} \right)
 \\
&v_{110}^{\heartsuit} = v_{110} \qquad
&&v_{101}^{\heartsuit} = v_{101} - qv_{110} \qquad
&&v_{011}^{\heartsuit} = v_{011} - qv_{101} + q^2 v_{110} \qquad
&&v_{111}^{\heartsuit} = v_{111}
\end{alignat*}
The Ozsv{\'a}th--Szab{\'o} dual canonical basis elements $v_{\x}^{\heartsuit \heartsuit}$ are obtained by replacing the coefficients $\frac{1}{(k)^!_{q^2}}$ with $(1-q^2)^k$. We have $v_{\x}^{\clubsuit} = \frac{1}{(k)^!_{q^2}} v_{\x}$ and $v_{\x}^{\clubsuit \clubsuit} = (1-q^2)^k v_{\x}$.
\end{example}

Our identification of $K_0^{\C(q)}(\cal{A}^{\Bbbk}_{n,k})$ and $G_0^{\C(q)}(\cal{A}^{\Bbbk}_{n,k})$ goes via $(K_0^{\C(q)}(\cal{A}^{\Bbbk}_{n,k}))^*$; the basis of simples for $G_0$ naturally corresponds to the dual basis to the basis of projectives for $K_0$. Under the further identification of $K_0^{\C(q)}$ with its dual, this dual basis gets sent to the dual to the basis of projectives for $K_0^{\C(q)}$ under the bilinear form $[,]_S$. Identifying $(K_0^{\C(q)}, \, [,]_S)$ with $(V^{\otimes n}, \, (,)_S)$ by Definition~\ref{def:SarVnIdent}, we see that the basis of simples for $G_0$ gets sent to the basis of $V^{\otimes n}$ that is dual to the canonical basis under $(,)_S$, i.e. the Sartori dual canonical basis. Similar reasoning applies in the Ozsv{\'a}th--Szab{\'o} case, proving the following corollary.

\begin{corollary}\label{cor:PreliminaryBasisCor}
Under the identification $K_0^{\C(q)}(\cal{A}^{\Bbbk}_{n,k}) \cong \left( V^{\otimes n} \right)_k$ of Definition~\ref{def:SarVnIdent}, we have
\begin{align}
		\{\text{indecomposable projective modules } P(\l) \} &\leftrightarrow \text{canonical basis elements } v_{\l}^{\diamondsuit}
    \nn \\ \nn
    \{ \text{simple modules } L(\l) \}  &\leftrightarrow \text{Sartori dual canonical basis elements } v_{\l}^{\heartsuit}.
\end{align}
Under the identification $K_0^{\C(q)}(\B_l^{\Bbbk}(n,k)) \cong \left( V^{\otimes n} \right)_k$ of Definition~\ref{def:OSzVnIdent}, we have
\begin{align}
		\{\text{indecomposable projective modules } P(\x) \} &\leftrightarrow \text{canonical basis elements } v_{\x}^{\diamondsuit}
    \nn \\ \nn
    \{ \text{simple modules } L(\x) \}  &\leftrightarrow \text{Ozsv{\'a}th--Szab{\'o} dual canonical basis elements } v_{\x}^{\heartsuit \heartsuit}.
\end{align}
\end{corollary}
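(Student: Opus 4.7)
The proposal is to assemble the corollary directly from the chain of identifications set up in the preceding subsections; there is essentially no new content to prove, only a careful accounting of three isomorphisms.

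First, the correspondences $[P(\lambda)]\leftrightarrow v_\lambda^\diamondsuit$ and $[P(\x)]\leftrightarrow v_\x^\diamondsuit$ are tautological: they are the defining equations of Definitions~\ref{def:SarVnIdent} and~\ref{def:OSzVnIdent}. So the only content is the identification of the simple modules with the appropriate dual canonical basis elements.

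Next, I would establish that the $\Z[q,q^{-1}]$-bilinear pairing
\[
K_0(\cal{A}^{\Bbbk}_{n,k}) \times G_0(\cal{A}^{\Bbbk}_{n,k}) \to \Z[q,q^{-1}],\qquad ([P],[M]) \mapsto \dimq \Hom_{\cal{A}^{\Bbbk}_{n,k}}({}^\vee P^{\psi_S},M)
\]
has the identity matrix in the bases $\{[P(\lambda)]\}$ and $\{[L(\mu)]\}$. For this it suffices to note that $\psi_S$ preserves the idempotents $\1_\lambda$, so ${}^\vee P(\lambda)^{\psi_S}\cong P(\lambda)$ in graded modules, and then that $\dimq \Hom(P(\lambda),L(\mu))=\delta_{\lambda,\mu}$ since $\cal{A}^{\Bbbk}_{n,k}$ is positively graded and semisimple in degree zero. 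The same argument with $\psi_{OSz}$ in place of $\psi_S$ handles the Ozsv{\'a}th--Szab{\'o} side, using that $\psi_{OSz}$ fixes the idempotents $\Ib_\x$.

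Now, combining this pairing with the self-pairing $[,]_S$ (respectively $[,]_{OSz}$), which is perfect over $\C(q)$ by Proposition~\ref{prop:SartoriFormCat} (respectively Corollary~\ref{cor:OSzBilinearForm}), I obtain an isomorphism $G_0^{\C(q)}\cong K_0^{\C(q)}$ sending the basis $\{[L(\lambda)]\}$ to the basis of $K_0^{\C(q)}$ which is dual to $\{[P(\lambda)]\}$ under $[,]_S$ (and analogously for $\B_l^{\Bbbk}(n,k)$). Transporting through Definitions~\ref{def:SarVnIdent}/\ref{def:OSzVnIdent} and using Proposition~\ref{prop:SartoriFormCat}/Corollary~\ref{cor:OSzBilinearForm} to identify $[,]_S$ with $(,)_S$ and $[,]_{OSz}$ with $(,)_{OSz}$, the basis of simples is sent to the basis of $V^{\otimes n}$ dual to the canonical basis under the relevant bilinear form. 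By definition these are $v_\lambda^{\heartsuit}$ in the Sartori case and $v_\x^{\heartsuit\heartsuit}$ in the Ozsv{\'a}th--Szab{\'o} case.

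There is no substantive obstacle; the only thing to be careful about is that the anti-automorphism $\psi$ fixes the idempotents and preserves the grading, so the statement ${}^\vee P(\lambda)^\psi\cong P(\lambda)$ really holds without a grading shift. This is immediate from Definitions~\ref{def:PsiOS} and the formula for $\psi_S$ in Section~\ref{sec:Sart-anti}, since both anti-automorphisms act trivially on the idempotents of their respective algebras.
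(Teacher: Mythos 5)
Your proposal is correct and follows exactly the chain of reasoning the paper lays out in the paragraphs preceding the corollary (the dual-basis argument via the $K_0$--$G_0$ pairing and the perfect form $[,]_S$, resp.\ $[,]_{OSz}$, transported through Definitions~\ref{def:SarVnIdent}/\ref{def:OSzVnIdent}). The only thing you spell out that the paper merely asserts is the verification that the $K_0\times G_0$ pairing is the identity matrix, and your justification via $\psi$ fixing idempotents (so ${}^{\vee}P(\lambda)^{\psi}\cong P(\lambda)$) together with $\dimq\Hom(P(\lambda),L(\mu))=\delta_{\lambda,\mu}$ is sound.
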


\subsection{Sartori's categorification of standard bases for \texorpdfstring{$V^{\otimes n}$}{Vn}} \label{sec:modules}

Adding to Corollary~\ref{cor:PreliminaryBasisCor}, Sartori also defines classes of modules over $\cal{A}_{n,k}$ categorifying standard and (Sartori) dual standard basis elements. We review these modules below.

Sartori shows \cite[Proposition 5.18 and Theorem 5.24]{Sar-diagrams} that the algebras $\cal{A}_{n,k}$ are {\em graded cellular} \cite{GL,HM} and {\em properly stratified} \cite{FKM}.  These properties can be formally deduced from the algebras' connection with category $\cal{O}$, but Sartori gives an independent proof with an explicit description of projective modules $P(\mu)$, standard modules $\Delta(\mu)$, cellular modules (including proper standard modules $\bar{\Delta}(\mu)$), and simple modules $L(\mu)$ for $\mu \in D_{n,k}$.

In addition, projective modules have explicit filtrations whose subquotients are standard modules \cite[Proposition 5.19]{Sar-diagrams}.  Standard modules admit filtrations by proper standard modules~\cite[Proposition 5.21]{Sar-diagrams}, and proper standard modules admit filtrations by simples~\cite[Proposition 5.22]{Sar-diagrams}.  These modules and filtrations give rise to various bases and change of basis formulas in the Grothendieck group of $\cal{A}_{n,k}$.

Proposition~\ref{prop:AZfree} gives a basis for $\cal{A}_{n,k}^{\Z}$ as a free $\Z$-module.  Consequently, it is immediate from \cite[Proposition 5.18]{Sar-diagrams} that $\cal{A}_{n,k}^{\Z}$ is graded cellular over $\Z$.   Furthermore, the four classes of modules $L(\l)$,  $\Delta(\l)$,   $\bar{\Delta}(\l)$, and  $P(\l)$ for $\l \in D_{n,k}$ all can be defined integrally, giving modules for $\cal{A}_{n,k}^{\Z}$.  It follows that, working over an arbitrary field $\Bbbk$, the algebras $\cal{A}_{n,k}^{\Z} \otimes_{\Z} \Bbbk$ are properly stratified algebras.

The filtrations described above along with the properly stratified structure on $\cal{A}_{n,k}^{\Bbbk}$ give rise to identities in $G_0(\cal{A}_{n,k}^{\Bbbk})$:
\begin{align}
  [P(\l)] = \sum_{\mu \in D_{n,k}} d_{\l,\mu} [\Delta(\mu)]
, \qquad
 [\bar{\Delta}(\mu)] = \sum_{\mu \in D_{n,k}} d_{\l,\mu} [L(\l)]
, \qquad
 [ \Delta(\mu)] = [k]_0^! [\bar{\Delta}(\mu)]
\end{align}
where
\[
d_{\l,\mu} :=
\left\{
  \begin{array}{ll}
    q^{\deg(\und{\l}\mu)}, & \hbox{if $\und{\l}\mu$ is an oriented lower fork diagram} \\
    0, & \hbox{otherwise,}
  \end{array}
\right.
\]
and
\[
[k]_0 = \frac{q^{2k}-1}{q^2-1} \quad \text{and} \quad [k]_0^! := [k]_0 [k-1]_0 \dots [1]_0.
\]

\begin{remark}\label{rem:SartoriStyleK0G0Ident}
\emph{A priori}, one can get a class in $G_0(\cal{A}_{n,k}^{\Bbbk})$ from a finitely generated projective module $P$ in two ways. Since $\cal{A}_{n,k}^{\Bbbk}$ and thus $P$ is finite-dimensional, $P$ is an object of $\cal{A}_{n,k}^{\Bbbk}{\rm-fmod}$ and thus gives a class in its Grothendieck group. On the other hand, one can use the above isomorphism $K_0^{\C(q)}(\cal{A}_{n,k}^{\Bbbk}) \cong G_0^{\C(q)}(\cal{A}_{n,k}^{\Bbbk})$ to get a class in $G_0^{\C(q)}(\cal{A}_{n,k}^{\Bbbk})$. In fact, this class makes sense in $G_0(\cal{A}_{n,k}^{\Bbbk})$, and it agrees with the class of $P$ defined in the first way since they both have the same expansion in the basis of simples.
\end{remark}

The cellular structure can be used to show that the matrices $d_{\l,\mu}$ are upper triangular with determinant 1, so that they are invertible over $\Z[q,q^{-1}]$   and the classes of proper standard modules $\{\bar{\Delta}(\l) \mid \l \in D_{n,k} \}$ also form a basis for $G_0(\cal{A}_{n,k}^{\Bbbk})$. Since $[k]^!_0$ is not invertible over $\Z[q,q^{-1}]$ in general, the classes of projective modules $\{ P(\l) \mid \l \in D_{n,k} \}$ and standard modules $\{ \Delta(\l) \mid \l \in D_{n,k} \}$ do not generate $G_0(\cal{A}_{n,k}^{\Bbbk})$ over $\Z[q,q^{-1}]$.

The situation improves if we pass from $\Z[q,q^{-1}]$ to $\C(q)$. Each of the four classes of modules above gives a basis for $G_0^{\C(q)}(\cal{A}_{n,k}^{\Bbbk})$ over $\C(q)$. In particular, the classes $[P(\l)]$ give a basis; thus, we can identify $G_0^{\C(q)}(\cal{A}_{n,k}^{\Bbbk})$ with $K_0^{\C(q)}(\cal{A}_{n,k}^{\Bbbk})$ by identifying $[P(\l)]$ with $[P(\l)]$ on either side, agreeing with our previous identification as in Remark~\ref{rem:SartoriStyleK0G0Ident}. We have four bases for $K_0^{\C(q)}(\cal{A}_{n,k}^{\Bbbk})$ corresponding to the four bases for $G_0^{\C(q)}(\cal{A}_{n,k}^{\Bbbk})$.

\begin{theorem}[Theorem 7.13 of \cite{Sar-tensor}]\label{thm:SartoriG0Ident}
Under the identification $K_0^{\C(q)}(\cal{A}^{\Bbbk}_{n,k}) \cong \left( V^{\otimes n} \right)_k$ of Definition~\ref{def:SarVnIdent}, we have
  \begin{align}
		\{\text{indecomposable projective modules } P(\l) \} &\leftrightarrow \text{canonical basis elements } v_{\l}^{\diamondsuit}
		\nn  \\
    \{ \text{standard modules } \Delta(\l) \}  &\leftrightarrow \text{standard basis elements } v_{\l}
    \nn  \\
    \{\text{proper standard modules } \overline{\Delta}(\l) \}  &\leftrightarrow \text{Sartori dual standard basis elements } v_{\l}^{\clubsuit}
    \nn \\ \nn
    \{ \text{simple modules } L(\l) \}  &\leftrightarrow \text{Sartori dual canonical basis elements } v_{\l}^{\heartsuit}.
  \end{align}
\end{theorem}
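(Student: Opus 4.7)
The first and fourth lines of the correspondence are already established: $P(\l) \leftrightarrow v_\l^{\diamondsuit}$ holds by Definition~\ref{def:SarVnIdent}, and $L(\l) \leftrightarrow v_\l^{\heartsuit}$ is the second half of Corollary~\ref{cor:PreliminaryBasisCor}. For the two middle lines, I would first observe that they are equivalent to one another: the Grothendieck-group identity $[\Delta(\mu)] = [k]_0^!\,[\bar\Delta(\mu)]$ recorded in Section~\ref{sec:modules}, together with $[k]_0 = (k)_{q^2}$ and the equality $v_\mu^{\clubsuit} = \tfrac{1}{(k)^!_{q^2}}\,v_\mu$ (immediate since $(\,,\,)_S$ is $(k)^!_{q^2}$ times the identity in the standard basis), show that $\Delta(\mu)\leftrightarrow v_\mu$ if and only if $\bar\Delta(\mu)\leftrightarrow v_\mu^{\clubsuit}$.

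To prove $\Delta(\mu)\leftrightarrow v_\mu$, I would use the standard filtration $[P(\l)] = \sum_{\mu} d_{\l,\mu}\,[\Delta(\mu)]$ from Section~\ref{sec:modules}. Since the matrix $(d_{\l,\mu})$ is upper triangular (in the Bruhat order on $D_{n,k}$) with ones on the diagonal, it is invertible over $\Z[q,q^{-1}]$; once $P(\l)\leftrightarrow v_\l^{\diamondsuit}$ is known, the identification $\Delta(\mu)\leftrightarrow v_\mu$ reduces to verifying the identity in $V^{\otimes n}$
\[
v_\l^{\diamondsuit} \;=\; \sum_{\mu \in D_{n,k}} d_{\l,\mu}\, v_\mu.
\]
I would check this identity by expanding $v_\l^{\diamondsuit} = \ell_{i_k}\wedge\cdots\wedge\ell_{i_1}$ using the two wedge-product formulas stated just before Example~\ref{ex:CanonicalBasisV3}. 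These formulas compute each maximal consecutive run of $\ell$-factors as a sum, indexed by a single ``hat'' position in the corresponding range and weighted by $q^{m-j+1}$, with an initial run $\ell_i\wedge\cdots\wedge\ell_1$ contributing no sum at all. The factorization of the wedge product into maximal consecutive runs matches precisely the decomposition of $\und\l$ into forks (each rooted at a $\vee$ of $\l$ and extending over the subsequent $\wedge$'s of $\l$) together with the initial $\wedge$-ray segment.

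Combining these ingredients, the key combinatorial step is to note that under the orientation criterion of Lemma~\ref{lem:OrientedForkHelper}, an oriented diagram $\und\l\mu$ corresponds exactly to a placement of one $\vee$ of $\mu$ inside each fork of $\und\l$ (and none in the initial $\wedge$-ray segment), and that the degree of such a fork diagram equals the sum of the shifts from each fork root to the chosen $\vee$ position of $\mu$. This matches, term by term, the expansion produced by the wedge-product formulas: each oriented $\mu$ arises with coefficient $q^{\deg(\und\l\mu)}$, and non-oriented $\mu$ contribute zero. The main obstacle is this bookkeeping, which requires carefully coordinating three index conventions (the wedge-product ordering $i_1<\cdots<i_k$, the maximal consecutive runs of $\wedge$'s in $\l$, and the $\vee$-positions $\vee_j^\mu$ of $\mu$ relative to the fork partition); once this is set up, each fork contributes independently and the desired identity follows directly from the explicit formulas.
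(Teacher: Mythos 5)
The paper does not give a proof of this statement: Theorem~\ref{thm:SartoriG0Ident} is imported directly from \cite[Theorem 7.13]{Sar-tensor}, with the first line serving as Definition~\ref{def:SarVnIdent}. Your argument is therefore a genuine reconstruction rather than a restatement of what the paper writes.

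That said, your reconstruction is correct, and the logic is sound. The reduction is valid: with $\iota\colon G_0^{\C(q)}(\cal{A}^{\Bbbk}_{n,k}) \to (V^{\otimes n})_k$ the isomorphism sending $[P(\l)]\mapsto v_\l^{\diamondsuit}$, the filtration identity $[P(\l)]=\sum_\mu d_{\l,\mu}[\Delta(\mu)]$ together with invertibility of the unitriangular matrix $(d_{\l,\mu})$ shows that $\iota([\Delta(\mu)])=v_\mu$ for all $\mu$ if and only if $v_\l^{\diamondsuit}=\sum_\mu d_{\l,\mu}v_\mu$ for all $\l$. Your reduction of the $\bar\Delta$-line to the $\Delta$-line via $[\Delta(\mu)]=[k]_0^![\bar\Delta(\mu)]$, $[k]_0=(k)_{q^2}$, and $v_\mu^{\clubsuit}=\tfrac{1}{(k)^!_{q^2}}v_\mu$ is also correct. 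Finally, the verification of the identity via the wedge-product formulas does go through: the forks of $\und\l$ span exactly the intervals $[\vee_j^{\l},\vee_{j+1}^{\l}-1]$, the orientation criterion of Lemma~\ref{lem:OrientedForkHelper} places the $j$-th $\vee$ of $\mu$ in that interval at some position $m$ with degree contribution $m-\vee_j^{\l}$, and this matches term-by-term the expansion of $\ell_{\vee_{j+1}^{\l}-1}\wedge\cdots\wedge\ell_{\vee_j^{\l}+1}$, while the initial $\wedge$-run contributes the rigid factor $e_{\vee_1^{\l}-1}\wedge\cdots\wedge e_1$. One caveat worth flagging: your proof is not independent of Sartori's, since the wedge-product description of $v_\l^{\diamondsuit}$ and the filtration identities are themselves drawn from \cite{Sar-tensor} and \cite{Sar-diagrams}; but as an exposition of how the machinery assembles into the theorem, it is correct and more explicit than the paper's citation.
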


\subsection{Classes in \texorpdfstring{$K_0^{\C(q)}(\B^{\Bbbk}_l(n,k))$}{the Grothendieck group} from inflated Sartori modules}\label{sec:ClassesOfInflatedModules}

As in Corollary~\ref{cor:InflIsIso}, inflation gives an isomorphism from $G_0(\cal{A}^{\Bbbk}_{n,k})$ to $G_0(\B^{\Bbbk}_l(n,k))$. Passing to $\C(q)$, we can compare inflation with the isomorphism
\[
\Phi: G_0^{\C(q)}(\cal{A}^{\Bbbk}_{n,k}) \xrightarrow{\cong} K_0^{\C(q)}(\cal{A}^{\Bbbk}_{n,k}) \xrightarrow{\pr^{-1}} K_0^{\C(q)}(\B^{\Bbbk}_l(n,k)) \xrightarrow{\cong} G_0^{\C(q)}(\B^{\Bbbk}_l(n,k)).
\]

\begin{proposition}\label{prop:PhiInfl}
With $\Phi$ defined as above, we have $\infl = (k)^!_{q^2}(1-q^2)^k \Phi$.
\end{proposition}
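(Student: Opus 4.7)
The strategy is to evaluate both sides on the basis $\{[L(\lambda)] : \lambda \in D_{n,k}\}$ of $G_0^{\C(q)}(\cal{A}_{n,k}^{\Bbbk})$ and then extend by $\C(q)$-linearity. On the left, I first observe that the surjection $\Xi$ forces $\infl$ to preserve simplicity, and since $\infl(L(\lambda))$ has nonzero $\Ib_{\x}$-action (where $\x \in V_l(n,k)$ corresponds to $\lambda$ as in Remark~\ref{rem:OSzSarStates}), we must have $\infl(L(\lambda)) \cong L(\x)$, so $\infl([L(\lambda)]) = [L(\x)]$.

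For the right side, I will track $[L(\lambda)]$ through the three arrows defining $\Phi$, working inside $\left(V^{\otimes n}\right)_k$ throughout. By Corollary~\ref{cor:PreliminaryBasisCor}, the first isomorphism sends $[L(\lambda)]$ to the Sartori dual canonical basis vector $v_{\lambda}^{\heartsuit}$. The projection functor $\pr$ sends indecomposable projectives $[P(\x)]$ to $[P(\lambda)]$ by Theorem~\ref{thm:CategorificationsCompatible}, hence acts as the identity on $\left(V^{\otimes n}\right)_k$ under the identifications of Definitions~\ref{def:SarVnIdent} and~\ref{def:OSzVnIdent}; so $\pr^{-1}$ lifts $v_{\lambda}^{\heartsuit}$ to the same vector now viewed in $K_0^{\C(q)}(\B_l^{\Bbbk}(n,k))$. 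The final isomorphism $K_0^{\C(q)}(\B_l^{\Bbbk}(n,k)) \cong G_0^{\C(q)}(\B_l^{\Bbbk}(n,k))$ identifies $[L(\x)]$ with the Ozsv\'ath--Szab\'o dual canonical vector $v_{\x}^{\heartsuit\heartsuit}$, again by Corollary~\ref{cor:PreliminaryBasisCor}, so it remains only to re-expand $v_{\lambda}^{\heartsuit}$ in the basis $\{v_{\x}^{\heartsuit\heartsuit}\}$.

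This re-expansion is the one substantive computation, but it is immediate from the explicit scalar form of the two pairings on $\left(V^{\otimes n}\right)_k$: in the standard basis $(,)_S$ is $(k)^!_{q^2}$ times the identity while $(,)_{OSz}$ is $\frac{1}{(1-q^2)^k}$ times the identity, so $(,)_S = (k)^!_{q^2}(1-q^2)^k \cdot (,)_{OSz}$ and consequently the two dual bases to the canonical basis are related by
\[
v_{\x}^{\heartsuit} \;=\; \frac{1}{(k)^!_{q^2}(1-q^2)^k}\, v_{\x}^{\heartsuit\heartsuit}.
\]
Hence $\Phi([L(\lambda)]) = \frac{1}{(k)^!_{q^2}(1-q^2)^k}[L(\x)]$, matching $\frac{1}{(k)^!_{q^2}(1-q^2)^k}\infl([L(\lambda)])$. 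I do not anticipate a genuine obstacle: the argument is essentially a bookkeeping exercise through the chain of identifications, and the only point that deserves a careful sanity check is that each of the two $K_0^{\C(q)} \cong G_0^{\C(q)}$ identifications entering $\Phi$ is consistent with Corollary~\ref{cor:PreliminaryBasisCor}'s labeling of $[L(\l)]$ and $[L(\x)]$ as the respective dual canonical vectors --- but this is exactly how those identifications were set up via the perfect pairings $[,]_S$ and $[,]_{OSz}$ in the preceding sections.
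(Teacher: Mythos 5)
Your proof is correct, and it is essentially an unpacking of the paper's one-line argument: the paper simply invokes the fact that $(,)_S = (k)^!_{q^2}(1-q^2)^k\,(,)_{OSz}$, and your computation on the basis $\{[L(\lambda)]\}$, tracking the class through the three arrows of $\Phi$ and using the relation $v_{\x}^{\heartsuit\heartsuit} = (k)^!_{q^2}(1-q^2)^k\, v_{\x}^{\heartsuit}$ between dual canonical bases, is precisely the bookkeeping that makes that invocation rigorous. Your observation that $\infl$ sends $L(\lambda)$ to $L(\x)$ is also correct and is already stated in the paper just before Corollary~\ref{cor:InflIsIso}.
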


\begin{proof}
The formula follows from the fact that the Sartori bilinear form $(,)_S$ is $(k)^!_{q^2}(1-q^2)^k$ times the Ozsv{\'a}th--Szab{\'o} form $(,)_{OSz}$.
\end{proof}

The finite-dimensional modules over $\B^{\Bbbk}_l(n,k)$ defined above give us classes in $K_0^{\C(q)}(\B^{\Bbbk}_l(n,k))$ via the identification of $K_0^{\C(q)}(\B^{\Bbbk}_l(n,k))$ with $G_0^{\C(q)}(\B^{\Bbbk}_l(n,k))$. By Definition~\ref{def:OSzVnIdent}, we get elements of $\left(V^{\otimes n} \right)_k$.

\begin{theorem}\label{thm:BorderedCategorificationOfBases}
Let $\l \in D_{n,k}$. Under the above identification, the modules over $\B^{\Bbbk}_l(n,k)$ obtained by inflating Sartori's indecomposable projective, standard, proper standard, and simple modules $P(\l)$, $\Delta(\l)$, $\overline{\Delta}(\l)$, and $L(\l)$ categorify
\begin{itemize}
\item $(k)^!_{q^2}(1-q^2)^k$ times the canonical basis element $v_{\l}^{\diamondsuit}$,
\item $(k)^!_{q^2}(1-q^2)^k$ times the standard basis element $v_{\l}$,
\item the Ozsv{\'a}th--Szab{\'o} dual standard basis element $v_{\l}^{\clubsuit \clubsuit}$, and
\item the Ozsv{\'a}th--Szab{\'o} dual canonical basis element $v_{\l}^{\heartsuit \heartsuit}$
\end{itemize}
of $\left( V^{\otimes n} \right)_k$ respectively.
\end{theorem}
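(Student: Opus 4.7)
The plan is to deduce Theorem~\ref{thm:BorderedCategorificationOfBases} essentially mechanically from two key earlier inputs: Proposition~\ref{prop:PhiInfl}, which tells us $\infl = (k)^!_{q^2}(1-q^2)^k \cdot \Phi$ as maps $G_0^{\C(q)}(\cal{A}^{\Bbbk}_{n,k}) \to G_0^{\C(q)}(\B^{\Bbbk}_l(n,k))$, and Theorem~\ref{thm:SartoriG0Ident}, which identifies the classes $[P(\l)]$, $[\Delta(\l)]$, $[\overline{\Delta}(\l)]$, $[L(\l)]$ in Sartori's $G_0^{\C(q)}$ with the basis vectors $v_{\l}^{\diamondsuit}$, $v_{\l}$, $v_{\l}^{\clubsuit}$, $v_{\l}^{\heartsuit}$ of $(V^{\otimes n})_k$. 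Since $\Phi$ is defined exactly so as to preserve the identification with $(V^{\otimes n})_k$ (it goes $G_0 \cong K_0 \xrightarrow{\pr^{-1}} K_0 \cong G_0$, and $\pr$ matches the canonical basis identifications on the two $K_0$'s by Theorem~\ref{thm:CategorificationsCompatible}), we conclude that for each module $M \in \{P(\l),\Delta(\l),\overline{\Delta}(\l),L(\l)\}$, the class $[\infl(M)]$ in $G_0^{\C(q)}(\B^{\Bbbk}_l(n,k))$ equals $(k)^!_{q^2}(1-q^2)^k$ times the corresponding Sartori basis element of $(V^{\otimes n})_k$.

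The only remaining task is to translate the resulting scalar multiples of Sartori's basis elements into the Ozsv{\'a}th--Szab{\'o} naming conventions. For $P(\l)$ and $\Delta(\l)$ there is nothing to translate: the canonical basis element $v_{\l}^{\diamondsuit}$ and the standard basis element $v_{\l}$ are not relabeled between the two pictures, and we simply get the claimed $(k)^!_{q^2}(1-q^2)^k$ multiples. For $\overline{\Delta}(\l)$ and $L(\l)$, one uses the relationship $(,)_S = (k)^!_{q^2}(1-q^2)^k (,)_{OSz}$ established in the course of proving Corollary~\ref{cor:OSzBilinearForm}: since dual bases scale inversely to the bilinear form, $v_{\l}^{\clubsuit\clubsuit} = (k)^!_{q^2}(1-q^2)^k \, v_{\l}^{\clubsuit}$ and $v_{\l}^{\heartsuit\heartsuit} = (k)^!_{q^2}(1-q^2)^k \, v_{\l}^{\heartsuit}$, so the $(k)^!_{q^2}(1-q^2)^k$ prefactor is exactly absorbed into the change from Sartori's dual bases to Ozsv{\'a}th--Szab{\'o}'s.

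The one subtle point worth spelling out carefully — and where I would be most careful to avoid circularity — is the statement that $\Phi$ intertwines the two identifications of $K_0^{\C(q)}$ and $G_0^{\C(q)}$ with $(V^{\otimes n})_k$. The middle arrow $\pr^{-1}$ identifies the canonical basis $\{v^{\diamondsuit}_{\x}\}$ with itself by construction, but the outer arrows pass through the bilinear forms $[,]_S$ and $[,]_{OSz}$, which are different. The key check is that, as linear maps $G_0^{\C(q)} \to K_0^{\C(q)}$ on each side, these two isomorphisms correspond to dualizing under the respective bilinear form, and both the form and its dual-basis assignment on that side live entirely inside one world; under the identifications with $V^{\otimes n}$, the isomorphism $G_0^{\C(q)} \cong K_0^{\C(q)}$ on each side is literally the identity on $V^{\otimes n}$ by Corollary~\ref{cor:PreliminaryBasisCor} and the preceding discussion of how simples match dual canonical basis elements. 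So $\Phi$ is the identity of $V^{\otimes n}$ under both identifications, and $\infl$ is multiplication by the scalar $(k)^!_{q^2}(1-q^2)^k$.

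There is no substantial obstacle; the proof should read as a short sequence of invocations of these previous results, together with the explicit scalars $v_{\l}^{\clubsuit\clubsuit}/v_{\l}^{\clubsuit} = v_{\l}^{\heartsuit\heartsuit}/v_{\l}^{\heartsuit} = (k)^!_{q^2}(1-q^2)^k$ coming from the comparison of the two bilinear forms. The main thing to write clearly is the bookkeeping: for each of the four module classes, display the chain "Sartori basis element $\xrightarrow{\Phi}$ same element of $V^{\otimes n}$ $\xrightarrow{\times (k)^!_{q^2}(1-q^2)^k}$ Ozsv{\'a}th--Szab{\'o} basis element (or the asserted scalar multiple thereof)" so that each of the four bullet points in the theorem statement is verified in a single line.
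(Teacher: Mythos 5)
Your proof is correct and takes essentially the same route as the paper: invoke Proposition~\ref{prop:PhiInfl} to write $\infl = (k)^!_{q^2}(1-q^2)^k\,\Phi$, observe that $\Phi$ is the identity of $(V^{\otimes n})_k$ under the two identifications, and then apply Theorem~\ref{thm:SartoriG0Ident} together with the scalar relationships $v_{\l}^{\clubsuit\clubsuit} = (k)^!_{q^2}(1-q^2)^k\,v_{\l}^{\clubsuit}$ and $v_{\l}^{\heartsuit\heartsuit} = (k)^!_{q^2}(1-q^2)^k\,v_{\l}^{\heartsuit}$. Your extra paragraph worrying about circularity in the compatibility of $\Phi$ with the two $K_0^{\C(q)} \cong G_0^{\C(q)}$ identifications is sound and just spells out what the paper leaves implicit in the discussion surrounding Corollary~\ref{cor:PreliminaryBasisCor} and Remark~\ref{rem:SartoriStyleK0G0Ident}.
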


\begin{proof}
Proposition~\ref{prop:PhiInfl} implies that $\mathrm{infl}\colon G_0(\cal{A}^{\Bbbk}_{n,k}) \to G_0(\B^{\Bbbk}_l(n,k))$, viewed a map from $K_0^{\C(q)}(\cal{A}^{\Bbbk}_{n,k})$ to $K_0^{\C(q)}(\B^{\Bbbk}_l(n,k))$, is equal to $(k)^!_{q^2}(1-q^2)^k$ times the isomorphism $\pr^{-1}\colon K_0^{\C(q)}(\cal{A}^{\Bbbk}_{n,k}) \to K_0^{\C(q)}(\B^{\Bbbk}_l(n,k))$ that we have chosen. Thus, inflating a Sartori module and using Definition~\ref{def:OSzVnIdent} to get a class in $\left( V^{\otimes n} \right)_k$ amounts to using Theorem~\ref{thm:SartoriG0Ident} to get a class in $\left( V^{\otimes n} \right)_k$ directly, then multiplying the result by $(k)^!_{q^2}(1-q^2)^k$. The claim follows from Theorem~\ref{thm:SartoriG0Ident} plus the fact that multiplying the Sartori dual standard and dual canonical bases by $(k)^!_{q^2}(1-q^2)^k$ gives the Ozsv{\'a}th--Szab{\'o} dual standard and dual canonical bases.
\end{proof}

\subsection{Compact derived categories}

As discussed in \cite[Section 5.1]{KellerTilting}, the homotopy category of bounded complexes of finitely generated projective (graded) $\B_l^{\Bbbk}(n,k)$-modules $H^b(\B_l^{\Bbbk}(n,k){\rm-proj})$ is equivalent to the compact derived category $\cal{D}^c(\B_l^{\Bbbk}(n,k))$. We have
\[
K(H^b(\B_l^{\Bbbk}(n,k){\rm-proj})) \cong K_0(\B_l^{\Bbbk}(n,k))
\]
and thus
\[
K_0(\B^{\Bbbk}_l(n,k)) \cong K(\cal{D}^c(\B^{\Bbbk}_l(n,k))).
\]
Passing to $\C(q)$, we can use Definition~\ref{def:OSzVnIdent} to identify $K^{\C(q)}(\cal{D}^c(\B^{\Bbbk}_l(n,k)))$ with $\left(V^{\otimes n}\right)_k$ (we could do the same with the Sartori algebra).

\begin{corollary}
Under the above identification, we have classes in $K^{\C(q)}(\cal{D}^c(\B^{\Bbbk}_l(n,k)))$ categorifying
\begin{itemize}
\item the canonical basis,
\item $(k)^!_{q^2}(1-q^2)^k$ times the canonical basis,
\item $(k)^!_{q^2}!(1-q^2)^k$ times the standard basis,
\item the Ozsv{\'a}th--Szab{\'o} dual standard basis, and
\item the Ozsv{\'a}th--Szab{\'o} dual canonical basis
\end{itemize}
of $\left( V^{\otimes n} \right)_k$.
\end{corollary}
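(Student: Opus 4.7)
The plan is to assemble this corollary directly from the material already established, rather than doing any new calculation. The key input is the identification
\[
K^{\C(q)}(\cal{D}^c(\B^{\Bbbk}_l(n,k))) \cong K_0^{\C(q)}(\B^{\Bbbk}_l(n,k))
\]
from the paragraph immediately preceding the statement, together with Definition~\ref{def:OSzVnIdent}, which further identifies $K_0^{\C(q)}(\B_l^{\Bbbk}(n,k))$ with $\left(V^{\otimes n}\right)_k$. It therefore suffices to exhibit classes in $K_0^{\C(q)}(\B^{\Bbbk}_l(n,k))$ realizing each of the five listed elements, since every such class is automatically a $\C(q)$-linear combination of $[P(\x)]$'s and hence lifts to a class in $K^{\C(q)}(\cal{D}^c(\B^{\Bbbk}_l(n,k)))$.

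First I would handle the canonical basis by invoking the indecomposable projective modules $P(\x)$ themselves: they are already objects of $\B_l^{\Bbbk}(n,k){\rm -proj}$, hence of $\cal{D}^c(\B^{\Bbbk}_l(n,k))$, and their classes are identified with $v_{\x}^{\diamondsuit}$ by Definition~\ref{def:OSzVnIdent}. For the remaining four bases I would appeal to Theorem~\ref{thm:BorderedCategorificationOfBases}: the inflations along $\Xi$ of Sartori's modules $P(\l)$, $\Delta(\l)$, $\bar{\Delta}(\l)$, and $L(\l)$ produce classes in $G_0(\B_l^{\Bbbk}(n,k))$ which, once passed to $G_0^{\C(q)}(\B_l^{\Bbbk}(n,k))$ and transported across the identification $G_0^{\C(q)} \cong K_0^{\C(q)}$ induced by the Ozsv{\'a}th--Szab{\'o} bilinear form, correspond precisely to $(k)^!_{q^2}(1-q^2)^k \, v_{\l}^{\diamondsuit}$, $(k)^!_{q^2}(1-q^2)^k \, v_{\l}$, $v_{\l}^{\clubsuit\clubsuit}$, and $v_{\l}^{\heartsuit\heartsuit}$ respectively. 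Transporting each resulting class along the isomorphism $K_0^{\C(q)}(\B^{\Bbbk}_l(n,k)) \cong K^{\C(q)}(\cal{D}^c(\B^{\Bbbk}_l(n,k)))$ then delivers the required classes in the compact derived category.

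There is no real obstacle; the content is essentially packaged in Theorem~\ref{thm:BorderedCategorificationOfBases} and the preceding paragraph. The one point worth flagging is that an inflated Sartori module need not itself be representable by a concrete bounded complex of finitely generated projectives over $\B_l^{\Bbbk}(n,k)$, since $\B_l^{\Bbbk}(n,k)$ is not known here to have finite global dimension. This is however irrelevant at the level of Grothendieck groups: the required classes exist purely formally in $K_0^{\C(q)}(\B_l^{\Bbbk}(n,k))$, and if one wanted explicit complexes of projectives representing them one could read them off by inverting the graded change-of-basis matrices computed in Proposition~\ref{prop:SartoriFormInCanonicalBasis} and Proposition~\ref{prop:OSzFormInCanonicalBasis}, which express the dual standard and dual canonical bases as $\C(q)$-linear combinations of canonical basis elements $[P(\x)]$.
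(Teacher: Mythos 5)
Your argument is correct and matches the paper's intended route: the corollary is a direct repackaging of Theorem~\ref{thm:BorderedCategorificationOfBases} (and Definition~\ref{def:OSzVnIdent} for the canonical basis) through the chain of identifications
\[
K^{\C(q)}(\cal{D}^c(\B^{\Bbbk}_l(n,k))) \cong K_0^{\C(q)}(\B^{\Bbbk}_l(n,k)) \cong G_0^{\C(q)}(\B^{\Bbbk}_l(n,k)),
\]
where the second isomorphism is the one induced by the Ozsv{\'a}th--Szab{\'o} bilinear form, as discussed in Section~\ref{sec:ClassesOfInflatedModules}. Your flag about finite global dimension is apt: since $\B^{\Bbbk}_l(n,k)$ is infinite-dimensional over $\Bbbk$, the inflated finite-dimensional Sartori modules need not have finite projective resolutions, so they do not give honest objects of $\cal{D}^c$; but that is exactly why the paper only formulates the statement after tensoring with $\C(q)$, where $K_0$ and $G_0$ are identified via the form rather than via a Cartan map. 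The third bullet in the statement has a stray ``$!$'' (it should read $(k)^!_{q^2}(1-q^2)^k$, matching Theorem~\ref{thm:BorderedCategorificationOfBases}), which you silently corrected.
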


\subsection{\texorpdfstring{Comparison with the conventions of \cite{ManionDecat}}{Comparison of conventions}}\label{sec:ConventionComparison}

In \cite{OSzNew,OSzNewer}, Ozsv{\'a}th--Szab{\'o} give $\B_l(n,k)$ different gradings based on a choice of orientations for $n$ points. Our quotient map from Ozsv{\'a}th--Szab{\'o}'s algebra to Sartori's algebra is a degree-zero map when Ozsv{\'a}th--Szab{\'o}'s algebra is given the gradings for all $n$ points oriented negatively.

In \cite{ManionDecat}, Ozsv{\'a}th--Szab{\'o}'s algebra with these gradings was used to categorify tensor powers of $V^*$, rather than of $V$. Since Sartori uses his algebra to categorify tensor powers of $V$, our conventions cannot match those of \cite{ManionDecat} exactly.

One way to relate the conventions is as follows. The ``right modified basis'' for $(V^*)^{\otimes n}$ defined in \cite{ManionDecat} can be described (up to a power of $q$ that we will change for convenience) by letting $w_i$ be the standard basis element with $v_0^*$ in position $i$ and $v_1^*$ in all other positions; then $\ell'_i := w_i + qw_{i+1}$ for $1 \leq i \leq n-1$, while $\ell'_n := w_n$. Wedge products of the elements $\ell'_i$ (taken with $i$ in increasing order in \cite{ManionDecat}) form the right modified basis for $(V^*)^{\otimes n}$.

As vector spaces, identify $V^{\otimes n}$ with $(V^*)^{\otimes n}$ by sending the standard basis element $v_{j_1} \otimes \cdots \otimes v_{j_n}$ to the dual basis element $v_{j_n}^* \otimes \cdots \otimes v_{j_1}^*$ where $j_i \in \{0,1\}$. Then $\ell'_n$ gets sent to $\ell_1 = e_1$ while $\ell'_i$ gets sent to $\ell_{n+1-i} = e_{n+1-i} + qe_{n-i}$ for $1 \leq i \leq n-1$. More generally, wedge products of the $\ell'_i$ are sent to the canonical basis for $V^{\otimes n}$. One can check that this identification intertwines the braiding on $V$ with the braiding on $V^*$.

In \cite{ManionDecat}, for an I-state $\x$ with $0 \notin \x$ (a right I-state), the element $[P(\x)]$ of $K_0(\B^{\Bbbk}_r(n,k))$ was identified with the right modified basis element $\ell'_{x_1 + 1} \wedge \cdots \wedge \ell'_{x_k + 1}$ of $(V^*)^{\otimes n}$, where $\B^{\Bbbk}_r(n,k)$ is defined as in Remark~\ref{rem:OSzAlgSymms}. Translating to an element of $V^{\otimes n}$ as in the above paragraph, we get $\ell_{n-x_1} \wedge \cdots \wedge \ell_{n-x_k}$. This is the canonical basis element $v_{\x'}^{\diamondsuit}$ associated to the left I-state $\cal{R}(\x)$ (and thus $[P(\cal{R}(\x))]$) in Definition~\ref{def:OSzVnIdent}, where $\cal{R}$ is the Ozsv{\'a}th--Szab{\'o} symmetry mentioned in Remark~\ref{rem:OSzAlgSymms} and $\cal{R}(\x) = \{n-x_i \mid 1 \leq i \leq k\}$. It now follows from \cite[Lemma 10.1]{OSzNew}, \cite[Theorem 1.4.2]{ManionDecat}, and the previous paragraph that under the conventions of Definition~\ref{def:OSzVnIdent}, Ozsv{\'a}th--Szab{\'o}'s positive-crossing bimodule $\cal{P}_i$ over $\B_l(n,k)$ categorifies the braiding acting on factors $(i,i+1)$ of $V^{\otimes n}$ for $1 \leq i \leq n-1$. Similarly, Ozsv{\'a}th--Szab{\'o}'s negative-crossing bimodule $\cal{N}_i$ categorifies the inverse of the braiding acting on factors $(i,i+1)$.

\section{Bimodules for quantum group generators} \label{subsec:EF}

\subsection{The Sartori \texorpdfstring{$\cal{F}$}{F} functor} \label{subsec:SartF}

Let $\Gamma_k^{\vee}$ denote the set of $\vee\wedge$-sequences in $D_{n,k}$ whose leftmost symbol is a $\vee$, and let $\Gamma_k^{\wedge}$ be the set of $\lambda \in D_{n,k}$ whose leftmost symbol is a $\wedge$. Define idempotents
\begin{equation}
  e_k^{\vee} = \sum_{\l \in \Gamma_k^{\vee}} \1_{\l}, \qquad e_k^{\wedge} = \sum_{\l \in \Gamma_k^{\wedge}} \1_{\l}.
\end{equation}

For $\l \in \Gamma_k^{\vee}$, set $\l^{(\wedge)} \in \Gamma_{k+1}^{\wedge}$ to be the sequence obtained from $\l$ by swapping the lead term from $\vee$ to $\wedge$.  Similarly, for $\mu \in \Gamma_{k+1}^{\wedge}$ define $\mu^{\vee} \in \Gamma_k^{\vee}$ by swapping the first symbol from $\wedge$ to $\vee$.  This operation defines a bijection $\Gamma_k^{\vee} \to \Gamma_{k+1}^{\wedge}$.

For any $\l,\mu \in \Gamma_{k+1}^{\wedge}$, there is a natural surjective map $\1_{\mu} \cal{A}^{\Z}_{n,k+1} \1_{\l} \longrightarrow \1_{\mu^{(\vee)}} \cal{A}^{\Z}_{n,k} 1_{\l^{(\vee)}} $.  We thus get a surjective algebra homomorphism
\begin{equation}
  \Psi \maps e_{k+1}^{\wedge}\cal{A}^{\Z}_{n,k+1} e_{k+1}^{\wedge} \longrightarrow e_{k}^{\vee}\cal{A}^{\Z}_{n,k}e_{k}^{\vee}
\end{equation}
and thereby a well-defined surjective homomorphism (see \cite[Proposition 5.36]{Sar-diagrams})
\begin{align} \label{eq:surjective}
  \cal{A}^{\Z}_{n,k+1}/ \cal{A}^{\Z}_{n,k+1} e_{k+1}^{\vee} \cal{A}^{\Z}_{n,k+1} &\to e_k^{\vee} \cal{A}^{\Z}_{n,k} e_k^{\vee} \nn \\
 [a]\qquad  & \mapsto \Psi(e_{k+1}^{\wedge} a e_{k+1}^{\wedge}).
\end{align}

Consider the projective module $P_k^{\vee}:= \cal{A}^{\Z}_{n,k}  e_k^{\vee}$.  Sartori shows in \cite[Section 5.5]{Sar-diagrams} that $P_k^{\vee}$ is the sum of all the indecomposable projective-injective left $\cal{A}^{\Z}_{n,k}$-modules.  The left $\cal{A}^{\Z}_{n,k}$-module $P_k^{\vee}$ has a right $\cal{A}^{\Z}_{n,k+1}$-module structure induced by the map
\begin{align}
  \cal{A}^{\Z}_{n,k+1} \longrightarrow \cal{A}^{\Z}_{n,k+1} / \cal{A}^{\Z}_{n,k+1}e_{k+1}^{\vee} \cal{A}^{\Z}_{n,k+1} \longrightarrow e_k^{\vee}\cal{A}^{\Z}_{n,k} e_k^{\vee}
\end{align}
where the first arrow is the quotient map and the second is the surjective map \eqref{eq:surjective}.  This gives $P_k^{\vee}$ the structure of an $(\cal{A}^{\Z}_{n,k},\cal{A}^{\Z}_{n,k+1})$-bimodule; call this bimodule $\mathbf{F}_k = \mathbf{F}_k^S$. One can define a right-exact functor
\begin{equation}
  \cal{F}_k = \cal{F}_k^S\maps \cal{A}^{\Z}_{n,k+1}{\rm -gmod}
 \xy (-10,0)*+{}="l"; (10,0)*+{}="r";
 {\ar^{\mathbf{F}_k \otimes_{\cal{A}^{\Z}_{n,k+1} } \cdot} "l";"r"}; \endxy \cal{A}_{n,k}{\rm -gmod.}
\end{equation}
We have $\cal{F}_{k-1} \circ \cal{F}_k = 0$. Applying $\cal{F}_k$ to an indecomposable projective $P(\mu) = \cal{A}^{\Z}_{n,k+1}\1_{\mu}$ gives either an indecomposable projective or zero:
\begin{equation}\label{eq:SartoriFOnProjectives}
  \cal{F}_k (P(\mu)) := \mathbf{F}_k \otimes_{\cal{A}^{\Z}_{n,k+1}}\cal{A}^{\Z}_{n,k+1}\1_{\mu}
=
\left\{
  \begin{array}{ll}
    \cal{A}^{\Z}_{n,k}\1_{\l}, & \hbox{if $\l^{(\wedge)} = \mu$ for some $\l \in \Gamma_k^{\vee}$;} \\
    0, & \hbox{otherwise.}
  \end{array}
\right.
\end{equation}

Sartori views $\cal{F}$ as inducing a map on a topological Grothendieck group of a derived analogue of $G_0(\cal{A}^{\Bbbk}_{n,k})$. On $K_0(\cal{A}^{\Bbbk}_{n,k})$, derived categories and topological completions are not required for $\cal{F}$ to induce a map; any additive functor between categories of finitely generated projective modules induces a map on $K_0(\cal{A}^{\Bbbk}_{n,k})$.

\begin{corollary}[cf. Proposition 5.8 of \cite{Sar-tensor} and equation \eqref{eq:SartoriFOnProjectives} above]
The map from $K_0(\cal{A}^{\Bbbk}_{n,k+1})$ to $K_0(\cal{A}^{\Bbbk}_{n,k})$ induced by $\cal{F}_k^S$ agrees with the map $F\colon \left(V^{\otimes n} \right)_{k+1} \to \left(V^{\otimes n} \right)_k$ under the identification of Definition~\ref{def:SarVnIdent}.
\end{corollary}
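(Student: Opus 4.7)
The plan is to verify the identification on the basis of indecomposable projectives $\{[P(\mu)] : \mu \in D_{n,k+1}\}$, which under Definition~\ref{def:SarVnIdent} corresponds over $\C(q)$ to the canonical basis $\{v_{\mu}^{\diamondsuit}\}$ of $\left(V^{\otimes n}\right)_{k+1}$ and in particular spans $K_0^{\C(q)}(\cal{A}^{\Bbbk}_{n,k+1})$. Because $\cal{F}_k^S$ sends finitely generated projectives to finitely generated projectives (or to zero) by equation \eqref{eq:SartoriFOnProjectives}, it induces a well-defined map $[\cal{F}_k^S]\co K_0(\cal{A}^{\Bbbk}_{n,k+1}) \to K_0(\cal{A}^{\Bbbk}_{n,k})$ directly, without needing to pass to the derived category or a topological completion of $G_0$ as in \cite{Sar-tensor}.

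The first half of the computation is already spelled out in \eqref{eq:SartoriFOnProjectives}: on basis elements $[\cal{F}_k^S]$ sends $[P(\mu)]$ to $[P(\l)]$ (with the grading shift encoded in the bimodule $\mathbf{F}_k$) whenever $\mu = \l^{(\wedge)}$ for some $\l \in \Gamma_k^{\vee}$, i.e. when $\mu$ begins with $\wedge$, and sends $[P(\mu)]$ to $0$ otherwise. The second half is to compute the action of $F\in U_q(\mf{gl}(1|1))$ on the canonical basis. Iterating the coproduct $\Delta(F) = F\otimes 1 + K\otimes F$ and using $Fv_0 = v_1$, $Fv_1 = 0$, $Kv_0 = q v_0$, $Kv_1 = q^{-1}v_1$, one shows that, in the explicit form $v_{\mu}^{\diamondsuit} = \ell_{i_k}\wedge\cdots\wedge\ell_{i_1}$ of the canonical basis, the operator $F$ sends $v_{\mu}^{\diamondsuit}$ to $v_{\mu^{(\vee)}}^{\diamondsuit}$ (up to an explicit power of $q$) when $\mu$ starts with $\wedge$, and to $0$ when $\mu$ starts with $\vee$.

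Comparing the two halves shows that $[\cal{F}_k^S]$ and $F$ act by the same combinatorial rule on the basis $\{v_{\mu}^{\diamondsuit}\}$, and therefore agree on $\left(V^{\otimes n}\right)_{k+1}$. In practice, this matching is the content of \cite[Proposition 5.8]{Sar-tensor}, so the argument reduces to citing that result and verifying that nothing is lost in the passage from Sartori's $\C$-linear setup to the integral form $\cal{A}^{\Z}_{n,k}$ considered here; this is automatic since Corollary~\ref{cor:W} and Proposition~\ref{prop:AZfree} give $\cal{A}^{\Z}_{n,k}\otimes\C \cong \cal{A}_{n,k}$, the bimodule $\mathbf{F}_k$ is defined in the same way in both settings, and the identifications of Definition~\ref{def:SarVnIdent} are compatible with base change.

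The expected main obstacle is purely bookkeeping: tracking the grading shifts in $\mathbf{F}_k$ so that the power of $q$ appearing in $[\cal{F}_k^S]([P(\mu)]) = q^{?}[P(\mu^{(\vee)})]$ matches the one arising in $F(v_{\mu}^{\diamondsuit})$. This is dictated by the $q$-grading conventions fixed at the start of the categorification and is verified case-by-case in \cite[Section 5.5]{Sar-diagrams} and \cite[Proposition 5.8]{Sar-tensor}, so no new calculation is required.
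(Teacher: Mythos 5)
Your proposal is correct and takes essentially the same route the paper does: the corollary is established by comparing the explicit formula \eqref{eq:SartoriFOnProjectives} for $\cal{F}_k^S$ on indecomposable projectives with the action of $F$ on the canonical basis, citing Sartori's Proposition 5.8 for the latter, while noting that $K_0$ of the category of projectives does not require the derived or topological completions that Sartori used. The only small point worth flagging is that your sketch of the coproduct computation omits the Koszul signs (applying $K\otimes F$ to $v\otimes w$ produces a factor $(-1)^{|v|}$ because $F$ is odd), which are exactly what make $F$ annihilate $v_\mu^{\diamondsuit}$ when $\mu$ begins with $\vee$; since you defer that computation to Sartori this is not a gap, but it is the place where one would go wrong if attempting the verification directly.
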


\subsection{The Ozsv{\'a}th--Szab{\'o} \texorpdfstring{$\cal{F}$}{F} functor}

As done above for the Sartori algebras, define idempotents in $\B_l(n,k)$ by
\begin{equation}
  e_k^{\vee} = \sum_{\x \in V_l(n,k) \, : \, 0 \notin \x} \Ib_{\x}, \qquad e_k^{\wedge} = \sum_{\x \in V_l(n,k) \, : \, 0 \in \x} \Ib_{\x}.
\end{equation}
For $\x \in V_l(n,k)$ with $0 \notin \x$, let $\x^{(\wedge)} = \x \cup \{0\}$; for $\x \in V_l(n,k)$ with $0 \in \x$, let $\x^{(\vee)} = \x \setminus \{0\}$. If $0 \in \x \cap \y$, the structure of generating intervals gives us a natural surjective map
\[
\Ib_{\x} \B_l(n,k+1) \Ib_{\y} \to \Ib_{\x^{(\vee)}} \B_l(n,k) \Ib_{\y^{(\vee)}}
\]
giving us a surjective ring homomorphism
\[
\Psi'\colon e_{k+1}^{\wedge} \B_l(n,k+1) e_{k+1}^{\wedge} \to e_k^{\vee} \B_l(n,k) e_k^{\vee}
\]
Thus, analogous to \cite[Prop 5.36]{Sar-diagrams}, we have a well defined surjective map
\begin{align*}
\B_l(n,k+1) / \B_l(n,k+1) e_{k+1}^{\vee} \B_l(n,k+1) &\to e_k^{\vee} \B_l(n,k) e_k^{\vee} \\
[b] \qquad &\mapsto \Psi'(e_{k+1}^{\wedge} b e_{k+1}^{\wedge}).
\end{align*}
Let $P_k^{\vee} = \B_l(n,k) e_k^{\vee}$. As in the Sartori case, the above homomorphism gives $P_k^{\vee}$ the structure of a right module over $\B_l(n,k+1)$; thus, $P_k^{\vee}$ is a bimodule over $(\B_l(n,k),\B_l(n,k+1))$. Call this bimodule $\mathbf{F}_k = \mathbf{F}_k^{OSz}$. We define
\[
\cal{F}_k = \cal{F}_k^{OSz} \colon \B_l(n,k+1){\rm-proj} \to \B_l(n,k){\rm-proj}
\]
to be the tensor product with $\mathbf{F}_k^{OSz}$. We have $\cal{F}_{k-1} \circ \cal{F}_k = 0$ and
\[
\cal{F}_k(P(\x)) = \begin{cases} P(\x \setminus \{0\}) & 0 \in \x \\ 0 & {\rm otherwise. } \end{cases}
\]

\begin{theorem}\label{thm:OSzFCat}
The map from $K_0(\B_l^{\Bbbk}(n,k+1))$ to $K_0(\B_l^{\Bbbk}(n,k))$ induced by $\cal{F}_k^{OSz}$ agrees with the map $F\colon \left(V^{\otimes n} \right)_{k+1} \to \left(V^{\otimes n} \right)_k$ under the identification of Definition~\ref{def:OSzVnIdent}.
\end{theorem}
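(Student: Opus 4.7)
The plan is to reduce the claim to the known analogous statement for Sartori's $\cal{F}_k^S$ by transporting along the projection functor $\pr$ from Section~\ref{sec:Homomorphism}. Directly from the definition, $\cal{F}_k^{OSz}$ acts on indecomposable projectives by $[P(\x)] \mapsto [P(\x \setminus \{0\})]$ if $0 \in \x$ and $0$ otherwise, and by \eqref{eq:SartoriFOnProjectives} Sartori's functor $\cal{F}_k^S$ acts analogously on indecomposable projectives. Under the bijection $V_l(n,k+1) \leftrightarrow D_{n,k+1}$ of Remark~\ref{rem:OSzSarStates}, the condition $0 \in \x$ corresponds to $\l \in \Gamma_{k+1}^{\wedge}$, and the operation $\x \mapsto \x \setminus \{0\}$ corresponds to $\l \mapsto \l^{(\vee)}$.

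First I would combine this dictionary with Theorem~\ref{thm:CategorificationsCompatible}, which tells us that $\pr$ induces an isomorphism $K_0^{\C(q)}(\B_l^{\Bbbk}(n,k+1)) \xrightarrow{\sim} K_0^{\C(q)}(\cal{A}_{n,k+1}^{\Bbbk})$ sending $[P(\x)] \mapsto [P(\l)]$ and intertwining the identifications of both sides with $(V^{\otimes n})_{k+1}$, and similarly in weight $k$. Evaluating $\pr \circ \cal{F}_k^{OSz}$ and $\cal{F}_k^S \circ \pr$ on the basis $\{[P(\x)]\}_{\x \in V_l(n,k+1)}$ of $K_0^{\C(q)}(\B_l^{\Bbbk}(n,k+1))$, both send $[P(\x)]$ to the same class; hence they induce the same map on $K_0^{\C(q)}$.

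Next I would invoke the corollary recorded at the end of Section~\ref{subsec:SartF} (cf.\ \cite[Proposition~5.8]{Sar-tensor}), which identifies the map on $K_0^{\C(q)}$ induced by $\cal{F}_k^S$ with $F\colon (V^{\otimes n})_{k+1} \to (V^{\otimes n})_k$ under Definition~\ref{def:SarVnIdent}. Since $\pr$ is an isomorphism on $K_0^{\C(q)}$ compatible with the identifications with $V^{\otimes n}$, chasing the commutative square transports this identification to the Ozsv{\'a}th--Szab{\'o} side: the map on $K_0^{\C(q)}$ induced by $\cal{F}_k^{OSz}$ is identified with $F$ under Definition~\ref{def:OSzVnIdent}, as desired. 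The only step that requires any care is verifying that the isomorphism $\cal{F}_k^{OSz}(P(\x)) \cong P(\x \setminus \{0\})$ is grading-preserving with no $q$-shift; this should be immediate because $\mathbf{F}_k^{OSz} = \B_l(n,k) e_k^{\vee}$ carries its natural graded-algebra-summand grading, $e_k^{\vee}$ is a sum of degree-zero idempotents, and the defining algebra map $\Psi'$ is a homomorphism of graded rings. Thus the Sartori input supplies the substantive content and no separate direct computation of $F$ on canonical basis elements of $V^{\otimes n}$ is required.
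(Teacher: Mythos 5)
Your proposal is correct and, unwound, relies on the same two inputs the paper's one-line proof cites: Sartori's formula for $F$ on canonical basis vectors (via \cite[Proposition 5.8]{Sar-tensor}, which you invoke indirectly through the decategorification corollary for $\cal{F}_k^S$) and Definition~\ref{def:OSzVnIdent}. The only difference is that you route through $\pr$ and Theorem~\ref{thm:CategorificationsCompatible} instead of matching $\cal{F}_k^{OSz}$ on projectives directly against $F$ on canonical basis elements; this is a harmless repackaging of the same comparison, though your closing remark that no computation of $F$ on canonical basis elements is needed is slightly misleading, since that computation is exactly what underlies the cited corollary about $\cal{F}_k^S$.
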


\begin{proof}
The result follows from \cite[Proposition 5.8]{Sar-tensor} and Definition~\ref{def:OSzVnIdent}.
\end{proof}

\subsection{Comparing the Ozsv{\'a}th--Szab{\'o} and Sartori functors}

As above, let $\mathbf{F}^S_k$ and $\mathbf{F}^{OSz}_k$ denote the bimodules giving rise to the Sartori and Ozsv{\'a}th--Szab{\'o} functors $\cal{F}_k^{S}$ and $\cal{F}_k^{OSz}$. We write ${_{\pr}}(\mathbf{F}^{OSz}_k)$ and $(\mathbf{F}_k^S)_{\infl}$ for the bimodules over $(\cal{A}^{\Z}_{n,k},\B_l(n,k+1))$ obtained by projecting the left action of $\mathbf{F}^{OSz}_k$ and inflating the right action of $\mathbf{F}^S_k$ respectively.

\begin{theorem}\label{thm:SarOSzFRel}
The bimodules ${_{\pr}}(\mathbf{F}^{OSz}_k)$ and $(\mathbf{F}_k^S)_{\infl}$ are isomorphic.
\end{theorem}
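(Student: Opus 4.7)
The plan is to produce an isomorphism $\Theta \maps (\mathbf{F}_k^S)_{\infl} \to {_{\pr}}(\mathbf{F}^{OSz}_k)$ that is the identity on the common underlying left $\cal{A}^{\Z}_{n,k}$-module $\cal{A}^{\Z}_{n,k} e_k^{\vee}$. Inflation does not touch the left action of $\mathbf{F}_k^S$, while ${_{\pr}}(\mathbf{F}^{OSz}_k) = \cal{A}^{\Z}_{n,k} \otimes_{\B_l(n,k)} \B_l(n,k) e_k^{\vee} \cong \cal{A}^{\Z}_{n,k} e_k^{\vee}$ because $\Xi$ sends the $\B_l(n,k)$-idempotent $e_k^{\vee} = \sum_{0 \notin \x} \Ib_{\x}$ to the Sartori idempotent $e_k^{\vee} = \sum_{\l \in \Gamma_k^{\vee}} \1_{\l}$ via the bijection $\x \leftrightarrow \mu^{\x}$.

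Under this identification the right action of $b \in \B_l(n,k+1)$ on $y \in \cal{A}^{\Z}_{n,k} e_k^{\vee}$ is $y \cdot \Xi(\Psi'(e_{k+1}^{\wedge} b e_{k+1}^{\wedge}))$ on the ${_{\pr}}(\mathbf{F}^{OSz}_k)$ side and $y \cdot \Psi(e_{k+1}^{\wedge} \Xi(b) e_{k+1}^{\wedge})$ on the $(\mathbf{F}_k^S)_{\infl}$ side, so the theorem reduces to commutativity of
\[
\xymatrix{
e_{k+1}^{\wedge}\, \B_l(n,k+1)\, e_{k+1}^{\wedge} \ar[r]^-{\Psi'} \ar[d]_{\Xi} & e_k^{\vee}\, \B_l(n,k)\, e_k^{\vee} \ar[d]^{\Xi} \\
e_{k+1}^{\wedge}\, \cal{A}^{\Z}_{n,k+1}\, e_{k+1}^{\wedge} \ar[r]_-{\Psi} & e_k^{\vee}\, \cal{A}^{\Z}_{n,k}\, e_k^{\vee}.
}
\]
I would check commutativity on the big-step generators $\Ib_{\x} f_{\x,\y} \Ib_{\y}$ with $0 \in \x \cap \y$; the $\Z[U_1,\ldots,U_n]$-linearity of $\Xi$, $\Psi$ and $\Psi'$ reduces the general case to these. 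Two combinatorial facts make the calculation almost immediate, both flowing from the observation that swapping a leading $\wedge$ for $\vee$ preserves the number of $\wedge$'s strictly right of any position: first, $\mathbf{b}^{\mu^{\x}} = \mathbf{b}^{\mu^{\x^{(\vee)}}}$ and $\mathbf{b}^{\mu^{\y}} = \mathbf{b}^{\mu^{\y^{(\vee)}}}$, so the ambient hom-spaces $\Hom_R(R_{\mathbf{b}^{\mu^{\y}}}, R_{\mathbf{b}^{\mu^{\x}}})$ are literally equal before and after replacing $(\x,\y)$ by $(\x^{(\vee)}, \y^{(\vee)})$; second, the exponents $c_i = \max(v_i^{\x} - v_i^{\y}, 0)$ from Proposition~\ref{prop:BigStepXi} are likewise unchanged. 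Thus both compositions send $f_{\x,\y}$ to the class of $1 \mapsto x_1^{c_1} \cdots x_n^{c_n}$ in the shared bottom-right hom-space.

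The main obstacle is bookkeeping the single extra relation that appears on each side when $(\x, \y)$ is replaced by $(\x^{(\vee)}, \y^{(\vee)})$, so that $\Psi'$ and $\Psi$ can both be described as reduction modulo an enlarged ideal inside an unchanged ambient module. On the Ozsv{\'a}th--Szab{\'o} side, Lemma~\ref{lem:SartoriStyleGenInts} implies that removing $0$ produces a new leftmost generating interval $[1, a]$ with $a = \min(h_1^{\x}, h_1^{\y})$, introducing the single extra relation $U_1 \cdots U_a f_{\x^{(\vee)}, \y^{(\vee)}} = 0$; the remaining generating intervals of $(\x^{(\vee)}, \y^{(\vee)})$ are exactly those of $(\x, \y)$. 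On the Sartori side, the $j = 1$ generator of $\Wa_{\mu^{\y^{(\vee)}}, \mu^{\x^{(\vee)}}}$ (in the notation of Definition~\ref{def:newW}) is precisely $(x_1 \cdots x_a)(x_1^{c_1} \cdots x_n^{c_n})$ for the same $a$, and the $j \geq 2$ generators of $\Wa_{\mu^{\y^{(\vee)}}, \mu^{\x^{(\vee)}}}$ are the generators of $\Wa_{\mu^{\y}, \mu^{\x}}$ shifted in index. Since $\Xi(U_1 \cdots U_a f_{\x,\y})$ is exactly the class of $1 \mapsto (x_1 \cdots x_a)(x_1^{c_1} \cdots x_n^{c_n})$, this matching simultaneously shows $\Wa_{\mu^{\y}, \mu^{\x}} \subset \Wa_{\mu^{\y^{(\vee)}}, \mu^{\x^{(\vee)}}}$ and identifies $\Psi'$ and $\Psi$ with the same quotient operation, yielding the required commutativity and hence the bimodule isomorphism.
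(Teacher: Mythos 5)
Your proof is correct and follows the same overall strategy as the paper's: reduce to commutativity of a square of $\Z[U_1,\ldots,U_n]$-algebra maps (you use the idempotent truncation $e_{k+1}^{\wedge}\B_l(n,k+1)e_{k+1}^{\wedge}$ at the top left while the paper uses the quotient $\B_l(n,k+1)/\B_l(n,k+1)e_{k+1}^{\vee}\B_l(n,k+1)$, but these are equivalent given how the $\Psi$-maps are defined), check commutativity on the big-step generators $f_{\x,\y}$ with $0 \in \x \cap \y$, and conclude by $\Z[U_1,\ldots,U_n]$-linearity. Where you genuinely diverge is in the verification of commutativity itself. The paper tracks fork-diagram basis elements: $\Xi$ sends $f_{\x,\y}$ to the minimal oriented fork diagram, and the vertical/horizontal composites are seen to coincide because both ``fork together'' the initial run of $\wedge$-labeled rays. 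You instead work with the explicit polynomial formula for $\Xi$ from Proposition~\ref{prop:BigStepXi}: you observe that $\mathbf{b}^{\mu^{\x}} = \mathbf{b}^{\mu^{\x^{(\vee)}}}$ and the exponents $c_i$ are unchanged by removing $0$, so the ambient hom-spaces and the image monomial literally agree, and the only thing that changes is the quotient ideal. You then identify the single new $\Wa$-generator on the Sartori side with the image under $\Xi$ of the single new generating-interval relation $U_1\cdots U_a f_{\x^{(\vee)},\y^{(\vee)}}$ on the Ozsv{\'a}th--Szab{\'o} side, which simultaneously establishes the inclusion $\Wa_{\mu^{\y},\mu^{\x}} \subset \Wa_{\mu^{\y^{(\vee)}},\mu^{\x^{(\vee)}}}$ needed for $\Psi$ to make sense as a further quotient. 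Your route is more concrete and spells out what Sartori's ``natural surjective map'' actually does at the level of the $\Wa$-ideals; the paper's route is shorter but leans on fork-diagram bookkeeping that the reader must unpack. Both are valid and neither yields more generality.
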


\begin{proof}
We just need to show that the right actions agree, which follows from commutativity of the square
\[
\xymatrix{
\B_l(n,k+1) / \B_l(n,k+1) e_{k+1}^{\vee} \B_l(n,k+1) \ar[r]^-{\Psi'} \ar[d]_{\Xi} & e_k^{\vee} \B_l(n,k) e_k^{\vee} \ar[d]^{\Xi} \\
\cal{A}^{\Z}_{n,k+1} / \cal{A}^{\Z}_{n,k+1} e_{k+1}^{\vee} \cal{A}^{\Z}_{n,k+1} \ar[r]_-{\Psi} & e_k^{\vee} \cal{A}^{\Z}_{n,k} e_k^{\vee}
}.
\]
To see that the square commutes, note that the generator $f_{\x,\y}$ at the top left (with $0 \in \x \cap \y$) gets sent by the left edge to the basis element of the minimal oriented fork diagram between $\x$ and $\y$, which starts with at least one $\wedge$-labeled ray. The bottom edge ``forks together'' all these $\wedge$-labeled rays and sends this basis element to the basis element of the minimal oriented fork diagram on the bottom right, which has one extra $\vee$ at the left of the new fork. On the other hand, the top edge sends $f_{\x,\y}$ to $f_{\x^{(\vee)}, \y^{(\vee)}}$, whose associated unoriented fork diagram also ``forks together'' the initial sequence of $\wedge$-labeled rays in the fork diagram for $f_{\x,\y}$, and the right edge sends $f_{\x^{(\vee)}, \y^{(\vee)}}$ to the basis element of the minimal oriented fork diagram for this unoriented fork diagram. The result now follows from $\Z[U_1,\ldots,U_n]$-linearity of all the edges.
\end{proof}

It follows that the functors $\cal{F}^S_k$ and $\cal{F}^{OSz}_k$ intertwine the projection functors from $\B_l(n,k+1)\rm{-proj}$ to $\cal{A}^{\Z}_{n,k+1}\rm{-proj}$ and from $\B_l(n,k)\rm{-proj}$ to $\cal{A}^{\Z}_{n,k}\rm{-proj}$.

\subsection{Duals of the \texorpdfstring{$\cal{F}$}{F} functors}

Let
\[
\mathbf{E}'_k = (\mathbf{E}')_k^{S} := {^{\vee}}\mathbf{F}^S_k = \Hom_{\cal{A}^{\Z}_{n,k}}(\mathbf{F}^S_k,\cal{A}^{\Z}_{n,k})
\]
(see \cite[Section 5.6]{Sar-diagrams}) and
\[
\mathbf{E}''_k = (\mathbf{E}'')_k^{OSz} := {^{\vee}}\mathbf{F}^{OSz}_k = \Hom_{\B_l(n,k)}(\mathbf{F}^{OSz}_k,\B_l(n,k)).
\]

As before, these bimodules square to zero. The functors
\[
\cal{E}'_k := \mathbf{E}'_k \otimes - \colon \cal{A}^{\Z}_{n,k}{\rm-fmod} \to \cal{A}^{\Z}_{n,k+1}{\rm-fmod}
\]
and
\[
\cal{E}''_k := \mathbf{E}''_k \otimes - \colon \B_l(n,k){\rm-fmod} \to \B_l(n,k+1){\rm-fmod}
\]
are exact since  $\mathbf{E}'_k$ and $\mathbf{E}''_k$  are projective as right modules, so they induce maps on $G_0$ after tensoring with $\Bbbk$.

The matrix for $G_0(\cal{E}'_k)$ in the basis of simples is the transpose of the matrix for $K_0(\cal{F}^S_k)$ in the basis of projectives. Identifying $G_0^{\C(q)}(\cal{A}^{\Bbbk}_{n,k})$ with $\left(V^{\otimes n}\right)_k$ via $K_0^{\C(q)}(\cal{A}^{\Bbbk}_{n,k})$ as in Definition~\ref{def:SarVnIdent}, $G_0(\cal{E}'_k)$ sends Sartori dual canonical basis elements to Sartori dual canonical basis elements or zero. Analogous claims hold for $\cal{E}''_k$ in the Ozsv{\'a}th--Szab{\'o} setting..

\begin{proposition}[cf. Theorem 7.19 of \cite{Sar-tensor}]\label{prop:SarFDualCat}
The map from $\left( V^{\otimes n} \right)_k$ to $\left( V^{\otimes n} \right)_{k+1}$ induced by the Sartori functor $\cal{E}'_k$ agrees with the action of the quantum group element\footnote{Sartori defines $E'$ in an arbitrary weight space by
\[
E  = q \frac{(1-q^{2K_1})}{(1-q^2)}E'K^{-1},
\]
which can be interpreted as defining $E'$ in the modified (or idempotent form) $\dot{U}_q(\mf{gl}(1|1))$ of $U_q(\mf{gl}(1|1))$ defined in \cite[Definition 3.2]{TVW} by
\[
E'1_{(a,b)} = q^{a+b-1}/(a+1)_{q^2}E1_{(a,b)}.
\]
}
\begin{equation}\label{eq:Eprime}
E' := \frac{q^{n-1}}{(k+1)_{q^2}}  E = q^{-1}\frac{1}{(k+1)_{q^2}} EK.
\end{equation}
\end{proposition}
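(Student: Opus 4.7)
The plan is to identify $\cal{E}'_k$ on the Grothendieck group level as the adjoint of $\cal{F}^S_k$ with respect to the bilinear form, then verify directly that this adjoint equals the claimed quantum group element.

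Since $\mathbf{F}^S_k = \cal{A}^{\Z}_{n,k}e_k^{\vee}$ is finitely generated projective as a left $\cal{A}^{\Z}_{n,k}$-module, the definition $\mathbf{E}'_k = {^{\vee}}\mathbf{F}^S_k$ together with the standard isomorphism $\mathbf{E}'_k\otimes_{\cal{A}^{\Z}_{n,k}}M \cong \Hom_{\cal{A}^{\Z}_{n,k}}(\mathbf{F}^S_k, M)$ and tensor-Hom adjunction exhibits $\cal{F}^S_k$ as left adjoint to $\cal{E}'_k$. Combined with the identifications $K_0^{\C(q)}\cong G_0^{\C(q)}\cong \left(V^{\otimes n}\right)_*$ from Corollary~\ref{cor:PreliminaryBasisCor} and the fact that $[\cal{F}^S_k]$ acts as $F$ on canonical basis elements, this forces $[\cal{E}'_k]$ to act on $V^{\otimes n}$ as the $(,)_S$-adjoint of $F$.

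It then remains to verify $F^{*} = \frac{q^{n-1}}{(k+1)_{q^2}}E$ directly on $V^{\otimes n}$. Using the super-Hopf-algebra coproduct together with the observation that $Kv_a = qv_a$ for both $a\in\{0,1\}$ (so $K$ acts on all of $V^{\otimes n}$ as the scalar $q^n$, a particular feature of the $\mf{gl}(1|1)$ setting), the actions of $F$ and $E$ on the standard basis are given by explicit sums involving Koszul signs (counting $\vee$'s to the left of the position being acted on) and $q$-factors $q^{i-1}$ for $F$ and $q^{-(n-i)}$ for $E$. Because $(,)_S$ has matrix $(k)_{q^2}^{!}\cdot\mathrm{Id}$ on the standard basis in weight $k$, a term-by-term matching of coefficients yields
\[
F^{*}v_{\mu} = \frac{1}{(k+1)_{q^2}}\sum_{i:\mu_i=\vee}(-1)^{N_i^{\vee}(\mu)}\,q^{i-1}\,v_{\mu+\epsilon_i} = \frac{q^{n-1}}{(k+1)_{q^2}}\,Ev_{\mu},
\]
where the Koszul signs cancel since they agree with those appearing in $E$. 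The alternate form $q^{-1}\frac{1}{(k+1)_{q^2}}EK$ then follows from $q^{n-1}E = q^{-1}(q^n)E = q^{-1}EK$.

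The main obstacle is the adjunction bookkeeping in the first step: carefully tracking the twists by Sartori's anti-automorphism $\psi_S$ appearing in the definition of $[\,,\,]_S$ and confirming compatibility with the Grothendieck-group-level adjointness of $\cal{E}'_k$ and $\cal{F}^S_k$ via the identifications $K_0^{\C(q)}\cong G_0^{\C(q)}$. Once this compatibility is established, the remaining coefficient matching on $V^{\otimes n}$ is essentially mechanical.
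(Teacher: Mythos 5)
Your argument is correct, and it is worth noting that the paper does not actually prove this proposition: it is cited verbatim from Sartori's Theorem 7.19, so you are supplying a proof where the paper only gives a reference. Your strategy is sound: (i) exhibit $\cal{F}^S_k$ as left adjoint to $\cal{E}'_k$ using $\mathbf{F}^S_k = \cal{A}^{\Z}_{n,k}e_k^\vee$ being finitely generated projective over $\cal{A}^{\Z}_{n,k}$ and $\mathbf{E}'_k = {}^{\vee}\mathbf{F}^S_k$; (ii) pass through the $K_0\times G_0$ pairing and the identification of $G_0^{\C(q)}$ with $V^{\otimes n}$ to conclude that $[\cal{E}'_k]$ is the $(\,,\,)_S$-adjoint of $F$; (iii) compute the adjoint directly on the standard basis. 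The coefficient matching in (iii) is right: the $q$-factors $q^{i-1}$ from $\Delta(F)=F\otimes 1 + K\otimes F$ and $q^{-(n-i)}$ from $\Delta(E)=E\otimes K^{-1}+1\otimes E$ produce a uniform ratio $q^{n-1}$, and the Koszul signs (depending only on entries to the left of position $i$, where $\mu$ and $\nu$ agree) cancel; dividing by $(k+1)_{q^2}^!/(k)_{q^2}^! = (k+1)_{q^2}$ from the form's scalar gives $E' = \frac{q^{n-1}}{(k+1)_{q^2}}E$, and $K=q^n$ on all of $V^{\otimes n}$ gives the second expression.

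The one point you flag as "the main obstacle" — step (ii) — does go through, and it would be worth writing it out: one needs $\cal{F}^S_k({}^{\vee}P^{\psi_S}) \cong {}^{\vee}(\cal{F}^S_kP)^{\psi_S}$ for $P$ finitely generated projective. It suffices to check on indecomposables: for $\mu\in\Gamma^\wedge_{k+1}$ with $\l^{(\wedge)}=\mu$ one has ${}^{\vee}P(\mu)^{\psi_S}\cong P(\mu)$ and $\cal{F}^S_k P(\mu)\cong P(\l)$, so both sides are $P(\l)$; for $\mu\notin\Gamma^\wedge_{k+1}$ both sides vanish; degree shifts are preserved because $\cal{F}^S_k$ is degree zero and ${}^{\vee}(q^iP)^{\psi_S}\cong q^{-i}({}^{\vee}P^{\psi_S})$ dualizes the shift on both sides consistently. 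Equivalently, one can avoid this bookkeeping by directly invoking the paper's preceding assertion that the matrix of $G_0(\cal{E}'_k)$ in the basis of simples is the transpose of the matrix of $K_0(\cal{F}^S_k)$ in the basis of projectives, and observing that this transpose relation together with $(v^\diamondsuit_\mu, v^\heartsuit_\l)_S = \delta_{\mu\l}$ is exactly $(\,,\,)_S$-adjointness. Your approach is more elementary and self-contained than Sartori's original route through Zuckerman functors on $\cal{O}^{\mf{p},\mf{q}\textrm{-pres}}_0$.
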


Below we describe the decategorification of the Ozsv{\'a}th--Szab{\'o} functor $\cal{E}''_k$, which is similar.
\begin{theorem}\label{thm:OSzECat}
The map from $\left( V^{\otimes n} \right)_k$ to $\left( V^{\otimes n} \right)_{k+1}$ induced by $\cal{E}''_k$ agrees with the action of the quantum group element
\[
E'' = q^{-1} (1-q^2) EK = (q^{-1} - q)EK.
\]
\end{theorem}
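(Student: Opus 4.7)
The plan is to decategorify the adjunction between $\cal{F}^{OSz}_k$ and $\cal{E}''_k$ and then compute the resulting operator on $V^{\otimes n}$ as the adjoint of $F$ with respect to the Ozsv\'ath--Szab\'o bilinear form $(,)_{OSz}$, using the explicit formula already known for the Sartori analogue from Proposition~\ref{prop:SarFDualCat}.

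First I will observe that $\mathbf{F}^{OSz}_k = \B_l(n,k)\, e_k^{\vee}$ is finitely generated projective as a left $\B_l(n,k)$-module, being a direct summand of the regular module. Standard tensor-hom adjunction then yields a natural isomorphism of functors
\[
\cal{E}''_k = \mathbf{E}''_k \otimes_{\B_l(n,k)} - \;\cong\; \Hom_{\B_l(n,k)}(\mathbf{F}^{OSz}_k, -),
\]
exhibiting $\cal{E}''_k$ as the right adjoint of $\cal{F}^{OSz}_k$. Passing to graded dimensions of Hom spaces gives
\[
\dimq \Hom_{\B_l^{\Bbbk}(n,k+1)}(P, \cal{E}''_k(M)) = \dimq \Hom_{\B_l^{\Bbbk}(n,k)}(\cal{F}^{OSz}_k(P), M)
\]
for $P \in \B_l^{\Bbbk}(n,k+1){\rm -proj}$ and $M \in \B_l^{\Bbbk}(n,k){\rm -fmod}$.

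Next I will identify the natural Hom pairing $\langle [P], [M]\rangle := \dimq \Hom(P,M)$ between $K_0^{\C(q)}$ and $G_0^{\C(q)}$ of $\B_l^{\Bbbk}(n,k)$ with the Ozsv\'ath--Szab\'o form $(,)_{OSz}$ on $(V^{\otimes n})_k$. This is because, by Corollary~\ref{cor:PreliminaryBasisCor}, the bases $\{[P(\x)]\}$ and $\{[L(\y)]\}$ (which pair to $\delta_{\x,\y}$ under $\langle\cdot,\cdot\rangle$) correspond to the canonical basis $\{v_{\x}^{\diamondsuit}\}$ and the Ozsv\'ath--Szab\'o dual canonical basis $\{v_{\y}^{\heartsuit\heartsuit}\}$, which are $(,)_{OSz}$-dual by definition. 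Combining with the adjunction above and Theorem~\ref{thm:OSzFCat}, the operator $E''$ induced by $\cal{E}''_k$ is then characterized by
\[
(E'' v, w)_{OSz} = (v, F w)_{OSz}, \qquad v \in (V^{\otimes n})_k, \; w \in (V^{\otimes n})_{k+1}.
\]

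Finally, I will compute $E''$ by comparison with the Sartori formula. Proposition~\ref{prop:SarFDualCat} provides the analogous identity $(E' v, w)_S = (v, Fw)_S$ for $E' = q^{-1}(k+1)_{q^2}^{-1} EK$ acting on $(V^{\otimes n})_k$. The comparison $(,)_S = (k)_{q^2}^!(1-q^2)^k (,)_{OSz}$ on weight $k$, which follows from Corollary~\ref{cor:OSzCatFormVsSarCatForm} together with Proposition~\ref{prop:SartoriFormCat} and Corollary~\ref{cor:OSzBilinearForm}, lets me rewrite the Sartori-adjoint equation in terms of $(,)_{OSz}$; comparing with the characterization of $E''$ then yields
\[
E''|_{(V^{\otimes n})_k} = \frac{(k+1)_{q^2}^!(1-q^2)^{k+1}}{(k)_{q^2}^!(1-q^2)^k}\, E'|_{(V^{\otimes n})_k} = (k+1)_{q^2}(1-q^2)\, E'|_{(V^{\otimes n})_k} = (q^{-1}-q)EK.
\]
The main obstacle will be cleanly matching the two Grothendieck group identifications (projectives with canonical basis versus simples with Ozsv\'ath--Szab\'o dual canonical basis) and verifying that the $\psi_{OSz}$-twist built into the definition of $[,]_{OSz}$ is consistent with the untwisted Hom pairing $\langle \cdot, \cdot\rangle$ under these identifications; once this bookkeeping is done, the remaining computation is direct.
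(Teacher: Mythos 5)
Your proposal is correct and takes essentially the same route as the paper: both rely on the transpose relationship between $[\cal{E}]$ on $G_0$ and $[\cal{F}]$ on $K_0$ (which you derive via the explicit tensor-hom adjunction, while the paper simply states it), then reduce the computation to the scalar $(k)_{q^2}^!(1-q^2)^k$ relating the Sartori and Ozsv\'ath--Szab\'o forms and dual canonical bases at each weight. The bookkeeping about the $\psi_{OSz}$-twist that you flag does go through, because the matrix of $[\cal{F}]$ in the basis of indecomposable projectives has all entries in $\{0,1\}$ and is therefore bar-invariant, so the twisted and untwisted Hom pairings give the same adjoint operator.
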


\begin{proof}
We claim that the map $[\cal{E}''_k]$ induced by $\cal{E}''_k$ equals $1-q^{2(k+1)}$ times the map $[\cal{E}'_k]$ induced by $\cal{E}'_k$; the result then follows from Proposition~\ref{prop:SarFDualCat}. Indeed, $[\cal{E}''_k]$ acts on Ozsv{\'a}th--Szab{\'o} dual canonical basis elements the way $[\cal{E}'_k]$ acts on Sartori dual canonical basis elements. For $\x \in V_l(n,k)$ we have $v_{\x}^{\heartsuit \heartsuit} = (k)^!_{q^2}(1-q^2)^k v_{\x}^{\heartsuit}$, and for $\y \in V_l(n,k+1)$ we have $v_{\y}^{\heartsuit \heartsuit} = (k+1)^!_{q^2}(1-q^2)^{k+1} v_{\y}^{\heartsuit}$. Thus, $[\cal{E}''_k]$ sends a Sartori dual canonical basis element for $\left( V^{\otimes n} \right)_k$ to
\[
\frac{(k+1)^!_{q^2}(1-q^2)^{k+1}}{(k)^!_{q^2}(1-q^2)^k} = (k+1)_{q^2}(1-q^2) = 1-q^{2(k+1)}
\]
times where $[\cal{E}'_k]$ sends this basis element.
\end{proof}

\begin{remark}
The generators $E''$ and $F$ satisfy the anticommutation relation $E''F + FE'' = 1-K^2$; compare with \cite[Section 1.2]{TianUT}. We set $T = K^2$, a slight change of conventions from what Tian writes.
\end{remark}

Define ${_{\infl}}(\mathbf{E}'_k)$ and $(\mathbf{E}''_k)_{\pr}$ by inflating the left action on $\mathbf{E}'_k$ and projecting the right action on $\mathbf{E}''_k$.

\begin{theorem}\label{thm:SarOSzERel}
The bimodules ${_{\infl}}(\mathbf{E}'_k)$ and $(\mathbf{E}''_k)_{\pr}$ over $(\B_l(n,k+1),\cal{A}^{\Z}_{n,k})$ are isomorphic.
\end{theorem}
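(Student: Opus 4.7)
The plan is to dualize the $(\cal{A}^{\Z}_{n,k}, \B_l(n,k+1))$-bimodule isomorphism $(\mathbf{F}^S_k)_{\infl} \cong {_{\pr}}(\mathbf{F}^{OSz}_k)$ from Theorem~\ref{thm:SarOSzFRel} by applying $\Hom_{\cal{A}^{\Z}_{n,k}}(-, \cal{A}^{\Z}_{n,k})$, and then to transport the $\Hom$ across the projection using tensor--hom adjunction. Writing $\A = \cal{A}^{\Z}_{n,k}$, $\A' = \cal{A}^{\Z}_{n,k+1}$, $\B = \B_l(n,k)$, $\B' = \B_l(n,k+1)$, and $\pi\colon \B \to \A$ for the homomorphism of Corollary~\ref{cor:IntegerXi}, Theorem~\ref{thm:SarOSzFRel} unwinds to a natural isomorphism of $(\A, \B')$-bimodules
\[
\mathbf{F}^S_k \;\cong\; \A \otimes_\B \mathbf{F}^{OSz}_k,
\]
where the right $\B'$-action on the left-hand side is inflated through $\B' \to \A'$.

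First I would record that $\mathbf{F}^{OSz}_k = \B e_k^{\vee}$ is finitely generated projective as a left $\B$-module, being the summand of $\B$ cut out by the idempotent $e_k^{\vee}$. Then I apply $\Hom_\A(-, \A)$ to the displayed isomorphism. The left-hand side yields $\mathbf{E}'_k$ with its left $\A'$-action automatically inflated through $\B' \to \A'$, giving ${_{\infl}}(\mathbf{E}'_k)$. For the right-hand side, tensor--hom adjunction produces
\[
\Hom_\A(\A \otimes_\B \mathbf{F}^{OSz}_k, \A) \;\cong\; \Hom_\B(\mathbf{F}^{OSz}_k, \A),
\]
where $\A$ is viewed as a $(\B, \A)$-bimodule through $\pi$. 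Finally, the projectivity of $\mathbf{F}^{OSz}_k$ yields a natural isomorphism
\[
\Hom_\B(\mathbf{F}^{OSz}_k, \B) \otimes_\B \A \;\longrightarrow\; \Hom_\B(\mathbf{F}^{OSz}_k, \A),
\qquad \phi \otimes a \longmapsto \bigl(m \mapsto \phi(m) \cdot a\bigr),
\]
which holds trivially for $\mathbf{F}^{OSz}_k$ replaced by $\B$ and passes to direct summands by additivity. The left-hand side here is exactly $\mathbf{E}''_k \otimes_\B \A = (\mathbf{E}''_k)_{\pr}$, and composing the chain of isomorphisms produces the desired identification.

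The hard part will be bookkeeping of the bimodule structures. Throughout the argument the left $\B'$-action originates from the right $\B'$-action on $\mathbf{F}^S_k$ or on $\mathbf{F}^{OSz}_k$, and the right $\A$-action comes from multiplication in the target copy of $\A$ or in the second factor of a tensor product; I will need to check that tensor--hom adjunction and the $\Hom$-tensor swap are compatible with these actions. The key conceptual point, which explains the statement of the theorem, is that the adjunction naturally exchanges the inflation of the left action on the dual of $\mathbf{F}^S_k$ for the projection of the right action on the dual of $\mathbf{F}^{OSz}_k$. Once these compatibilities are verified, the composite isomorphism is automatically an isomorphism of $(\B', \A)$-bimodules, completing the proof.
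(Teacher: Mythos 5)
Your proof is correct and follows essentially the same route as the paper's: apply the duality functor ${}^{\vee}(-) = \Hom(-, \text{algebra})$ to the isomorphism of Theorem~\ref{thm:SarOSzFRel} and then commute duality past inflation and projection. The paper compresses this into the pair of identities ${_{\infl}}({^{\vee}}\mathbf{F}_k^S) \cong {^{\vee}}((\mathbf{F}_k^S)_{\infl})$ and $({^{\vee}}\mathbf{F}_k^{OSz})_{\pr} = {^{\vee}}({_{\pr}}(\mathbf{F}_k^{OSz}))$; your chain of tensor--hom adjunction plus finite projectivity of $\mathbf{F}^{OSz}_k = \B_l(n,k)e_k^{\vee}$ is precisely the justification of the second (nontrivial) identity that the paper leaves implicit, and the bimodule bookkeeping you flag does work out as you describe.
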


\begin{proof}
This result is a consequence of Theorem~\ref{thm:SarOSzFRel} because ${_{\infl}}(\mathbf{E}'_k) = {_{\infl}}({^{\vee}}\mathbf{F}_k^S) \cong {^{\vee}}((\mathbf{F}_k^S)_{\infl})$ and $(\mathbf{E}''_k)_{\pr} = ({^{\vee}}\mathbf{F}_k^{OSz})_{\pr} = {^{\vee}}({_{\pr}}(\mathbf{F}_k^{OSz}))$.
\end{proof}

It follows that the functors $\cal{E}'_k$ and $\cal{E}''_k$ intertwine the inflation functors from $\cal{A}^{\Z}_{n,k+1}\rm{-fmod}$ to $\B_l(n,k+1)\rm{-fmod}$ and from $\cal{A}^{\Z}_{n,k}\rm{-fmod}$ to $\B_l(n,k)\rm{-fmod}$.

\bibliographystyle{alpha}
\bibliography{bib_clean}

\end{document}